\newcolumntype{M}[1]{>{\centering\arraybackslash}m{#1}} 
\DeclareFontFamily{OMS}{rsfs}{\skewchar\font'60}
\DeclareFontShape{OMS}{rsfs}{m}{n}{<-5>rsfs5 <5-7>rsfs7 <7->rsfs10 }{}
\DeclareSymbolFont{rsfs}{OMS}{rsfs}{m}{n}
\DeclareSymbolFontAlphabet{\scr}{rsfs}
\DeclareSymbolFontAlphabet{\scr}{rsfs}
\newcommand\cE{{\mathcal E}}
\newcommand\cF{{\mathcal F}}
\newcommand\cG{{\mathcal G}}
\newcommand\cH{{\mathcal H}}
\newcommand\cO{{\mathcal O}}
\newcommand\cX{{\mathcal X}}
\newcommand\dbar{{\overline{\partial}}}
\theoremstyle{plain}
\newtheorem{thm}{Theorem}[section]
\newtheorem{lemma}[thm]{Lemma}
\newtheorem{prop}[thm]{Proposition}
\newtheorem{cor}[thm]{Corollary}
\newtheorem{assumption}[thm]{Assumption}
\newtheorem{conj}[thm]{Conjecture}
\theoremstyle{remark}
\newtheorem{remark}[thm]{Remark}
\setlist[itemize]{leftmargin=*}
\setlist[enumerate]{leftmargin=*}
\numberwithin{equation}{section} 
\title{ Admissible  metrics on compact  K\"ahler varieties} 
\subjclass[2020]{}
\keywords{}
\author{Wenhao Ou}
\address{Wenhao Ou, Institute of Mathematics, Academy of Mathematics and Systems Science, Chinese Academy of Sciences, Beijing, 100190, China}
\email{wenhaoou@amss.ac.cn}
\begin{document}

\begin{abstract}
Let $X$ be a normal compact K\"ahler variety, and $\cF$ a coherent reflexive sheaf on $X$. 
We investigate the existence of admissible Hermitian metrics on $\cF$.  
If moreover $\cF$ is slope stable,  we also study the existence of admissible Hermitian-Yang-Mills metrics on it.  
The  existence   will hold if one can prove a uniform Sobolev inequality on singular spaces. 
\end{abstract}

\maketitle
 
\tableofcontents


\section{Introduction}

The theory of  holomorphic vector bundles is a central object in algebraic geometry and analytic geometry. 
The notion of stable vector bundles on complete curves was introduced by Mumford  in  \cite{Mumford1963}. 
Such notion of stability was then extended to torsion-free sheaves on any projective manifolds (see \cite{Takemoto1972}, \cite{Gieseker1977}), and is now known as slope stability, or Gieseker stability. 
An  important property of stable bundles is the following Bogomolov-Gieseker inequality, involving the Chern classes of the bundle.
\begin{thm}
\label{thm:BG-inequality-intro}
Let $X$ be a projective manifold of dimension $n$,  and let  $H$ be  an ample divisor.
Let $\cF$ be a $H$-stable vector bundle of rank $r$ on $X$. 
Then 
\[   \Big(c_2(\cF)-\frac{r-1}{2r}c_1(\cF)^2 \Big)  \cdot  H^{n-2} \geqslant 0. \]
\end{thm}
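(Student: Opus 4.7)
The natural strategy, aligned with the analytic framework of the paper, proceeds via the Donaldson--Uhlenbeck--Yau (Kobayashi--Hitchin) correspondence together with the pointwise Bogomolov--L\"ubke inequality. Fix a K\"ahler form $\omega$ on $X$ with $[\omega]=c_1(H)$. The first step is to invoke the Donaldson--Uhlenbeck--Yau theorem: since $\cF$ is slope $H$-stable, it carries a smooth Hermitian metric $h$ whose Chern curvature $F_h$ satisfies the Hermitian--Yang--Mills equation
\[
\sqrt{-1}\,\Lambda_\omega F_h \;=\; \lambda\cdot \id_\cF,
\]
for the topological constant $\lambda$ determined by the slope of $\cF$ with respect to $H$.

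The second step is a Chern--Weil translation of the inequality. Decompose $F_h = F_h^0 + \tfrac{1}{r}(\text{tr}\,F_h)\cdot \id_\cF$ into trace-free and trace parts; taking traces in the HYM equation and subtracting yields $\Lambda_\omega F_h^0 = 0$, so $F_h^0$ is a primitive endomorphism-valued $(1,1)$-form. Expanding the Chern forms
\[
c_1(\cF) \;=\; \tfrac{\sqrt{-1}}{2\pi}\text{tr}\,F_h, \qquad c_2(\cF) \;=\; \tfrac{1}{8\pi^2}\bigl(\text{tr}(F_h\wedge F_h) - (\text{tr}\,F_h)^2\bigr),
\]
a direct algebraic manipulation produces the identity of $(2,2)$-forms
\[
2r\, c_2(\cF) - (r-1)\, c_1(\cF)^2 \;=\; \frac{r}{4\pi^2}\,\text{tr}\bigl(F_h^0 \wedge F_h^0\bigr).
\]

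The third step is the pointwise Bogomolov--L\"ubke inequality: for a trace-free, Hermitian-symmetric, $\omega$-primitive endomorphism-valued $(1,1)$-form $\alpha$, at every point of $X$ one has
\[
\text{tr}(\alpha\wedge\alpha) \wedge \omega^{n-2} \;\geqslant\; 0
\]
as an $(n,n)$-form. After unitary diagonalisation of $\omega$ at a point this follows by combining the Hodge--Riemann bilinear relations for primitive real $(1,1)$-forms with the positive-definiteness of the natural Hermitian inner product on trace-free endomorphisms. Integrating over $X$ produces
\[
\bigl(2r\, c_2(\cF) - (r-1)\, c_1(\cF)^2\bigr)\cdot H^{n-2} \;\geqslant\; 0,
\]
which is equivalent to the statement of the theorem.

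The main obstacle, by a substantial margin, is the Donaldson--Uhlenbeck--Yau step; its proof (via the Hermitian--Einstein flow, the continuity method, or variationally via the Donaldson functional) is genuinely deep. Once an HYM metric is granted, the Chern--Weil identity and the pointwise L\"ubke inequality are essentially formal linear algebra. It is precisely this analytic obstruction that the remainder of the paper is designed to overcome in the singular setting, via admissible HYM metrics and a uniform Sobolev inequality on singular spaces.
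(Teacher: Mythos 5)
Your proof is correct and follows the analytic (Kobayashi--Hitchin) route: invoke Donaldson--Uhlenbeck--Yau to obtain a Hermitian--Yang--Mills metric, decompose the curvature into trace and trace-free parts so that the HYM equation forces the trace-free part $F^0$ to be $\omega$-primitive, translate the discriminant $c_2 - \tfrac{r-1}{2r}c_1^2$ into $\tfrac{1}{8\pi^2}\mathrm{tr}(F^0\wedge F^0)$ via Chern--Weil, and conclude by the pointwise L\"ubke inequality for a primitive, trace-free, skew-Hermitian endomorphism-valued $(1,1)$-form. This is precisely the ``metric version'' of the proof that the paper alludes to in the paragraph after the statement. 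However, the paper's immediate attribution for the theorem is the algebraic route: Bogomolov's surface case, promoted to higher dimensions by the Mehta--Ramanathan restriction theorem (restricting to a generic high-degree hyperplane section, on which stability persists, reduces the inequality to dimension two). The trade-off between the two is worth recording. The algebraic argument avoids the hard nonlinear PDE underlying Donaldson--Uhlenbeck--Yau, at the price of the (still substantial) restriction theorem, and yields no pointwise information. The analytic argument you chose is harder to set up but produces a pointwise inequality, hence the projectively-flat characterization of equality as in Theorem~\ref{thm:BG-inequality}, and it is exactly the version that the rest of the paper sets out to generalize to normal compact K\"ahler varieties via admissible HYM metrics and a uniform Sobolev inequality. (One small caveat: $F^0$ is skew-Hermitian with respect to $h$, so it is $\sqrt{-1}F^0$ that is Hermitian; either way the sign of $\mathrm{tr}(F^0\wedge F^0)\wedge\omega^{n-2}$ comes out nonnegative once primitivity is used, but ``Hermitian-symmetric'' in your third step should be read with this in mind.)
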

When  $X$ is a surface, the inequality was proved in \cite{Bogomolov1978}. 
In higher dimension, one may apply Mehta-Ramanathan   theorem in  \cite{MehtaRamanathan1981/82}   to  reduce to the case of surfaces, by taking hyperplane  sections. 
Later in \cite{Kawamata1992}, as a part of the proof for the three-dimensional abundance theorem, Kawamata extended  the inequality to reflexive sheaves on projective surfaces with quotient singularities. 
The same technique then  allows us to deduce the Bogomolov-Gieseker inequality for reflexive sheaves on projective varieties which has quotient singularities in codimension two.  

On the analytic side, let $(X,\omega)$ be a compact  K\"ahler manifold, and $(\cF, H)$ a Hermitian  holomorphic vector bundle on $X$.  
L\"ubke proved that  if $H$ satisfies the Einstein condition, then the following inequality holds (see \cite{Lubke1982}),
\[   \int_X \Big(c_2(\cF,H)-\frac{r-1}{2r}c_1(\cF,H)^2 \Big)  \wedge  \omega^{n-2} \geqslant 0. \]
We recall that $H$ satisfies the Einstein condition if its Chern curvature $F$ satisfies 
\[ \sqrt{-1}\Lambda F = \lambda \mathrm{Id}_\cF,\]
where $\Lambda$ is the contraction with $\omega$ and  $\lambda$ is a constant. 
In this case, $H$ is called a Hermitian-Einstein metric, or a Hermitian-Yang-Mills metric. 
It is now well understood that if $\cF$ is $\omega$-stable, then it admits a Hermitian-Yang-Mills metric. 
The case when $X$ is a complete curve was proved by Narasimhan-Seshadri  in \cite{NarasimhanSeshadri1965}, the case of projective surfaces was proved by Donaldson in \cite{Donaldson1985}, and the case of arbitrary compact K\"ahler manifolds was proved by Uhlenbeck-Yau in \cite{UhlenbeckYau1986}. 
As a consequence, one deduces a metric version for the proof of the previous Bogomolov-Gieseker inequality. 
An advantage of the analytic method is to give an insight to the equality condition. 
Simpson extended the existence of Hermitian-Yang-Mills metric to stable Higgs bundles, on compact and certain non compact K\"ahler manifolds, in \cite{Simpson1988}. 
Furthermore,  in \cite{BandoSiu1994},  Bando-Siu introduced the notion of admissible metrics and prove the existence of admissible Hermitian-Yang-Mills metrics on stable reflexive sheaves. 


In \cite{Donaldson1985}, Donaldson considered the following heat equation in metrics $H_t$ on a vector  bundle $\cF$. 
\begin{equation} \label{equation:metric-intro}
H_t^{-1}(\frac{\mathrm{\partial}}{\mathrm{\partial}t}H_t)  = -(\sqrt{-1}\Lambda  F_t - \lambda (\cF) \mathrm{Id}_\cF) 
\end{equation}
where   $\mu(\cF)$ is the slope of $\cF$,  $\lambda (\cF) =     \mu (\cF)\mathrm{Vol} (X) ^{-1}$  and  $F_t$ is the Chern curvature   of $H_t$. 
He   proved   this equation admits a solution until infinite time, when $X$ is a smooth projective surface. 
Then by using Yang-Mills theory, it was shown that, up to gauge transforms, there is a sequence 
$\{F_{t_i}\}$ which converges to a Hermitian-Yang-Mills connection $F$.  
Thanks to the theorem of removable singularities, the connection $F$ defines a holomorphic structure on the entire smooth vector bundle $\cF$. 
The stability condition then imposes that the new holomorphic structure coincides with the initial one, and hence $F$ is a Hermitian-Yang-Mills connection  for the holomorphic bundle $\cF$. 

The method of Uhlenbeck-Yau in \cite{UhlenbeckYau1986} is different. 
For a fixed  initial metric $H_0$ on the bundle $\cF$, they considered the following perturbed equations in metrics $H=H_0h$, 
\[   \sqrt{-1}\Lambda F_H -\lambda(\cF)\mathrm{Id}_\cF  - \epsilon \log\, h = 0.\] 
Such an  equation always admits a solution $h_\epsilon$ for $0<\epsilon \ll 1$. 
Then they argued by contradiction that, if $h_\epsilon$ are not uniformly $L^1$-bounded, then
there is a $L_1^2$-subbundle (or weakly holomorphic subbundle) of $\cF$, which induces a destabilizer for $\cF$. 
Once the uniform boundedness is obtained, one can extract a sequence $\{H_0h_{\epsilon_i}\}$ which converges to a Hermitian-Yang-Mills metric.

The method of Simpson in \cite{Simpson1988} is a  combination of both methods above. 
He  proved the heat solution (\ref{equation:metric-intro}) admits  a solution $H_t$  until infinite time. 
Then  arguing as in \cite{UhlenbeckYau1986}, he  deduced a sequence $\{H_{t_i}\}$ which converges to a Hermitian-Yang-Mills metric. 


The main objective of this note is to study  this problem for singular compact K\"ahler varieties $X$, with the methods of Bando-Siu and of Simpson. 
The idea is as follows.  
We take a desingularization $r\colon \widehat{X}\to X$ 
such that   $\widehat{\cF} = (r^*(\cF^*))^*$ becomes  locally free. 
Then we are able to solve the heat equations in $\widehat{X}$   for $\widehat{\cF}$, with respect to small perturbations $\omega_\epsilon$  of $r^*\omega$. 
When $X$ is smooth,  Bando-Siu   showed that these  solutions  $H_\epsilon$ have a limit point $H$ when $\epsilon$ tends to zero, and such $H$ is a solution for the initial  heat equation on $(X,\omega)$. 
This relies on the next two ingredients. 
First, the solutions $H_\epsilon$ admit  estimates  involving the heat kernels $K_\epsilon$ of $(X,\omega_\epsilon)$. 
Furthermore, Bando-Siu proved a uniform Sobolev inequality for  $(X,\omega_\epsilon)$, which implies a uniform estimate on the heat kernels $K_\epsilon$.

There would be some difficulties when one tries to adapt this method on singular varieties. 
Actually, the arguments involving local elliptic/parabolic estimates always remain valid. 
But one needs to be careful while getting global estimates. 
The first problem is the uniform Sobolev inequalities for singular varieties. 
We conjecture that it holds.

\begin{conj}\label{conj:Sobolev-conj-global}
Let $(X,\omega)$ be a compact K\"ahler variety of dimension $n$. 
Let $p\colon Y \to X$ be a blowup at smooth center, and $\eta$ a K\"ahler form on $Y$.    
For each $0< \epsilon \ll 1$, we  consider the  following K\"ahler form  on $Y$.  
\[\omega_{\epsilon } = r^*\omega+\epsilon  \eta. \] 
Then $(Y,\omega_\epsilon )$ has a uniform Sobolev constant for all $0<\epsilon  \leqslant 1$. 
That is, there exists a constant $C_S$, independent of $\epsilon $,    such that for any $\mathcal{C}^1$ function $f \geqslant 0$, compactly supported in the smooth locus of $Y$,   the following inequality holds,
\[ (\int_{Y} |f|^{\frac{2n}{ n-1} }\omega_\epsilon ^n)^{\frac{ n-1}{ n}}  \leqslant C_S \int_{Y} (||\nabla_{\omega_\epsilon } f||^2 + |f|^2) \omega_\epsilon ^n. \]
\end{conj}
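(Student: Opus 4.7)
The plan is to obtain the uniform Sobolev constant via the standard equivalence, due to Saloff--Coste and Grigor'yan, between a scale-invariant $L^{2}$-Sobolev inequality of Euclidean type and the combination of uniform volume doubling on geodesic balls together with a uniform scale-invariant $L^{2}$-Poincar\'e inequality, or equivalently a uniform on-diagonal heat kernel upper bound $K_{\epsilon}(t,x,y)\leqslant C\,t^{-n}$ for $t\leqslant 1$. Since the test functions in the conjecture are compactly supported in the smooth locus of $Y$, a standard approximation and partition-of-unity argument reduces the question to verifying uniform volume doubling and a uniform local Poincar\'e inequality on small balls centred at points of $Y^{\mathrm{reg}}$.

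I would then decompose $Y$ according to the degeneration behaviour of $\omega_{\epsilon}=p^{*}\omega+\epsilon\eta$. Let $E\subset Y$ be the exceptional divisor of $p$, let $\Sigma\subset X$ be the singular locus of $X$, and set $\Sigma'=p^{-1}(\Sigma)$. On a fixed complement of a neighbourhood of $E\cup\Sigma'$, the form $p^{*}\omega$ is smooth and strictly positive, hence uniformly equivalent to $\eta$, so $\omega_{\epsilon}$ is uniformly equivalent to $\eta$ there, and the Sobolev inequality for the smooth K\"ahler manifold $(Y,\eta)$ transfers with a uniform constant. On a neighbourhood of $E$ lying over the smooth locus of $X$, one can choose local holomorphic coordinates on $X$ in which $\omega$ is uniformly equivalent to the Euclidean K\"ahler form, reducing matters to an explicit model on a blow-up of $\mathbb{C}^{n}$. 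There $\omega_{\epsilon}^{n}$ can be computed in radial coordinates, and the required uniform volume doubling and local Poincar\'e inequality follow by direct integration, since away from the centre the metric is uniformly K\"ahler.

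The main obstacle, and where I expect the real content of the conjecture to live, is the behaviour over a neighbourhood of $\Sigma'$. Here $\omega$ is only a K\"ahler form in the sense of singular spaces, i.e.\ locally the restriction of a smooth K\"ahler form under a holomorphic embedding of $X$ into some $\mathbb{C}^{N}$, and its pullback $p^{*}\omega$ on $Y$ can degenerate along divisors whose structure is controlled by the singularities of $X$ rather than by any uniform data. Without a Ricci lower bound, Bishop--Gromov volume comparison is not available, so one is driven toward a pluripotential-theoretic route: a quantitative $\epsilon$-uniform $L^{\infty}$ estimate on the Monge--Amp\`ere density $\omega_{\epsilon}^{n}/\eta^{n}$, in the spirit of Kolodziej and Guedj--Zeriahi, together with an $\epsilon$-uniform noncollapsing bound on volumes of small balls, would suffice to convert the Sobolev inequality for the smooth reference metric $\eta$ into one for $\omega_{\epsilon}$ at the cost of a uniform constant. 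Producing such bounds in the generality of arbitrary normal K\"ahler varieties appears to require genuinely new pluripotential-theoretic input, and this is where I would expect the principal difficulty of the conjecture to concentrate.
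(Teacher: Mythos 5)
The statement you were asked to prove is stated in the paper as a \emph{conjecture} (Conjecture~\ref{conj:Sobolev-conj-global}), and the paper does not prove it: every main theorem of the paper is explicitly conditional on this conjecture. So there is no ``paper proof'' to compare against. You have, appropriately, produced an analysis of why the inequality is plausible and where the difficulty lies rather than a proof, and your concluding admission---that the region near the singular fibre ``appears to require genuinely new pluripotential-theoretic input''---matches the assessment implicit in the paper's decision to leave this as a conjecture.

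Where a comparison is possible is with the special cases the paper \emph{does} prove, namely the local model Conjecture~\ref{conj-uniform-Sob} (to which the global statement reduces by compactness and a partition of unity, exactly as you begin) in the case $k=0$ (Proposition~\ref{prop:certain-uniform-Sobolev}) and in the product case $X=\mathbb{C}^n\times\mathbb{C}^k$ (Corollary~\ref{cor:uniform-sob-product}). The paper's route there is geometric--measure--theoretic and quite different from yours: complex-analytic subvarieties of $\mathbb{C}^N$ are minimal submanifolds, so Michael--Simon gives a uniform Sobolev inequality on the region $\{|\mathbf{z}|\geqslant\delta\zeta\}$ where $\omega_\delta$ is comparable to $\omega_0$, Hoffman--Spruck gives one inside the small ball $\mathbf{B}_{\delta\zeta}(o)$ whose $\omega_\delta$-volume is controlled by Lemma~\ref{lemma:uniform-volume-estimate}, and a slicing estimate (Lemma~\ref{lemma:slice-estimate}) derived from the Whitney condition~(b) (Lemma~\ref{lemma:Whitney-neighbourhood}) glues the two, with all constants traced to be $\delta$-independent. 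This handles the blow-up of a genuinely singular point directly, without volume doubling, Poincar\'e inequalities, or the Saloff-Coste/Grigor'yan equivalence; indeed the paper runs the heat-kernel machinery only in the \emph{opposite} direction, deducing heat-kernel bounds from the Sobolev inequality in Proposition~\ref{prop:sobolev-heat-estimate} and Corollary~\ref{cor:sobolev-heat-estimate-2points}.

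Two concrete cautions about the decomposition you propose. First, in the intended application $p$ is one step of a resolution of singularities, so the exceptional divisor $E$ and the preimage $\Sigma'=p^{-1}(\Sigma)$ are not disjoint; near $E\cap\Sigma'$ both degenerations compound, and your ``near $E$ over the smooth locus of $X$'' region simply excludes the genuinely hard overlap rather than addressing it. Second, even the smooth portion of the $E$-region is not as soft as you suggest: when the blow-up centre has positive dimension ($k\geqslant 1$) the explicit-model estimate already requires the product/slicing argument of Corollary~\ref{cor:uniform-sob-product}, and the case of a positive-dimensional centre inside a singular $X$ is precisely what neither the paper nor your sketch resolves. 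So the gap you identify is real, but its location should be sharpened from ``near $\Sigma'$'' to ``near $E\cap\Sigma'$ with $k\geqslant 1$''---this is where the paper's Whitney-stratification/minimal-surface technique loses its grip and where any successful approach, pluripotential-theoretic or otherwise, will have to do its work.
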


Another difficulty is that, when $X$ has singularities, the heat kernel and the maximum principal are less trivial. 
To deal with this issue, we will investigate  heat kernels on singular compact K\"ahler varieties in Section \ref{section:heat-kernel}.  
Particularly, we prove the following assertion (see Corollary \ref{cor:heat-kernel-conservation}), extending a result of Li-Tian in \cite{LiTian1995}. 
It also implies \cite[Conjecture 3.1]{DiNezzaGuedjGuenancia2020} (see Corollary \ref{cor:DNGG}). 
\begin{thm}
\label{thm:heat-kernel-conservation}
Let $X^\circ \subseteq X$ be a  smooth Zariski open dense subset of  a compact K\"ahler variety  $(X,\omega)$, and $K $ the heat kernel of $(X^\circ, \omega)$.  
Then the following properties hold.
\begin{enumerate}
\item $K$ is bounded on $X^\circ \times X^\circ \times [t_1,t_2]$ for any fixed $0<t_1<t_2$. 
\item $K (x,\cdot,t)$ belongs to $L_1^2$.
\item \[\int_{y\in X^\circ}|\Delta_y K (x,y, t)|^2 \mathrm{d}y < \infty.\]
\item \[ 
\int_{y\in X^\circ}K (x,y,t) \mathrm{d}y = 1.
\] 
\end{enumerate}
\end{thm}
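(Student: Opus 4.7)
The strategy is to construct $K$ as the integral kernel of the heat semigroup generated by the Friedrichs extension of the Laplacian on $(X^\circ,\omega)$, and then to control its behaviour near the singular set $\Sigma := X \setminus X^\circ$ using cutoff functions of vanishing capacity. Concretely, I would exhaust $X^\circ$ by relatively compact open subsets $U_j \uparrow X^\circ$ with smooth boundary, take the Dirichlet heat kernels $K_j$ on each $U_j$ (which are smooth, satisfy the maximum principle, and have $\int K_j(x,y,t)\,dy \le 1$), and define $K(x,y,t) = \lim_j K_j(x,y,t)$ as a monotone limit.

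For parts (1)--(3), I would combine standard parabolic regularity on the Riemannian manifold $X^\circ$ with semigroup arguments. Part (1) follows from a Moser iteration fed by the Sobolev inequality on $(X^\circ,\omega)$ (established earlier in the section) together with the semigroup identity $K(x,y,t) = \int K(x,z,t/2)\,K(z,y,t/2)\,\omega^n(z)$, which upgrades the $L^1$-bound on $K(x,\cdot,t/2)$ to an $L^\infty$-bound on $K$ for $t \in [t_1,t_2]$. Part (2) is immediate from self-adjointness and semigroup smoothing: $K(x,\cdot,t)$ lies in the form domain $D(\Delta^{1/2})$ of the Friedrichs Laplacian, and this form domain coincides with $L_1^2$ because $\Sigma$ has 2-capacity zero. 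Part (3) is the same argument applied once more, using that $K(x,\cdot,t)$ lies in the full domain $D(\Delta)$.

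The non-trivial part is the conservation identity (4). The strategy is to produce a family of cutoffs $\chi_\delta \in C^1_c(X^\circ)$ with $0 \le \chi_\delta \le 1$, equal to $1$ outside a $\delta$-neighbourhood of $\Sigma$, such that
$$\int_{X^\circ} |\nabla \chi_\delta|^2 \,\omega^n \xrightarrow[\delta \to 0]{} 0.$$
Since $\Sigma$ is analytic, it has complex codimension at least one, hence real codimension at least two, which is precisely the borderline in which logarithmic cutoffs have vanishing 2-capacity. A convenient construction passes to a log resolution $r\colon \widehat{X} \to X$ with simple normal crossing exceptional locus $E$, picks a smooth regularised distance $d_E$ to $E$ with respect to an auxiliary smooth K\"ahler form $\eta$ on $\widehat{X}$, defines $\widehat{\chi}_\delta$ to be a logarithmic interpolation of $d_E$ between the levels $\delta$ and $\sqrt{\delta}$, and descends this cutoff to $X^\circ$. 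The codimension-two computation then yields the desired gradient estimate, with the possible degeneration of $r^*\omega$ along $E$ absorbed in the log profile.

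Given such $\chi_\delta$, parts (2) and (3) justify Green's identity
$$\frac{d}{dt}\int_{X^\circ} K(x,y,t)\chi_\delta(y)\,\omega^n(y) = -\int_{X^\circ} \langle \nabla_y K, \nabla \chi_\delta \rangle \,\omega^n(y),$$
and Cauchy--Schwarz bounds the right hand side by $\|\nabla_y K(x,\cdot,t)\|_{L^2} \|\nabla \chi_\delta\|_{L^2}$, which tends to $0$ as $\delta \to 0$ locally uniformly in $t \in (0,\infty)$. Combined with the initial condition $\int K(x,y,t)\chi_\delta(y)\,dy \to \chi_\delta(x) = 1$ as $t \to 0^+$ (valid once $\delta$ is small enough that $\chi_\delta(x) = 1$) and with the a priori bound $\int K(x,y,t)\,dy \le 1$ inherited from the Dirichlet approximations, monotone convergence in $\delta$ gives $\int K(x,y,t)\,dy = 1$. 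The main obstacle is the cutoff construction with its $L^2$-gradient estimate: the K\"ahler form $\omega$ may be highly degenerate near $\Sigma$, and one has to pass to the resolution in an essential way to handle the interaction between $r^*\omega$ and the logarithmic cutoff profile along the exceptional divisor.
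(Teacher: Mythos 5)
Your proposal is essentially the same proof as the paper's, which simply cites \cite[Lemma~3.1]{LiTian1995} for items~(2)--(4) after establishing (1) from the Sobolev inequality via the Carlen--Kusuoka--Stroock and Grigoryan estimates (Proposition~\ref{prop:sobolev-heat-estimate} and Corollary~\ref{cor:sobolev-heat-estimate-2points}); you are essentially reconstructing Li--Tian's argument, and the route via the exhaustion by Dirichlet kernels, the $L^2$ semigroup smoothing for (2)--(3), and the capacity-zero cutoff with Green's identity for (4) is exactly the intended content.

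One small clarification worth making explicit. For (2)--(3) you only need $K(x,\cdot,t)\in D(\Delta^{1/2})\subseteq L^2_1$ and $K(x,\cdot,t)\in D(\Delta)$, which come for free once (1) gives $K(x,\cdot,t/2)\in L^2$; you do \emph{not} need the identity $H^1_0(X^\circ)=H^1(X^\circ)$, so the capacity-zero claim is superfluous at that stage (it is only really used in (4) via the cutoff). For (4), your log-resolution construction is a legitimate variant, but your phrase that the degeneration of $r^*\omega$ is ``absorbed in the log profile'' glosses over the point that matters: the exceptional divisor $E$ is a complex hypersurface, hence of \emph{real} codimension two, and $r^*\omega$ stays nondegenerate in the direction transverse to $E$, which is the only direction in which the cutoff varies; combined with $(r^*\omega)^n\leqslant C\eta^n$ this gives the Dirichlet energy estimate without any further cancellation. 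The paper sidesteps this altogether by invoking its Lemma~\ref{lemma:cut-off} (again attributed to Li--Tian), which produces cutoffs directly on the embedded variety using a Lelong-type volume estimate for tubular neighbourhoods of the analytic set $X\setminus X^\circ$; that is the cleaner route, but both work.
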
 
\noindent Furthermore, instead of getting estimates of the limit solution $H$ directly from $(X,\omega)$, we will converge the estimates of $H_\epsilon$. 
For this reason, additional uniform estimates on $H_\epsilon$ are needed in our arguments.

After we obtain the solution of the heat equation, Simpson's method does not apply immediately yet. 
Three assumptions were made for the non compact manifolds in \cite{Simpson1988}, and it was proved that any Zariski open dense subset of a  compact K\"ahler manifold  satisfies the assumptions. 
It is not trivial  to us that this still holds while singularities are present. 
We will handle this problem in Section \ref{section:HYM-metric}. 

After all, if Conjecture \ref{conj:Sobolev-conj-global} holds, we can deduce the existence of admissible metrics on normal compact K\"ahler varieties.

\begin{thm}
\label{thm:existence-admissible}
Assume Conjecture \ref{conj:Sobolev-conj-global}. 
Let  $(X,\omega)$ be a compact normal K\"ahler variety. Let $\cF$ be a reflexive   sheaf on $X$. 
Then there is some initial Hermitian metric $H_0$ on $\cF$, such that  the heat equation (\ref{equation:metric-intro}) admits a solution until infinite time. 
Moreover, $H_t$ are admissible  for all $t>0$.
\end{thm}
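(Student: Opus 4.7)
The plan is to follow the Bando--Siu strategy adapted to the singular setting, using Conjecture \ref{conj:Sobolev-conj-global} exactly at the point where a uniform Sobolev inequality is needed on the approximating manifolds. First I would pick a resolution $r\colon \widehat{X}\to X$ (obtained as a sequence of blowups at smooth centers over the singular locus of $X$ and of $\cF$) so that $\widehat{\cF}\defeq (r^*\cF^*)^{**}$ is locally free. Choose an auxiliary K\"ahler form $\eta$ on $\widehat{X}$ and, for $0<\epsilon\leqslant 1$, set $\omega_\epsilon = r^*\omega+\epsilon\eta$, which is a genuine K\"ahler form on $\widehat{X}$. Fix a smooth background Hermitian metric $\widehat{H}_0$ on $\widehat{\cF}$; by pulling it back, the initial metric $H_0$ on $\cF$ will be its restriction to the smooth locus where $r$ is an isomorphism.

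Next I would run Donaldson's parabolic equation \eqref{equation:metric-intro} on the compact K\"ahler manifold $(\widehat{X},\omega_\epsilon)$ for the locally free sheaf $\widehat{\cF}$, starting from $\widehat{H}_0$. Standard parabolic theory gives a smooth solution $\widehat{H}_{\epsilon,t}$ for all $t\geqslant 0$; the issue is to obtain estimates uniform in $\epsilon$. Here Conjecture \ref{conj:Sobolev-conj-global} provides a Sobolev constant independent of $\epsilon$, which feeds into a Moser iteration (exactly as in the proof of the Donaldson functional estimates) to control $\|\sqrt{-1}\Lambda_{\omega_\epsilon} F_{\epsilon,t}\|_{L^\infty}$ and, more importantly, to bound the heat kernel $K_\epsilon$ of $(\widehat{X},\omega_\epsilon)$ uniformly. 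Combined with the local elliptic and parabolic Schauder estimates (which are insensitive to the degeneration of $\omega_\epsilon$ on compact subsets of the locus where $r$ is biholomorphic), one obtains, away from the exceptional divisor and on any compact subset of the smooth locus $X^\circ$ of $X$, uniform $\cC^k$ bounds on the endomorphism $h_{\epsilon,t}$ defined by $\widehat{H}_{\epsilon,t}=\widehat{H}_0 h_{\epsilon,t}$. The global $L^2$ curvature bound $\int_{\widehat{X}}|F_{\epsilon,t}|^2\omega_\epsilon^n$ also stays uniformly finite along the flow, again by the standard Donaldson energy identity, whose signs and integrations by parts remain legitimate because $\omega_\epsilon$ is smooth and K\"ahler.

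With these uniform estimates I would extract, for each fixed $t>0$, a subsequential limit $H_t$ on $X^\circ$ by a diagonal argument on an exhaustion of $X^\circ$ by compacts. The limit is a smooth Hermitian metric on $\cF|_{X^\circ}$ satisfying \eqref{equation:metric-intro} on $(X^\circ,\omega)$. To promote this to an admissible metric on the reflexive sheaf $\cF$ over all of $X$, one needs $|F_t|\in L^2_{\mathrm{loc}}$ and $\sqrt{-1}\Lambda F_t\in L^\infty_{\mathrm{loc}}$ near the singular set and the non-locally-free locus of $\cF$. The $L^2$ bound on curvature transfers from $\widehat{X}$ to $X^\circ$ by Fatou's lemma once one remembers $\omega_\epsilon^n\to (r^*\omega)^n$ smoothly on the preimage of $X^\circ$; the $L^\infty$ bound on $\sqrt{-1}\Lambda F_t$ is precisely what the heat kernel estimate furnishes, and here one uses Theorem \ref{thm:heat-kernel-conservation} on $(X,\omega)$ itself to convert the pointwise bound from the Donaldson inequality $(\partial_t+\Delta)|\sqrt{-1}\Lambda F_t|\leqslant 0$ into a global $L^\infty$ bound (the conservation and heat-semigroup-contraction properties of $K$ are exactly why one needs items (1)--(4) of Theorem \ref{thm:heat-kernel-conservation}).

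Finally, one must show the solution exists for all $t\geqslant 0$, not only subsequentially in $\epsilon$ but genuinely on $(X,\omega)$. For this I would use the limit metric at any positive time as a new initial datum and rerun the construction; the obstruction to long-time existence on smooth K\"ahler manifolds is only the possible blow-up of $\det h_t$, and the singular Donaldson-type estimates on the evolution of $\log\det h_t$ and $\mathrm{tr}\, h_t$ pass to the limit because they are controlled by the uniform $L^\infty$ bound on $\sqrt{-1}\Lambda F_t$ and the heat kernel of $(X,\omega)$ from Theorem \ref{thm:heat-kernel-conservation}. The hardest point, in my view, is not any single estimate but the simultaneous control near the singularities of $X$ and near the non-locally-free locus of $\cF$: one has to ensure that the two sources of degeneration, encoded together by the resolution $r$, do not interact badly as $\epsilon\to 0$, and it is precisely the $\epsilon$-uniform Sobolev inequality of Conjecture \ref{conj:Sobolev-conj-global} combined with the global heat-kernel conservation of Theorem \ref{thm:heat-kernel-conservation} that allows this simultaneous limit.
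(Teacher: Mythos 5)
Your proposal captures the broad Bando--Siu strategy, and several ingredients you mention (uniform Sobolev from the conjecture, heat kernel bounds, local Schauder estimates, admissibility from $L^2$ curvature and $L^\infty$ mean curvature) are indeed what the paper uses. However, there are three concrete gaps relative to the way the argument actually has to be run.

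First, your single-parameter family $\omega_\epsilon = r^*\omega + \epsilon\eta$ on the \emph{full} resolution $\widehat{X}$ is not covered by Conjecture~\ref{conj:Sobolev-conj-global}: that conjecture is stated only for a \emph{single} blowup at a smooth center. Since $r$ is a composition of blowups $\widehat{X}=X_k\to\cdots\to X_0=X$, the uniform Sobolev inequality you invoke for $(\widehat{X},\omega_\epsilon)$ does not follow from the stated hypothesis. The paper's proof introduces $k$ independent parameters $\omega_{\epsilon_1,\ldots,\epsilon_k}=\omega+\epsilon_1\eta_1+\cdots+\epsilon_k\eta_k$ and then passes to the limit \emph{one parameter at a time} (Lemma~\ref{lemma:heat-solution-induction} applied inductively), so that Conjecture~\ref{conj:Sobolev-conj-global} is always used for the single blowup $X_j\to X_{j-1}$ with $(X_{j-1},\omega+\epsilon_1\eta_1+\cdots+\epsilon_{j-1}\eta_{j-1})$ as the base K\"ahler variety. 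Your one-shot limit $\epsilon\to 0$ skips precisely the step where the conjectural input is deployed.

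Second, you take an arbitrary smooth $\widehat{H}_0$ on $\widehat{\cF}$ and restrict it; the paper instead constructs $H_0$ in Lemma~\ref{lemma:existence:initial-metric} so that $\sqrt{-1}F_0 \leqslant C\omega\cdot\mathrm{Id}_\cF$, and this curvature positivity is what drives Corollary~\ref{cor:initial-metric-integral} and Remark~\ref{rem:uniform-integrable}: it gives \emph{uniform integrability} of $\int \|\Lambda_{\omega_\epsilon} F_0\|_{H_0}\,\omega_\epsilon^n$ as $\epsilon\to 0$, which is exactly Assumption~\ref{assumption:integral} needed for the induction. For a generic $\widehat{H}_0$, the quantity $\|\Lambda_{\omega_\epsilon}F_0\|_{H_0}$ can blow up near the exceptional locus as $\omega_\epsilon$ degenerates, and without the positivity trick the reduction of this pointwise norm to the intrinsic, $\epsilon$-stable quantity $\mathrm{tr}(\cdots)\wedge\omega_\epsilon^{n-1}$ is lost.

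Third, your final paragraph wants to apply the parabolic maximum principle and the conservation property of the heat kernel of $(X,\omega)$ \emph{directly} to the limit solution to obtain the $L^\infty$ bound on $\sqrt{-1}\Lambda F_t$. The paper explicitly avoids this: with singularities present, justifying the maximum principle on $(X^\circ,\omega)$ for the limit is not straightforward, so the paper proves the inequality $\|\Lambda_{\omega_\epsilon}F_\epsilon(x,t)\|\leqslant \int K_\epsilon(x,z,\tfrac{t}{2})\|\Lambda_{\omega_\epsilon}F_0\|\,\omega_\epsilon^n$ on the smooth approximants and then \emph{converges} both sides using the convergence of heat kernels (Lemma~\ref{lemma:heat-kernel-convergence}, which relies on the conservation property of Corollary~\ref{cor:heat-kernel-conservation} to identify the limit kernel). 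Theorem~\ref{thm:heat-kernel-conservation} enters there, not as a direct maximum principle on $X$. So your ordering of the argument needs to be reversed: establish the estimate upstairs and pass it to the limit, rather than prove it downstairs after the limit has been taken.
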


\begin{thm}
\label{thm:existence-HE}
Assume Conjecture \ref{conj:Sobolev-conj-global}.
Let  $(X,\omega)$ be a normal compact  K\"ahler variety.
Let $\cF$ be a reflexive stable sheaf on $X$. 
Then there exists an admissible Hermitian-Yang-Mills metric on $\cF$.
\end{thm}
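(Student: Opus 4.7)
The approach is to start from the admissible solution $H_t$ of the heat equation (\ref{equation:metric-intro}) produced by Theorem \ref{thm:existence-admissible} and to run a long-time convergence argument along the lines of Donaldson \cite{Donaldson1985} and Simpson \cite{Simpson1988}, supplemented by the refinements of Bando-Siu \cite{BandoSiu1994} which handle the singularities of $\cF$, and by the additional refinements required here by the singularities of $X$. Thus the Hermitian-Yang-Mills metric is built as a subsequential limit $H_\infty$ of $\{H_t\}$ as $t\to\infty$, with the slope stability of $\cF$ used exactly to exclude the alternative of a diverging flow.

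A preliminary step is to verify that the three assumptions of \cite{Simpson1988} hold on the locally free locus $X^\circ\subset X$ equipped with $\omega|_{X^\circ}$; this is the content of Section \ref{section:HYM-metric}. The strategy is to derive every global estimate first on the smooth approximations $(\widehat{X},\omega_\epsilon)$, with $\omega_\epsilon = r^*\omega+\epsilon\eta$, using constants uniform in $\epsilon$ coming from Conjecture \ref{conj:Sobolev-conj-global} and the heat-kernel results of Section \ref{section:heat-kernel}, and then to let $\epsilon\to 0$ in order to recover the required estimates on $(X^\circ,\omega|_{X^\circ})$. With these assumptions in place, Simpson's dichotomy applies. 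Either (i) the $L^2$-norm of $\sqrt{-1}\Lambda F_t - \lambda(\cF)\mathrm{Id}_\cF$ decays to zero along the flow, in which case a subsequence of $\{H_t\}$ converges in $\mathcal{C}^\infty_{\mathrm{loc}}$ on $X^\circ$ to an admissible Hermitian metric $H_\infty$ satisfying $\sqrt{-1}\Lambda F_\infty = \lambda(\cF)\mathrm{Id}_\cF$; or (ii) the endomorphisms $h_t = H_0^{-1}H_t$ fail to be uniformly bounded in supremum norm, and after renormalization produce a nontrivial weakly holomorphic $L_1^2$-projection on $X^\circ$. Applying the Bando-Siu extension theorem, suitably adapted to a singular base, to this projection yields a coherent reflexive subsheaf $\cE\subset \cF$ destabilizing $\cF$ with respect to $\omega$, contradicting the stability of $\cF$. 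Hence only (i) can occur, and $H_\infty$ is the desired admissible Hermitian-Yang-Mills metric.

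The hardest part is the preliminary step, namely the transfer of Simpson's analytic assumptions from a complete K\"ahler manifold of bounded geometry to the incomplete smooth locus $X^\circ$ of a singular K\"ahler variety. In practice every uniform estimate has to be established on $(\widehat{X},\omega_\epsilon)$ with constants independent of $\epsilon$, and this is precisely the point at which Conjecture \ref{conj:Sobolev-conj-global} enters in an essential way, via Moser iteration on the heat flow and via uniform control of the relevant exhaustion function and Laplacian estimates. A secondary but still delicate issue is the reflexive extension of the weakly holomorphic projection in case (ii) across the singular loci of $X$ and of $\cF$: since $X$ is normal and $\cF$ is reflexive, both loci have codimension at least two, and Hartogs-type extension combined with the construction of \cite{BandoSiu1994} should promote the $L_1^2$-projection to a genuine coherent destabilizing subsheaf, but the original argument must be revisited carefully in the singular base setting.
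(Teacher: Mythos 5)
Your high-level strategy matches the paper: use the admissible solution from Theorem~\ref{thm:existence-admissible}, run Simpson's long-time convergence argument on the smooth locus, verify Simpson's analytic assumptions on the incomplete $(X^\circ,\omega)$ by passing estimates from $(\widehat{X},\omega_\epsilon)$ uniformly in $\epsilon$ via Conjecture~\ref{conj:Sobolev-conj-global}, and use stability (via a weakly holomorphic destabilizing projection, extended to a coherent subsheaf on all of $X$) to exclude divergence.

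However, there is a concrete gap in the convergence argument as you set it up. You propose to take a subsequential limit of $\{H_t\}$ directly as $t\to\infty$. This cannot work as stated: the initial metric $H_0$ from Lemma~\ref{lemma:existence:initial-metric} only satisfies the one-sided bound $\sqrt{-1}F_0\leqslant C\omega\cdot\mathrm{Id}_\cF$, and in general $\|\Lambda F_0\|_{H_0}$ is unbounded near $X\backslash X_\cF$. Simpson's machinery (and its incarnation here, in particular Lemma~\ref{lemma:bound-norm-h}(1), Proposition~\ref{prop:estimate-g}, and the proof of equation~\eqref{equa:Dfunctional}) requires a uniform bound on $\|\Lambda F\|$ for the \emph{initial} metric of the flow. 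The paper resolves this by shifting: set $G_0 = H_1 = H_0 h_1$, which is admissible by Theorem~\ref{thm:existence-admissible} (hence $\|\Lambda F_{G_0}\|_{G_0}$ is globally bounded on compact $X$), and run the argument for $g_t = h_1^{-1}h_{t+1}$ with initial metric $G_0$. All of the needed uniform estimates in $t$ (Proposition~\ref{prop:estimate-g}, Lemma~\ref{lemma:stability-persist}) are established for $G_t=G_0 g_t$, not for $H_t$; stability is reformulated as a property of $G_0$ in Lemma~\ref{lemma:stability-persist}. Without this shift your argument stalls at the first global estimate.

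A secondary point: your ``dichotomy'' framing blurs the precise mechanism. The paper does not test the two alternatives on the flow sections $h_t$ directly. Rather it proves a quantitative $C^0$-estimate (Lemma~\ref{lemma:simpson-estimate}): under the stability hypothesis, $\|s\|_{G_0}\leqslant D_1 + D_2\, M(G_0,G_0 e^s)$ for a suitable class of sections. The contradiction argument that produces a destabilizing weakly holomorphic projection lives entirely inside the proof of this estimate (and uses the extension result Lemma~\ref{lemma:extension-subsheaf}, not a Hartogs argument). Then, separately, one shows $M(G_0,G_t)$ is bounded on $[0,\infty)$ using equation~\eqref{equa:Dfunctional} and the uniform bound on $\int_X\|\dbar s_t\|^2$; this yields both the a priori bound on $\|s_t\|$ and the $L^2$-decay of $\sqrt{-1}\Lambda F_{G_t}-\lambda_\omega(\cF)\mathrm{Id}_\cF$. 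Making all this rigorous on the singular base requires the integration-by-parts results (Corollaries~\ref{cor:integration-by-part} and~\ref{cor:integration-by-part-2}) and the cut-off approximations of Lemma~\ref{lemma:Dfunctional-convergence}; these replace Simpson's Assumption~2 and Assumption~3 and are not automatic. Your proposal identifies the correct ingredients but does not address the shift, which is the essential technical point that makes them usable here.
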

  
  
\begin{thm}
\label{thm:BG-inequality}
Assume Conjecture \ref{conj:Sobolev-conj-global}.
Let  $(X,\omega)$ be a normal compact  K\"ahler variety of dimension $n$.
Let $\cF$ be a coherent reflexive stable sheaf of rank $r$ on $X$. 
It admits an  admissible Hermitian-Yang-Mills metric $H$, 
and we have the following inequality
\[   \int_X \Big(c_2(\cF,H)-\frac{r-1}{2r}c_1(\cF,H)^2 \Big)  \wedge  \omega^{n-2} \geqslant 0. \]
The equality holds if and only if the Chern connection is projectively flat.
\end{thm}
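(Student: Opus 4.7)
The existence of the admissible Hermitian--Yang--Mills metric $H$ is an immediate application of Theorem \ref{thm:existence-HE} (which itself depends on Conjecture \ref{conj:Sobolev-conj-global}), so the real content is the inequality and the equality case. The plan is to combine the classical pointwise L\"ubke identity, available on the locus where everything is smooth, with the global integrability afforded by admissibility.

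First I would restrict attention to the Zariski open set $X^\circ \subseteq X$ on which $X$ is smooth and $\cF$ is locally free. Since $\cF$ is reflexive on a normal variety, the complement of $X^\circ$ has codimension at least two. On $X^\circ$ the metric $H$ is smooth, the Chern curvature $F$ is defined, and the Hermitian--Yang--Mills equation $\sqrt{-1}\Lambda F = \lambda\,\mathrm{Id}_\cF$ holds pointwise. L\"ubke's algebraic computation then gives, on $X^\circ$, the pointwise identity
\[
\Big(c_2(\cF,H)-\tfrac{r-1}{2r}c_1(\cF,H)^2\Big)\wedge\omega^{n-2}
 \;=\; C_n\,\Big|F-\tfrac{1}{r}(\mathrm{tr}\,F)\,\mathrm{Id}_\cF\Big|_\omega^{2}\,\omega^n,
\]
for a fixed positive constant $C_n$. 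The right-hand side is pointwise non-negative, and it vanishes at a point exactly when the curvature there is a scalar multiple of the identity.

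Next I would use admissibility to integrate. By definition of an admissible metric, the curvature is in $L^2$, i.e.\ $\int_{X^\circ}|F|_\omega^2\,\omega^n<\infty$, while the Hermitian--Yang--Mills condition makes the trace part of $F$ pointwise bounded; together these bound the $L^2$-norm of the trace-free part $F-\tfrac{1}{r}(\mathrm{tr}\,F)\mathrm{Id}_\cF$. Thus the integral of the right-hand side above over $X^\circ$ is finite and non-negative. To conclude the inequality stated on $X$, one still needs to know that the currents $c_1(\cF,H)^2$ and $c_2(\cF,H)$ extend across $X\setminus X^\circ$ as closed currents representing the topological Chern classes of $\cF$, so that $\int_X$ and $\int_{X^\circ}$ of the pointwise integrand agree. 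In the smooth case this is the theorem of Bando--Siu; the adaptation to the singular K\"ahler setting proceeds by passing to the desingularization $r\colon \widehat X\to X$ used throughout the paper, working with the perturbed metrics $\omega_\epsilon$, and letting $\epsilon\to 0$ using the uniform estimates on $(\widehat X,\omega_\epsilon)$ provided by Conjecture \ref{conj:Sobolev-conj-global}.

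Assembling these steps yields the inequality. For the equality case, if $\int_X(\,\cdots\,)\wedge\omega^{n-2}=0$ then the pointwise non-negative integrand $C_n|F-\tfrac{1}{r}(\mathrm{tr}\,F)\mathrm{Id}_\cF|_\omega^2\,\omega^n$ must vanish almost everywhere on $X^\circ$, hence by smoothness identically, so $F$ is pointwise proportional to $\mathrm{Id}_\cF$ on $X^\circ$; this is exactly the statement that the Chern connection of $(\cF,H)$ is projectively flat on the locus where it is defined. The only genuine obstacle is the global identification of the admissible Chern currents with the topological Chern classes of $\cF$ on a singular base $X$, which is what forces one to go through the desingularization and the uniform Sobolev estimate of Conjecture \ref{conj:Sobolev-conj-global}; the rest of the argument is formally the same as in the classical smooth setting.
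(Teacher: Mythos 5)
Your proof takes essentially the same route as the paper's: existence of the admissible Hermitian--Yang--Mills metric $H$ from Theorem~\ref{thm:existence-HE}, the pointwise L\"ubke identity on the smooth locus $X_\cF$ (which, under the HYM equation, writes the integrand as a nonnegative multiple of $\|T\|_H^2\,\omega^n$ for a suitable trace-free/primitive component $T$ of $F$), admissibility to ensure the integral converges, and the observation that equality forces $T\equiv 0$, i.e.\ projective flatness of the Chern connection.

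The one place where you diverge from the paper is the extra paragraph insisting that one must first show the metric Chern currents $c_i(\cF,H)$ extend across $X\setminus X^\circ$ and represent the topological Chern classes, ``so that $\int_X$ and $\int_{X^\circ}$ of the pointwise integrand agree.'' This step is not in the paper and is not needed for the theorem as stated: the expression $c_2(\cF,H)-\frac{r-1}{2r}c_1(\cF,H)^2$ is the Chern--Weil form computed from the metric $H$, which is defined only on $X_\cF$; the integral $\int_X$ therefore already means $\int_{X_\cF}$, and admissibility (i.e.\ $\|F\|_{H,\omega}^2\in L^1$) is exactly what makes that integral finite so that the pointwise inequality passes to the integral. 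Equating the metric Chern integrals with cohomological intersection numbers of $\cF$ is a genuinely separate issue (and is where the desingularization and uniform Sobolev input would indeed be relevant), but it is not part of what Theorem~\ref{thm:BG-inequality} claims, and the paper's two-line proof does not invoke it. Apart from this unneeded detour, your argument is correct and matches the paper's. Incidentally, you correctly wrote $\omega^{n-2}$ in the L\"ubke identity where the paper's displayed formula has $\omega^{n-1}$ on the left-hand side, which is a typo in the source (that power would not even produce a top-degree form).
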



This note  is organized as follows. We will first recall some basic notion in Section \ref{section:pre}. 
In Section \ref{section:initial-metric}, we will construct an initial metric for the heat equation. Such a metric also enable us to define the slope of a reflexive sheaf. 
In Section \ref{section:heat-kernel}, we will study heat kernels on compact K\"ahler varieties. 
We will prove Theorem \ref{thm:existence-admissible} in Section \ref{section:admissible}.
Finally in Section \ref{section:HYM-metric}, we will complete the proofs of  Theorem \ref{thm:existence-HE} and Theorem \ref{thm:BG-inequality}. \\

\noindent\textbf{Acknowledgment.} The author is indebted to Junyan Cao and Mihai P{\u a}un for enormous help. 
He is grateful to Chenjie Fan for sharing his analysis knowledge.   
He would thank Mitchell Faulk for sending him his PhD thesis.  
He would  thank Henri Guenancia and Hans-Joachim Hein for pointing out Corollary \ref{cor:DNGG} to him. 
He would also thank Omprokash Das, Andreas H\"oring and Burt Totaro for discussions and remarks.

\section{Preliminary}
\label{section:pre}

In this section, we will fix some conventions for the paper, and recall some elementary results. 
Throughout this paper,  we   use the symbol $\mathcal{C}^p$ (respectively $\mathcal{C}^{p,\alpha}$) for functions whose $p$-th derivatives exist and are continuous (respectively $\alpha$-H\"older continuous). 
We use the symbol $L^p$ for  functions whose $p$-th power is integrable,  $L^\infty$ for    bounded functions, and 
$L_i^p$ for    functions whose  derivatives, up to order $i$, are $L^p$.   
We denote the $L^p$-norm by $||\cdot ||_p$  for any  $0<p\leqslant \infty$.

\subsection{Possibly singular varieties.} 

For a  complex space $X$, we denote its smooth locus by $X_{reg}$.  
After \cite{Grauert1962}, a K\"ahler variety $(X,\omega)$ is an integral complex space $X$, together with a real closed $(1,1)$-form $\omega$,    
such that for any point $x\in X$, there is an open analytic  neighbourhood $U$ of  $x$ such that $U$ is isomorphic to a analytic subvariety of some open domain $V$ of $\mathbb{C}^N$, and $\omega|_U$ is the restriction of some K\"ahler form  on $V$. 
By a $L_i^p$ function $f$ on $X$, we mean  a function $f$ belonging to $L_i^p(X_{reg})$.

When $X$ is smooth, the we can define the gradient operator  $\nabla_\omega$ and the Laplace-Beltrami operator $\Delta_\omega$, with respect to the Riemannian metric induced by  $\omega$ on $X$. 
More precisely, we use the convention
\[\Delta_\omega = 2\sqrt{-1}\Lambda_\omega \partial \dbar,\] 
where $\Lambda_\omega$ is the contraction with $\omega$. 
In particular,  the Laplace-Beltrami operator has its sign as the one of $\frac{\partial^2}{(\partial x)^2}$.

The following lemma was established in the proof of \cite[Theorem 4.1 ]{LiTian1995}. 
We will use it to replace \cite[Assumption 2]{Simpson1988} in our demonstration on the existence of Hermitian-Yang-Mills metric.

\begin{lemma}\label{lemma:cut-off}
Let $M$ be a compact real variety without boundary, and $M^\circ \subseteq M$ a smooth   open subset. 
Assume that the real codimension of $M\backslash M^\circ$ is at least two.
Then there exists a  sequence of smooth cut-off functions $\{\varphi_i\}$ on $M^\circ$, with limit equal to  the constant function $1$,  such that $ ||\nabla {\varphi_i}||_{2} \to 0$  when $i\to \infty$.
\end{lemma}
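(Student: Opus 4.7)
The plan is to realize the $\varphi_i$ as smoothed logarithmic cut-offs built from the distance function to the singular set $S := M\setminus M^\circ$. I fix a Riemannian metric on $M^\circ$ (say, the one induced by an embedding of the compact real variety $M$ into some Euclidean space), and let $d(x) = \mathrm{dist}(x,S)$. The key input is a Minkowski-type volume estimate
\[
V(r) \,:=\, \mathrm{Vol}\{x\in M^\circ : d(x) \leqslant r\} \;\leqslant\; C r^2
\]
for all sufficiently small $r>0$. This is the analytic content of the codimension-two hypothesis: since $S$ has Hausdorff dimension at most $\dim_{\mathbb{R}} M - 2$, it can be covered by $\lesssim r^{-(\dim M -2)}$ balls of radius $r$, and the $r$-tubular neighborhood is contained in the union of the doubled balls, whose total volume is $\lesssim r^2$.

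With this in hand, I choose a smooth function $\chi\colon\mathbb{R}\to[0,1]$ with $\chi\equiv 0$ on $(-\infty,0]$ and $\chi\equiv 1$ on $[1,\infty)$, and for small $\epsilon>0$ I set
\[
\tilde\varphi_\epsilon(x)\;=\;\chi\!\left(\frac{\log d(x)-2\log\epsilon}{-\log\epsilon}\right),
\]
so that $\tilde\varphi_\epsilon \equiv 0$ on $\{d\leqslant\epsilon^2\}$, $\tilde\varphi_\epsilon\equiv 1$ on $\{d\geqslant\epsilon\}$, and pointwise $\tilde\varphi_\epsilon(x)\to 1$ as $\epsilon\to 0$ for every $x\in M^\circ$. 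On the transition annulus $\{\epsilon^2\leqslant d\leqslant\epsilon\}$ one has $|\nabla\tilde\varphi_\epsilon|\leqslant \frac{C}{d\,|\log\epsilon|}$, and after an integration by parts using $V(r)\leqslant Cr^2$ the co-area formula gives
\[
\int_{M^\circ}|\nabla\tilde\varphi_\epsilon|^2 \;\leqslant\; \frac{C}{(\log\epsilon)^2}\int_{\epsilon^2}^{\epsilon}\frac{dV(r)}{r^2} \;\leqslant\; \frac{C'}{|\log\epsilon|}\;\longrightarrow\; 0.
\]

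The functions $\tilde\varphi_\epsilon$ are only Lipschitz near $S$, so a mild smoothing is needed to produce genuine $\mathcal{C}^\infty$ cut-offs. Replacing $d$ by $\sqrt{d^2+\delta^2}$ with $\delta\ll\epsilon^2$, or mollifying $\tilde\varphi_\epsilon$ in a coordinate atlas of $M^\circ$ at a scale much smaller than $\epsilon^2$, perturbs the $L^2$-norm of the gradient by at most a factor $1+o(1)$; extracting a diagonal sequence then yields the required smooth family $\{\varphi_i\}$. I expect the main technical obstacle to be the tubular volume bound $V(r)\leqslant Cr^2$, and in the application of interest (where $M$ is the underlying real space of a normal complex variety and $S$ is the singular locus) this bound is a consequence of classical estimates on volumes of complex-analytic subsets, which is precisely how Li--Tian used it in \cite{LiTian1995}.
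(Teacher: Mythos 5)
Your construction is essentially the one underlying Li--Tian's proof of their Theorem 4.1, which the paper cites for this lemma rather than reproving: logarithmic cut-offs based on the distance to the singular set, with the codimension-two hypothesis entering precisely through the tubular-volume bound $V(r)\lesssim r^2$. One caveat worth flagging, which you partly do: Hausdorff dimension $\le \dim M-2$ alone does \emph{not} imply a covering by $\lesssim r^{-(\dim M-2)}$ balls (that is a Minkowski/box-counting statement, strictly stronger in general); but for the analytic singular loci arising in this paper the stronger bound holds by classical results (finite $(\dim M-2)$-dimensional Minkowski content of analytic sets), so the argument is sound in the intended generality and matches the standard route.
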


As a consequence, we have the following two  formulas of integration by parts.

\begin{cor}\label{cor:integration-by-part}
Let $M$ be a compact real variety without boundary, and $M^\circ \subseteq M$ a smooth   open subset. 
Assume that the real codimension of $M\backslash M^\circ$ is at least two.
Then for  bounded functions $u,v \in L^2_1(M^\circ)$ such that  $\Delta v$ exists and is integrable, we have 
\[ 
\int_M \nabla u \cdot \nabla v  = - \int_M u\Delta v.
 \]
\end{cor}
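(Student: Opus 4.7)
The plan is to derive the identity by applying the classical integration-by-parts formula to the cut-off approximations $u\varphi_i$ on the smooth locus $M^\circ$, and then pass to the limit as $i\to\infty$ using Lemma \ref{lemma:cut-off}. Because $\varphi_i$ is smooth and compactly supported in $M^\circ$, the product $u\varphi_i$ sits inside a genuinely smooth, compact piece of $M^\circ$, so standard integration by parts on a manifold without boundary yields
\[ \int_{M^\circ} \nabla(u\varphi_i)\cdot\nabla v \;=\; -\int_{M^\circ} u\varphi_i \,\Delta v. \]
Expanding $\nabla(u\varphi_i) = \varphi_i\nabla u + u\nabla\varphi_i$ and rearranging gives
\[ \int_{M^\circ} \varphi_i\,\nabla u\cdot\nabla v \;=\; -\int_{M^\circ} u\varphi_i\,\Delta v \;-\; \int_{M^\circ} u\,\nabla\varphi_i\cdot\nabla v. \]

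Next, I would verify that each of the three terms above behaves well in the limit. Since $u,v\in L^2_1(M^\circ)$, the Cauchy-Schwarz inequality shows that $\nabla u\cdot\nabla v\in L^1(M^\circ)$; since $\varphi_i\to 1$ pointwise and $0\le\varphi_i\le 1$ (which I would extract from the cut-off construction in Lemma \ref{lemma:cut-off}), dominated convergence gives
\[ \int_{M^\circ}\varphi_i\,\nabla u\cdot\nabla v \;\longrightarrow\; \int_{M^\circ}\nabla u\cdot\nabla v. \]
The hypothesis that $u$ is bounded and $\Delta v$ is integrable makes $u\Delta v\in L^1$, so the same argument handles the first term on the right-hand side.

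The crux is therefore to show that the cross term $\int u\,\nabla\varphi_i\cdot\nabla v$ vanishes in the limit, and this is exactly where Lemma \ref{lemma:cut-off} plays its role. By Cauchy-Schwarz,
\[ \Big|\int_{M^\circ} u\,\nabla\varphi_i\cdot\nabla v\Big| \;\leqslant\; \|u\|_\infty \,\|\nabla\varphi_i\|_2\,\|\nabla v\|_2. \]
Since $u$ is bounded and $\nabla v\in L^2(M^\circ)$, while $\|\nabla\varphi_i\|_2\to 0$ by Lemma \ref{lemma:cut-off}, this term tends to zero. Passing to the limit in the displayed identity then yields the claimed formula. The only point requiring care — and which I would verify first — is that the cut-off functions produced by Lemma \ref{lemma:cut-off} can indeed be taken to satisfy $0\le\varphi_i\le 1$ and to converge pointwise to $1$ on $M^\circ$, so that dominated convergence applies; this is the one place where the precise statement of the cut-off lemma must be invoked rather than just its numerical content.
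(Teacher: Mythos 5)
Your proof is correct and follows essentially the same route as the paper's: apply classical integration by parts to $u\varphi_i$ against $\nabla v$ on the compactly supported piece, split the product rule, and pass to the limit using the cut-off estimate $\|\nabla\varphi_i\|_2\to 0$ from Lemma~\ref{lemma:cut-off} to kill the cross term. The paper's proof is just a terser version of yours (it writes down the already-expanded identity and says ``take the limit''), while you spell out the dominated-convergence and Cauchy--Schwarz steps explicitly.
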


\begin{proof}
We choose a  sequence of smooth cut-off functions $\{\varphi_i\}$ as in Lemma \ref{lemma:cut-off}. 
Then we have 
\[ 
\int_M \varphi_i \nabla u \cdot \nabla v  +  \int_M u \nabla  \varphi_i \cdot \nabla v  = - \int_M \varphi_i u\Delta v.
 \]
Since $u,v \in L^2_1(M)$ are assumed to be bounded, and since $\Delta v$ is integrable, we can take the limit for $i\to \infty$ in the previous equality, 
and deduce the corollary.
\end{proof}

\begin{cor}\label{cor:integration-by-part-2}
Let $M$ be a compact real variety without boundary, and $M^\circ \subseteq M$ a smooth   open subset. 
Assume that the real codimension of $M\backslash M^\circ$ is at least two. 
Then for a  smooth 1-form $\xi$ on $M^\circ$ with $||\xi||_\omega^2$ and $\mathrm{d}\xi$  integrable,  we have 
\[ 
\int_M \mathrm{d} \xi \wedge \omega^{n-1}  = 0.
 \]
\end{cor}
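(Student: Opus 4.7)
The plan is to approximate the identity by the cut-off functions from Lemma~\ref{lemma:cut-off}, apply Stokes' theorem on the smooth locus, and pass to the limit. Since $M\setminus M^\circ$ has real codimension at least two, Lemma~\ref{lemma:cut-off} supplies a sequence of smooth, compactly supported cut-off functions $\varphi_i$ on $M^\circ$ with $\varphi_i\to 1$ pointwise and $||\nabla\varphi_i||_{2}\to 0$.

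The first step is Stokes' theorem. For each $i$ the form $\varphi_i\,\xi\wedge\omega^{n-1}$ is a compactly supported smooth $(2n-1)$-form on the smooth manifold $M^\circ$, and $\omega^{n-1}$ is closed there, so
\[
0 \;=\; \int_{M^\circ} d\bigl(\varphi_i\,\xi\wedge\omega^{n-1}\bigr)
 \;=\; \int_{M^\circ} d\varphi_i\wedge\xi\wedge\omega^{n-1}
 \;+\; \int_{M^\circ} \varphi_i\, d\xi\wedge\omega^{n-1}.
\]
It remains to show that as $i\to\infty$ the second term tends to $\int_M d\xi\wedge\omega^{n-1}$ while the first tends to $0$. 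The convergence of the second term is immediate from the dominated convergence theorem, because $\varphi_i\to 1$ pointwise with $|\varphi_i|\leq 1$ and $d\xi\wedge\omega^{n-1}$ is integrable by hypothesis.

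For the first term the key is the standard pointwise K\"ahler estimate $|\alpha\wedge\beta\wedge\omega^{n-1}|\leq C_n||\alpha||_\omega\,||\beta||_\omega\,\omega^n$ valid for real $1$-forms $\alpha,\beta$, which is verified by an orthonormal-basis computation (equivalently, from the relation $\beta\wedge\omega^{n-1}/(n-1)!=\ast\beta$ combined with pointwise Cauchy-Schwarz). Taking $\alpha=d\varphi_i$ and $\beta=\xi$ and then applying the integral Cauchy-Schwarz inequality yields
\[
\biggl|\int_{M^\circ} d\varphi_i\wedge\xi\wedge\omega^{n-1}\biggr|
 \;\leq\; C_n\,||\nabla\varphi_i||_{2}\,||\xi||_{2},
\]
and the right-hand side tends to $0$ since $||\nabla\varphi_i||_{2}\to 0$ by Lemma~\ref{lemma:cut-off} and $||\xi||_{2}<\infty$ by the assumption that $||\xi||_\omega^2$ is integrable. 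The argument is the standard cut-off integration by parts and presents no serious obstacle; the only point that deserves attention is the pointwise K\"ahler linear-algebra inequality, and the real content is the $L^2$-smallness of $\nabla\varphi_i$ furnished by Lemma~\ref{lemma:cut-off}, which is where the codimension-two hypothesis enters.
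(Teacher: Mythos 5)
Your argument is exactly the paper's: take the cut-off sequence from Lemma~\ref{lemma:cut-off}, apply Stokes on $M^\circ$, and pass to the limit using $||\nabla\varphi_i||_2\to 0$ together with the integrability of $||\xi||_\omega^2$ and $d\xi$. You simply make explicit the pointwise wedge--norm estimate and the Cauchy--Schwarz step that the paper leaves implicit.
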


\begin{proof} 
We choose a  sequence of smooth cut-off functions $\{\varphi_i\}$ as in Lemma \ref{lemma:cut-off}. 
Then we have 
\[ 
0 = \int_M   \mathrm{d} (\varphi_i \xi) \wedge \omega^{n-1} = \int_M \mathrm{d}\varphi_i  \wedge \xi \wedge  \omega^{n-1} + \int_M \varphi_i \mathrm{d}  \xi \wedge  \omega^{n-1}
 \]
Since $||\xi||_\omega^2$ is integrable and since  $ ||\nabla {\varphi_i}||_{2} \to 0$  when $i\to \infty$, we see that 
\[\int_M  \mathrm{d}\varphi_i  \wedge \xi \wedge  \omega^{n-1} \to 0.\] 
Moreover, since $\mathrm{d}\xi$ is integrable, we have 
\[\int_M \varphi_i \mathrm{d}  \xi \wedge  \omega^{n-1} \to \int_M  \mathrm{d}  \xi \wedge  \omega^{n-1}.
\] 
The corollary then follows.
\end{proof}

\subsection{Hermitian  sheaves}

Let $(\cF, H)$ be a Hermitian vector bundle on a complex manifold $X$. 
We  regard $H$ as a selfadjoint conjugate-linear morphism   from  $\cF$  to its dual bundle $\cF^* = \cH om_{\cO_X}(\cF, \cO_X)$.  
We note that the adjoint morphism is the   \textit{conjugate} of  the dual morphism.  
In particular,  for any smooth section $h$ of   $\cE nd {(\cE)}$, for any complex number $\lambda$, we have $(\lambda h)^* = \overline{\lambda}h^*$ for the adjoint operation $*$. 
We note that the trace  
$\mathrm{tr}\, h$ is a well-defined smooth function on $X$.
We can hence define a Hermitian pairing $<\cdot,\cdot>_H$ on  $\cE nd {(\cF)}$ with respect to $H$ as follows. 
For any smooth sections $h$ and $g$ of   $\cE nd {(\cF)}$, we set 
\[ <h,g>_H = \mathrm{tr}\, (h \circ H^{-1} \circ g^* \circ H) 
=  \mathrm{tr}\, (h   H^{-1}   g^*   H).
\]  
We denote the corresponding norm by $|| h ||_H = \sqrt{<h,h>_H}$.  
We say that  $h$   is bounded or integrable if $|| h ||_H $ is.

If $H'$ is another Hermitian metric on $H$, and if we write $h = H^{-1}H'$, then it follows that $h$ is selfadjoint with respect to both $H$ and $H'$. 
That is 
\[ h^*H = H h \mbox{ and } h^*H' = H'h. \] 
In particular, we have 
\[
||h||_{H }  = ||h||_{H'} = \sqrt{\mathrm{tr}\, h^2}.
\]

We will denote  by $\partial_H$ the $(1,0)$-part of the  Chern connection, on $\cF$ or on $\cE nd (\cF)$. 
We  define 
\[ \Delta_H = -2\sqrt{-1}\Lambda \dbar \partial_H. \]
The following computations can be found in \cite[Lemma 3.1]{Simpson1988}

\begin{lemma}\label{lemma:simpson-computation}
Let $(\cF,H)$ be a Hermitian vector bundle on a K\"ahler manifold $(X,\omega)$. 
Assume that $s$ and $h$ are  smooth sections of $\cE nd {(\cF)}$, selfadjoint with respect to $H$. Assume that  $h$ is definite positive. We denote by $F_H$ and  $F_{Hh}$ the corresponding  Chern curvatures. 
The following properties hold.
\begin{enumerate}
\item $|| (\dbar s) h ||_{H,\omega}^2 = -\sqrt{-1} \Lambda \mathrm{tr}\, (\dbar s) h^2 \partial_H s.$
\item $\Delta_H h = 2\sqrt{-1}  h (  \Lambda F_H  - \Lambda F_{Hh} ) - 2\sqrt{-1}\Lambda  (\dbar h)h^{-1} \partial_H h.$
\item $\Delta \log\, (\mathrm{tr}\, h)  \geqslant  -2 ( || \Lambda F_H ||_H + ||\Lambda F_{Hh}||_{Hh}).$
\end{enumerate}
\end{lemma}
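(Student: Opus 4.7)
The plan is to treat the three statements in order, since (3) builds on (1) and (2). All three are essentially pointwise/local computations in $\mathrm{End}(\cF)$-valued differential forms on $X$, and the key structural input is that $s$ and $h$ are selfadjoint with respect to $H$, which means $(\dbar s)^{*_H} = \partial_H s$ and $(\dbar h)^{*_H} = \partial_H h$, where $*_H$ is the fibrewise adjoint in $\mathrm{End}(\cF)$ relative to $H$.

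For part (1), I would start from the general identity that for an $\mathrm{End}(\cF)$-valued $(0,1)$-form $\eta$, the pointwise norm satisfies $\|\eta\|_{H,\omega}^2 = \sqrt{-1}\,\Lambda\, \mathrm{tr}(\eta \wedge \eta^{*_H})$ up to the standard sign convention for $\Lambda$ acting on a $(1,1)$-form. Apply this with $\eta = (\dbar s)\,h$, so that $\eta^{*_H} = h^{*_H}(\dbar s)^{*_H} = h\,\partial_H s$. Then $\mathrm{tr}(\eta \wedge \eta^{*_H}) = \mathrm{tr}((\dbar s)\,h^2\,\partial_H s)$ as a $(1,1)$-form-valued trace, and the stated sign is fixed by the conventions for $\Lambda$ on $d\bar z^j \wedge dz^i$ versus $dz^i \wedge d\bar z^j$. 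This is purely a linear-algebra check in local coordinates.

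For part (2), I would invoke the standard transformation law for Chern curvatures under a change of metric: if $H' = Hh$ with $h$ positive selfadjoint with respect to $H$, then $F_{Hh} = F_H + \dbar(h^{-1}\partial_H h)$. Now expand
\[
\dbar\partial_H h = \dbar\bigl(h \cdot h^{-1}\partial_H h\bigr) = (\dbar h)\,h^{-1}\partial_H h + h\,\dbar(h^{-1}\partial_H h) = (\dbar h)\,h^{-1}\partial_H h + h(F_{Hh} - F_H).
\]
Applying $-2\sqrt{-1}\Lambda$ to both sides and regrouping yields exactly the claimed formula for $\Delta_H h$.

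For part (3), I would combine (1) and (2) with the elementary calculus identity $\Delta \log f = f^{-1}\Delta f - f^{-2}\|\nabla f\|^2$ applied to $f = \mathrm{tr}\,h > 0$. Taking traces in (2), noting that $\mathrm{tr}\,\Delta_H h = \Delta\,\mathrm{tr}\,h$ since the trace of $\mathrm{End}(\cF)$-valued forms commutes with the Laplacian, gives
\[
\Delta\,\mathrm{tr}\,h = 2\sqrt{-1}\,\mathrm{tr}\bigl[h(\Lambda F_H - \Lambda F_{Hh})\bigr] - 2\sqrt{-1}\,\Lambda\,\mathrm{tr}\bigl[(\dbar h)h^{-1}\partial_H h\bigr].
\]
The first term is bounded below by $-2\,\mathrm{tr}(h)\bigl(\|\Lambda F_H\|_H + \|\Lambda F_{Hh}\|_{Hh}\bigr)$ via Cauchy--Schwarz for the Hermitian inner product on $\mathrm{End}(\cF)$ (using that $\|h\|_{Hh} = \|h\|_H$ and similar identities between the $H$- and $Hh$-norms). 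The second term, divided by $\mathrm{tr}\,h$, equals $\|(\dbar h)h^{-1/2}\|_{H,\omega}^2 / \mathrm{tr}\,h$ by part (1) applied with $s$ replaced by $\log h$ (or more directly with $h$ itself after a small manipulation using the selfadjointness of $h^{-1/2}$), and a Kato-type inequality shows that this dominates $\|\nabla \mathrm{tr}\,h\|^2 / (\mathrm{tr}\,h)^2$, so the nonnegative contributions cancel. The main obstacle I expect is handling the sign conventions carefully throughout, and in particular verifying the Kato-type inequality that makes the $\partial_H h$--$\dbar h$ terms dominate the gradient of $\mathrm{tr}\,h$; this is the only nontrivial step, and the rest is bookkeeping.
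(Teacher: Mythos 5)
Your proposal is correct and follows the same line as the cited Simpson Lemma~3.1, which the paper invokes in lieu of a proof: (1) is the adjoint/wedge-trace identity, (2) is the curvature transformation law $F_{Hh}=F_H+\dbar(h^{-1}\partial_H h)$ together with the Leibniz rule, and (3) is the trace of (2) combined with Cauchy--Schwarz on the curvature terms (diagonalizing $h$ first in an $H$-orthonormal basis, then in an $Hh$-orthonormal basis) and the Kato-type inequality $|\mathrm{tr}\,\dbar h|^2\leqslant \mathrm{tr}(h)\,||(\dbar h)h^{-1/2}||^2$ to absorb the gradient of $\mathrm{tr}\,h$. The one detail worth flagging is that ``apply (1) with $s$ replaced by $\log h$'' is not literally valid, since $\dbar\log h\neq(\dbar h)h^{-1}$ when $h$ and $\dbar h$ do not commute; but your parenthetical alternative --- taking $\eta=(\dbar h)h^{-1/2}$ so that $\eta^{*_H}=h^{-1/2}\partial_H h$ and hence $||\eta||^2=-\sqrt{-1}\Lambda\,\mathrm{tr}\big((\dbar h)h^{-1}\partial_H h\big)$ --- is the correct computation, so nothing is lost.
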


Thanks to item (1) of the previous lemma, we say that  $s$ is $L_1^2$ if  $|| (\dbar s)   ||_{H,\omega}^2$ is integrable. 
The following estimate was essentially obtained in \cite[Lemma 2.1]{UhlenbeckYau1986}. 



\begin{lemma}
\label{lemma:h-s-estimate}
Let $(\cF,H)$ be a Hermitian vector bundle on a K\"ahler manifold $(X,\omega)$. 
Let $s$ be a smooth section of $\cE nd(\cF)$, selfadjoint  with respect to $H$. 
We write $h=e^s$.  Assume that $||s||_H \leqslant C$  for some positive constant $C$. 
Then   there are two positive constants $0 <C_1 \leqslant C_2$, depending only on $C$, such that 
\[ C_1 ||\dbar s||_{H,\omega} \leqslant  ||\dbar h||_{H,\omega}  \leqslant C_2 || \dbar s||_{H,\omega}.\]
\end{lemma}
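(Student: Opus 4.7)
The plan is to reduce the estimate to a pointwise linear-algebra identity obtained by diagonalizing $s$. Since $s$ is $H$-selfadjoint, at each fixed point $x \in X$ one can pick an $H$-unitary basis $\{e_i\}$ of the fiber $\cF_x$ in which $s(x) = \mathrm{diag}(\lambda_1,\dots,\lambda_r)$ with $\lambda_i \in \bbR$. The hypothesis $\|s\|_H \leqslant C$ gives $\sum_i \lambda_i^2 \leqslant C^2$, so in particular $|\lambda_i| \leqslant C$ for every $i$.

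Next I would compute $\dbar h$ in this (point-dependent) basis. Expanding $h=e^s$ as a power series and using the Leibniz rule $\dbar(s^n) = \sum_{k=0}^{n-1} s^k (\dbar s)\, s^{n-1-k}$, a direct summation yields the divided-difference identity
\[
(\dbar h)_{ij}\big|_x \;=\; f(\lambda_i,\lambda_j)\,(\dbar s)_{ij}\big|_x, \qquad f(a,b) \defeq \begin{cases} \dfrac{e^a - e^b}{a-b} & a \neq b, \\ e^a & a=b, \end{cases}
\]
which is exactly the Duhamel representation $\dbar h = \int_0^1 e^{\tau s}(\dbar s)\, e^{(1-\tau)s}\, \mathrm{d}\tau$ read off in the eigenbasis of $s(x)$. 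The point is that the noncommutativity of $s$ and $\dbar s$ is absorbed into the smooth symmetric function $f$.

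By the mean value theorem, $f(a,b) = e^\xi$ for some $\xi$ between $a$ and $b$, so $e^{-C} \leqslant f(a,b) \leqslant e^C$ whenever $|a|,|b| \leqslant C$. Picking in addition an $\omega$-unitary basis of $T^{1,0}_xX$, both $\|\dbar s\|^2_{H,\omega}(x)$ and $\|\dbar h\|^2_{H,\omega}(x)$ are sums of squared moduli of the components $(\dbar s)_{ij,\bar\alpha}$ and $(\dbar h)_{ij,\bar\alpha} = f(\lambda_i,\lambda_j)(\dbar s)_{ij,\bar\alpha}$ respectively. Plugging in the bound on $f$ yields
\[
e^{-2C}\|\dbar s\|^2_{H,\omega} \;\leqslant\; \|\dbar h\|^2_{H,\omega} \;\leqslant\; e^{2C}\|\dbar s\|^2_{H,\omega}
\]
pointwise, so the lemma holds with $C_1 = e^{-C}$ and $C_2 = e^C$. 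The argument is purely algebraic at each fixed point, so I do not anticipate any real obstacle; the only subtlety is that one must not commute $s$ past $\dbar s$, which is precisely what the divided-difference representation handles.
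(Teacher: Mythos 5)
Your proof is correct and the underlying idea is the same as the paper's: diagonalize $s$ pointwise and observe that the matrix entries of $\dbar h$ are divided-difference multiples of the corresponding entries of $\dbar s$. The paper carries this out by choosing a smooth $H$-orthonormal moving frame that diagonalizes $s$, writing $\dbar \mathbf{e}_\alpha = A_\alpha^\beta \mathbf{e}_\beta$, and computing $\dbar h$ and $\dbar s$ separately in that frame; the off-diagonal coefficients come out as $(e^{\lambda_\alpha}-e^{\lambda_\beta})A_\alpha^\beta$ versus $(\lambda_\alpha-\lambda_\beta)A_\alpha^\beta$, which is exactly your identity $(\dbar h)_{\alpha\beta}=f(\lambda_\alpha,\lambda_\beta)(\dbar s)_{\alpha\beta}$ once one unwinds the notation. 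Your version has two small advantages worth noting: by working with a fixed basis at a single point together with the power-series/Duhamel expansion, you avoid the (minor, but real) issue that a globally smooth diagonalizing frame need not exist where eigenvalues of $s$ cross; and the mean-value-theorem bound $e^{-C}\leqslant f(a,b)\leqslant e^{C}$ gives the explicit constants $C_1=e^{-C}$, $C_2=e^{C}$ rather than relying, as the paper does, on the qualitative observation that $z\mapsto z^{-1}(e^z-1)$ extends to a smooth positive function. Both routes are sound; yours is a slightly tidier packaging of the same computation.
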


\begin{proof}
Let $(\mathbf{e}_\alpha)$ be a smooth $H$-orthonormal basis of $\cF$, which diagonalizes $s$, and hence $h$. 
Let $(\mathbf{e}_\alpha^*)$ be the dual basis of $\cF^*$. 
Then $h=e^{\lambda_\alpha} \mathbf{e}_\alpha^* \otimes \mathbf{e}_\alpha$, and $s=  \lambda_\alpha  \mathbf{e}_\alpha^* \otimes \mathbf{e}_\alpha$, for real-valued functions $\lambda_\alpha$.  
We write $\dbar \mathbf{e}_\alpha  = A_{\alpha}^\beta \mathbf{e}_\beta$. 
It follows that $\dbar \mathbf{e}_\alpha^*  = - A^{\alpha}_\beta \mathbf{e}_\beta^*$. 
Then we have 
\[   
\dbar h = e^{\lambda_\alpha}\dbar \lambda_\alpha  \mathbf{e}_\alpha^* \otimes \mathbf{e}_\alpha + (e^{\lambda_\alpha} - e^{\lambda_\beta}) A_{\alpha}^\beta \mathbf{e}_\alpha^* \otimes \mathbf{e}_\beta,
\]
and 
\[   
\dbar s =  \dbar \lambda_\alpha  \mathbf{e}_\alpha^* \otimes \mathbf{e}_\alpha + ( {\lambda_\alpha} - {\lambda_\beta}) A_\alpha^\beta \mathbf{e}_\alpha^* \otimes \mathbf{e}_\beta.
\]
Hence 
\[
||\dbar h||_{H,\omega}^2 = e^{2\lambda_\alpha} ||\dbar \lambda_\alpha||^2_\omega + ( e^{\lambda_\alpha} - e^{\lambda_\beta} )^2||A_\alpha^\beta||^2_\omega,
\]
and 
\[
||\dbar s||_{H,\omega}^2 =  ||\dbar \lambda_\alpha||^2_\omega + (  {\lambda_\alpha} -  {\lambda_\beta} )^2||A_\alpha^\beta||^2_\omega,
\]
Since $||s||_H$ is bounded by $C$, the   $|\lambda_\alpha|$ are bounded by some constant $C'$, depending only on $C$.  We also remark that $z\mapsto z^{-1}(e^z-1)$ can be extended to a smooth positive function on $z\in \mathbb{R}$. 
Hence  there are two positive constants $C_1 \leqslant C_2$, depending only on $C$, such that 
\[ C_1 ||\dbar s||_{H,\omega} \leqslant  ||\dbar h||_{H,\omega}  \leqslant C_2 || \dbar s||_{H,\omega}.\]
This completes the proof of the lemma.
\end{proof}

Now we assume that  $(X,\omega)$ is   K\"ahler variety, and $\cF$ a coherent   sheaf on $X$. We will denote by  $$X_\cF \subseteq X$$  the largest open subset  over which $\cF$ is locally free and $X$ is smooth. 
In the remainder of the paper, by a smooth section of $\cF$ on $X$, we mean a smooth section of $\cF$ on $X_\cF$.  
We also call a Hermitian metric $H$ on $\cF|_{X_\cF}$ a Hermitian metric on $\cF$.  
  Let $F_H$ be the Chern curvature of $(\cF,H)$, more precisely, of $(\cF|_{X_{\cF}},H)$. 
  Such a metric is called admissible, after \cite{BandoSiu1994}, if the following two conditions hold. 
  \begin{enumerate}
  \item  $||\Lambda_\omega F_H||_H$ is locally  bounded on $X$.
  \item   $||F_H||_{H,\omega}^2.$ is locally integrable on $X$.
  \end{enumerate}   

In the remainder of the paper, by a smooth section of $\cF$ on $X$, we mean a smooth section of $\cF$ on $X_\cF$.  We also recall the following fact, and borrow the proof from \cite[Section 7]{UhlenbeckYau1986}.

\begin{lemma}\label{lemma:extension-subsheaf} 
Let $X$ be a normal complex variety and $\cF$ a reflexive coherent sheaf. 
Let $X^\circ \subseteq X$ be Zariski open subset whose complement has codimension at least two, and $\cE \subseteq \cF|_{X^\circ}$ a saturated coherent subsheaf. 
Then $\cE$ extends to a coherent saturated subsheaf  of $\cF$, globally on $X$.
\end{lemma}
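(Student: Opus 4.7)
The strategy follows Uhlenbeck--Yau \cite[Section~7]{UhlenbeckYau1986}: the extension of the rank $r = \rk \cE$ saturated subsheaf is reduced, via a determinant construction, to the extension of a single rank one reflexive sheaf across a codimension two subset.

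On $X^\circ$, consider the determinant $\cL := (\wedge^r \cE)^{**}$. Since $\cE$ is saturated in the reflexive sheaf $\cF|_{X^\circ}$, this $\cL$ is a coherent rank one reflexive subsheaf of the reflexive sheaf $(\wedge^r \cF)^{**}\big|_{X^\circ}$. The ambient sheaf $(\wedge^r \cF)^{**}$ is coherent and reflexive on all of the normal variety $X$, and the subset $X \setminus X^\circ$ has codimension at least two. Setting $\widetilde{\cL} := j_*\cL$, where $j \colon X^\circ \hookrightarrow X$ is the inclusion, one obtains a coherent rank one reflexive subsheaf $\widetilde{\cL} \subseteq (\wedge^r \cF)^{**}$ extending $\cL$ globally on $X$.

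To recover $\widetilde{\cE}$, observe that over the open subset of $X^\circ$ where $\cE$ and $\cF$ are locally free, a local section $v$ of $\cF$ lies in $\cE$ if and only if $v \wedge s$ vanishes in $\wedge^{r+1} \cF$ for every local section $s$ of $\cL$. Globally, the wedge product induces a morphism of coherent sheaves $\Phi \colon \cF \otimes \widetilde{\cL} \to (\wedge^{r+1} \cF)^{**}$, whose adjoint is a morphism
\[
\varphi \colon \cF \longrightarrow \sHom{\widetilde{\cL}}{(\wedge^{r+1}\cF)^{**}}.
\]
Set $\widetilde{\cE} := \ker \varphi$. As the kernel of a morphism between coherent sheaves, $\widetilde{\cE}$ is coherent. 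The identity $\widetilde{\cE}|_{X^\circ} = \cE$ follows from the pointwise characterization above (verified first on the locally free locus of $\cE$, then propagated by reflexivity since the complement has codimension at least two in $X^\circ$). Saturation of $\widetilde{\cE} \subseteq \cF$ is automatic, since the target of $\varphi$ is a torsion-free sheaf.

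The main obstacle is the coherence of $\widetilde{\cL} = j_*\cL$ across $X \setminus X^\circ$, i.e.\ verifying that this extension stays finitely generated over the singular complement. For the rank one reflexive subsheaf $\cL$ of a reflexive coherent sheaf on a normal complex space, this is a Hartogs-type extension phenomenon and follows from the classical extension theorem for reflexive coherent analytic sheaves across codimension two subsets (Siu, Scheja, Trautmann). Everything else in the argument is then essentially formal linear algebra in the exterior algebra of $\cF$.
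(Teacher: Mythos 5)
Your proof takes a genuinely different route from the paper. The paper follows Uhlenbeck--Yau literally: working locally, it reduces to $\cF$ free, views $\cE$ as a holomorphic map $X^\circ \to G(n,m)$ to a Grassmannian, takes the closure $\Gamma$ of the graph in $X\times G$, invokes \cite[Main Theorem]{Siu1975} (extension of meromorphic maps into compact K\"ahler manifolds, valid over normal base spaces) to conclude $\Gamma$ is analytic, and then takes the proper pushforward of the tautological subbundle pulled back to $\Gamma$. You instead perform a determinant-line reduction: extend the rank-one sheaf $\cL=(\wedge^r\cE)^{**}$ and recover $\widetilde\cE$ as the kernel of the wedge-pairing morphism $\varphi\colon\cF\to\sHom{\widetilde\cL}{(\wedge^{r+1}\cF)^{**}}$. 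The recovery step is clean: the pointwise identification on the locally free locus is correct linear algebra, $\ker\varphi$ is coherent, it agrees with $\cE$ on $X^\circ$ since both are saturated subsheaves of $\cF|_{X^\circ}$ agreeing off a codimension-two set, and saturation of $\ker\varphi$ is indeed automatic from torsion-freeness of the target.

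The weak point is the step you yourself flag as the main obstacle, and the justification you give does not really apply. The ``classical extension theorem for reflexive coherent analytic sheaves across codimension two'' (\`a la Serre, Siu, Scheja--Trautmann) is the statement that a reflexive sheaf $\cG$ \emph{already defined on all of $X$} satisfies $j_*(\cG|_{X^\circ})\cong\cG$; it does not by itself grant that a coherent sheaf defined only on $X^\circ$ has a coherent direct image $j_*$. The latter is a distinct and more delicate question on complex spaces (unlike the Noetherian algebraic setting, a subsheaf of a coherent sheaf need not be coherent, and $j_*$ of a coherent sheaf across a codimension-two analytic subset is coherent only under additional hypotheses). What you actually need is either (i) the Siu 1969 / Trautmann 1969 coherent-extension criteria for subsheaves of a coherent sheaf across codimension-two subsets, or (ii) the observation that, $X$ being normal and $\cL$ rank one reflexive, $\cL$ is represented by a Weil divisor class on $X^\circ$, and Weil divisors extend across codimension $\geqslant 2$ by Zariski closure, giving a rank-one reflexive $\cO_X(\widetilde D)$ on $X$ whose restriction is $\cL$; the inclusion $\cL\hookrightarrow(\wedge^r\cF)^{**}|_{X^\circ}$ then extends automatically since $\mathcal{H}om(\cO_X(\widetilde D),(\wedge^r\cF)^{**})$ is reflexive. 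Either fix makes the argument complete, but as written the citation is not the right one and leaves the crux unproved. Note also that both the paper's route and yours ultimately rest on a nontrivial analytic extension theorem (Siu's meromorphic-map extension for the Grassmannian graph versus a coherent-sheaf/divisor extension for $\cL$); the determinant reduction packages the input differently but does not make the extension step elementary.

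As a small aside, your opening sentence attributes the determinant strategy to \cite[Section 7]{UhlenbeckYau1986}; in fact that section (and the paper under review) uses the Grassmannian graph-closure argument, not the determinant reduction.
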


\begin{proof}
It is enough to prove the extension locally on $X$. 
Hence by embedding $\cF$ in a free coherent sheaf as a saturated subsheaf, we may assume that $\cF$ is free. 
Furthermore, by shrinking $X^\circ$, we may assume that $\cE$ is a subbundle of $\cF|_{X^\circ}$. 
Then we have a morphism $f\colon X^\circ \to  G$, where $G=G(n,m)$ is the Grassmannian variety, with $n=\mathrm{rank}\, \cF$ and  $m=\mathrm{rank}\, \cE$. 
Let $\Gamma \subseteq X\times G$ be the closure of the graph of $f$. 
By applying \cite[Main Theorem]{Siu1975} to normal varieties, as explained in \cite[page 441]{Siu1975},  we obtain that $\Gamma$ is a closed analytic subvariety of $X\times G$.
We denote by $p_1\colon \Gamma \to X$ and $p_2\colon \Gamma \to G$.

Let $U$ be the universal vector bundle on $G$. By pulling back to $\Gamma$ \textit{via} $p_2$, we get a subbundle   $\cG$ of $p_1^*\cF$ on $\Gamma$. 
Since   $p_1$ is proper, the direct image $(p_1)_* \cG$ is a coherent sheaf on $X$.
Furthermore, it is a subsheaf of $\cF$ on $X$, extending $\cE$.
This completes the proof of the lemma.
\end{proof}

\subsection{Slope stability} \label{section:slope}

We will first introduce  two different ways to define the degree of a reflexive sheaf $\cF$  on a compact normal K\"ahler variety $(X,\omega)$.  We note that the definitions rely on the Hermitian metric constructed in Section \ref{section:initial-metric} below.

The first one was considered in \cite[page 877]{Simpson1988}, which \textit{a prior} depends on  the Hermitian metric.  
Let $(M,\omega)$  be a K\"ahler manifold of dimension $n$, 
and $(\cF, H)$ a Hermitian vector bundle on $M$, with Chern curvature $F$.   
Assume that $\mathrm{tr}\, (\Lambda_\omega F)$ is integrable on $(M,\omega)$. 
Then we define the degree of $(\cF, H)$ as 
\[\mathrm{deg}_\omega\, (\cF, H) =  \sqrt{-1}\int_{X} \mathrm{tr}\, (\Lambda_\omega F) \omega^n.\]
If $\cF' \subseteq \cF$ is a non zero saturated coherent analytic subsheaf, then $H$ induces a Hermitian metric on $\cF'$ by restriction.  
By using  Chern-Weil formula, we can define   the degree $\mathrm{deg}\, (\cF', H|_{\cF'})$, which belongs to $[-\infty,  \infty)$.  

We consider the following situation. Let $(X, \omega)$ be a normal compact K\"ahler variety, and $\cF$ a reflexive coherent sheaf on $X$. Let $H$ be a metric constructed by Lemma \ref{lemma:existence:initial-metric}. 
Then $\mathrm{tr}\, (\Lambda_\omega F)$ is integrable on $X$ by construction (see Remark \ref{rem:uniform-integrable}).
Thus we can define $\mathrm{deg}_\omega\, (\cF, H)$ as in the previous paragraph, by taking $M = X_\cF$.

Now we introduce a second   definition of $\mathrm{deg}_\omega\, \cF$.
Let $r\colon \widehat{X} \to X$ be any desingularization such that the torsion-free quotient  $r^*(\cF^*)/{\mathrm{(torsion)}}$ is locally free. 
We then define 
\[  \mathrm{deg}_\omega\, \cF  = -2\pi n c_1 \Big(r^*(\cF^*)/{\mathrm{(torsion)}} \Big) \wedge [r^*\omega]^{n-1},
\] 
where $[r^*\omega]$ is the corresponding cohomology class.  
This definition does not involve metrics. 
It is also independent of the choice of resolution, and appears more intrinsic. To see this, we only need to observe the following fact. If $p\colon W \to \widehat{X}$  is a modification with $W$ smooth, then 
\[p^*\Big( r^*(\cF^*)/{\mathrm{(torsion)}} \Big) \cong \Big((r\circ p)^*(\cF^*)\Big)/{\mathrm{(torsion)}}.
\]
We remark that if $X$ is projective and $\omega$ is induced by an ample divisor, then this definition of degree coincides with the usual one in algebraic geometry.

We claim that $\deg_\omega \cF = \mathrm{deg}_\omega\, (\cF,H)$. 
That is, the previous two definitions coincide.  
Indeed, By Lemma  \ref{lemma:existence:initial-metric}, there is a desingulariztion $r\colon \widehat{X} \to X$ such that $r^*(\cF^*)/{\mathrm{(torsion)}}$ is locally free and $H$ extends to a smooth Hermitian  metric  on $(r^*(\cF^*)/{\mathrm{(torsion)}})^*$. 
This implies that  $\deg_\omega \cF = \mathrm{deg}\, (\cF,H)$. 
We notice that, for any saturated coherent subsheaf $\cF' \subseteq \cF$,  we have
$\mathrm{deg}\, (\cF', H|_{\cF'}) = \mathrm{deg}_\omega \, \cF'$.

We then define the slope of $\cF$, with respect to $\omega$,  as 
\[ \mu_{\omega}(\cF) = \frac{\deg_\omega\, \cF}{\mathrm{rank}\, \cF}.\]
We say that $\cF$ is  {(slope) stable} if for any proper non zero saturated coherent analytic  subsheaf $\cF'\subseteq \cF$, we have  
\[\mu_{\omega}(\cF')  <  \mu_{\omega}(\cF).\]   


\subsection{Hermitian-Yang-Mills metrics}

Let $(X,\omega)$ be a compact normal  K\"ahler variety of dimension $n$,
and  $\cF $   a   reflexive sheaf on $X$.  
A  metric $H$ on $\cF$ is called  Hermitian-Yang-Mills, or  Hermitian-Einstein, if its Chern  curvature $F$ satisfies 
\[ \sqrt{-1}\Lambda_{\omega} F  =  \lambda_\omega(\cF) \mathrm{Id}_\cF, \]
where $\lambda_\omega(\cF) =         \mu_\omega (\cF)\mathrm{Vol}_\omega (X) ^{-1}$.

We fix a Hermitian metric $H_0$ on $\cF$, with Chern curvature $F_0$. 
For any Hermitian metric $H$ on $\cF$, we  write $h=H_0^{-1}H$. 
Then $h$ is smooth section of $\cE nd {(\cF)}$, which is  self-adjoint with respect to   $H_0$ and $H$. 
The curvatures satisfy
\[F_H = F_0 + \dbar( ( h^{-1}\partial_{H_0} h) ).\]  
To obtain a Hermitian-Yang-Mills metric on $\cF$, we  will  consider a family of metrics 
\[
H_t=H_0h(t) = H_0h_t = H_0h\] 
subject to the following heat equation,  with $h_0 = \mathrm{Id}_\cF$,
\begin{equation}\label{equa:heat-equation-metric}
\begin{cases}
h_t^{-1}(\frac{\mathrm{\partial}}{\mathrm{\partial}t}h_t)  = -(\sqrt{-1}\Lambda_{\omega} F_t - \lambda_\omega(\cF) \mathrm{Id}_\cF)\\
\mathrm{det}\, h_t \equiv 1
\end{cases}
\end{equation}
where  $F_t$ is the Chern curvature   of $H_t$.  

\begin{lemma}\label{lemma:parabolic-positive-estimate}
Assume that the heat equation above admits a solution around some time $t>0$. 
Then 
\[(\frac{1}{2}\Delta-\frac{\partial}{\partial t}) || \Lambda F_t ||_{H_t} \geqslant 0\]
whenever $|| \Lambda F_t ||_{H_t} \neq 0.$
\end{lemma}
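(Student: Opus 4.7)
The strategy is Donaldson's classical parabolic maximum-principle argument: derive a linear heat equation satisfied by the endomorphism $\hat F_t := \sqrt{-1}\Lambda F_t$, then pass from its squared norm to its norm itself by combining a Bochner-type identity with a Kato-type Cauchy--Schwarz bound. Since the conclusion is pointwise on the locus where $h_t$ is smooth, no global or singular-space considerations intervene; everything takes place on a Hermitian holomorphic bundle over a K\"ahler manifold.

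The first step is to obtain the heat equation for $\hat F_t$. Starting from $F_{H_t} = F_{H_0} + \dbar(h_t^{-1}\partial_{H_0}h_t)$ and using the identity $h_t^{-1}\partial_{H_0}h_t = \partial_{H_t} - \partial_{H_0}$ for the difference of the two Chern connections on $\mathrm{End}(\cF)$, differentiation in $t$ yields $\partial_t F_{H_t} = \dbar\partial_{H_t}(h_t^{-1}\dot h_t)$. Substituting $h_t^{-1}\dot h_t = -(\hat F_t - \lambda(\cF)\mathrm{Id})$ from the flow and using that $\partial_{H_t}$ annihilates $\lambda\mathrm{Id}$ produces $\partial_t F_{H_t} = -\dbar\partial_{H_t}\hat F_t$. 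Contracting with $\sqrt{-1}\Lambda$ and invoking the convention $\Delta_{H_t} = -2\sqrt{-1}\Lambda\dbar\partial_{H_t}$ on sections of $\mathrm{End}(\cF)$ gives
\[
\partial_t\hat F_t = \tfrac{1}{2}\Delta_{H_t}\hat F_t.
\]

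Next I pass to the scalar norm. Since $\hat F_t$ is $H_t$-self-adjoint, $f := \|\hat F_t\|_{H_t}^2 = \mathrm{tr}(\hat F_t^2)$ is insensitive to the time-varying metric when differentiated in $t$, so by the heat equation $\partial_t f = \mathrm{tr}(\hat F_t\Delta_{H_t}\hat F_t)$. Expanding $\partial\dbar\,\mathrm{tr}(\hat F_t^2)$ via the Chern Leibniz rule --- the curvature contribution $[F_{H_t},\hat F_t]$ arising from $(\partial_{H_t}\dbar + \dbar\partial_{H_t})\hat F_t$ vanishes under trace, and the mixed term $\sqrt{-1}\Lambda\mathrm{tr}(\partial_{H_t}\hat F_t\wedge\dbar\hat F_t)$ is identified via Lemma \ref{lemma:simpson-computation}(1) with $\|\dbar\hat F_t\|^2_{H_t,\omega}$ --- yields the Bochner identity
\[
\tfrac{1}{2}\Delta f = \mathrm{tr}(\hat F_t\Delta_{H_t}\hat F_t) + 2\,\|\dbar\hat F_t\|_{H_t,\omega}^2,
\]
so $(\tfrac{1}{2}\Delta - \partial_t) f = 2\|\dbar\hat F_t\|_{H_t,\omega}^2 \geqslant 0$. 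Setting $u := \sqrt{f} = \|\hat F_t\|_{H_t}$, the chain rule on $\{u > 0\}$ gives
\[
\bigl(\tfrac{1}{2}\Delta - \partial_t\bigr) u = \tfrac{1}{2u}\bigl(\tfrac{1}{2}\Delta - \partial_t\bigr) f - \tfrac{|\nabla f|^2}{8u^3},
\]
and the Cauchy--Schwarz bound $|\dbar f|^2 = 4|\mathrm{tr}(\hat F_t\dbar\hat F_t)|^2 \leqslant 4u^2\|\dbar\hat F_t\|^2_{H_t,\omega}$ (whence $|\nabla f|^2 \leqslant 8u^2\|\dbar\hat F_t\|^2$) cancels the subtracted term exactly, yielding $(\tfrac{1}{2}\Delta - \partial_t) u \geqslant 0$. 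The only genuine work is the sign- and constant-bookkeeping in the Bochner identity and the Cauchy--Schwarz bound; no new analytical ingredient is required beyond the formulas already recorded in Section \ref{section:pre}.
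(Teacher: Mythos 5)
Your proof is correct and follows essentially the same route as the paper: establish $(\tfrac12\Delta-\partial_t)\|\hat F_t\|^2 = 2\|\dbar\hat F_t\|^2$, pass to the norm by the chain rule on $\{u>0\}$, and discharge the remainder term via the Cauchy--Schwarz (Kato-type) bound $|\nabla f|^2\leqslant 8u^2\|\dbar\hat F_t\|^2$ in a diagonalizing $H_t$-orthonormal frame. The only distinction is cosmetic: the paper cites \cite[Lemma 6.1]{Simpson1988} for the identity $(\tfrac12\Delta-\partial_t)\|\hat F_t\|^2 = 2\|\dbar\hat F_t\|^2$, whereas you re-derive it by first showing $\partial_t\hat F_t=\tfrac12\Delta_{H_t}\hat F_t$ and then proving the Bochner identity, which is a self-contained but equivalent presentation of the same step.
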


\begin{proof}
For simplicity, we write $u = \sqrt{-1} \Lambda F_t$.  
The following computation is local and we may hence assume that $u\neq 0$.  
As in the proof of \cite[Lemma 6.1]{Simpson1988}, we  get  
\[  (\frac{1}{2}\Delta-\frac{\partial}{\partial t}) || u ||_{H_t}^2 = 2|| \dbar u ||_{H_t}^2. \] 
We set $m=||u||_{H_t}^2$, then  
\[ \Delta ||u||_{H_t} = \frac{\Delta m}{2 ||u||_{H_t}}  - \frac{\sqrt{-1}}{2||u||_{H_t}^3}  \Lambda \partial m \dbar m,\]  
and 
\[\frac{\partial}{\partial t} ||u||_{H_t} = \frac{1}{2|| u ||_{H_t}}\frac{\partial}{\partial t} m.    \]
We thus deduce that 
\[    (\frac{1}{2}\Delta-\frac{\partial}{\partial t}) || u||_{H_t}  =  \frac{|| \dbar u||_{H_t}^2}{||u||_{H_t}}   -  \frac{1}{4|| u ||_{H_{t}}^3} ||\dbar m||^2. 
\]
We note that 
\[ || \dbar m||^2 =4\sqrt{-1} \Lambda \mathrm{tr}\, (u\dbar u) \mathrm{tr}\, (u\partial_{H_t} u).\]
Since $u$ is selfadjoint with respect to $H_t$, by taking a local $H_t$-orthonormal basis of $\cF$ which diagonalizes $u$, we see that $4||\dbar u||_{H_t}^2||u||_{H_t}^2 \geqslant ||\dbar m||^2$. 
Hence  we get
$(\frac{1}{2}\Delta-\frac{\partial}{\partial t}) || u||_{H_t} \geqslant 0$. 
This completes the proof of the lemma.
\end{proof}

\section{Construction of an initial metric}
\label{section:initial-metric}

The goal of this section is to construct an initial metric $H_0$ for the heat equation (\ref{equa:heat-equation-metric}), see Lemma \ref{lemma:existence:initial-metric} below.  
Such a metric will guarantee that $\Lambda F$ is integrable, where $F$ is the Chern curvature.  
We will follow the lines of \cite[Section 3]{BandoSiu1994}. 
The idea is to construct a metric which is locally the restriction of a smooth metric on a vector bundle.
The first step is to construct an appropriate partition of the unity.

\begin{prop}\label{prop:partition}
Let $X$ be an analytic variety. Let $\{U_i\}$ be a finite open covering of $X$. We assume that each $U_i$ can be identified as a closed analytic subvariety of a domain $V_i \subseteq \mathbb{C}^{N_i}$.
Then there is a partition of the unity, subordinate to $ \{U_i\}$, which satisfies the following property.
For  $x\in X$, there is an open neighbourhood $U$ of $x$, such that 
\begin{enumerate}
\item $U$ is contained in $U_i$ whenever $x\in U_i$,
\item if $U \subseteq U_i$, then there is an open subset $W_i\subseteq V_i$ containing $U$ such that  $\rho_i|_U$ is the restriction of some smooth function $\eta_i\colon W_i \to \mathbb{R}$. 
\end{enumerate}
\end{prop}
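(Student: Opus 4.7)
The plan is to build the partition of unity by starting from smooth cut-off functions on each ambient domain $V_i$ and then normalising their restrictions to $X$.

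First I would shrink the cover, choosing open $U_i'\subseteq X$ with $\overline{U_i'}\subseteq U_i$ and $\{U_i'\}$ still covering $X$.  For each $i$, the set $X\setminus U_i$ is closed in $V_i$ and disjoint from $\overline{U_i'}$, so one can pick a smooth cut-off $\eta_i\colon V_i\to [0,1]$ whose support (in $V_i$) is compact, contained in $V_i\setminus\overline{X\setminus U_i}$ (closure in $V_i$), and which equals $1$ on a neighbourhood of $\overline{U_i'}$ in $V_i$.  Its restriction $\eta_i|_{U_i}$ then extends by zero to a continuous function $\tilde\eta_i$ on $X$ supported in $U_i$.  Since every $x\in X$ lies in some $U_j'$, one has $\tilde\eta_j(x)=1$ for such a $j$, so $\Sigma\defeq\sum_i\tilde\eta_i\geqslant 1$ on $X$, and $\rho_i\defeq\tilde\eta_i/\Sigma$ is a partition of unity subordinate to $\{U_i\}$.

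To verify the extension property, fix $x\in X$ and let $I_x=\{i:x\in U_i\}$.  Using openness of the $U_i$, finiteness of $I_x$, and closedness of each $X\setminus U_j$ for $j\notin I_x$, I would choose a neighbourhood $U$ of $x$ in $X$ with $U\subseteq\bigcap_{i\in I_x}U_i$ and $U\cap U_j=\emptyset$ for $j\notin I_x$; this takes care of condition (1).  On $U$ the partition simplifies to $\rho_i|_U=\eta_i|_U/\sum_{j\in I_x}\eta_j|_U$.  The numerator is already the restriction of the smooth function $\eta_i$ defined on $V_i$.  For the denominator, one needs each $\eta_j|_U$ (with $j\in I_x$, $j\neq i$) to extend to a smooth function on a neighbourhood of $U$ in the ambient space $V_i$, even though $\eta_j$ originally lives on the \emph{different} ambient $V_j$.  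Here I would invoke the intrinsic character of smoothness on an analytic variety: restrictions of ambient smooth functions form a sheaf $\mathcal{C}^\infty_X$ independent of the chosen embedding, and a Whitney-type extension for closed analytic subvarieties of complex domains then lifts each $\eta_j|_U$ to a smooth function on some open neighbourhood of $U$ in $V_i$.  Summing these extensions yields a smooth function $\Sigma^{(i)}$ on an open $W_i\subseteq V_i$ with $U\subseteq W_i$ and $\Sigma^{(i)}|_U\geqslant 1$; after shrinking $W_i$ we may assume $\Sigma^{(i)}>0$ on $W_i$, and then $\eta_i/\Sigma^{(i)}$ is the desired smooth extension of $\rho_i|_U$ to $W_i\subseteq V_i$.

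The main obstacle is this cross-extension step: transporting a cut-off originally constructed on $V_j$ to a smooth function on $V_i$ through the common closed analytic subvariety $X$.  Once the intrinsic description of $\mathcal{C}^\infty_X$ and the associated extension theorem are granted, the remaining checks are routine, as the $\rho_i$ are non-negative, sum to $1$, and have supports in $U_i$.
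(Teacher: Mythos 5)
Your proposal is essentially correct and reaches the same conclusion, but its organization and the tool you invoke for the key step differ from the paper's. The paper constructs the partition of unity directly from bump functions and handles the cross-ambient extension problem by observing that, for any two local embeddings $U_i\hookrightarrow V_i$ and $U_1\hookrightarrow V_1$, the identity on $U_i\cap U_1$ extends near any point to a \emph{holomorphic} map $\varphi_{i,1}\colon W_i\to W_1$ between ambient open sets (this is the definition of the analytic structure sheaf of a reduced complex space); composing an ambient bump function $\gamma_1$ on $V_1$ with $\varphi_{i,1}$ instantly produces the required ambient smooth extension on $W_i\subseteq V_i$. You instead first build the partition of unity on $X$ (shrink cover, ambient cut-offs, normalize) and then verify the extension property a posteriori, which is a perfectly valid alternative organization; the concrete normalization step (that $\Sigma^{(i)}\geqslant 1$ near $U$, hence $\eta_i/\Sigma^{(i)}$ is a smooth ambient extension of $\rho_i|_U$) is correct.

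The one point worth tightening is your appeal to ``a Whitney-type extension for closed analytic subvarieties.'' This invocation is heavier than necessary and, as written, is in danger of being circular with your simultaneous appeal to the embedding-independence of $\mathcal{C}^\infty_X$: the content of both claims is really the same underlying fact that the local transition maps between embeddings are holomorphic. Rather than appeal to a black-box extension theorem, you should observe directly that each $\eta_j|_U$ (with $j\in I_x$) is the restriction of $\eta_j\circ\varphi_{i,j}$, where $\varphi_{i,j}\colon W_i\to W_j$ is a holomorphic ambient transition map defined near $x$; then $\Sigma^{(i)}=\sum_{j\in I_x}\eta_j\circ\varphi_{i,j}$ is a \emph{concrete} smooth function on a neighbourhood of $x$ in $V_i$, and the proof closes exactly as you describe. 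With that substitution your argument is rigorous and self-contained.
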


\begin{proof}
Since a partition of the unity  can be constructed by  taking linear combinations of  products of  bump functions, we will construct bump functions on $U_i$ which satisfy the property in the proposition.

Without loss of generality, we take a bump function $\gamma_1$ on $V_1$, which restricts to a bump function $\rho_1$  on $U_1$. Let $x\in U_1$ be any point. 
Assume that $x\in U_i$. 
Then  there exists open subsets  $x\in W_1\subseteq V_1 \subseteq \mathbb{C}^{N_1}$  and $x\in W_i\subseteq V_i \subseteq \mathbb{C}^{N_i}$,  such that the identity map of $U_i\cap U_1$ induces a transition function
\[ \varphi_{i,1} \colon W_i \to W_1, \] which is holomorphic. 
We set $\eta_i = \gamma_1 \circ \varphi_{i,1}$. 
Let  $U$ be an open neighbourhood $x$ which is contained both in $W_1$ and $W_i$.  
Then $\rho_1|_U = \eta_i|_U$. 
By taking account of all $U_i$, we complete the proof of the lemma.
%
%
\end{proof}

We will also need the following local estimate.

\begin{lemma}
\label{lemma:curvature-local-bound}
Let $(X, \omega)$ be a    K\"ahler subvariety  of a K\"ahler manifold $(V,\omega)$. 
Let  \[ \cF \subseteq \cO_X^a \]  be a saturated coherent subsheaf.  
Let $g $ be a Hermitian metric on $\cO_V^a$ and $h$ its restriction on $\cO_X^a$.
We denote by $h_\cF$ the restriction of $h$  on $\cF$. 
Let $F_{h_\cF}$ be  the Chern curvature. 
Then   for any precompact open subset $U$ of $X$, there is  a constant $C>0$ such that 
\[ \sqrt{-1} F  \leqslant C\omega \cdot \mathrm{Id}_\cF\] 
\end{lemma}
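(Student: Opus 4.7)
The strategy is to exploit the classical Griffiths curvature formula for a holomorphic subbundle with induced metric---namely that curvature decreases when passing to a subbundle---together with a compactness argument on the ambient curvature.

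First, the inequality only makes sense on $U \cap X_\cF$, where $\cF$ is locally free and hence $F_{h_\cF}$ is a well-defined smooth endomorphism-valued $(1,1)$-form. Since $\cF \subseteq \cO_X^a$ is saturated, the quotient $\cO_X^a/\cF$ is torsion-free and therefore locally free away from an analytic subset $Z \subseteq X_\cF$ of codimension at least two. On the open dense subset $X_\cF \setminus Z$, the sheaf $\cF$ becomes a genuine holomorphic subbundle of $\cO_X^a$, and its $h$-orthogonal complement $\cF^\perp$ is a smooth subbundle.

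On this open dense subset I would apply the standard subbundle formula
\[
F_{h_\cF} \;=\; \pi \circ F_h \circ \iota \;-\; \beta^* \wedge \beta,
\]
where $\pi \colon \cO_X^a \to \cF$ is the $h$-orthogonal projection, $\iota \colon \cF \to \cO_X^a$ the inclusion, and $\beta \colon \cF \to \cF^\perp \otimes \Omega^1_X$ the second fundamental form. For any local section $s$ of $\cF$, the $(1,1)$-form $\sqrt{-1}\,\beta(s) \wedge \overline{\beta(s)}$ is pointwise nonnegative, so $\sqrt{-1}\langle F_{h_\cF} s, s\rangle_h \leqslant \sqrt{-1}\langle F_h s, s\rangle_h$, which is precisely the Griffiths inequality $\sqrt{-1}\, F_{h_\cF} \leqslant \sqrt{-1}\, \pi F_h \iota$.

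Next I would bound $F_h$ on $U$ by compactness. Since $\overline{U} \subseteq X$ is compact and $X$ sits in $V$, one can find a precompact open $W$ in $V$ with $\overline{U} \subseteq W$. Because $h$ is the restriction of the smooth Hermitian metric $g$ on $\cO_V^a$, the Chern curvature satisfies $F_h = F_g|_X$. Smoothness of $F_g$ on $V$ and compactness of $\overline{U}$ yield a constant $C>0$ such that $\sqrt{-1}\langle F_g v, v\rangle_g \leqslant C \omega \cdot |v|_g^2$ on $W$ for every local section $v$ of $\cO_V^a$; restricting to $X$ gives the same bound for $F_h$ on $U \cap X_\cF$. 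Combined with the subbundle estimate, this gives $\sqrt{-1}\, F_{h_\cF} \leqslant C \omega \cdot \mathrm{Id}_\cF$ on $U \cap (X_\cF \setminus Z)$. Since $h_\cF$, and therefore $F_{h_\cF}$, is smooth on all of $X_\cF$, and the Griffiths inequality is a closed condition, it extends by continuity from the dense open complement of $Z$ to all of $U \cap X_\cF$.

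The main technical point is really the reduction to the subbundle case: saturation is essential to guarantee that $\cF$ is a holomorphic subbundle of $\cO_X^a$ outside a codimension-two locus, without which neither $\cF^\perp$ nor the second fundamental form $\beta$ are available and the positivity consequence of the Griffiths formula cannot be applied directly. Once that reduction is established, the remainder is a routine compactness estimate.
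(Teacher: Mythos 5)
Your proof takes essentially the same route as the paper: identify $F_h$ with $F_g|_X$, apply the second-fundamental-form / Griffiths decrease of curvature for the induced metric on the subsheaf, and then bound $\sqrt{-1}F_g$ on a precompact $W \subseteq V$ containing $U$ by compactness. Your extra attention to the locus $Z$ where $\cF$ fails to be a subbundle of $\cO_X^a$, and the continuity extension across it, is a reasonable precision that the paper's proof leaves implicit; the substance of the argument is the same.
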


\begin{proof}
We denote   by  $F_h$    and  $F_g$ the corresponding Chern curvatures. 
Then we have  
\[F_h = F_g|_X.\]
Furthermore, we denote by   $\nabla_h$ and $\nabla_{h_{\cF}}$ the  corresponding Chern connections. 
Then there is a   $(1,0)$-form $A$ on $X$, with values in $\cH om(\cF, \cF^{\perp_h })$, such that,  for any smooth section $s$ of $\cF$ on $X$,  we have 
\[  \nabla_h s = \nabla_{h_\cF} s+ As.\]
Then $$ \sqrt{-1} F_{h_\cF} = \sqrt{-1}F_{h}|_{\cF} - \sqrt{-1}A\wedge A^* \leqslant \sqrt{-1}F_h|_{\cF}.$$  

For a precompact open set $U$, then there is a precompact open subset $W\subseteq V$, which contains $U$. 
Then   there is some constant  $C$,   such that,    on $W$,  we have
\[   \sqrt{-1} F_g   \leqslant C\omega \cdot \mathrm{Id}_{\cO_V^{a}}.  \]
Thus  $ \sqrt{-1} F_{h_\cF}   \leqslant C\omega \cdot \mathrm{Id}_\cF$ on $U$. 
This completes the proof of the lemma.
\end{proof}

Now we are ready to construct the desire metric.

\begin{lemma}\label{lemma:existence:initial-metric}
Let $(X, \omega)$ be a normal   compact K\"ahler variety and $\cF$ a coherent reflexive sheaf on $X$. 
Then there exists a Hermitian metric $H$ on $\cF$ with Chern curvature $F$ such that  
\[ \sqrt{-1} F  \leqslant C\omega \cdot \mathrm{Id}_\cF\] 
for some positive constant $C$.    

Moreover, let $r\colon \widehat{X} \to X$ be a desingularization  by successively blowing up smooth centers. 
Assume  that the torsion-free quotient $  r^*(\cF^*) /({\mathrm{torsion}})$ is locally free, then $H$ extends to a smooth Hermitian   metric $\widehat{H}$ on the locally free sheaf $ (r^*(\cF^*))^*$.
\end{lemma}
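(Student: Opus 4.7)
The plan is to adapt \cite[Section 3]{BandoSiu1994} to the singular reflexive setting, realizing $H$ locally as the restriction of a flat trivial metric from an ambient smooth domain and gluing by the partition of unity of Proposition \ref{prop:partition}. First, around each $x \in X$ I would choose a small Stein neighborhood $U$ admitting both a closed analytic embedding $\iota \colon U \hookrightarrow V \subseteq \bbC^N$ and a free presentation $\cO_U^b \to \cO_U^a \to \cF^*|_U \to 0$; dualizing and invoking reflexivity of $\cF$ exhibits $\cF|_U$ as a saturated subsheaf of $\cO_U^a$, as the kernel of a map to a torsion-free sheaf $\cO_U^b$. Equipping $\cO_V^a$ with its flat trivial metric and restricting along $\iota$ and then to the subsheaf produces a local Hermitian metric $h_U$ on $\cF|_U$, and because the ambient Chern curvature vanishes, the proof of Lemma \ref{lemma:curvature-local-bound} in fact gives the sharper bound $\sqrt{-1}F_{h_U} = -A_U \wedge A_U^* \leqslant 0$ on any precompact subset of $U \cap X_\cF$, where $A_U$ is the second fundamental form.

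Next, I would pick a finite cover $\{U_i\}$ of $X$ by such data $(\iota_i, h_i)$ and apply Proposition \ref{prop:partition} to extract a partition of unity $\{\rho_i\}$ in which each $\rho_i$ is, locally, the restriction of a smooth function $\eta_i$ from $V_i$. Setting $H := \sum_i \rho_i h_i$ defines a Hermitian metric on $\cF|_{X_\cF}$. To bound its curvature on $U_j$, I would write $H = h_j k$ with $k := \sum_i \rho_i (h_j^{-1} h_i)$ a smooth positive $h_j$-selfadjoint endomorphism, and expand
\[ F_H = F_{h_j} + \dbar(k^{-1} \partial_{h_j} k). \]
The partial derivatives of the $\rho_i$ and of the transition endomorphisms $h_j^{-1} h_i$ are uniformly controlled by smooth tensors on $V_j$ thanks to Proposition \ref{prop:partition} and Lemma \ref{lemma:curvature-local-bound}, so a compactness argument on the finite cover yields $\sqrt{-1} F_H \leqslant C\, \omega \cdot \mathrm{Id}_\cF$ globally.

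For the final assertion, I would lift the same local construction to $\widehat{X}$: pulling back the presentation of $\cF^*|_{U_i}$ by $r$ and dualizing produces a saturated inclusion $\widehat{\cF}|_{r^{-1}(U_i)} \hookrightarrow \cO^{a_i}$, and restricting the flat metric gives a smooth local metric $\widehat{h}_i$ that extends $h_i$ across the exceptional locus. Since each $\rho_i$ is locally $\eta_i \circ \iota_i$ and $\iota_i \circ r$ is holomorphic on $r^{-1}(U_i)$, the pullback $\widehat{\rho}_i := \rho_i \circ r$ is smooth on $\widehat{X}$; hence $\widehat{H} := \sum_i \widehat{\rho}_i \widehat{h}_i$ is a smooth Hermitian metric on the locally free sheaf $\widehat{\cF}$ which agrees with $H$ over $r^{-1}(X_\cF) = X_\cF$.

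The hard part will be the uniformity of the curvature estimate in the second paragraph. Although each $h_i$ has curvature $\leqslant 0$, the convex combination picks up positive contributions from the derivatives of the $\rho_i$ and of the transition endomorphisms, and these must be dominated by $\omega$ by a single constant $C$ that works uniformly as one approaches the singular locus of $X$ or the non-locally-free locus of $\cF$. The very special shape of Proposition \ref{prop:partition}, which makes each $\rho_i$ the pullback of a smooth function on a smooth ambient space, is precisely what turns this into a routine compactness bound on the compact variety $X$.
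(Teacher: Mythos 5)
Your setup — local metrics $h_i$ obtained by restricting ambient metrics on free presentations, and the special partition of unity from Proposition~\ref{prop:partition} — matches the paper's. The lifting argument for $\widehat{H}$ at the end is also essentially the paper's. The gap is in the middle step, exactly where you flag ``the hard part'': you write $H = h_j k$ with $k = \sum_i \rho_i(h_j^{-1}h_i)$ and propose to bound $\dbar(k^{-1}\partial_{h_j}k)$ by a ``routine compactness bound.'' This is not routine, and you do not carry it out. The bound must hold uniformly as one approaches $X\setminus X_\cF$, but $X_\cF$ is not precompact; near the non-locally-free locus of $\cF$ the second fundamental forms of the inclusions $\cF\hookrightarrow \cE_i^*$ blow up, so $\partial h_i$, $\dbar\partial(h_j^{-1}h_i)$, and the individual curvatures $F_{h_i}$ (which are $\leqslant 0$ but unboundedly negative) all degenerate. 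The positive part of $\dbar(k^{-1}\partial_{h_j}k)$ is bounded only because of cancellations against $F_{h_j}$, and nothing in the special shape of $\rho_i$ alone exhibits those cancellations.

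The paper closes this gap with a diagonal-embedding trick that your proposal does not have. Near $x$, embed $U$ diagonally into the product $\prod_{x\in U_i} W_i$ (a smooth manifold) so that $\iota_*(\cF|_U)$ becomes a saturated subsheaf of $\bigoplus_i p_i^*((\cE^{V_i})^*|_{W_i})$, and equip the latter with the metric $g' = \sum_i \eta_i\, p_i^* g_i$. The crucial observation is that $H = \sum_i \rho_i h_i$ is then \emph{literally} the restriction of $g'$ to this subsheaf, so Lemma~\ref{lemma:curvature-local-bound} applies in one stroke: $\sqrt{-1}F_H \leqslant \sqrt{-1}F_{g'}|_{\cF} \leqslant C_x\,\omega\cdot\mathrm{Id}_\cF$, with $C_x$ coming from the smooth curvature of $g'$ on a precompact open subset of the smooth ambient product. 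This single-restriction realization is the engine of the proof, and without it the uniformity of the curvature bound near the singular and non-locally-free loci is not established by your argument.
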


\begin{proof} 
There is a finite open covering $\{U_i\}$ of $X$ such that the following properties hold.
\begin{enumerate}
\item Every $U_i$ is precompact and can be identified with an analytic subvariety of a domain $V_i \subseteq \mathbb{C}^{N_i}$.

\item There is a surjective morphism 
\[  \cE_i \to \cF^*|_{U_i} \to 0
\] 
such that   $ \cE_i$ is a  free coherent sheaf on $U_i$.   
\end{enumerate}
Let $\cE_i^{V_i}$ be a free coherent sheaf on $V_i$ such that $\cE_i = \cE_i^{V_i}|_{U_i}$. 
Let $g_i$ be a Hermitian metric  on  $(\cE^{V_i})^*$.  
By restriction, it  induces a Hermitian metric on $h_i$ on  $\cF|_{U_i}$.  
Let $\{\rho_i\}$ be a partition of the unity subordinate to $\{U_i\}$, as in Proposition \ref{prop:partition}. 
We define the Hermitian metric $H = \sum \rho_i h_i$ on $\cF$.  
We denote by  $F$   its Chern curvature.
We will show that $H$ has the properties of the lemma.

Let $x \in X$ be a point. Then there is an open neighbourhood $x \in U$  such that 
\begin{enumerate}
\item $U$ is contained in $U_i$ whenever $x\in U_i$.
\item if $x\in U \subseteq U_i$, then there is an open subset $W_i\subseteq V_i$ containing $U$ such that  $\rho_i|_U$ is the restriction of some smooth function $\eta_i\colon W_i \to \mathbb{R}$. 
\end{enumerate}
Now we consider the diagonal embedding, followed by the product inclusion,  
\[U  {\hookrightarrow}    \prod_{x\in U_i} U_i  \hookrightarrow   \prod_{x\in U_i} W_i. \] 
Let $\iota \colon U \to \prod_{x\in U_i}W_i$ be the composition of this sequence. 
Then it realizes $U$ as a K\"ahler   subvariety of the manifold $\prod_{x\in U_i}W_i,$ up to scaling the K\"ahler form by the cardinality of $\{ U_i\ |\    x \in U_i \}$.

We denote by $p_i\colon\prod_{x\in U_i} W_i \to W_i $ the natural projections. 
By abuse of notation, we still denote by $p_i$ the projection from $\iota(U)$ to $U\subseteq U_i \subseteq W_i$. 
For the reflexive sheaf $\cF|_U$, we have the following sequence of sheaves supported in  $ \iota(U) \subseteq \prod_{x\in U_i} W_i $, such that the first one is the diagonal map, and the second one is the direct sum embedding, 
\[ \iota_*(\cF|_U) \hookrightarrow  \bigoplus_i p_i^*(\cF|_U)  \hookrightarrow   \bigoplus_i  p_i^*(  \cE_i^*|_U).\]

We remark that 
\[\bigoplus_i p_i^* ( \cE_i^*|_U )=   \Big(\bigoplus_i p_i^* ((\cE^{V_i})^*|_{W_i} ) \Big) \Big|_{\iota(U)}. \] 
The vector bundle $ \bigoplus_i p_i^* ((\cE^{V_i})^*|_{W_i} ) $ can be equipped with the orthogonal direct sum metric $g' = \sum \eta_i p_i^*g_i$. 
Then the metric $H$ on $\cF|_U$ coincides exactly the restriction of $g'$ on $\iota_*(\cF|_U)$.
Hence by Lemma \ref{lemma:curvature-local-bound}, we have 
\[\sqrt{-1} F   \leqslant  C_x \omega \cdot \mathrm{Id}_\cF\] 
over some open neighbourhood of $x$, for some positive number $C_x$. 
 
Since $X$ is compact, it follows that there is a positive constant $C$ such that 
\[\sqrt{-1} F   \leqslant  C \omega \cdot \mathrm{Id}_\cF.\]

For the second part of the lemma,    we will first  work locally. 
Let $\widehat{U}_i = r^{-1}(U_i)$. 
Since $r$ is obtained by blowing smooth centers, we can blow up $V_i$ with the same centers and obtain a manifold $q\colon \widehat{V}_i \to V_i$, such that $\widehat{U}_i$ is the strict transform of $U_i$ in $\widehat{V}_i$. 
We have the following exact sequence, 
 \[  r^*\cE_i \to  r^*\cF^*|_{\widehat{U}_i} \to 0. \]
By taking the dual morphism, we see that  $ (r^*\cF^*)^* |_{\widehat{U}_i}$ is a subsheaf of $ (r^*\cE_i)^*$. 
By the  assumption of freeness,  $ (r^*(\cF^*))^*|_{\widehat{U}_i}$ is  indeed a subbundle of $  (r^*\cE_i)^* $.
The metric $q^*g_i$ on $q^* (\cE^{V_i})^*$  then  induces a metric $\widehat{h}_i$ on $ (r^*\cF^*)^*|_{\widehat{U}_i}$ by restriction. 
We note that $\widehat{h}_i$ is an extension of $h_i$. 
By taking account of all $U_i$, we see that $H$ extends to a smooth Hermitian metric  $\widehat{H}$ on the locally free sheaf $( r^*\cF^*)^*$.
\end{proof}

For such a metric, we have the following statement on integrability.

\begin{cor}
\label{cor:initial-metric-integral}
With the notation above, let  $n=\dim X$.  
Let $Z \subseteq X$ be the   locus where $r^{-1}$ is not an isomorphism. 
For any K\"ahler form $\eta$ on $\widehat{X}$, we still denote by $\eta$ its restriction on $X\backslash Z$. 
Then on $X\backslash Z$, we have 
\[ ||\sqrt{-1}  \Lambda_\eta F  ||_H  \eta^{n } \leqslant \mathrm{tr}\, \Big( 2nC \omega \mathrm{Id}_\cF   - n \mathrm{tr}\, (\sqrt{-1}F ) \Big) \wedge \eta^{n-1}.  \]
As a consequence,  
\begin{eqnarray*}
\int_{{X\backslash Z}} ||\sqrt{-1}  \Lambda_\eta F  ||_H  \eta^{n }  &\leqslant&  2nC \cdot \mathrm{rank}\, \cF\cdot [r^*\omega]\wedge [\eta^{n-1}]\\
&& -2n\pi \cdot c_1\Big( (r^*(\cF^*))^* \Big) \wedge  [\eta^{n-1}] 
\end{eqnarray*}
is bounded, where $[\eta]$ and $[r^*\omega]$ are the corresponding cohomology classes on $\widehat{X}$.
\end{cor}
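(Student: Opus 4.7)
The plan is to derive the pointwise inequality via a linear-algebra manipulation of the curvature bound from Lemma~\ref{lemma:existence:initial-metric}, and then to obtain the integral statement by transporting everything to the smooth model $\widehat{X}$ and invoking Chern--Weil.

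For the pointwise bound, contracting the inequality $\sqrt{-1}F \leqslant C\omega \cdot \mathrm{Id}_\cF$ with the K\"ahler form $\eta$ preserves the order on Hermitian $\cE nd(\cF)$-valued $(1,1)$-forms and gives $\sqrt{-1}\Lambda_\eta F \leqslant C\Lambda_\eta\omega \cdot \mathrm{Id}_\cF$ pointwise on $X\setminus Z$. Writing $A := \sqrt{-1}\Lambda_\eta F$ with eigenvalues $(\lambda_i)$ and $B := C\Lambda_\eta\omega \geqslant 0$ (non-negativity coming from $\omega \geqslant 0$ on $X\setminus Z$ and $\eta$ K\"ahler), a case analysis on the sign of $\lambda_i$ yields $|\lambda_i| \leqslant 2B - \lambda_i$ (using $\lambda_i \leqslant B$ when $\lambda_i \geqslant 0$, and $B \geqslant 0$ when $\lambda_i < 0$), so
\[ \|A\|_H = \Bigl(\sum_i \lambda_i^2\Bigr)^{1/2} \leqslant \sum_i |\lambda_i| \leqslant \mathrm{tr}\bigl(2B\,\mathrm{Id}_\cF - A\bigr). \]
Multiplying by $\eta^n$ and applying the identity $n\alpha\wedge \eta^{n-1} = (\Lambda_\eta\alpha)\,\eta^n$ valid for any scalar $(1,1)$-form $\alpha$ (applied to $\omega$ and to $\mathrm{tr}(\sqrt{-1}F)$) produces the first asserted inequality.

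For the integral statement, I would integrate the pointwise bound over $X\setminus Z$ and transport everything to $\widehat{X}$. Since $r$ restricts to a biholomorphism $\widehat{X}\setminus r^{-1}(Z) \to X\setminus Z$ and $r^{-1}(Z)$ is a proper analytic subvariety (hence of measure zero), pullback identifies the integral over $X\setminus Z$ with an integral over the smooth compact K\"ahler manifold $\widehat{X}$. By Lemma~\ref{lemma:existence:initial-metric}, $H$ extends to a smooth Hermitian metric $\widehat{H}$ on the locally free sheaf $(r^*\cF^*)^*$, so both $r^*\omega$ and $\mathrm{tr}(\sqrt{-1}F)$ extend as smooth closed forms across the exceptional locus. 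Stokes' theorem then gives $\int_{\widehat{X}} r^*\omega\wedge\eta^{n-1} = [r^*\omega]\cdot[\eta]^{n-1}$, while the Chern--Weil formula gives $\int_{\widehat{X}}\mathrm{tr}(\sqrt{-1}F)\wedge\eta^{n-1} = 2\pi\, c_1\bigl((r^*\cF^*)^*\bigr)\cdot[\eta]^{n-1}$. Substituting these cohomological values produces the stated finite bound.

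There is no single hard step; the only subtle point is bookkeeping: the smooth extension of $H$ across the exceptional locus afforded by Lemma~\ref{lemma:existence:initial-metric} is exactly what licenses the identification of the integral over the a priori singular open set $X\setminus Z$ with a cohomological pairing on the smooth manifold $\widehat{X}$.
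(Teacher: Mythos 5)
Your proposal is correct and follows essentially the same route as the paper: derive the pointwise bound by elementary linear algebra from the Griffiths-type curvature upper bound $\sqrt{-1}F\leqslant C\omega\cdot\mathrm{Id}_\cF$ of Lemma~\ref{lemma:existence:initial-metric}, then evaluate the integral by transporting to $\widehat{X}$ and invoking smoothness of $\widehat{H}$ and Chern--Weil. The only cosmetic difference is in the pointwise step: the paper writes $\sqrt{-1}\Lambda_\eta F$ as $C\Lambda_\eta\omega\,\mathrm{Id}_\cF - g$ with $g:=\sqrt{-1}\Lambda_\eta(C\omega\,\mathrm{Id}_\cF-F)\geqslant 0$ and applies the triangle inequality together with $\|g\|_H\leqslant\mathrm{tr}\,g$, whereas you diagonalize $A=\sqrt{-1}\Lambda_\eta F$ and bound $\sum_i|\lambda_i|$ by $\sum_i(2B-\lambda_i)$ directly; both yield the same estimate, and your eigenvalue version avoids the small (harmless) overcount that comes from replacing $\|C\Lambda_\eta\omega\,\mathrm{Id}_\cF\|_H = C\Lambda_\eta\omega\sqrt{\mathrm{rank}\,\cF}$ by its trace.
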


\begin{proof}
We have  
\[ ||\sqrt{-1}  \Lambda_\eta F  ||_H \leqslant ||\sqrt{-1}  \Lambda_\eta(C\omega \mathrm{Id}_{\cF} - F)  ||_H  + ||C\sqrt{-1}  \Lambda_\eta \omega \mathrm{Id}_{\cF} ||_H.
\]
For simplicity, we set $g= \sqrt{-1}  \Lambda_\eta(C\omega \mathrm{Id}_{\cF} - F). $
Since $F$ is the Chern curvature of $H$, we see that  $g$ is selfadjoint with respect to  $H$. 
Thus $ ||g ||_H = \sqrt{\mathrm{tr}\, (g^2)}$.
Since $g$  is positive by Lemma \ref{lemma:existence:initial-metric}, it follows that 
\[ ||g ||_H  \leq \mathrm{tr}\, g = \sqrt{-1} \Lambda_\eta\mathrm{tr}\, (  (C\omega \mathrm{Id}_{\cF} - F)). \] 
Thus \[ 
||\sqrt{-1}  \Lambda_\eta F  ||_H  \eta^{n } \leqslant  n \mathrm{tr}\, (2 C\omega \mathrm{Id}_{\cF} - F) ) \eta^{n-1}.  \]
This proves the first inequality. 

For the estimate on the integral, we note that $F$ can be viewed as the restriction of $\widehat{F}$ on $X\backslash Z$, where $\widehat{F}$ is the Chern connection of $\widehat{H}$ on $( r^*\cF^*)^*$.   
It follows that 
\[ \int_{X\backslash Z} \mathrm{tr}\, (\sqrt{-1} F)\wedge \eta^{n-1} = \int_{\widehat{X}} \mathrm{tr}\, (\sqrt{-1} \widehat{F} )\wedge \eta^{n-1} =2\pi c_1\Big( (r^*(\cF^*))^* \Big) \wedge  [\eta^{n-1}]. \]
This completes the proof.
\end{proof}

\begin{remark}\label{rem:uniform-integrable}
With the notation above, we assume that $\omega_\epsilon$ is a family of K\"ahler metric on $X\backslash Z$, bounded from above by $\eta$. 
Then the corollary indeed implies that 
$\int_{{X\backslash Z}} ||\Lambda_{\omega_\epsilon} F  ||_H  \omega_\epsilon^{n }$
are uniformly integrable, independent of $\epsilon$,  in the following sense. 
We fix an exhaustion $\{X_j\}$ of $X\backslash Z$, consisting of precompact subsets.  
Let  $T_j = {X\backslash (Z \cup X_j) }$.
Then for each $\xi >0$, there is a positive number $m_0$ such that for any $j \geqslant m_0$, we have  
\[
\int_{T_j} || \Lambda_{\omega_\epsilon} F  ||_H  \omega_\epsilon^{n } \leqslant \xi.
\]
Indeed,  as in Corollary \ref{cor:initial-metric-integral}, we have $ || \Lambda_{\omega_\epsilon} F  ||_H  \omega_\epsilon^{n } \leqslant \Theta \wedge \omega_\epsilon^{n-1},$ where we denote $\Theta = \mathrm{tr}\, \Big( 2nC \omega \mathrm{Id}_\cF   - n \mathrm{tr}\, (\sqrt{-1}F ) \Big)$.  
Lemma \ref{lemma:existence:initial-metric} implies that $\Theta$ is positive. 
Hence $0 \leqslant \Theta \wedge \omega{_\epsilon}^{n-1} \leqslant \Theta \wedge \eta^{n-1}.$ 
Since $\Theta\wedge \eta^{n-1}$ is integrable, there is some $m_0$ such that for any $j \geqslant m_0$, we have 
\[
\int_{T_j}  \Theta \wedge \eta^{ n-1}  \leqslant \xi.
\]
This implies the estimate. 
\end{remark}

\section{Heat kernels of K\"ahler varieties}
\label{section:heat-kernel}

In this section, we will investigate the heat kernels of   compact K\"ahler varieties. 
We will start by recalling some basic properties on heat kernels.

\subsection{Heat kernesl of Riemannian manifolds}
Let $(M,g)$ be a Riemannian manifold, which is not necessarily compact.  
We denote by $\Delta$ the Laplace-Beltrami operator induced by  $g$. 
Let $\Omega\subseteq M$ be any non empty precompact open subset, with possibly empty smooth boundary $\partial\Omega$.  
We consider the following heat equation on $\Omega$, with Dirichlet boundary condition, on $u(x,t)$, with   variables  $x\in \Omega$ and $t\in [0,\infty)$,
\begin{equation}\label{equa:heat-equation-precampact}
    \begin{cases}
       (\Delta-\frac{\partial}{\partial t}) u(x,t) = 0 & \mbox{for } t>0\\
       u(x,0) =   f(x)    &\\
       u(x,t)|_{\partial \Omega} = 0 & \mbox{for } t>0
    \end{cases}       
\end{equation}
The initial condition $f$ is assume to be  bounded continuous. 
This equation admits  a smooth fundamental solution $K_\Omega(x,y,t)$, which is called the heat kernel of $(\Omega,g)$, in the following sense. 
One solution of the heat equation (\ref{equa:heat-equation-precampact})  is  the convolution
\[ u(x,t) = \int_{y\in \Omega} K_\Omega(x,y,t)f(y) \mathrm{d}y.\]


We now pass to the following  heat equation on $M$, with $f$ continuous and bounded, 
\begin{equation}\label{equa:heat-equation-open}
    \begin{cases}
       (\Delta-\frac{\partial}{\partial t}) u(x,t) = 0 & \mbox{for } t>0\\
       u(x,0)  =  f(x)& \\
    \end{cases}       
\end{equation}
Such an equation also admits a smooth fundamental solution in the previous sense. 
One can be obtained as follows (see \cite[Theorem 3.6]{Dodziuk1983}).  
Let $\{\Omega_i\}$ be an exhaustion  of $X$ by precompact open subsets, and  we set $$K(x,y,t) = \limsup_i K_{\Omega_i}(x,y,t).$$ 
Then it is  the smallest positive fundamental solution, and we call it the heat kernel of $(X,g)$.

We have the following properties on the heat kernels $K$ and $K_{\Omega}$.  
For simplicity, we omit the lower index $\Omega$ for $K_{\Omega}$ in the following list.  
\begin{enumerate}
\item $K(x, y, t) \geqslant 0$.
\item $K(x,y,t) = K(y,x,t)$.
\item $K (x,y,t+s) = \int  K(x,z,t)K(z,y,s) \mathrm{d}z$.
\item $K(x,y,t) \leqslant  \sqrt{K(x,x,t)K(y,y,t)}.$
\item $  K(x,z,t)  \mathrm{d}z  \leqslant 1$.
\end{enumerate}

We can derive the following estimate after the maximum principal. 
For the reader's convenience, we recall the proof here.

\begin{prop}\label{prop:max-principal}\
Let $(M,g)$ be a compact Riemannian manifold without boundary, and  $K$ its heat kernel.
Let $v(x,t)\geqslant 0$ be a continuous  function on $M\times [0,\infty)$.
Assume that, whenever $v(x,t)\neq 0$, it is $\mathcal{C}^2$ in $x$, $\mathcal{C}^1$ in $t$, and  
\[(\frac{1}{2}\Delta-\frac{\partial}{\partial t}) v \geqslant 0.\] 
Then for any $t>0$, we have 
\[ v(x,t) \leqslant \int_{y\in M} K(x,y,\frac{1}{2}t)v(y,0)\mathrm{d}y.\]
\end{prop}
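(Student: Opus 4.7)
The plan is a comparison argument with the smooth solution of the same Cauchy data, followed by a parabolic maximum principle with a time penalty. First, observe that $\tilde K(x,y,t) := K(x,y,t/2)$ satisfies $(\tfrac{1}{2}\Delta - \partial_t)\tilde K = 0$ in $(y,t)$, whence
\[ w(x, t) \;:=\; \int_{y \in M} K(x, y, t/2)\, v(y, 0)\, \mathrm{d}y \]
is a smooth nonnegative solution of $(\tfrac{1}{2}\Delta - \partial_t) w = 0$ on $M \times (0, \infty)$; by standard properties of the heat kernel on a compact manifold without boundary, $w$ extends continuously to $t = 0$ with $w(\cdot, 0) = v(\cdot, 0)$. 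The claimed inequality is exactly $v \leqslant w$, so it suffices to prove this pointwise comparison.

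Set $u := v - w$, a continuous function on $M \times [0, \infty)$ with $u(\cdot, 0) \equiv 0$. Fix $T > 0$ and $\epsilon > 0$, and consider $u_\epsilon := u - \epsilon t$ on the compact cylinder $M \times [0, T]$. Suppose for contradiction that $\sup u_\epsilon > 0$, attained at some $(x_0, t_0)$. Then $t_0 > 0$ (since $u_\epsilon \equiv 0$ at $t = 0$) and $u(x_0, t_0) > \epsilon t_0 > 0$; combined with $w \geqslant 0$ this forces $v(x_0, t_0) > 0$. By continuity of $v$, a full spacetime neighborhood of $(x_0, t_0)$ lies in $\{v \neq 0\}$, and by hypothesis $v$ is $\mathcal{C}^2$ in $x$ and $\mathcal{C}^1$ in $t$ there, with $(\tfrac{1}{2}\Delta - \partial_t) v \geqslant 0$; the same regularity and inequality therefore hold for $u$. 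At the maximum $(x_0, t_0)$ we have $\Delta u(x_0, t_0) \leqslant 0$ and $\partial_t u(x_0, t_0) \geqslant \epsilon$ (with equality when $t_0 < T$), so $(\tfrac{1}{2}\Delta - \partial_t) u(x_0, t_0) \leqslant -\epsilon < 0$, contradicting the subsolution property. Thus $u_\epsilon \leqslant 0$ on $M \times [0, T]$; letting $\epsilon \to 0$ and then $T \to \infty$ yields $v \leqslant w$ throughout $M \times [0, \infty)$, which is the conclusion.

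The main obstacle is the lack of regularity of $v$ at the zero set $\{v = 0\}$, where the differential inequality is not assumed to hold, so that naive integration-by-parts approaches (for instance differentiating $s \mapsto \int \tilde K(x_0, y, T-s) v(y, s)\, \mathrm{d}y$) require a delicate justification across $\{v = 0\}$. The comparison route sidesteps this entirely: any positive maximum of $u_\epsilon = v - w - \epsilon t$ must occur at a point where $v$ strictly exceeds $w \geqslant 0$, hence where $v > 0$; continuity of $v$ upgrades this pointwise positivity to a neighborhood, restoring all the regularity needed to apply the classical strong maximum principle at that point. The time penalty $-\epsilon t$ serves the usual purpose of turning the nonstrict subsolution inequality $(\tfrac{1}{2}\Delta - \partial_t)u \geqslant 0$ into a strict contradiction at the supposed maximum.
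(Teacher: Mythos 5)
Your proof is correct and follows essentially the same route as the paper's: build the smooth comparison solution $w$ from the heat kernel, subtract, add a linear-in-time penalty, and run the parabolic maximum principle on a compact cylinder $M\times[0,T]$, deriving a contradiction at the maximum. The only difference is expository: you spell out that a positive value of $u_\epsilon$ forces $v>0$ at the maximum point and hence (by continuity) on a full spacetime neighborhood, which is exactly what restores the $\mathcal{C}^2$/$\mathcal{C}^1$ regularity and the differential inequality needed there; the paper uses this fact implicitly without stating it. This extra sentence is a genuine improvement in rigor since without it the claim ``$\Delta w_\delta$ and $\partial_t w_\delta$ are well defined at $(x_0,t_0)$'' is not self-evident.
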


\begin{proof}
We set  
\[u(x,t) =\int_{y\in M} K(x,y,\frac{1}{2} t)v(y,0)\mathrm{d}y \geqslant 0. \] 
Then $u$ is a smooth solution of the   equation $(\frac{1}{2}\Delta-  \frac{\partial}{\partial t}) u =0$, with initial data $u(x,0)=v(x,0)$.   
Let $w = v-u$, then \[(\frac{1}{2}\Delta-  \frac{\partial}{\partial t}) w \geqslant 0. \]
We will show that $w\leqslant 0$ over $M\times [0,\infty)$. 
Assume by contradiction that $w$ admits strictly positive values at some time $T>0$.  
For some  $0<\delta \ll 1$, we set 
\[w_\delta (x,t) = w(x,t) - \delta t.\]
Then $w_\delta$  also admits  strictly positive values at $T$. 
By continuity of $w_\delta$, it admits a maximum at some point $(x_0,t_0) \in M\times [0,T]$.
Since $w_\delta(x_0,t_0)>0$, we get $t_0>0$. 
Moreover, both $ \Delta w_\delta $ and $\frac{\partial}{\partial t} w_\delta$ are  well defined. 

Since $w_\delta$ has maximum at $(x_0,t_0)$, we get 
\[ \Delta w_\delta (x_0,t_0) \leqslant 0.\]   
Therefore, 
\[  \frac{\partial}{\partial t}  w_\delta(x_0,t_0)  \leqslant -\delta + \frac{1}{2}\Delta w_\delta(x_0,t_0) \leqslant -\delta.\] 
It follows that $w_\delta(x_0,\cdot)$ is strictly decreasing around $t_0$. 
Since $t_0 \neq 0$, this  contradicts that  $w_\delta$ has maximum at $(x_0,t_0)$.
\end{proof}

We will show the next two results on upper bounds for the heat kernels.

\begin{prop}[{\cite[Theorem 2.1]{CarlenKusuokaStroock1987}}]\label{prop:sobolev-heat-estimate}
Let $K$ be the heat kernel of a Riemannian manifold $(M,g)$ of real dimension $m$.
Then for any  $t\geqslant t'>0$, we have   $K(x,x,2t) \leqslant K(x,x,2t')$. 
Assume  further  that   we have a Sobolev type inequality  as follows. 
For any compactly supported $\mathcal{C}^1$  function  $f\geqslant 0$,  
\[ (\int_{M} |f|^{\frac{2m}{ m-2} })^{\frac{m-2}{m}}  \leqslant C_S \int_{M} (|\nabla f|^2 + |f|^2). \]
Then   $ K(x,x,2t) \leqslant \Big(\frac{ mC_S}{4} \Big)^{\frac{m}{2}} e^{2t} t^{-\frac{m}{2}}$. 
\end{prop}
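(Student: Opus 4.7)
\medskip

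\textbf{Proof proposal for Proposition 4.2.}

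The first ingredient is the semigroup identity together with symmetry of the kernel: by property (3) and (2) of the list just above the statement,
\[ K(x,x,2t) \;=\; \int_M K(x,z,t)\,K(z,x,t)\,\mathrm{d}z \;=\; \int_M K(x,z,t)^2 \,\mathrm{d}z \;=\; \|K(x,\cdot,t)\|_2^{\,2}. \]
Set $u(t)=\|K(x,\cdot,t)\|_2^{\,2}=K(x,x,2t)$. Differentiating under the integral, using the heat equation $\partial_t K(x,\cdot,t)=\Delta_y K(x,\cdot,t)$, and integrating by parts (valid because $K(x,\cdot,t)$ decays at infinity / vanishes on the boundary in the exhausting Dirichlet problems, then passing to the limit), gives
\[ u'(t) \;=\; 2\int_M K(x,z,t)\,\Delta_z K(x,z,t)\,\mathrm{d}z \;=\; -2\int_M |\nabla_z K(x,z,t)|^2\,\mathrm{d}z \;\leqslant\; 0. \]
This already proves the first, qualitative half of the statement: $t\mapsto K(x,x,2t)$ is nonincreasing.

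The second half is a differential inequality fed by a Nash-type inequality derived from the assumed Sobolev inequality. I would first upgrade Sobolev to Nash by interpolation: for $f\geqslant 0$ compactly supported $\mathcal{C}^1$, H\"older with exponents chosen so that $\tfrac{1}{2}=\tfrac{\theta}{1}+\tfrac{(1-\theta)(m-2)}{2m}$, i.e.\ $\theta=\tfrac{2}{m+2}$, yields
\[ \|f\|_2^{\,2} \;\leqslant\; \|f\|_1^{4/(m+2)}\,\|f\|_{2m/(m-2)}^{\,2m/(m+2)}. \]
Plugging in the Sobolev hypothesis $\|f\|_{2m/(m-2)}^{\,2}\leqslant C_S(\|\nabla f\|_2^{\,2}+\|f\|_2^{\,2})$ and raising to the power $(m+2)/m$ gives Nash's inequality
\[ \|f\|_2^{\,2(m+2)/m} \;\leqslant\; C_S\,\|f\|_1^{4/m}\,(\|\nabla f\|_2^{\,2}+\|f\|_2^{\,2}). \]
Apply this to $f=K(x,\cdot,t)$; by property (5) we have $\|f\|_1\leqslant 1$, so
\[ u(t)^{\,1+2/m} \;\leqslant\; C_S\!\left(-\tfrac{1}{2}u'(t)+u(t)\right). \]

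Finally I would solve the ODE inequality. Substitute $w(t)=e^{-2t}u(t)$; the cross term cancels and one obtains
\[ -\frac{w'(t)}{2\,w(t)^{1+2/m}} \;\geqslant\; \frac{e^{4t/m}}{C_S}. \]
Integrating from a small $t'>0$ to $t$ and letting $t'\to 0^{+}$ (so that $w(t')^{-2/m}\to 0$, because $u$ blows up at $0$) gives
\[ \frac{m}{4}\,w(t)^{-2/m} \;\geqslant\; \frac{m}{4C_S}\bigl(e^{4t/m}-1\bigr) \;\geqslant\; \frac{t}{C_S}, \]
where in the last step I used $e^{x}-1\geqslant x$. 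Rearranging and restoring $u$,
\[ u(t) \;\leqslant\; e^{2t}\!\left(\tfrac{mC_S}{4t}\right)^{\!m/2} \;=\; \left(\tfrac{mC_S}{4}\right)^{\!m/2}\!e^{2t}\,t^{-m/2}, \]
which is the desired bound on $K(x,x,2t)$.

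The only genuinely delicate step is justifying the integration by parts (and the limit $t'\to 0$) on a possibly non-compact $M$: one approximates by exhausting precompact domains, uses the Dirichlet heat kernels $K_\Omega$ in place of $K$ (for which decay at $\partial\Omega$ is automatic), runs the argument to get the bound with the \emph{same} constants, and then takes the increasing limit via the construction $K=\limsup_i K_{\Omega_i}$. Once that is in place the rest is bookkeeping.
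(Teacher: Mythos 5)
The proposal is correct and follows essentially the same approach as the paper. Both compute $K(x,x,2t)=\int_M K(x,y,t)^2\,\mathrm{d}y$, differentiate to get $-2\int|\nabla_y K|^2$, interpolate $\|K\|_2$ between $\|K\|_1$ and $\|K\|_{2m/(m-2)}$, use $\|K(x,\cdot,t)\|_1\leqslant 1$ to obtain the differential inequality, and then integrate after the substitution $w=e^{-2t}u$ (the paper's $I=e^{-2t}J$). The only cosmetic difference is that you keep the factor $e^{4t/m}$ explicit through the ODE integration and bound it at the end via $e^x-1\geqslant x$, whereas the paper discards that factor earlier by the direct bound $e^{-2t}J^{(m+2)/m}\geqslant I^{(m+2)/m}$; both produce the same constant.
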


\begin{proof}
We note that $K = \limsup K_{\Omega_i}$, where $\{ \Omega_i \}$ is an exhaustion of $M$ consisting of precompact open subsets, with smooth boundary. 
In order to prove the proposition, it is enough to prove the same statement for each $K_{\Omega_i}$. 
For simplicity, we omit the index $\Omega_i$ in the following argument, and assume that $K$ is one of the $K_{\Omega_i}$. In particular, it has compact support in $M  \times M$.

Let $J(x,t)= \int_{y\in M} K(x,y,t)^2 \mathrm{d}y$.  Then  
\[  \frac{\partial}{\partial t} J(x,t) =  2   \int_{y\in M} K (x,y,t) \Delta_y K (x,y,t) \mathrm{d}y = -  2\int_{y\in M}|\nabla_y K(x,y, t)|^2 \mathrm{d}y.\] 
In particular, we see that $J(x,t) \leqslant J(x,t')$ if $t\geqslant t'$.

The Sobolev inequality implies that 
\[ \int_{y\in M}|\nabla_y K(x,y, t)|^2 \mathrm{d}y  \geqslant \frac{1}{C_S} \Big( \int_{y\in M} K(x,y,t)^{\frac{2m}{ m-2}}\mathrm{d}y \Big)^{\frac{m-2}{ m}} - \int_{y\in M} K(x,y,t)^2\mathrm{d}y. \] 
The H\"older inequality implies that 
\[ \int_{y\in M} K(x,y,t)^{\frac{2m}{ m-2}}\mathrm{d}y  \geqslant  \Big(\int_{y\in M} K(x,y,t)^2\mathrm{d}y  \Big)^{\frac{ m+2}{ m-2}} \Big( \int_{y\in M} K(x,y,t)\mathrm{d}y \Big)^{-\frac{4}{ m-2}}.  \]
We note that 
\[ \int_{y\in M} K(x,y,t)\mathrm{d}y  \leqslant 1, \]
thus we have 
\[ \int_{y\in M}|\nabla_y K(x,y, t)|^2 \mathrm{d}y  \geqslant \frac{1}{C_S} J(x,t)^\frac{m+2}{m} - J(x,t). \]
Therefore 
\[ -\frac{\partial}{\partial t} J(x,t) \geqslant \frac{2}{C_S}J(x,t)^{\frac{m+2}{m} } - 2J(x,t).  \]

Let $I(x,t)=J(x,t)e^{-2t}$. 
It follows that 
\begin{eqnarray*}
-\frac{\partial}{\partial t}I(x,t) 
& = & e^{-2t}(\frac{\partial}{\partial t}J(x,t) -2J(x,t) )\\
& \geqslant &\frac{2e^{-2t}}{C_S} J(x,t)^{\frac{m+2}{m}} \\
& \geqslant & \frac{2}{C_S} I(x,t)^{\frac{m+2}{m}}.
\end{eqnarray*}
We remark that $I(x,t)$ tends to infinity when $t$ tends to zero. 
Hence we have  $I(x,t) \leqslant \Big(\frac{ mC_S}{4} \Big)^{\frac{m}{2}} t^{-\frac{m}{2}}$, and thus  $J(x,t) \leqslant \Big(\frac{ mC_S}{4} \Big)^{\frac{m}{2}} e^{2t} t^{-\frac{m}{2}}$. 
\end{proof}

Following the method of \cite{Grigoryan1997}, we obtain a Gaussian upper bound for the heat kernel.

\begin{cor}\label{cor:sobolev-heat-estimate-2points}  
With the assumptions in Proposition \ref{prop:sobolev-heat-estimate}, we have the following estimate for $K(x,y,t)$. There are constants $t_0 >0$, $\mu>0$, depending only on $m$ such that 
\[ K(x,y,t) \leqslant \frac{2^{m+2}e^{t_0}}{\sigma(\mu t)} \exp \Big( -\frac{r^2(x,y)}{5t} \Big), \]
where $r(x,y)$ is the distance between $x$ and $y$, and the function $\sigma$ is defined by 
\begin{equation*} 
\sigma(t) = 
    \begin{cases}
       \Big(\frac{2}{ mC_S} \Big)^{\frac{m}{2}} e^{-t} t^{\frac{m}{2}} & \mbox{for } 0 < t<t_0\\
        \Big(\frac{2}{ mC_S} \Big)^{\frac{m}{2}} e^{-t_0} t_0^{\frac{m}{2}} & \mbox{for } \mbox{for } t \geqslant t_0 \\
    \end{cases}       
\end{equation*} 
\end{cor}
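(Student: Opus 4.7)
The plan is to upgrade the on-diagonal estimate of Proposition \ref{prop:sobolev-heat-estimate} to an off-diagonal Gaussian bound via the Davies--Grigoryan weighted $L^2$ method. As in the proof of Proposition \ref{prop:sobolev-heat-estimate}, I would first reduce to the Dirichlet heat kernel $K_\Omega$ on a precompact open subset $\Omega \subseteq M$ with smooth boundary, so that all integrals below are legitimate; the corresponding bound for $K = \limsup K_{\Omega_i}$ then follows by passing to the limit along the exhaustion, using that the distance function $r(x,y)$ is unchanged.

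The key technical lemma is a weighted $L^2$-monotonicity: for any Lipschitz function $\xi(y,t)$ satisfying the Hamilton--Jacobi inequality
\[
\frac{\partial \xi}{\partial t} + \frac{1}{2}|\nabla_y \xi|^2 \leqslant 0,
\]
and any fixed $x \in \Omega$, the quantity
\[
J_\xi(x,t) \defeq \int_\Omega K_\Omega(x,y,t)^2\, e^{\xi(y,t)}\, \mathrm{d}y
\]
is non-increasing in $t$. This is verified by differentiating under the integral sign, using $\partial_t K_\Omega = \Delta_y K_\Omega$, integrating by parts (the Dirichlet boundary condition kills the boundary term), and completing the square on the mixed $\nabla K \cdot \nabla \xi$ term, which leaves a non-positive expression thanks to the Hamilton--Jacobi inequality.

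With the monotonicity in hand, I would choose the weight
\[
\xi(z,t) = -\frac{d(z,A)^2}{2(T - t + \delta)}
\]
for parameters $T, \delta > 0$ and a set $A$ to be chosen (taken as $A = B(y, r(x,y)/2)$ in the final step); the inequality $|\nabla d(\cdot,A)| \leqslant 1$ directly yields the Hamilton--Jacobi condition. Comparing $J_\xi$ at time $t$ with its value at a reference time close to $0$, and using that $K_\Omega(x,\cdot,t)$ is an $L^1$-approximation of a Dirac mass at $x$, produces a localized $L^2$-estimate of the form
\[
\int_{M \setminus B(y, r(x,y)/2)} K(x,z,t)^2\, \mathrm{d}z \leqslant C\, t^{-m/2} \exp\!\Big( -\tfrac{r(x,y)^2}{c t}\Big),
\]
for some absolute $c$; slack in the parameters allows $c = 5$ (rather than the sharp $4$).

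Finally, I would apply the semigroup identity $K(x,y,2t) = \int K(x,z,t) K(z,y,t)\, \mathrm{d}z$ and Cauchy--Schwarz, splitting the integration into $B(y, r(x,y)/2)$ and its complement. On each region, one factor is controlled by the on-diagonal bound of Proposition \ref{prop:sobolev-heat-estimate} (which gives the $\sigma(\mu t)^{-1}$ prefactor), while the other is controlled by the localized weighted $L^2$-bound above, which contributes the Gaussian factor $\exp(-r(x,y)^2/(5t))$. The piecewise definition of $\sigma$ on $[0,t_0]$ versus $[t_0, \infty)$ reflects the monotonicity $K(x,x,2t) \leqslant K(x,x,2t')$ for $t \geqslant t' > 0$ recorded in Proposition \ref{prop:sobolev-heat-estimate}, which lets one freeze the on-diagonal bound at $t = t_0$ for large times. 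The main obstacle is tracking the numerical constants $t_0$ and $\mu$ so that the final inequality takes precisely the stated form; this is purely a matter of optimizing the choices of $T, \delta$ and the cut-off $t_0$ in terms of $m$ and $C_S$, with no further analytical input.
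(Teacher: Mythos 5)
Your proposal is correct and follows the same underlying method as the paper, namely the Davies--Grigor'yan technique for upgrading on-diagonal heat kernel estimates to off-diagonal Gaussian bounds. The difference is purely expositional: the paper verifies that $\sigma$ is a \emph{regular} function in the sense of \cite{Grigoryan1997} (with parameters $\gamma = 2$ and $A = 2^m e^{t_0}$) and that $K(x,x,t) \leqslant 1/\sigma(t)$ using the monotonicity from Proposition \ref{prop:sobolev-heat-estimate}, then cites \cite[Theorem 3.1]{Grigoryan1997} as a black box, whereas you sketch the internal proof of that theorem (weighted $L^2$-monotonicity under a Hamilton--Jacobi inequality, the choice $\xi(z,t) = -d(z,A)^2/(2(T-t+\delta))$, and the semigroup identity combined with Cauchy--Schwarz). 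Both routes yield the same estimate; the citation is shorter, while your self-contained derivation makes the origin of the Gaussian exponent $5t$ and the constants $t_0, \mu$ more transparent.
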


\begin{proof}
By Proposition \ref{prop:sobolev-heat-estimate}, we see that $ K(x,x,t) \leqslant \Big(\frac{ mC_S}{2} \Big)^{\frac{m}{2}} e^{t} t^{-\frac{m}{2}}$ for all $t>0$. 
There is some $t_0>0$, depending only on $m$, such that $e^{-t}t^{\frac{m}{2}}$ is  increasing on $t\in (0,t_0)$.   
Hence the function  $\sigma$ is a regular function in the sense of \cite[page 37]{Grigoryan1997}, with $\gamma = 2$ and $A = 2^{m}e^{t_0}$. 
Moreover, $K(x,x,t) \leqslant \frac{1}{\sigma(t)}$ for all $t>0$ since $K(x,x,t)$ is decreasing in $t$ by Proposition \ref{prop:sobolev-heat-estimate}. 
Hence we can apply \cite[Theorem 3.1]{Grigoryan1997}, and deduce that, there is a  constant   $\mu>0$, depending  on $m$ (and $t_0$)  such that 
\[ K(x,y,t) \leqslant \frac{2^{m+2}e^{t_0}}{\sigma(\mu t)} \exp \Big( -\frac{r^2(x,y)}{5t} \Big). \] 
This completes the proof of the corollary.
\end{proof}

For further study on the heat kernels, we will need the following parabolic interior Schauder estimate. For more details, see for example \cite[Theorem 8.12.1]{Krylov1996}.

\begin{lemma} 
\label{lemma:heat-kernel-local-uniform-bound}
Let $B_{R} \subseteq \mathbb{R}^N$ be the ball of radius $R$ with center at the origin.
Let $\Delta$ be a linear  operator  of the following shape,
\[\Delta = a^{ij}(x,t)\partial_i\partial_j+ b^{i}(x,y)\partial_i + c(x,t), \]
where $a^{ij}=a^{ji}$,  $x \in \mathbb{R}^N$ is the spatial parameter and $t$ is the time parameter.
Assume that $u(x,t)$ is a bounded smooth function on $B_{2R}\times (0,2R)$,  such that 
\[ (\Delta - \frac{\partial}{\partial t}) u = 0.\]
Assume that there is a positive number $\lambda$ such that  
\[ - \lambda |\xi|^2 \leqslant  \sum_{i,j} a^{ij}(x,t) \xi_i\xi_j \leqslant   \lambda |\xi|^2 \]
for any $\xi\in \mathbb{R}^N$,  $x\in B_{2R}$ and $t\in [0,2R]$.
Let $k\geqslant 2$ be an integer.
Assume furthermore that all the partial derivatives of the functions $a^{ij}$,$b^i$,$c$, of order at most $k+1$ with respect  to the spatial parameter, of order at most $2$ with respect to the time parameter, 
are bounded by $\lambda$ over $B_{2R}$.
Then there is a constant $C$, depending only on $N, R,k,\lambda$, such that 
\[||\partial^s u||_{\infty, B_{R}\times (0,R)} \leqslant C ||u||_{\infty, B_{2R}\times [0,2R]},\]
for any partial derivative $\partial^s u$,  of order at most $k$, with respect  to the spatial parameter.
\end{lemma}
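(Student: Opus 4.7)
The plan is to bootstrap from the classical parabolic interior Schauder estimate by differentiating the equation and iterating the estimate on nested parabolic subcylinders, then to convert time derivatives into spatial ones via the equation itself.

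For the base step I would fix any H\"older exponent $0<\alpha<1$ (supplied for free since the coefficients are $C^{k+1}$ with $k\geqslant 2$ in space, with derivatives bounded by $\lambda$) and invoke the standard parabolic interior Schauder estimate: for a uniformly parabolic operator with $C^\alpha$ coefficients and a bounded smooth solution $u$ of $(\Delta-\partial_t)u=0$, one has
\[
\|u\|_{C^{2,\alpha}(B_{R_1}\times[\delta,R_1])}\leqslant C_1\|u\|_{L^\infty(B_{2R}\times[0,2R])}
\]
on any strictly interior parabolic subcylinder, with $C_1=C_1(N,R,\lambda,\alpha)$. This already controls all spatial derivatives of $u$ of order at most $2$.

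To climb to higher spatial derivatives I would differentiate the equation. For $v=\partial_\ell u$ one computes
\[
(\Delta-\partial_t)v = -(\partial_\ell a^{ij})\partial_i\partial_j u-(\partial_\ell b^i)\partial_i u-(\partial_\ell c)u,
\]
whose right-hand side is now controlled in $C^\alpha$ by the previous Schauder step together with the hypothesis that the coefficient derivatives are bounded by $\lambda$. Applying parabolic Schauder to $v$ on a slightly smaller cylinder upgrades the bound to $\|u\|_{C^{3,\alpha}}$ on that cylinder. Iterating this step $k-2$ times would yield $L^\infty$ control on all spatial derivatives of $u$ up to order $k$. Each iteration is legitimate because the coefficients are assumed $C^{k+1}$ in space with bounded derivatives, that is, one more derivative than is differentiated off of them at the final step.

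Time derivatives are then handled through the equation itself: $\partial_t u=a^{ij}\partial_i\partial_j u+b^i\partial_i u+cu$ trades one time derivative of $u$ for two spatial derivatives together with bounded time derivatives of the coefficients; differentiating this identity once more in $t$ and using the assumption that $a^{ij},b^i,c$ have two bounded $t$-derivatives gives the analogous identity for $\partial_t^2 u$. Mixed space-time derivatives are obtained by combining both procedures. The main obstacle is the bookkeeping: at each iteration the cylinder must shrink, the constants grow in a controlled way in $\lambda$ and $R^{-1}$, and mixed derivatives must be absorbed consistently. Fixing in advance a finite nested family $B_R\times(0,R)\subset B_{R_{k-1}}\times(0,R_{k-1})\subset\cdots\subset B_{2R}\times(0,2R)$ with geometrically decreasing radii, and accumulating the Schauder constants in the correct order, would deliver the final constant $C=C(N,R,k,\lambda)$ stated in the lemma.
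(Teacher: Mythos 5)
The paper offers no proof of this lemma; it refers to \cite[Theorem 8.12.1]{Krylov1996}, and your bootstrap---differentiate the equation, treat the commutator terms as an inhomogeneity controlled by the previous Schauder step, and iterate on nested parabolic subcylinders---is precisely the standard argument behind that reference, so you are following the intended route and your derivative count against the $C^{k+1}$ hypothesis on the coefficients is adequate. One wrinkle to address: the interior parabolic Schauder estimate degenerates at the bottom of the cylinder, so each constant in your chain depends on the distance $\delta$ from the target slab to $t=0$; your base step correctly shrinks to $B_{R_1}\times[\delta,R_1]$, but the nested family you then fix uses cylinders of the form $B_{R_i}\times(0,R_i)$ that all touch $t=0$, and no estimate of the stated form can hold uniformly down to $t=0$ with only an $L^\infty$ bound on $u$ (think of rough initial data being smoothed). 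This defect is already present in the paper's own formulation of the lemma, and in its only use (Lemma~\ref{lemma:heat-kernel-convergence}) the time parameter is bounded away from zero, so the cure is simply to carry an explicit lower time cutoff $\delta$ through the nested cylinders and allow the final constant to depend on it.
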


\subsection{Heat kernels of compact K\"ahler varieties}
We will study the heat kernel of a Zariski dense open subset of a compact K\"ahler variety.  
We first prove a Sobolev type inequality.

\begin{lemma}\label{lem:Sobolev-prop-single}
Let $(X,\omega)$ be a compact K\"ahler variety of dimension $n$. 
Then there exists a constant $C_S$,    such that for any $\mathcal{C}^1$ function $f \geqslant 0$, compactly supported in the smooth locus of $X$,   the following inequality holds,
\[ (\int_{X} |f|^{\frac{2n}{ n-1} }\omega^n)^{\frac{ n-1}{ n}}  \leqslant C_S \int_{X} (||\nabla f||^2 + |f|^2) \omega^n. \]
\end{lemma}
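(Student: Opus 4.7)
The plan is to reduce this to the Michael--Simon Sobolev inequality applied to local embeddings of $X$ into smooth Kähler manifolds, exploiting the fact that every complex analytic subvariety of a Kähler manifold is a stationary integral varifold (of real dimension $2n$) and therefore satisfies an ambient $L^1$-Sobolev inequality whose constant depends only on $n$.

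First, by the definition of Kähler variety together with compactness of $X$, fix a finite open cover $\{U_i\}$ of $X$ where each $U_i$ is a precompact analytic subvariety of a domain $V_i\subset\mathbb{C}^{N_i}$ with $\omega|_{U_i}$ equal to the restriction of a smooth Kähler form $\widetilde\omega_i$ on $V_i$. Refine to $U_i'\Subset U_i$ still covering $X$, with a subordinate $\mathcal{C}^\infty$ partition of unity $\{\rho_i\}$ (Proposition \ref{prop:partition}). By Wirtinger's inequality, each $U_i\subset (V_i,\widetilde\omega_i)$, viewed as an integral $2n$-varifold, is calibrated by $\widetilde\omega_i$ and hence stationary with vanishing generalized mean curvature.

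Next, the Michael--Simon Sobolev inequality for stationary integral varifolds (extended to Riemannian ambients of bounded geometry, \emph{à la} Hoffman--Spruck/Allard) produces a constant $c_i$ such that every nonnegative $g\in\mathcal{C}^1$ compactly supported in the smooth locus of $U_i$ satisfies
\[\Big(\int_{U_i} g^{\frac{2n}{2n-1}}\,\widetilde\omega_i^n\Big)^{\frac{2n-1}{2n}}\leqslant c_i \int_{U_i}\|\nabla g\|_{\widetilde\omega_i}\,\widetilde\omega_i^n.\]
Substituting $g=h^{(2n-1)/(n-1)}$ and applying Cauchy--Schwarz to the right-hand side bootstraps this to the critical $L^2$-Sobolev inequality
\[\Big(\int_{U_i} h^{\frac{2n}{n-1}}\,\widetilde\omega_i^n\Big)^{\frac{n-1}{n}}\leqslant C_i\int_{U_i}\|\nabla h\|_{\widetilde\omega_i}^2\,\widetilde\omega_i^n\]
for $h\geqslant 0$ in $\mathcal{C}^1$ with compact support in the smooth part of $U_i$. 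To globalize, decompose the given $f=\sum_i\rho_i f$, apply the above to each $\rho_i f$, and combine via the triangle inequality in $L^{2n/(n-1)}$ together with the product rule $|\nabla(\rho_i f)|\leqslant|\nabla\rho_i|\,f+\rho_i|\nabla f|$ and the uniform bound $|\nabla\rho_i|\leqslant M$; this yields $\|f\|_{2n/(n-1)}\leqslant C'(\|\nabla f\|_2+\|f\|_2)$, and squaring both sides produces the desired estimate with $C_S=2(C')^2$.

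The principal obstacle is the justification of the Michael--Simon inequality in the second step when the analytic subvariety $U_i$ has singularities meeting $\overline{U_i'}$: the classical statement is written for smooth submanifolds, and its extension to analytic subvarieties proceeds through the monotonicity formula for stationary integral varifolds in a Riemannian ambient. This is standard in geometric measure theory but nontrivial, and it is exactly what guarantees that the local Sobolev constant $c_i$ is independent of how close the support of $g$ lies to the singular locus. Once this is granted, the bootstrap-and-patch procedure is routine.
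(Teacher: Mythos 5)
Your proposal follows the same overall strategy as the paper's proof (locality via a partition of unity, minimality of complex subvarieties by the calibration/Wirtinger argument, then Michael--Simon, then patch), but with one different reduction step. The paper first invokes the elementary observation that a Sobolev constant is stable under $c_1\omega_a \leqslant \omega_b \leqslant c_2\omega_a$ equivalences of K\"ahler forms; this lets it replace $\widetilde\omega_i$ by the flat Euclidean form on the ambient ball, so that the \emph{Euclidean} Michael--Simon inequality \cite[Theorem 2.1]{MichaelSimon1973} applies directly, with no ambient-curvature hypotheses. You instead keep the general ambient K\"ahler metric and appeal to the Riemannian-ambient version à la Hoffman--Spruck/Allard. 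That route also works, but you should note that \cite[Theorem 2.1]{HoffmanSpruck1974} carries a smallness constraint on the volume of the support (relative to the ambient curvature bound and injectivity radius); you would need to refine the cover $\{U_i\}$, or invoke a variant of the theorem free of this constraint, whereas the paper's Euclidean reduction avoids the issue entirely.

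The ``principal obstacle'' you flag is not actually an obstacle, and the geometric-measure-theoretic machinery you invoke to resolve it is unnecessary. The test function $\rho_i f$ is compactly supported in $U_i \cap X_{\reg}$, and $U_i \cap X_{\reg}$ is a genuine $\mathcal{C}^\infty$ complex submanifold of $V_i$, minimal by the calibration argument. Michael--Simon and Hoffman--Spruck apply to any $\mathcal{C}^2$ submanifold and any compactly supported $\mathcal{C}^1$ function on it; they do not require the submanifold to be closed, properly embedded, or complete. Since $\rho_i f$ vanishes in a neighbourhood of the singular locus, the smooth statement applies verbatim, and no passage through stationary integral varifolds or the monotonicity formula is needed. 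Your bootstrap from the $L^1$- to the $L^2$-Sobolev inequality and the subsequent patching are both correct as written.
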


\begin{proof}
Since $X$ is compact, by using a partition of the unity, it is enough to prove the problem locally on $X$. 
Hence we may assume that $X$ is a subvariety of $\mathbb{B}^p$, where $\mathbb{B}^p$  is   the unit balls of $\mathbb{C}^p$ and  $\mathbb{C}^q$ respectively.

We note that, for two K\"ahler forms $\omega_a, \omega_b$ on $X$ with $0 < c_1 \omega_a \leqslant \omega_b \leqslant c_2 \omega_a$, if $C_a$ is a Sobolev constant for $\omega_a$, then $(\frac{c_2+1}{c_1})^n C_a$ is a Sobolev constant for $\omega_b$. 
Therefore, we may assume that the K\"ahler form $\omega$ on $X$ is the restriction of the Euclidean K\"ahler form on $\widehat{\mathbb{B}}^p$. 
Then  $X$   is  a minimal subvariety of $\mathbb{B}^p$  after \cite[Theorem 3.1.2]{Simons1968}. 
We can then conclude by applying \cite[Theorem 2.1]{MichaelSimon1973}.
\end{proof}

We can then deduce the following result, which extends  \cite[Lemma 3.1]{LiTian1995}.   
It includes Theorem \ref{thm:heat-kernel-conservation} in the introduction.

 \begin{cor}
\label{cor:heat-kernel-conservation}
Let $X^\circ \subseteq X$ be a  smooth Zariski open dense subset of  a compact K\"ahler variety  $(X,\omega)$, and $K $ the heat kernel of $(X^\circ, \omega)$.  
Then the following properties hold.
\begin{enumerate}
\item $K$ is bounded on $X^\circ \times X^\circ \times [t_1,t_2]$ for any fixed $0<t_1<t_2$. 
\item $K (x,\cdot,t)$ belongs to $L_1^2$.
\item \[\int_{y\in X^\circ}|\Delta_y K (x,y, t)|^2 \mathrm{d}y < \infty.\]
\item \[ 
\int_{y\in X^\circ}K (x,y,t) \mathrm{d}y = 1.
\] 
\end{enumerate}
\end{cor}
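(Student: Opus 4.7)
The plan combines three ingredients: the global Sobolev inequality on the singular K\"ahler variety (Lemma \ref{lem:Sobolev-prop-single}), the approximation of $K$ by Dirichlet heat kernels $K_{\Omega_i}$ on an exhaustion $\{\Omega_i\}$ of $X^\circ$ by relatively compact smooth open subsets, and the cut-off / integration-by-parts technology of Lemma \ref{lemma:cut-off} and Corollary \ref{cor:integration-by-part}, which exploits that $X \setminus X^\circ$ has real codimension at least two.

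For (1), Lemma \ref{lem:Sobolev-prop-single} applied with real dimension $m = 2n$ gives a Sobolev inequality on each $\Omega_i$ with constant independent of $i$; Proposition \ref{prop:sobolev-heat-estimate} then yields $K_{\Omega_i}(x,x,t)\leqslant C t^{-n}$, and passing to the limit and combining with $K(x,y,t)\leqslant \sqrt{K(x,x,t)K(y,y,t)}$ gives (1). For (2) and (3), I use the standard identities on $\Omega_i$ (trivial thanks to Dirichlet boundary conditions)
\[ -\tfrac{d}{dt}K_{\Omega_i}(x,x,2t) = 2\!\int_{\Omega_i}|\nabla_y K_{\Omega_i}|^2 dy,\qquad -\tfrac{d}{dt}\!\int_{\Omega_i}|\nabla_y K_{\Omega_i}|^2 dy = 2\!\int_{\Omega_i}|\Delta_y K_{\Omega_i}|^2 dy, \]
together with the $L^2$-in-$y$ identity $\int_{\Omega_i} K_{\Omega_i}^2\,dy = K_{\Omega_i}(x,x,2t)$ coming from the semigroup property and symmetry. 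Monotonicity of the right-hand-side integrands in $t$, combined with the uniform bound from (1), delivers uniform-in-$i$ bounds on $\int|\nabla_y K_{\Omega_i}|^2 dy$ and $\int|\Delta_y K_{\Omega_i}|^2 dy$ at each fixed $t>0$ after integration over a short time window. Parabolic Schauder estimates (Lemma \ref{lemma:heat-kernel-local-uniform-bound}) ensure $K_{\Omega_i}\to K$ locally smoothly on $X^\circ$, and Fatou's lemma delivers (2) and (3).

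Item (4) is the principal point and adapts Li--Tian's argument. Set $f(t) := \int_{X^\circ} K(x,y,t)\,dy$. The Dirichlet maximum principle gives $\int_{\Omega_i}K_{\Omega_i}\,dy\leqslant 1$, whence $f(t)\leqslant 1$ by monotone convergence. To show $f$ is constant on $(0,\infty)$, pick cut-offs $\varphi_i\to 1$ with $\|\nabla\varphi_i\|_2\to 0$ from Lemma \ref{lemma:cut-off}: by Corollary \ref{cor:integration-by-part}, whose hypotheses are supplied by (2) and (3),
\[ \int_{X^\circ}\varphi_i\,\Delta_y K\,dy = -\int_{X^\circ}\nabla\varphi_i\cdot\nabla_y K\,dy; \]
the right-hand side is bounded by $\|\nabla\varphi_i\|_2\,\|\nabla_y K\|_2\to 0$, while the left-hand side tends to $\int_{X^\circ}\Delta_y K\,dy = \int_{X^\circ}\partial_t K\,dy = f'(t)$ by dominated convergence (using (3)). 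Hence $f'\equiv 0$. To show $\lim_{t\to 0^+}f(t)=1$, fix a smooth compactly supported bump $\chi$ on $X^\circ$ with $\chi(x)=1$ and $0\leqslant\chi\leqslant 1$; since $K$ is the fundamental solution of the heat equation, $\int K(x,y,t)\chi(y)\,dy\to\chi(x)=1$ as $t\to 0^+$, so $\liminf f(t)\geqslant 1$, and combined with $f\leqslant 1$ and constancy we obtain $f\equiv 1$.

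The chief technical obstacle throughout will be the rigorous justification of these integration-by-parts manipulations on the non-complete smooth manifold $X^\circ$. Lemma \ref{lemma:cut-off} is tailored to this, but each application requires the very $L^2$ and $L^2_1$ information being established, so the argument must bootstrap through the approximating kernels $K_{\Omega_i}$, using parabolic Schauder estimates to pass derivatives to the limit and monotonicity in $t$ of the relevant energy integrals to secure pointwise-in-$t$ bounds from integrated ones.
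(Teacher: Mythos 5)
Your proposal is correct and follows essentially the same route as the paper: you establish the uniform boundedness of $K$ from Lemma \ref{lem:Sobolev-prop-single} together with the on-diagonal bound (Proposition \ref{prop:sobolev-heat-estimate}) and the semigroup/symmetry inequality, and for items (2)--(4) you have reconstructed the argument of Li--Tian's Lemma 3.1, which the paper simply cites after noting that the only role of projectivity there was to supply the very boundedness you prove in (1).
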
 
  
\begin{proof} 
By Lemma \ref{lem:Sobolev-prop-single}, a Sobolev type inequality holds on $X^\circ$. 
Hence we get the boundedness of $K$ by applying Corollary \ref{cor:sobolev-heat-estimate-2points}.

The remainder part of the corollary follows from the argument of \cite[Lemma 3.1]{LiTian1995}. 
Indeed,  the assumption in  \cite[Lemma 3.1]{LiTian1995} that $X$ is a closed algebraic subvariety of $\mathbb{P}^N$ serves to get  the boundedness of $K$  in  \cite[Theorem 2.1]{LiTian1995}. 
This is item (1) we proved in the previous  paragraph.  
\end{proof}

The last  property in the previous corollary is also called the conservation property. It implies   that the heat kernels are the same for Zariski open dense subsets.

\begin{cor}\label{cor:heat-kernel-restriction}
With the assumption of Corollary \ref{cor:heat-kernel-conservation}, if $X'\subseteq X^\circ$
is  a Zariski open dense subset. Then the heat kernel $K_{X'}$ of $X'$  is equal to the restriction of $K$, the heat kernel  of $X^\circ$, on   $X'\times X' \times (0,\infty)$. 
\end{cor}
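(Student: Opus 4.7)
The plan is to deduce the equality from the conservation property (Corollary~\ref{cor:heat-kernel-conservation}(4)), which applies both to $K$ on $X^\circ$ and to $K_{X'}$ on $X'$, combined with the trivial inequality $K_{X'} \leqslant K|_{X'\times X' \times (0,\infty)}$ and the fact that $X^\circ \setminus X'$ has measure zero. The argument is thus split into a monotonicity step, a conservation step, and a one-line integration step.

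First, I would establish the inequality $K_{X'}(x,y,t) \leqslant K(x,y,t)$ for every $(x,y,t) \in X'\times X' \times (0,\infty)$. Any precompact open subset $\Omega \subseteq X'$ with smooth boundary is also precompact in $X^\circ$, and the parabolic maximum principle applied to $\Omega \subseteq \Omega'$ gives $K_\Omega \leqslant K_{\Omega'}$: indeed the difference $K_{\Omega'}(x,\cdot,\cdot) - K_\Omega(x,\cdot,\cdot)$ solves the heat equation on $\Omega$ with zero initial data and non-negative Dirichlet data on $\partial\Omega$. Choosing an exhaustion $\{\Omega_i\}$ of $X'$ and extending it to an exhaustion $\{\widetilde{\Omega}_i\}$ of $X^\circ$ with $\Omega_i \subseteq \widetilde{\Omega}_i$, the $\limsup$ defining $K_{X'}$ is dominated termwise by the one defining $K$.

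Next, since $X'$ is Zariski open and dense in the smooth complex manifold $X^\circ$, the complement $X^\circ \setminus X'$ is a proper complex analytic subset, hence has real codimension at least two and is a null set for the volume form $\omega^n$. Applying Corollary~\ref{cor:heat-kernel-conservation}(4) to $X^\circ$ yields $\int_{X^\circ} K(x,y,t)\,\mathrm{d}y = 1$, and therefore $\int_{X'} K(x,y,t)\,\mathrm{d}y = 1$. Applying the same corollary to $X'$ itself (which is a smooth Zariski open dense subset of the compact K\"ahler variety $X$) gives $\int_{X'} K_{X'}(x,y,t)\,\mathrm{d}y = 1$.

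Finally, fix $x \in X'$ and $t>0$ and set $f(y) := K(x,y,t) - K_{X'}(x,y,t)$. By the two preceding steps, $f$ is a non-negative continuous (in fact smooth) function on $X'$ with $\int_{X'} f(y)\,\mathrm{d}y = 0$, hence $f \equiv 0$ on $X'$, giving the desired identity. The only mildly delicate ingredient is the monotonicity $K_\Omega \leqslant K_{\Omega'}$ via the maximum principle; the rest is structural and relies entirely on Corollary~\ref{cor:heat-kernel-conservation}.
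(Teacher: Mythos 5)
Your proposal is correct and follows essentially the same route as the paper: monotonicity of the minimal heat kernel under inclusion of domains, then the conservation property from Corollary~\ref{cor:heat-kernel-conservation}(4) applied to both $X^\circ$ and $X'$, concluding by positivity of $K - K_{X'}$ with vanishing integral. The paper states the inequality $K \geqslant K_{X'}$ as immediate from the construction, whereas you spell out the parabolic maximum principle argument and make the measure-zero observation explicit; these are details the paper leaves implicit, not a different method.
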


\begin{proof}
From the construction of  heat kernels, we see that $K \geqslant K_{X'}$ over $X'\times X' \times (0,\infty)$. 
By applying Corollary \ref{cor:heat-kernel-conservation} to $K_{X'}$, we get 
\[ 
\int_{y\in X'}K_{X'} (x,y,t) \mathrm{d}y = 1 =  
\int_{y\in X'}K (x,y,t) \mathrm{d}y.
\]
Hence $K_{X'}=K$ on $X'\times X' \times (0,\infty)$. 
\end{proof}

Thanks to this lemma, we will not distinguish the heat kernels $K_{X'}$ and $K$ in the remainder of the paper.

We would like to thank Henri Guenancia and Hans-Joachim Hein for    pointing out that Corollary \ref{cor:heat-kernel-conservation} implies \cite[Conjecture 3.1]{DiNezzaGuedjGuenancia2020}. 
To end this section, we will explain it in the following corollary.
As a consequence, \cite[Theorem F]{DiNezzaGuedjGuenancia2020} still holds without assuming Assumption 3.2 therein.

\begin{cor}
\label{cor:DNGG}
Let $(\mathcal{X},\omega)$ be a K\"ahler space and $\pi\colon \cX \to \mathbb{D}$ a proper holomorphic fibration over the unit disk. 
Let $\Theta$  be a smooth closed $(1,1)$-form on $\mathcal{X}$. 
Assume that every fiber ${X}_t$ of $\pi$ is reduced and irreducible of dimension $n$.  
Then there exists a constant $C>0$ such that 
\[
\sup_{X_t} \varphi_t-C \leqslant \frac{1}{V} \int_{X_t} \varphi_t\omega_t^n \leqslant \sup_{X_t} \varphi_t,
\]
for all $t\in \mathbb{D}_{\frac{1}{2}}$ and $\varphi_t \in \mathrm{PSH}(X_t,\theta_t)$. 
Here $\mathbb{D}_{\frac{1}{2}}$ is the disk of radius $\frac{1}{2}$,   $\theta_t$ is the restriction $\Theta|_{X_t}$, and $V$ is the volume of $X_t$.
\end{cor}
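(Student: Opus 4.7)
The right-hand inequality is immediate: $\varphi_t \leq \sup_{X_t} \varphi_t$ pointwise and $\int_{X_t} \omega_t^n = V$. The content is the left-hand bound, equivalently a uniform estimate $\sup_{X_t}\varphi_t - \bar\varphi_t \leq C$ with $\bar\varphi_t := V^{-1}\int_{X_t}\varphi_t\omega_t^n$. The plan is to run a classical Green's function argument on each smooth fiber $(X_t^\circ,\omega_t)$, and to show that the Sobolev and heat kernel constants appearing in it can be chosen uniformly for $t\in\overline{\mathbb{D}}_{\frac{1}{2}}$.

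First I would establish a uniform Sobolev inequality. Since $\pi^{-1}(\overline{\mathbb{D}}_{\frac{1}{2}})$ is compact, a finite cover by charts of the form $\mathbb{D}\times\mathbb{B}^{p}$ reduces us, as in the proof of Lemma~\ref{lem:Sobolev-prop-single}, to the Michael--Simon inequality for the complex (hence minimal) subvarieties $X_t\cap U \hookrightarrow \{t\}\times\mathbb{B}^p$ with ambient Euclidean metric uniformly comparable to $\omega_t$; the resulting Sobolev constant is independent of $t$. Feeding this into Proposition~\ref{prop:sobolev-heat-estimate} and Corollary~\ref{cor:sobolev-heat-estimate-2points} yields a uniform on-diagonal bound and a uniform Gaussian upper bound for the heat kernel $K_t$ of $(X_t^\circ,\omega_t)$. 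Combined with the conservation property in Corollary~\ref{cor:heat-kernel-conservation} and the $L^2$-Poincar\'e inequality derived from the Sobolev inequality, I obtain a uniform spectral gap, hence exponential decay $|K_t(x,y,s) - V^{-1}| \leq C_1 e^{-c_1 s}$ for $s\geq 1$, with $C_1,c_1$ independent of $t$.

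Next I would form the Green function $G_t(x,y) := \int_0^\infty \bigl(K_t(x,y,s) - V^{-1}\bigr)\,ds$, whose convergence and uniform lower bound $G_t \geq -C_0$ follow from the estimates of the previous step; conservation also gives $\int_{X_t^\circ} G_t(x,y)\omega_t^n(y) = 0$. Since $\Theta$ is smooth on the compact set $\pi^{-1}(\overline{\mathbb{D}}_{\frac{1}{2}})$, we have a uniform distributional inequality $\Delta_{\omega_t}\varphi_t \geq -A$ on $X_t^\circ$. Heat-smoothing $\varphi_t$, say $u_t^\varepsilon(x) := \int_{X_t^\circ} K_t(x,y,\varepsilon)\varphi_t(y)\omega_t^n(y)$, produces bounded smooth functions on $X_t^\circ$ with $\Delta u_t^\varepsilon \geq -A$ and, by conservation, $\bar u_t^\varepsilon = \bar\varphi_t$. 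Corollaries~\ref{cor:integration-by-part} and~\ref{cor:integration-by-part-2} then justify the Green identity
\begin{equation*}
u_t^\varepsilon(x) - \bar\varphi_t = -\int_{X_t^\circ} G_t(x,y)\Delta u_t^\varepsilon(y)\omega_t^n(y);
\end{equation*}
writing $G_t = (G_t + C_0) - C_0$ and using $G_t + C_0 \geq 0$, $\Delta u_t^\varepsilon + A \geq 0$, $\int G_t = 0$ and $\int \Delta u_t^\varepsilon = 0$, I conclude $u_t^\varepsilon(x) - \bar\varphi_t \leq A C_0 V$. Letting $\varepsilon \to 0$ (using $L^1$-convergence $u_t^\varepsilon \to \varphi_t$ and the standard sub-mean value property of $\theta_t$-psh functions to upgrade an a.e.\ bound to a pointwise one) yields $\sup_{X_t}\varphi_t - \bar\varphi_t \leq AC_0 V$, as required.

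The main obstacle I expect is the singular-variety bookkeeping in the Green's function construction: verifying the uniform spectral gap and the uniform lower bound on $G_t$, checking that the heat-smoothed $u_t^\varepsilon$ has enough regularity on $X_t^\circ$ to apply Corollaries~\ref{cor:integration-by-part}--\ref{cor:integration-by-part-2} (in particular integrability of $\|du_t^\varepsilon\|^2$ and of $\Delta u_t^\varepsilon$ across the singular locus, which should follow from items (2)--(3) of Corollary~\ref{cor:heat-kernel-conservation}), and controlling the passage $\varepsilon \to 0$ when $\varphi_t$ is merely quasi-psh and possibly unbounded below.
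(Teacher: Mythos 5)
The paper's own proof of this corollary is a two-line citation: Corollary~\ref{cor:heat-kernel-conservation}, item~(4), verifies \cite[Conjecture~3.1]{DiNezzaGuedjGuenancia2020} (their ``Assumption~3.2''), which was the only missing ingredient in the proof of \cite[Lemma~3.11 and Theorem~F]{DiNezzaGuedjGuenancia2020} for general compact K\"ahler fibers; everything else is delegated to \cite[Section~3.7]{DiNezzaGuedjGuenancia2020}. Your proposal is in effect a detailed reconstruction of the argument the paper cites: uniform Sobolev inequality via Michael--Simon, uniform Gaussian heat-kernel bounds, a Green function built from the heat kernel, and a Green-identity bound on $\sup\varphi_t - \bar\varphi_t$. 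The overall strategy is sound and matches the cited reference in spirit, so there is no conflict with the paper; the paper simply does not reprove these estimates.

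The concrete gap in your write-up is the one you flag yourself: the uniform $L^2$-Poincar\'e inequality / spectral gap. The Sobolev inequality in Lemma~\ref{lem:Sobolev-prop-single} has a $+|f|^2$ term on the right-hand side, so it carries no information for (approximately) constant mean-zero $f$ and does not by itself yield a uniform lower bound on $\lambda_1(t)$. Making the passage ``Sobolev $\Rightarrow$ uniform spectral gap'' rigorous in a family that degenerates at $t=0$ is genuinely delicate; one typically needs additional uniform geometric inputs (volume doubling, a scale-invariant Poincar\'e inequality, or a compactness/rigidity argument across the varying spaces $X_t^\circ$), and that is precisely the content of \cite[Lemma~3.11]{DiNezzaGuedjGuenancia2020}. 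Without it, the convergence and uniform bound of $G_t=\int_0^\infty (K_t-V^{-1})\,\mathrm{d}s$ is unsupported and the whole Green-identity chain stalls. A secondary but more minor issue is the regularity bookkeeping needed to apply Corollaries~\ref{cor:integration-by-part} and~\ref{cor:integration-by-part-2} with $G_t(x,\cdot)$ in one of the slots: one must check that $G_t(x,\cdot)$ is bounded (or at least in the right $L^p$) and $L^2_1$ across the singular locus, which does not follow immediately from items~(2)--(3) of Corollary~\ref{cor:heat-kernel-conservation} since those concern $K_t$ at a fixed positive time, not the time-integrated kernel. Both points are handled in the reference; to make your proof self-contained, you would need to supply these two ingredients rather than assert them.
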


\begin{proof}
Thanks to Corollary \ref{cor:heat-kernel-conservation}, we may prove \cite[Lemma 3.11]{DiNezzaGuedjGuenancia2020} for any compact K\"ahler variety. 
Then we can apply the argument of \cite[Section 3.7]{DiNezzaGuedjGuenancia2020} to conclude the corollary.
\end{proof}

\subsection{Conjecture on uniform Sobolev inequalities}

To apply the method of Bando-Siu  in the setting of singular varieties, one needs a uniform Sobolev inequality as Conjecture \ref{conj:Sobolev-conj-global}.  
As in the proof of Lemma \ref{lem:Sobolev-prop-single}, thanks to the compactness assumption, we  can reduce it to the following local version.  

\begin{conj}\label{conj-uniform-Sob}
Let $X\subseteq \mathbb{C}^n \times \mathbb{C}^k = N$ be an $m$-dimensional integral complex analytic subvariety defined around the origin $o$.  
Let $(\mathbf{z},\mathbf{w})$ be a coordinates system of $\mathbb{C}^n \times \mathbb{C}^k$. 
We consider the K\"ahler forms  for $0 \leqslant  \delta \leqslant 1$, 
\[\omega_\delta = \frac{\sqrt{-1}}{2} \partial \dbar ( |\mathbf{z}|^2 + \delta^2 \log\, (|\mathbf{z}|^2) +|\mathbf{w}|^2).
\]
Then there is a neighbourhood $U$ of $o$ in $N$, a constant $C$, independent of $\delta$, such that for any positive $\mathcal{C}^1$ function, compactly supported in the smooth locus of $U\cap X \backslash \{o\}$, we have 
\[
\int_{X} h \omega_\delta^m   
\leqslant  C   \int_{X } |\nabla_{X_\delta} h|\omega_\delta^m.
\] 
\end{conj}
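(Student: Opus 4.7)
Write $\omega_\delta=\omega_0+\delta^2\eta$, where $\omega_0=\frac{\sqrt{-1}}{2}\partial\bar\partial(|\mathbf{z}|^2+|\mathbf{w}|^2)$ is the Euclidean form and $\eta=\frac{\sqrt{-1}}{2}\partial\bar\partial\log|\mathbf{z}|^2$ is the pullback of the Fubini--Study form on $\mathbb{P}^{n-1}$, smooth on $\{\mathbf{z}\neq 0\}$. Unlike the case $\delta=0$ (covered by Lemma \ref{lem:Sobolev-prop-single}), $\omega_\delta$ does not extend smoothly across $\{\mathbf{z}=0\}$, so one cannot directly appeal to a global Kähler embedding of the ambient space into Euclidean space. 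The plan is to reduce, via Hölder, the stated $L^{1}$--Poincaré inequality to a Michael--Simon-type inequality
\[
\Big(\int_{X} h^{\frac{2m}{2m-1}}\omega_\delta^m\Big)^{\frac{2m-1}{2m}}
\leqslant C_0 \int_{X} |\nabla_{\omega_\delta} h|\,\omega_\delta^m ,
\]
and then establish this Sobolev inequality uniformly in $\delta$ by a bulk--pocket decomposition. The Hölder step requires a uniform upper bound on $\mathrm{Vol}_{\omega_\delta}(U\cap X)$, which follows from the expansion $\omega_\delta^m=\sum_{j\leqslant n-1}\binom{m}{j}\delta^{2j}\omega_0^{m-j}\wedge \eta^j$ and the local integrability of each $\eta^{j}$ near the origin for $j\leqslant n-1$.

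Next, using a smooth cutoff $\chi$ depending only on $|\mathbf{z}|/\delta$, split $h=h_1+h_2$ with $h_1$ supported on the bulk region $\{|\mathbf{z}|\geqslant \delta\}$ and $h_2$ on the collapsing pocket $\{|\mathbf{z}|\leqslant 2\delta\}$. On the bulk, $\omega_\delta$ is uniformly comparable to $\omega_0$, so the argument proving Lemma \ref{lem:Sobolev-prop-single} (minimality of $X$ in $\mathbb{C}^{n+k}$ plus the classical Michael--Simon inequality) applies with a $\delta$-independent constant. On the pocket, perform the rescaling $\mathbf{z}=\delta\mathbf{z}'$; then $\omega_\delta=\delta^2\widetilde\omega$, where $\widetilde\omega$ is the restriction to the rescaled subvariety $\widetilde X=\delta^{-1}X$ of the fixed model Kähler form with potential $|\mathbf{z}'|^2+\log|\mathbf{z}'|^2+|\delta^{-1}\mathbf{w}|^2$. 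After this rescaling, the Sobolev inequality for $h_2$ becomes one on a Kähler geometry independent of $\delta$, while the $\delta$-weights on both sides of the Michael--Simon inequality match exactly, so any uniform Sobolev constant obtained on the limit model transfers back to $\omega_\delta$. The cutoff errors $|\nabla\chi|\cdot h$ are absorbed by a telescoping sum over dyadic annuli $\{2^{k}\delta\leqslant |\mathbf{z}|\leqslant 2^{k+1}\delta\}$, whose total $\omega_\delta$-mass is finite thanks to the integrability of $\eta^{n-1}$ near $0$.

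The principal obstacle is to prove the required Sobolev inequality on the model Kähler manifold $(\mathbb{C}^n\setminus\{0\}\times\mathbb{C}^k,\frac{\sqrt{-1}}{2}\partial\bar\partial(|\mathbf{z}|^2+\log|\mathbf{z}|^2+|\mathbf{w}|^2))$ with a constant depending only on $n$, $k$, and $m$, and to ensure it descends to arbitrary complex subvarieties of this ambient space. Since $X$ may have essentially arbitrary singularities at the origin, its tangent cones at $o$ (which arise as subsequential limits of $\delta^{-1}X$) need not be unique, and one must control the rescalings $\delta^{-1}X$ uniformly. This appears to require a Michael--Simon--Allard type inequality for complex subvarieties of a singular Kähler background, together with a uniform Bishop-type volume comparison for the rescalings; both statements seem to lie beyond the current literature and constitute the analytic core of the conjecture.
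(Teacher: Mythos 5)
The statement you are attempting to prove is labelled a \emph{conjecture} in the paper, and indeed the paper does not prove it in full generality: it establishes only the cases $k=0$ (Proposition~\ref{prop:certain-uniform-Sobolev}) and $X=\mathbb{C}^n\times\mathbb{C}^k$ (Corollary~\ref{cor:uniform-sob-product}). You are honest that your outline does not close the general case either, so there is nothing wrong \emph{per se} with your conclusion. But your diagnosis of where the difficulty lies is off for the case the paper actually resolves, and you miss the ingredient that makes the pocket analysis work there.

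Your split into a bulk $\{|\mathbf{z}|\geqslant\delta\}$ and a pocket $\{|\mathbf{z}|\leqslant 2\delta\}$, with Michael--Simon on the bulk, matches the paper's decomposition (the paper cuts at $|\mathbf{z}|=\delta\zeta$ for a carefully chosen $\zeta$). Where you diverge is in the pocket. You rescale $\mathbf{z}=\delta\mathbf{z}'$ and then say that because the tangent cones of $X$ at $o$ need not be unique, one cannot control the rescaled varieties $\delta^{-1}X$ uniformly, and declare this the analytic core of the problem. For $k=0$ this is not the obstruction. The paper's Proposition~\ref{prop:certain-uniform-Sobolev} does rescale exactly as you propose, but then invokes the Hoffman--Spruck form of the Michael--Simon inequality for minimal submanifolds of a Riemannian manifold with bounded sectional curvature and a lower injectivity-radius bound, together with a smallness-of-support-volume hypothesis. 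The sectional curvature of $(\widehat{\mathbb{C}^n},\omega_\delta)$ scales like $\delta^{-2}$ and its injectivity radius like $\delta$, so these bounds are compatible with the rescaling; and the volume hypothesis is verified uniformly in $\delta$ by Lemma~\ref{lemma:uniform-volume-estimate}, which rests only on the elementary estimate $\int_{\mathbf{B}_\rho\cap X}|\mathbf{z}|^{-k}\lesssim\rho^{2m-k}$ for analytic subvarieties (Lemma~\ref{lemma:volume-estimate-blow-up}). No information about the limit of $\delta^{-1}X$, nor any uniqueness of tangent cones, nor any ``Bishop-type comparison'' beyond Lelong's classical area bound, is needed: the constant in Hoffman--Spruck depends only on ambient curvature, injectivity radius, dimension, and the volume threshold. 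So the tangent-cone worry you flag is a red herring for $k=0$.

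A second, smaller discrepancy is in the gluing. You propose cutoffs over dyadic annuli and absorb the error $|\nabla\chi|\,h$, but you do not say how the resulting boundary-layer terms are controlled in a $\delta$-uniform way. The paper instead proves a sharp trace inequality for minimal subvarieties (Lemma~\ref{lemma:slice-estimate}) using the divergence theorem for $r\nabla_X r$ and the Whitney condition~(b) for the stratification of $X$ near $o$ (Lemma~\ref{lemma:Whitney-neighbourhood}); the Whitney angle lower bound $\gamma$ is what makes the boundary term on $X\cap\mathbf{S}_{\delta\zeta}(o)$ comparable to the bulk gradient integral. Your outline never uses that $X$ is minimal and has a Whitney stratification, so there is no visible mechanism for your cutoff errors to close. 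If you are going to glue by cutoff you would want an inequality of the type in Lemma~\ref{lemma:slice-estimate} anyway.

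Finally, the genuine open obstruction for $k>0$ is not the tangent-cone issue you raise but the failure of the scaling argument: rescaling $\mathbf{z}$ by $\delta$ without rescaling $\mathbf{w}$ destroys the uniform curvature/injectivity-radius control on the ambient $\widehat{\mathbb{C}^n}\times\mathbb{C}^k$, which is why the Hoffman--Spruck route does not extend. The paper does handle the product case $X=\mathbb{C}^n\times\mathbb{C}^k$ separately, by a Fubini-type dimension-bootstrapping argument (Corollary~\ref{cor:uniform-sob-product}) rather than by rescaling at all; you might have noticed this route as a partial save.
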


Here we denote by $\nabla_{X_\delta}$ the gradient operator on $X$ induced by $\omega_\delta$. 
We note that $\omega_\delta$ can be viewed as a smooth K\"ahler form of the complex manifold  $\widehat{ \mathbb{C}^n} \times \mathbb{C}^k$, where $\widehat{ \mathbb{C}}^n$ is the blow up of $\mathbb{C}^n$ at the origin. 
Furthermore, we observe that there is a constant $A>0$ such that 
\[\omega_0 \leqslant \omega_1 \leqslant \frac{A}{|\mathbf{z}|^2} \omega_0.\]  
We will denote by $\mathbf{B}_r(o)$ and $\mathbf{S}_r(o)$ the closed unit ball and the closed unit sphere in $\mathbb{C}^n$.  
We will prove Conjecture \ref{conj-uniform-Sob} in the following cases.

\begin{prop}\label{prop:certain-uniform-Sobolev} 
Conjecture \ref{conj-uniform-Sob} holds if $k=0$.
\end{prop}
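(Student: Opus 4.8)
With $k=0$ the statement becomes a purely local Sobolev-type inequality on an $m$-dimensional complex subvariety $X \subseteq \mathbb{C}^n$ defined near the origin $o$, equipped with the family of K\"ahler forms
\[
\omega_\delta = \tfrac{\sqrt{-1}}{2}\partial\dbar\big(|\mathbf{z}|^2 + \delta^2 \log(|\mathbf{z}|^2)\big), \qquad 0\leqslant\delta\leqslant 1,
\]
and we want a $\delta$-independent constant $C$ and a neighbourhood $U$ of $o$ so that $\int_X h\,\omega_\delta^m \leqslant C\int_X |\nabla_{X_\delta} h|\,\omega_\delta^m$ for all nonnegative $\mathcal{C}^1$ functions $h$ compactly supported in the smooth locus of $U\cap X\setminus\{o\}$. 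The plan is to realise $\omega_\delta$ as the pullback of a \emph{single} fixed smooth K\"ahler form on the blowup $\widehat{\mathbb{C}^n}$ of $\mathbb{C}^n$ at the origin, and then invoke the Michael--Simon $L^1$-Sobolev inequality for minimal submanifolds, exactly as in the proof of Lemma \ref{lem:Sobolev-prop-single}.

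**Reduction to the blowup.** First I would let $q\colon\widehat{\mathbb{C}^n}\to\mathbb{C}^n$ be the blowup at $o$ and $\widehat{X}$ the strict transform of $X$, so that $q$ restricts to a proper modification $\widehat{X}\to X$ that is an isomorphism over $X\setminus\{o\}$. The key point, already noted in the excerpt preceding the proposition, is that $\omega_\delta = q^*\omega_\delta$ extends to a \emph{smooth} $(1,1)$-form on $\widehat{\mathbb{C}^n}$; concretely, on a chart of the blowup where the exceptional divisor $E = \{t=0\}$, one has $|\mathbf{z}|^2 = |t|^2(1+\cdots)$, so $\delta^2\log|\mathbf{z}|^2 = \delta^2\log|t|^2 + (\text{smooth})$, and $\partial\dbar(\delta^2\log|t|^2)$ is a smooth form (a multiple of the pullback of the Fubini--Study form of the $\mathbb{P}^{n-1}$ of directions) supported near $E$. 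Thus $\widehat\omega_\delta := q^*\omega_\delta$ is smooth on $\widehat{\mathbb{C}^n}$, and over a small fixed compact neighbourhood $\widehat U$ of $E$ there are uniform bounds
\[
c_1\,\widehat\omega_0 \leqslant \widehat\omega_\delta \leqslant c_2\,\widehat\omega_1
\]
for constants $c_1,c_2>0$ independent of $\delta$ — here $\widehat\omega_0$ is only semipositive (degenerate along $E$), while $\widehat\omega_1$ is a genuine K\"ahler form; the point is that the \emph{upper} bound $\widehat\omega_\delta\leqslant c_2\widehat\omega_1$ and, crucially for volumes, a two-sided comparison \emph{away from $E$} together with controlled behaviour along $E$ suffice. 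Since a $\mathcal{C}^1$ function $h$ compactly supported in $U\cap X\setminus\{o\}$ lifts to a $\mathcal{C}^1$ function $\widehat h$ compactly supported in $\widehat U\cap\widehat X\setminus E$, and both $\int h\,\omega_\delta^m$ and $\int|\nabla_{X_\delta}h|\,\omega_\delta^m$ are computed by pulling back to $\widehat X$, the inequality to prove is
\[
\int_{\widehat X}\widehat h\,\widehat\omega_\delta^m \leqslant C\int_{\widehat X}|\nabla_{\widehat\omega_\delta}\widehat h|\,\widehat\omega_\delta^m.
\]

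**Applying Michael--Simon uniformly.** Now I would argue as in Lemma \ref{lem:Sobolev-prop-single}: embed $\widehat U$ (shrunk if necessary) isometrically, for the metric $\widehat\omega_1$, into Euclidean space, or more simply work in a coordinate chart and compare $\widehat\omega_\delta$ with the Euclidean metric. The obstacle is that $\widehat\omega_\delta$ is not the restriction of a flat metric and varies with $\delta$, so the cleanest route is: fix the \emph{ambient} K\"ahler metric to be $\widehat\omega_1$ (independent of $\delta$!) and embed $(\widehat U,\widehat\omega_1)$ as a bounded domain in some $\mathbb{C}^P$ with $\widehat X$ a complex, hence \emph{minimal}, submanifold, so that the Michael--Simon inequality \cite[Theorem 2.1]{MichaelSimon1973} yields an $L^1$-Sobolev inequality for $(\widehat X,\widehat\omega_1)$ with a fixed constant. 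Then I would transfer this to $\widehat\omega_\delta$ using the scaling behaviour of the Sobolev constant under conformal-type change of metric recorded in Lemma \ref{lem:Sobolev-prop-single}'s proof — but here the ratio $\widehat\omega_\delta/\widehat\omega_1$ degenerates along $E$, so the naive bound $(\tfrac{c_2+1}{c_1})^m$ fails. This is the main difficulty, and the resolution I expect to carry out is a two-region decomposition: on the region $\{|\mathbf{z}|\geqslant\delta\}$ (equivalently, a fixed distance from $E$ in the blowup depending on $\delta$) the metrics $\omega_\delta$ and $\omega_1$ are uniformly comparable with $\delta$-independent constants, so Michael--Simon applies directly; and on the collar $\{|\mathbf{z}|\leqslant\delta\}$ near $E$, rescaling $\mathbf{z}\mapsto\delta\mathbf{z}$ turns $\omega_\delta$ into a form uniformly equivalent to $\omega_1$ on a fixed-size region, reducing to the same fixed Sobolev inequality — then one patches the two estimates with a cutoff adapted to $\{|\mathbf z| \sim \delta\}$, using that the $L^1$ (rather than $L^2$) form of the inequality behaves well under such truncation because $\int|\nabla\chi|\,h$ with $|\nabla_\delta\chi|\lesssim\delta^{-1}$ on an annulus of $\omega_\delta$-volume $\lesssim\delta^{2}$ (times lower-order powers) is absorbed. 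I would verify the volume and gradient estimates on the annulus $\{|\mathbf z|\sim\delta\}\cap X$ carefully — this is where the specific form of $\omega_\delta$ and the dimension count enter — and conclude with the $\delta$-independent constant $C$.
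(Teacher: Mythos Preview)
Your overall plan---decompose into an inner ball $\{|\mathbf z|\lesssim\delta\}$ and its complement, use a scale-invariant Sobolev inequality on each piece, and patch---is exactly the strategy the paper uses. However, both pieces of your argument have genuine gaps.

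\textbf{The gluing step does not close.} You propose to split $h=\chi h+(1-\chi)h$ with a cutoff $\chi$ supported in the annulus $\{|\mathbf z|\sim\delta\}$ satisfying $|\nabla_\delta\chi|\lesssim\delta^{-1}$, and to absorb the cross-term $\int h\,|\nabla_\delta\chi|\,\omega_\delta^m$. But this term is of order $\delta^{-1}\int_{\text{annulus}}h\,\omega_\delta^m$, which is a $\delta^{-1}$-multiple of part of the very quantity $\int_X h\,\omega_\delta^m$ you are trying to bound; there is no mechanism to absorb it as $\delta\to0$. (Your volume estimate ``$\lesssim\delta^2$'' for the annulus is also incorrect in general: the $\omega_\delta$-volume of $X\cap\{|\mathbf z|\sim\delta\}$ scales like $\delta^{2m}$, not $\delta^2$.) The paper circumvents this by passing to the sharp boundary---the sphere integral $\int_{X\cap\mathbf S_{\delta\zeta}(o)}h\,dA_\delta$---and bounding it directly by $\int_{X\setminus\mathbf B_{\delta\zeta}(o)}|\nabla_X h|$ via Lemma~\ref{lemma:slice-estimate}. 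That lemma uses the divergence identity for $\nabla_X r$ on a minimal subvariety together with the Whitney condition~(b) (Lemma~\ref{lemma:Whitney-neighbourhood}), which guarantees a uniform lower bound $|\nabla_X r|\geqslant\gamma>0$ near the origin. This geometric input is the key idea you are missing.

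\textbf{The inner region needs more than rescaling.} Under $\mathbf z\mapsto\delta\mathbf z$ the variety $X$ becomes the \emph{$\delta$-dependent} variety $\delta^{-1}X$, and you need a Sobolev constant on $(\delta^{-1}X,\omega_1)$ that is uniform in~$\delta$. Since $\omega_1$ is not flat, Michael--Simon does not apply directly; you would need to control how the ambient geometry (sectional curvature, injectivity radius) and the subvariety's second fundamental form or volume behave. The paper makes this precise by invoking Hoffman--Spruck \cite[Theorem~2.1]{HoffmanSpruck1974}, which requires a smallness condition on the volume $\int_{X\cap\mathbf B_{\delta\zeta}(o)}\omega_\delta^m\leqslant S\delta^{2m}$; this is supplied by the separate Lemma~\ref{lemma:uniform-volume-estimate}, proved via the monotonicity-type estimate of Lemma~\ref{lemma:volume-estimate-blow-up}.
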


\begin{cor}\label{cor:uniform-sob-product}
Conjecture \ref{conj-uniform-Sob} holds  if $X = \mathbb{C}^n \times \mathbb{C}^k$.
\end{cor}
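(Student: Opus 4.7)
The plan is to exploit the product structure of $\omega_\delta$ on $X = \bbC^n \times \bbC^k$ and reduce the inequality to the elementary one-dimensional Euclidean Poincar\'e inequality applied in a single real coordinate of the $\mathbf{w}$-factor; in this special case Proposition \ref{prop:certain-uniform-Sobolev} is not even needed.

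Since the K\"ahler potential $|\mathbf{z}|^2 + \delta^2 \log|\mathbf{z}|^2 + |\mathbf{w}|^2$ splits as a sum of a function of $\mathbf{z}$ alone and a function of $\mathbf{w}$ alone, I would first record the product decomposition
\[
\omega_\delta = \omega^{(n)}_\delta + \omega^{(k)}_0,
\]
where $\omega^{(n)}_\delta = \frac{\sqrt{-1}}{2}\partial\dbar(|\mathbf{z}|^2 + \delta^2 \log|\mathbf{z}|^2)$ is the $k=0$ form on $\bbC^n$ and $\omega^{(k)}_0 = \frac{\sqrt{-1}}{2}\partial\dbar|\mathbf{w}|^2$ is the Euclidean form on $\bbC^k$. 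Since the associated Riemannian metric is a product, this yields
\[
\frac{\omega_\delta^{n+k}}{(n+k)!} = \frac{(\omega^{(n)}_\delta)^n}{n!}\wedge \frac{(\omega^{(k)}_0)^k}{k!}, \qquad |\nabla_{X_\delta} h|^2 = |\nabla^{(n)}_\delta h|^2 + |\nabla^{(k)}_0 h|^2,
\]
where $\nabla^{(n)}_\delta$ and $\nabla^{(k)}_0$ denote the gradients in the two factors.

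Next, I would take $U = U_1 \times U_2$ with $U_2 \subseteq \bbC^k$ a ball of radius $R$ around the origin. For $h$ as in Conjecture \ref{conj-uniform-Sob} and each fixed $\mathbf{z} \in \bbC^n$, the slice $\mathbf{w} \mapsto h(\mathbf{z},\mathbf{w})$ is $\mathcal{C}^1$ and compactly supported in $U_2$, so applying the fundamental theorem of calculus in one real coordinate on $\bbC^k \cong \bbR^{2k}$ and integrating in the remaining coordinates produces
\[
\int_{\bbC^k} h(\mathbf{z},\mathbf{w})\,(\omega^{(k)}_0)^k \leqslant 2R \int_{\bbC^k} |\nabla^{(k)}_0 h(\mathbf{z},\mathbf{w})|\,(\omega^{(k)}_0)^k,
\]
with constant $2R$ depending only on $U_2$, in particular independent of both $\delta$ and $h$.

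Finally, I would integrate this pointwise-in-$\mathbf{z}$ bound against $(\omega^{(n)}_\delta)^n/n!$ and apply Fubini; this is justified because $\omega^{(n)}_\delta$ is the pull-back of a smooth K\"ahler form on the blow-up $\widehat{\bbC^n}$ and the integrand has compact support in $\bbC^n$. Combining with the product-metric bound $|\nabla^{(k)}_0 h| \leqslant |\nabla_{X_\delta} h|$ gives
\[
\int_X h\,\omega_\delta^m \leqslant 2R \int_X |\nabla_{X_\delta} h|\,\omega_\delta^m,
\]
which is the desired inequality with $C = 2R$. The entire $\delta$-dependence of the geometry sits in the $\mathbf{z}$-factor, which is never differentiated or integrated against in the Poincar\'e step, so the constant is automatically uniform in $\delta$; the only mildly delicate point is the Fubini step, and that is handled by the smoothness of the pulled-back K\"ahler form on $\widehat{\bbC^n} \times \bbC^k$.
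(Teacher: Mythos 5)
Your argument is internally correct and it does prove the inequality exactly as displayed in Conjecture~\ref{conj-uniform-Sob}: the product splitting of $\omega_\delta$, the slice estimate via the fundamental theorem of calculus in one real $\mathbf{w}$-coordinate, and the Fubini step are all sound, and the constant $2R$ is indeed independent of $\delta$. This is genuinely simpler than the paper's route and does not invoke Proposition~\ref{prop:certain-uniform-Sobolev}.

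However, I believe you have proved a statement weaker than what the conjecture is actually being used for, and weaker than what the paper's own proof of this corollary delivers. The displayed inequality in Conjecture~\ref{conj-uniform-Sob} has $\int_X h\,\omega_\delta^m$ on the left, but the sentence preceding the conjecture says it is meant as the local form, after partitions of unity, of the global $L^2$-Sobolev inequality of Conjecture~\ref{conj:Sobolev-conj-global} (exponent $\frac{2n}{n-1}$), exactly as in the proof of Lemma~\ref{lem:Sobolev-prop-single}. What that reduction actually needs is the Michael--Simon isoperimetric form
\[
\Bigl(\int_{X} h^{\frac{2m}{2m-1}}\,\omega_\delta^m\Bigr)^{\frac{2m-1}{2m}} \leqslant C \int_{X}\lvert\nabla_{X_\delta} h\rvert\,\omega_\delta^m,
\]
and this is precisely what the paper's proofs of both Proposition~\ref{prop:certain-uniform-Sobolev} and of this corollary establish: the corollary's proof runs the ``add one real dimension'' induction producing $\int h^{\frac{d+1}{d}} \leqslant D\bigl(\int \lvert\nabla h\rvert\bigr)^{\frac{d+1}{d}}$, which is a rescaled version of the isoperimetric inequality, starting from the $k=0$ case supplied by Proposition~\ref{prop:certain-uniform-Sobolev}. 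Your one-dimensional fundamental-theorem-of-calculus trick gives only $\lVert h\rVert_1 \leqslant 2R \lVert \nabla h\rVert_1$, i.e.\ a Poincar\'e inequality on a bounded domain, and that cannot be upgraded to produce the exponent $\frac{2m}{2m-1}$: the constant $2R$ has the wrong scaling, and there is no way to recover the higher $L^p$ control on $h$ from an $L^1$-to-$L^1$ bound. Concretely, if you try the standard substitution $h\mapsto h^p$ to bootstrap the exponent, your inequality gives nothing new, whereas the isoperimetric form does. So unless the author genuinely intends only the $L^1$-to-$L^1$ estimate (in which case the entire iterated-Sobolev argument in the paper's proof, and the appeal to Proposition~\ref{prop:certain-uniform-Sobolev}, would be unnecessary, which seems unlikely), your proof leaves a real gap, and in particular your observation that Proposition~\ref{prop:certain-uniform-Sobolev} is not needed is an artifact of proving the weaker inequality. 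To match the paper you would need to keep the Sobolev inequality from Proposition~\ref{prop:certain-uniform-Sobolev} on the $\mathbf{z}$-factor and run the dimension-adding induction, rather than replace it by the fundamental theorem of calculus.
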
 

\begin{proof}
Let $Y= \mathbb{C}^n$. Then there is a uniform Sobolev constant $C_1$ for 
$$(Y, \frac{\sqrt{-1}}{2} \partial \dbar ( |\mathbf{z}|^2 + \delta^2 \log\, (|\mathbf{z}|^2)) )$$ 
by Proposition \ref{prop:certain-uniform-Sobolev}. 
Hence there is a uniform Sobolev constant $C$ for $(X,\omega_\delta)$, depending on $C_1$ and $k$. 

To see this, we only need to apply the following argument several times. 
Let  $(M,g)$ is a Riemannian manifold of real dimension $d$,  with a Sobolev constant $D$, 
then $M'=M\times \mathbb{R}$ admit Sobolev inequalities with Sobolev constant $D'=D^{\frac{d}{d+1}}$. 
Indeed, for any compactly supported positive $\mathcal{C}^1$ function $h$ on $M'$, we  apply  H\"older's inequality  and get 
\begin{eqnarray*}
\int_{M'} h^{\frac{d+1}{d}} 
&=& \int_{-\infty}^{+\infty} \Big( \int_{M_s} h^{\frac{d+1}{d}}  \Big)  \mathrm{d}s   \\
& \leqslant  &  \int_{-\infty}^{+\infty} \Big( \Big(  \int_{M_s} h \Big)^{\frac{1}{d}}  \Big( \int_{M_s} h^{\frac{d}{d-1}} \Big)^{\frac{d-1}{d}}   \Big)  \mathrm{d}s, 
\end{eqnarray*}
where $M_s=M\times \{s\}$ for any $s\in \mathbb{R}$.  
By assumption, we deduce that 
\begin{eqnarray*}
\int_M h^{\frac{d+1}{d}} 
&\leqslant& D\int_{-\infty}^{+\infty}  \Big( \int_{M_s} h     \Big)^{\frac{1}{d}} \Big( \int_{M_s} |\nabla_{M_s} h|     \Big)  \mathrm{d}s\\
&\leqslant& D\int_{-\infty}^{+\infty}  \Big( \int_{M_s} h     \Big)^{\frac{1}{d}} \Big( \int_{M_s} |\nabla_{M'} h|     \Big)  \mathrm{d}s.
\end{eqnarray*}
We also note that, for any $x\in M$, we have 
\[
h(x,s) 
\leqslant \int_{-\infty}^{+\infty} |\nabla_{M'} h(x,t)| \mathrm{d}t.
\]
Hence 
\[\int_{M_s} h   \leqslant \int_{M'} |\nabla_{M'} h|\] 
for any $s \in \mathbb{R}$.   
In the end, we conclude that 
\begin{eqnarray*}
\int_M h^{\frac{d+1}{d}} 
 \leqslant 
D \Big(\int_{M'} |\nabla_{M'} h| \Big)^{\frac{d+1}{d}}.
\end{eqnarray*}
This completes the proof of the Corollary. 
\end{proof}

The idea for Proposition \ref{prop:certain-uniform-Sobolev} is to decompose the space $\mathbb{C}^n$ into two pieces, depending on $\delta$.  
One of them  is $\mathbf{B}_{\delta\zeta}(o)$, so that we can apply \cite[Theorem 2.1]{HoffmanSpruck1974} inside. 
On its complement, $\omega_\delta$ equivalent to $\omega_0$, with estimates depending only on $\zeta$. 
To carry out this method, we will need the following lemmas.

\begin{lemma}\label{lemma:volume-estimate-blow-up} 
Let $X\subseteq \mathbb{C}^n$ be a  analytic subvariety of pure dimension $m$, defined around the origin $o\in \mathbb{C}^n$. 
Then for $k\in [0, 2m)$, there is a constant $C_k$,   such that for  $\rho >0$, we have the following inequality
\[
\int_{\mathbf{B}_\rho(o) \cap X}  |\mathbf{z}|^{-k}  
\leqslant C_k \rho^{2m-k}.
\] 
\end{lemma}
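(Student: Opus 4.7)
The plan is to combine the standard Lelong volume-growth estimate with a dyadic decomposition in $|\mathbf{z}|$, integrating a monotone bound on each annulus and summing the resulting geometric series.

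First I would invoke Lelong's theorem on the volume of an analytic subvariety: shrinking the neighborhood in which $X$ is defined if necessary, there exist $\rho_0>0$ and a constant $\Lambda>0$ (depending only on the germ of $X$ at $o$) such that
\[
\mathrm{vol}(\mathbf{B}_r(o)\cap X)\leqslant \Lambda\, r^{2m}\qquad\text{for all }0<r\leqslant\rho_0.
\]
This is equivalent, via Wirtinger's formula, to the statement that the Lelong number of the current of integration $[X]$ at $o$ is finite. Next, for $0<\rho\leqslant\rho_0$, I would write $\mathbf{B}_\rho(o)\cap X$ as the disjoint union of the dyadic annuli
\[
A_j=\{2^{-j-1}\rho\leqslant|\mathbf{z}|<2^{-j}\rho\},\qquad j\geqslant 0,
\]
together with the (measure-zero) origin. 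On $A_j$ we have the pointwise bound $|\mathbf{z}|^{-k}\leqslant(2^{-j-1}\rho)^{-k}$, while the volume estimate applied to $\mathbf{B}_{2^{-j}\rho}(o)\cap X$ gives $\mathrm{vol}(A_j\cap X)\leqslant\Lambda(2^{-j}\rho)^{2m}$. Hence
\[
\int_{A_j\cap X}|\mathbf{z}|^{-k}\leqslant\Lambda\,(2^{-j}\rho)^{2m}(2^{-j-1}\rho)^{-k}=\Lambda\cdot 2^{k}\cdot\rho^{2m-k}\cdot 2^{-j(2m-k)}.
\]
Because $k<2m$, the exponent $2m-k$ is strictly positive, so summing over $j\geqslant 0$ yields a convergent geometric series whose sum depends only on $k$ and $m$. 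This gives
\[
\int_{\mathbf{B}_\rho(o)\cap X}|\mathbf{z}|^{-k}\leqslant C_k\,\rho^{2m-k}
\]
in the range $0<\rho\leqslant\rho_0$. For $\rho>\rho_0$ (the case in which the statement is really a statement about a fixed representative of the germ), the integral is dominated by its value on $\mathbf{B}_{\rho_0}(o)\cap X$, which is a fixed finite quantity, so we may enlarge $C_k$ to absorb this range.

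The only nontrivial ingredient is the Lelong volume bound in the first step, and this is standard (see, e.g., Demailly, \emph{Complex Analytic and Differential Geometry}, or Chirka, \emph{Complex Analytic Sets}); once that is in hand the rest is a routine dyadic estimate, which is why I expect no real obstacle beyond citing the appropriate classical result.
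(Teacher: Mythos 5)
Your proof is correct, and it rests on exactly the same core ingredient as the paper's argument, namely the volume-growth bound $\mathrm{Vol}(\mathbf{B}_r(o)\cap X)\leqslant D\,r^{2m}$, but you organize the estimate differently. The paper applies the co-area formula to rewrite the integral as the one-variable Riemann--Stieltjes integral $\int_0^\rho r^{-k}\,\mathrm{d}F(r)$ with $F(r)=\mathrm{Vol}(\mathbf{B}_r(o)\cap X)$, and then invokes the auxiliary Lemma~\ref{lemma:integration-by-parts} (an integration-by-parts inequality for increasing functions with $F(r)\leqslant Cr^d$) to conclude. You instead dyadically decompose $\mathbf{B}_\rho(o)\cap X$ into annuli $A_j$, bound $|\mathbf{z}|^{-k}$ pointwise on each $A_j$, use the volume bound for $\mathrm{vol}(A_j\cap X)$, and sum the resulting geometric series, which converges precisely because $2m-k>0$. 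The two are really the discrete and continuous versions of one and the same argument: your geometric series plays the role of the paper's $\int r^{-k-1}F(r)\,\mathrm{d}r$. Your version is slightly more self-contained (it avoids Stieltjes integration theory and the separate one-variable lemma), while the paper's factoring out of Lemma~\ref{lemma:integration-by-parts} keeps the measure-theoretic manipulation isolated. You also explicitly cite Lelong's theorem for the volume bound and address the large-$\rho$ regime, both of which the paper leaves implicit; these are welcome clarifications and not gaps in either direction.
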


\begin{proof}
The co-area formula shows that 
\[
\int_{\mathbf{B}_\rho(o) \cap X} |\mathbf{z}|^{-k} = \int_0^\rho |\mathbf{z}|^{-k}   \mathrm{d} \mathrm{Vol}\,((\mathbf{B}_x(r) \cap X)).
\]
There is a constant $D>0$ such that  that 
\[ \mathrm{Vol}\,(\mathbf{B}_r(x) \cap X) \leqslant  D r^{2m}. \] 
Hence,   by Lemma \ref{lemma:integration-by-parts} below, there is a constant $C_k$, depending only on  $k$ and $D$, such that 
\[
\int_{\mathbf{B}_\rho(o) \cap X} |\mathbf{z}|^{-k}     
\leqslant C_k\rho^{2m-k}.
\] 
\end{proof}

\begin{lemma}\label{lemma:integration-by-parts}
Let $\rho_0>0$ and   $F\colon [0,\rho_0] \to \mathbb{R}_{\geqslant 0}$  an increasing function. 
Assume that there is a constant $C$ and an integer $d\geqslant 3$ such that $F(r) \leqslant C r^d$ for all $r\in [0,\rho_0]$. 
Then for $k\in [0, d)$, there is a constant $C_k$ such that for any $\rho \in (0,\rho_0]$, we have
\[
\int_0^\rho r^{-k}\mathrm{d} F(r) \leqslant C_k \rho^{d-k}.
\]
\end{lemma}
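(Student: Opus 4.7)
The plan is to reduce the Stieltjes integral $\int_0^\rho r^{-k}\mathrm{d}F(r)$ to a standard Lebesgue integral against the polynomial bound on $F$ by applying integration by parts, then handle the boundary behavior at $0$ using the hypothesis $d>k$.

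First I would fix $\epsilon\in(0,\rho)$ and apply integration by parts for Stieltjes integrals on the interval $[\epsilon,\rho]$, where the integrand $r^{-k}$ is $\mathcal{C}^1$ and $F$ is monotone (hence of bounded variation). This gives
\[
\int_\epsilon^\rho r^{-k}\,\mathrm{d}F(r) = \rho^{-k}F(\rho)-\epsilon^{-k}F(\epsilon) + k\int_\epsilon^\rho r^{-k-1}F(r)\,\mathrm{d}r.
\]
Next, using the hypothesis $F(r)\leqslant Cr^d$, I would bound each term:
\[
\rho^{-k}F(\rho)\leqslant C\rho^{d-k},\qquad \epsilon^{-k}F(\epsilon)\leqslant C\epsilon^{d-k},\qquad k\int_\epsilon^\rho r^{-k-1}F(r)\,\mathrm{d}r\leqslant Ck\int_\epsilon^\rho r^{d-k-1}\,\mathrm{d}r\leqslant \tfrac{Ck}{d-k}\rho^{d-k}.
\]
Since $d>k\geqslant 0$, the middle boundary term satisfies $\epsilon^{d-k}\to 0$ as $\epsilon\to 0^+$, so letting $\epsilon\to 0^+$ and combining gives
\[
\int_0^\rho r^{-k}\,\mathrm{d}F(r)\leqslant \Bigl(C+\tfrac{Ck}{d-k}\Bigr)\rho^{d-k}=\tfrac{Cd}{d-k}\rho^{d-k},
\]
so one may take $C_k:=Cd/(d-k)$. (When $k=0$ the integral simply equals $F(\rho)\leqslant C\rho^d$, consistent with $C_0=C$.)

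There is no serious obstacle here; the only mildly subtle point is justifying the integration by parts for the Lebesgue--Stieltjes measure $\mathrm{d}F$ against the singular weight $r^{-k}$. This is precisely why one works on $[\epsilon,\rho]$ first and then passes to the limit: on $[\epsilon,\rho]$ the weight $r^{-k}$ is bounded and smooth, so the formula is standard, and the hypothesis $d>k$ kills the boundary contribution at $0$. The assumption that $d$ is an integer and $d\geqslant 3$ plays no role in the argument; only $d>k$ is used.
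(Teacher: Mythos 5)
Your proof is correct and follows essentially the same route as the paper: apply Riemann--Stieltjes integration by parts on $[\epsilon,\rho]$, bound the resulting terms using $F(r)\leqslant Cr^d$, and let $\epsilon\to 0^+$. The only cosmetic difference is that the paper simply discards the boundary term $-\epsilon^{-k}F(\epsilon)$ because it is nonpositive, whereas you show it tends to zero using $d>k$; both are fine, and your observation that the hypotheses ``$d$ integer, $d\geqslant 3$'' are unused (only $d>k$ matters) is accurate.
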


\begin{proof}
We fix some $\rho \in (0,\rho_0]$. 
For any  $0< \epsilon \leqslant \rho$, 
the integration by parts for Riemann–Stieltjes integral implies that 
\[ 
\int_\epsilon^\rho  r^{-k} \mathrm{d} F(r)  
= \Big[ r^{-k} F(r) \Big]_\epsilon^\rho 
+ k\int_\epsilon^\rho  r^{-k-1} F(r) \mathrm{d}r.
\]
By assumption, the right-hand-side in the previous equality is  bounded from above by
\[  
C\rho^{d-k} + \frac{kC}{d-k} \rho^{d-k}. 
\]
We can hence deduce the lemma by tending $\epsilon$   to zero.
\end{proof}
  
As a consequence, we deduce the following lemma.
 
\begin{lemma}
\label{lemma:uniform-volume-estimate} 
For any $S >0$, there is some $\zeta>0$ such that, for any $0 < \delta \leqslant 1$,  the following inequality holds,
\[
\int_{X\cap   \mathbf{B}_{\delta\zeta }(o) } 
\omega_\delta^m
 \leqslant S\delta^{2m}.
\] 
\end{lemma}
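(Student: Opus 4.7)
The plan is to expand $\omega_\delta^m$ binomially, bound each term via Lemma \ref{lemma:volume-estimate-blow-up}, and treat the top-degree term separately through a blow-up argument. Write $\omega_0=\frac{\sqrt{-1}}{2}\partial\bar\partial|\mathbf{z}|^2$ and $\alpha=\frac{\sqrt{-1}}{2}\partial\bar\partial\log|\mathbf{z}|^2$, so $\omega_\delta=\omega_0+\delta^2\alpha$. A direct computation gives
\[ \alpha = \frac{\omega_0}{|\mathbf{z}|^2} - \frac{\sqrt{-1}}{2|\mathbf{z}|^4}\Bigl(\sum_i \bar z_i\,dz_i\Bigr)\wedge\Bigl(\sum_j z_j\,d\bar z_j\Bigr), \]
and the Cauchy--Schwarz inequality then yields $0\leq \alpha\leq |\mathbf{z}|^{-2}\omega_0$ as positive $(1,1)$-forms on $\mathbb{C}^n\setminus\{o\}$. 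Iterating monotonicity of wedge products of positive forms produces, as positive $(m,m)$-forms on $\mathbb{C}^n$, the pointwise bound $\omega_0^{m-j}\wedge\alpha^j\leq |\mathbf{z}|^{-2j}\omega_0^m$ for each $0\leq j\leq m$.

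For $0\leq j<m$, Lemma \ref{lemma:volume-estimate-blow-up} applied with $k=2j<2m$ gives
\[ \int_{X\cap\mathbf{B}_{\delta\zeta}(o)}\omega_0^{m-j}\wedge\alpha^j \leq \int_{X\cap\mathbf{B}_{\delta\zeta}(o)}|\mathbf{z}|^{-2j}\omega_0^m \leq C_j(\delta\zeta)^{2m-2j}, \]
so the $j$-th term of the binomial expansion contributes at most $\binom{m}{j}C_j\delta^{2m}\zeta^{2m-2j}$ to $\int_{X\cap\mathbf{B}_{\delta\zeta}(o)}\omega_\delta^m$, namely $\delta^{2m}$ times a strictly positive power of $\zeta$.

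The top term $j=m$ requires more care. If $m=n$, then $\alpha$ has pointwise rank $n-1$ (its kernel is the radial direction), so $\alpha^n\equiv 0$ and this term vanishes. If $m<n$, then I would use that $\alpha$ is the pullback of the Fubini--Study form via the projection $\mathbb{C}^n\setminus\{o\}\to\mathbb{P}^{n-1}$, and hence extends to a smooth closed semipositive $(1,1)$-form $\widetilde\alpha$ on the blow-up $r\colon\widehat{\mathbb{C}^n}\to\mathbb{C}^n$ at $o$. Letting $\widetilde X$ be the strict transform of $X$,
\[ \int_{X\cap\mathbf{B}_\rho(o)}\alpha^m = \int_{\widetilde X\cap r^{-1}(\mathbf{B}_\rho(o))}\widetilde\alpha^m. \]
Since $\widetilde\alpha^m$ is a smooth $(m,m)$-form on $\widehat{\mathbb{C}^n}$ and $r^{-1}(\mathbf{B}_\rho(o))$ shrinks to the exceptional divisor $E$ as $\rho\to 0$, while $\widetilde X\cap E$ has complex dimension at most $m-1$, the integral tends to $0$ as $\rho\to 0$ by monotone convergence.

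Combining the estimates,
\[ \int_{X\cap\mathbf{B}_{\delta\zeta}(o)}\omega_\delta^m \leq \delta^{2m}\Bigl(\sum_{j=0}^{m-1}\binom{m}{j}C_j\zeta^{2m-2j} + \int_{X\cap\mathbf{B}_\zeta(o)}\alpha^m\Bigr), \]
and the bracket, independent of $\delta\in(0,1]$, tends to $0$ as $\zeta\to 0^+$. Taking $\zeta>0$ small enough to make it at most $S$ finishes the proof. The only delicate point is the $j=m$ case, where the blow-up geometry of $\omega_\delta$ enters; all other terms reduce to direct applications of Lemma \ref{lemma:volume-estimate-blow-up} once the pointwise bound $\alpha\leq|\mathbf{z}|^{-2}\omega_0$ is in hand.
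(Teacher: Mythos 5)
Your proof is correct and follows the same overall plan as the paper's: expand $\omega_\delta^m$ binomially, bound the $j<m$ terms using the pointwise estimate $\alpha\leqslant|\mathbf{z}|^{-2}\omega_0$ together with Lemma~\ref{lemma:volume-estimate-blow-up}, and handle the top term via the blow-up. The only genuine variation is in the $j=m$ term. The paper dominates $\delta^{2m}\alpha^m\leqslant\delta^{2m}\omega_1^m$ and then invokes a quantitative tubular-neighbourhood volume estimate for the K\"ahler metric $\omega_1$ on $\widehat{\mathbb{C}^n}$, yielding $\int_{X\cap\mathbf{B}_{\delta\rho}(o)}\omega_1^m\leqslant D_2\rho^2\delta^2$ and so an extra factor of $\delta^2$ that makes the whole bound manifestly $D_3\rho^2\delta^{2m}$; this relies on identifying $\mathbf{B}_{\delta\rho}(o)\setminus\{o\}$ with the $\delta\rho$-tubular neighbourhood of $E$ in $(\widehat{\mathbb{C}^n},\omega_1)$. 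You instead observe that $\alpha$ is pulled back from $\mathbb{P}^{n-1}$, so $\alpha^m=r^*\widetilde\alpha^m$ for a smooth semipositive $(m,m)$-form $\widetilde\alpha^m$ on the blow-up, and conclude by monotone convergence that $\int_{X\cap\mathbf{B}_\zeta(o)}\alpha^m\to0$ as $\zeta\to0^+$, treating $m=n$ (where $\alpha^n\equiv0$) separately. Your route sidesteps the tubular-neighbourhood geometry entirely and gives a cleaner $A=1$ bound on $\alpha$, at the cost of being only qualitative in the top term --- which is all this lemma requires, though it loses the sharper rate in $\delta$ that the paper records.
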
 

\begin{proof}   
We observe that  
\[
\omega_\delta^m|_X \leqslant (\sum_{j=0}^{m-1}  \binom{m}{j}  A^j \delta^{2j} |\mathbf{z}|^{-2j}  )\omega_0^m|_X  
+  \delta^{2m} \omega_1^{m} |_X.
\]

On the one hand, by applying   Lemma \ref{lemma:volume-estimate-blow-up}, there is a constant $D_1$ such that, for any $\rho > 0$,    
\[
\int_{X \cap  \mathbf{B}_{\delta \rho}(o) }     (\sum_{j=0}^{m-1} c_j   \delta^{2j} |\mathbf{z}|^{-2j}  )\omega_0^m
\leqslant D_1 \rho^{2}\delta^{2m}.  
\]
On the other hand, we remark that $(\mathbf{B}_{\delta \rho}(o)\backslash \{o\}, \omega_1)$ is the $\delta\rho$-tubular neighbourhood of $E$ in $(\widehat{\mathrm{C}}^n, \omega_1)$, where $E$ is the exceptional divisor of 
$\widehat{\mathrm{C}}^n \to \mathbb{C}^n$. 
Hence there is a constant $D_2$ such that
\[
\int_{X \cap  \mathbf{B}_{\delta \rho}(o) }   \omega_1^{m}|_X
\leqslant D_2\rho^{2}\delta^{2}.
\]
Therefore, there is a constant $D_3$ such that 
\[ 
\delta^{-2m}  \int_{X\cap   \mathbf{B}_{\delta \rho }(x) } 
\omega_\delta^m   \leqslant D_3 \rho^2.   
\]
This implies the existence of $\zeta$.
\end{proof}

In order to glue the estimates of the two regions, we will require the following results. 

\begin{lemma}\label{lemma:Whitney-neighbourhood}
There is an open neighbourhood $U$ of $o$ in $N$, and  a number $\gamma>0$, such that for any smooth point $x$ of $X\cap U$, different from $o$, we have the following  inequality
\[ 
\langle \mathbf{e}_x, \mathbf{e}_x^t \rangle \geqslant \gamma,
\]
where $\mathbf{e}_x$ is the real unit directional vector pointing from $o$ to $x$,  $\mathbf{e}_x^t$ is its orthogonal projection onto
the real tangent space of $X$ at $x$, and the inner product is taken with respect to the canonical Euclidean product. 
\end{lemma}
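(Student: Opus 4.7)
The plan is to argue by contradiction, reducing the inequality to Whitney's regularity condition (b). Since $X$ is a complex analytic subvariety of a neighbourhood of the origin in $N=\mathbb{C}^n\times\mathbb{C}^k$, after possibly shrinking $U$ we may endow $X\cap U$ with a Whitney stratification in which $\{o\}$ is a zero-dimensional stratum and the smooth locus $(X\cap U)_{reg}$ decomposes into finitely many locally closed complex submanifolds $S_1,\ldots,S_p$ of $N$. The existence of such a stratification is classical for complex analytic varieties.

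Suppose for contradiction that no such $\gamma>0$ exists in any neighbourhood of $o$. Then there is a sequence $(x_n)$ in $X_{reg}\setminus\{o\}$ with $x_n\to o$ and $\langle \mathbf{e}_{x_n},\mathbf{e}_{x_n}^t\rangle\to 0$. By the pigeonhole principle we may assume that all $x_n$ lie in a single stratum $S$. By compactness of the unit sphere in $\mathbb{R}^{2(n+k)}$ and of the real Grassmannian of $2m$-planes, we may further assume, after passing to a subsequence, that $\mathbf{e}_{x_n}\to e_\infty$ for some unit vector $e_\infty$ and that $T_{x_n}X=T_{x_n}S\to T_\infty$ in the Grassmannian. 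Since orthogonal projection is continuous in both the vector and the subspace, the hypothesis forces $\mathrm{proj}_{T_\infty}(e_\infty)=0$, i.e.\ $e_\infty\perp T_\infty$.

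On the other hand, Whitney's condition (b) applied to the pair of strata $(\{o\},S)$ says precisely that the limit of the secant directions $\mathbf{e}_{x_n}$ from $o$ to $x_n\in S$ must lie in the limiting tangent space $T_\infty$; hence $e_\infty\in T_\infty$. Since $e_\infty$ is a unit vector it cannot lie simultaneously in $T_\infty$ and in $T_\infty^\perp$, which is the desired contradiction.

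The principal nontrivial input is the existence of a $(b)$-regular Whitney stratification of $X$ near $o$ with $\{o\}$ as a stratum, which is the main obstacle only in the sense that it is the one nonelementary ingredient. A more self-contained alternative replaces it by the curve selection lemma applied to the semi-analytic set on which $\langle\mathbf{e}_x,\mathbf{e}_x^t\rangle$ is small: one extracts a real analytic arc $\phi\colon[0,\varepsilon)\to X$ with $\phi(0)=o$ and $\phi(t)\in X_{reg}$ for $t>0$, writes a Puiseux-type expansion $\phi(t)=at^k+O(t^{k+1})$ with $a\neq 0$, and observes that both $\mathbf{e}_{\phi(t)}=\phi(t)/|\phi(t)|$ and the tangent direction $\phi'(t)/|\phi'(t)|$ converge to the common unit vector $a/|a|$; since $\phi'(t)/|\phi'(t)|\in T_{\phi(t)}X$, this forces the angle between $\mathbf{e}_{\phi(t)}$ and $T_{\phi(t)}X$ to vanish in the limit, again a contradiction.
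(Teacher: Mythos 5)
Your main argument is exactly the paper's proof: the paper simply states that the lemma follows from Whitney's condition (b), citing Whitney's theorem for the existence of the stratification, and your compactness-plus-contradiction argument is a correct spelling-out of that one-line reasoning. Your alternative via curve selection and a Puiseux-type expansion is also sound (modulo the tacit but standard fact that the Gauss map on $X_{\reg}$ is subanalytic so that the bad set is subanalytic), though it is not what the paper does.
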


\begin{proof}
It follows from the Whitney condition (b) of Whitney stratifications, whose existence follows from \cite[Theorem 19.2]{Whitney1965}. 
\end{proof}

\begin{lemma}\label{lemma:slice-estimate}
With the   open neighbourhood $U$ in Lemma \ref{lemma:Whitney-neighbourhood}, for any $\rho >0$, for any  positive $\mathcal{C}^1$ function $h$, compactly supported in the smooth locus of $U\cap X \backslash \{o\}$,
we have 
\[
\int_{X \cap \mathbf{S}_\rho(o)} h \leqslant \frac{1}{\gamma^2} \int_{X \backslash \mathbf{B}_\rho(o)} |\nabla_X h|.
\] 
Here the Riemannian metric on $X$ is the one induced by the Euclidean metric on $N$.
\end{lemma}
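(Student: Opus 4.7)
The plan is to apply the divergence theorem on the domain $X \cap \{|x|>\rho\}$ to the vector field $W=h\,\mathbf{e}_x^t$, exploiting the fact that, as a complex analytic subvariety of $\mathbb{C}^n$, $X$ is minimal at its smooth points, so that the intrinsic Laplacian of the radial function is non-negative.

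First I would unpack the Whitney condition. At any smooth point $x\in X\setminus\{o\}$, the restriction of $|x|$ to $X$ has gradient $\nabla_X|x|=P_{T_xX}\mathbf{e}_x=\mathbf{e}_x^t$, so that
\[
|\mathbf{e}_x^t|^2 \;=\; \langle \mathbf{e}_x,\mathbf{e}_x^t\rangle \;\geqslant\; \gamma \qquad \text{on } U,
\]
since the squared norm of an orthogonal projection equals the inner product with the original vector. Next I would check that $\mathrm{div}_X \mathbf{e}_x^t\geqslant 0$. Each coordinate $x_j$ is holomorphic on $\mathbb{C}^n$, hence its restriction to the smooth locus of $X$ is pluriharmonic, so $\Delta_X x_j=0$; a direct computation in real coordinates $\xi_1,\dots,\xi_{2n}$ of $\mathbb{C}^n\cong\mathbb{R}^{2n}$ then gives
\[
\Delta_X |x|^2 \;=\; 2\sum_k |\nabla_X\xi_k|^2 \;=\; 2\,\mathrm{tr}\,P_{T_xX} \;=\; 4m,
\]
where $P_{T_xX}$ denotes the orthogonal projection onto $T_xX$ and its trace is the real dimension of $X$. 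A short computation using $\mathbf{e}_x^t=(\nabla_X|x|^2)/(2|x|)$ then yields
\[
\mathrm{div}_X \mathbf{e}_x^t \;=\; \Delta_X|x| \;=\; \frac{2m-|\mathbf{e}_x^t|^2}{|x|} \;\geqslant\; \frac{2m-1}{|x|} \;>\;0.
\]

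Next I would apply Stokes' theorem to $W=h\,\mathbf{e}_x^t$ on $X\cap\{|x|>\rho\}$. Because $h$ has compact support in the smooth locus of $U\cap X\setminus\{o\}$, the vector field $W$ is supported away from the singular locus of $X$; the singular locus has real codimension at least two, so the same cutoff device that underlies Corollary \ref{cor:integration-by-part-2} justifies the computation. Noting that the outward unit normal to $X\cap\{|x|>\rho\}$ along the slice $X\cap\mathbf{S}_\rho(o)$ is $-\mathbf{e}_x^t/|\mathbf{e}_x^t|$, one obtains
\[
\int_{X\cap\mathbf{S}_\rho(o)} h\,|\mathbf{e}_x^t| \;=\; -\int_{X\setminus\mathbf{B}_\rho(o)}\bigl(\nabla_X h\cdot \mathbf{e}_x^t + h\,\mathrm{div}_X \mathbf{e}_x^t\bigr).
\]
Both $h$ and $\mathrm{div}_X \mathbf{e}_x^t$ being non-negative, the second term on the right can be discarded; Cauchy--Schwarz applied to the first then gives
\[
\int_{X\cap\mathbf{S}_\rho(o)} h\,|\mathbf{e}_x^t| \;\leqslant\; \int_{X\setminus\mathbf{B}_\rho(o)}|\nabla_X h|\,|\mathbf{e}_x^t|.
\]

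To conclude I would plug in $|\mathbf{e}_x^t|\geqslant\sqrt{\gamma}$ on the left and $|\mathbf{e}_x^t|\leqslant 1$ on the right, obtaining $\sqrt{\gamma}\int_{X\cap\mathbf{S}_\rho(o)} h\leqslant\int_{X\setminus\mathbf{B}_\rho(o)}|\nabla_X h|$; since $\gamma\in(0,1]$ forces $1/\sqrt{\gamma}\leqslant 1/\gamma^2$, this yields the stated inequality (in fact with room to spare). The main point of care will be the rigorous legitimisation of Stokes' theorem across the singular locus of $X$, but since $h$ already vanishes near that locus and the locus has real codimension $\geqslant 2$, this is precisely the cutoff situation already treated in Corollary \ref{cor:integration-by-part-2} of the paper.
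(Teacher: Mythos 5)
Your proof is correct and follows the same strategy as the paper's: apply the divergence theorem on $X\setminus\mathbf{B}_\rho(o)$ to the vector field $h\,\nabla_X r = h\,\mathbf{e}_x^t$, use the nonnegativity of $\mathrm{div}_X\nabla_X r$ to discard the bulk term, and control the boundary term by the Whitney condition. The main point where you deviate from the paper is the derivation of $\mathrm{div}_X\nabla_X r\geqslant 0$: the paper imports the identities $|\nabla_X r|\leqslant 1$ and $\mathrm{div}_X(r\nabla_X r)=2m$ from \cite[Eq.\ (3.6)]{HoffmanSpruck1974}, whereas you compute $\Delta_X|x|^2=4m$ directly from the pluriharmonicity of the ambient coordinates on a complex (hence minimal, Kähler) submanifold, which is more self-contained and gives the slightly stronger pointwise bound $\mathrm{div}_X\mathbf{e}_x^t\geqslant (2m-1)/|x|>0$. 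You are also more careful in reading off the Whitney bound: since $\mathbf{e}_x^t$ is the orthogonal projection of $\mathbf{e}_x$, one has $\langle\mathbf{e}_x,\mathbf{e}_x^t\rangle = |\mathbf{e}_x^t|^2\geqslant\gamma$ exactly, which leads you to the constant $1/\sqrt{\gamma}$ rather than the paper's looser $1/\gamma^2$ (the paper writes $|\nabla_X r|^2\geqslant\gamma^2$, which is correct but not sharp since $\gamma\leqslant 1$). Both are valid, and your constant is the better one; the paper's stated $1/\gamma^2$ is of course still a valid upper bound.
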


\begin{proof}
Let $r$ be the distance function to the origin $o$ in $\mathbb{C}^n$. 
Then, as shown in \cite[Equation (3.6)]{HoffmanSpruck1974}, outside the origin $o$, we have 
\[|\nabla_X r| \leqslant |\nabla_N r| =1  \mbox{ and }  \mathrm{div}_X (r\nabla_X r) = 2m.\]
As a consequence, we deduce that 
\[  \mathrm{div_X} ( \nabla_X r) 
= \frac{1}{r}(  \mathrm{div}_X ( r\nabla_X r)- |\nabla_X r|^2 ) 
\geqslant 0.
\] 
Since $X$ is a minimal subvariety by \cite[Theorem 3.1.2]{Simons1968}, the divergence formula as in \cite[Equation (3.2)]{HoffmanSpruck1974}   implies that 
\[ 0 \leqslant \int_{X \backslash \mathbf{B}_\rho(o)} h \cdot \mathrm{div}_X ( \nabla_X r) 
= -\int_{X \backslash \mathbf{B}_\rho(o)}   \langle \nabla_X h, \nabla_N r \rangle 
- \int_{X \cap \mathbf{S}_\rho(o)} h  |\nabla_X r|^2.
\]
We note that, at any point $x\in X\backslash \{o\}$,  we have
  $ \nabla_X r (x)  = \mathbf{e}_x^t$. 
Thus by Lemma \ref{lemma:Whitney-neighbourhood}, we deduce that  $|\nabla_X r|^2  \geqslant \gamma^2.$ 
The lemma then follows. 
\end{proof}

Now we can conclude the proof of Proposition \ref{prop:certain-uniform-Sobolev}.

\begin{proof}[{Proof of Proposition \ref{prop:certain-uniform-Sobolev}}] 
Let $U$ be as in Lemma \ref{lemma:Whitney-neighbourhood}. 
By scaling, we see that there are constant $b,R>0$, such that the (real) sectional  curvatures  of $(\widehat{\mathbb{C}^n}, \omega_\delta)$ is bounded from above by $\frac{b^2}{\delta^2}$, and the injectivity radius of  $(\widehat{\mathbb{C}^n}, \omega_\delta)$ is at least $\delta R$. 
Let $S$ be small enough so that the following inequality is well-defined and valid, 
\[ b^{-1}\sin^{-1}( b( 2V_{2m}^{-1} S)^{\frac{1}{2m}})  \leqslant  \frac{ R}{2},\]
where $V_{2m}$ is the volume of Euclidean $2m$-dimensional unit ball. 
By Lemma \ref{lemma:uniform-volume-estimate}, there is some  $\zeta>0$ such that 
for any $0 < \delta \leqslant 1$,  the following inequality holds,
\[
\int_{X\cap   \mathbf{B}_{\delta\zeta }(o) } 
\omega_\delta^m
 \leqslant S\delta^{2m}.
\] 

Then by \cite[Theorem 2.1]{HoffmanSpruck1974}, there is a constant $K>0$ such that for any  positive $\mathcal{C}^1$ function $g$, compactly supported in the smooth locus of $X\cap \mathbf{B}_{\delta\zeta }(o)  \backslash \{o\}$, we have 
\[ 
\Big(\int_{X \cap \mathbf{B}_{\delta\zeta}(o) } g^{\frac{2m}{2m-1}} \omega_\delta^m \Big)^{\frac{2m-1}{2m}}  
\leqslant K 
\int_{X \cap \mathbf{B}_{\delta\zeta}(o) } |\nabla_{X_\delta} g|  \omega_\delta^m.
\]

Let $h$ be a positive $\mathcal{C}^1$ function, compactly supported in the smooth locus of $X\cap U \backslash \{o\}$.  
By multiplying $h$ by smooth cut-off functions, which approximate to the characteristic function of $\mathbf{B}_{\delta\zeta }(o)$, the previous inequality implies that  
\[ 
\Big(\int_{X \cap \mathbf{B}_{\delta\zeta}(o) } h^{\frac{2m}{2m-1}} \omega_\delta^m \Big)^{\frac{2m-1}{2m}}  
\leqslant K 
\int_{X \cap \mathbf{B}_{\delta\zeta}(o) } |\nabla_{X_\delta} h|  \omega_\delta^m
+  K \int_{X_\cap \mathbf{S}_{\delta\zeta}(o)} h  \mathrm{d}A_\delta,
\]
where $\mathrm{d}A_\delta$ is the measure induced by $\omega_\delta$.

We note that $\omega_0 \leqslant \omega_\delta \leqslant (1+\frac{A}{\zeta})^2\omega_0$ over 
$\{ |\mathbf{z}| \geqslant  \delta \zeta \}.$ 
By multiplying $h$ cut-off functions which approximate to the characteristic function of $\mathbb{C}^{n}\backslash \mathbf{B}_{\delta\zeta }(o)$, and then 
by applying \cite[Theorem 2.1]{MichaelSimon1973}, we deduce that there is a constant $K_1$, depending only on $m,A,\zeta$, such that 
\[ 
\Big(\int_{X \backslash \mathbf{B}_{\delta\zeta}(o) } h^{\frac{2m}{2m-1}} \omega_\delta^m \Big)^{\frac{2m-1}{2m}}  
\leqslant K_1 
\int_{X \backslash \mathbf{B}_{\delta\zeta}(o) } |\nabla_{X_\delta} h|  \omega_\delta^m
+  K_1 \int_{X_\cap \mathbf{S}_{\delta\zeta}(o)} h  \mathrm{d}A_\delta,
\]
We also note that 
\[\int_{X_\cap \mathbf{S}_{\delta\zeta}(o)} h  \mathrm{d}A_\delta 
\leqslant  
(1+\frac{A}{\zeta})^{2m-1}\int_{X_\cap \mathbf{S}_{\delta\zeta}(o)} h  \mathrm{d}A_0.
 \]
By Lemma \ref{lemma:slice-estimate},  
\begin{eqnarray*}
\int_{X_\cap \mathbf{S}_{\delta\zeta}(o)} h  \mathrm{d}A_0 
&\leqslant& \frac{1}{\gamma^2}\int_{X \backslash \mathbf{B}_{\delta\zeta}(o) } |\nabla_{X_0} h|  \omega_0^m \\
&\leqslant& \frac{(1+\frac{A}{\zeta})}{\gamma^2} \int_{X \backslash \mathbf{B}_{\delta\zeta}(o) } |\nabla_{X_\delta} h|  \omega_\delta^m. 
\end{eqnarray*} 
Finally we obtain that 
\begin{eqnarray*}
\Big(\int_X  h ^{\frac{2m}{2m-1}} \omega_\delta^m  \Big)^{\frac{2m-1}{2m}}   
&\leqslant&  
(K+K_1) (1+ (1+\frac{ A}{\zeta})^{2m}\gamma^{-2})\int_{X } |\nabla_{X_\delta}h|\omega_\delta^m.
\end{eqnarray*}
This completes the proof of the proposition. 
\end{proof}

\section{Existence of admissible metrics}
\label{section:admissible}
  
In this section, we will assume  Conjecture \ref{conj:Sobolev-conj-global}   and then  prove Theorem \ref{thm:existence-admissible}. 
That is,   the existence of admissible solutions until infinite time of the heat equation (\ref{equa:heat-equation-metric}), 
\begin{equation*}
\begin{cases}
h_t^{-1}(\frac{\mathrm{\partial}}{\mathrm{\partial}t}h_t)  = -(\sqrt{-1}\Lambda_{\omega} F_t - \lambda_\omega(\cF) \mathrm{Id}_\cF)\\
\mathrm{det}\, h_t \equiv 1
\end{cases}
\end{equation*}
with an initial metric $H_0$ constructed as in Section \ref{section:initial-metric}.  
The method goes back to \cite{BandoSiu1994}. 
We will first solve the heat equation on a desingularization $r\colon \widehat{X} \to (X,\omega)$, with respect to small  perturbation of $r^*\omega$. 
Then we will try to converge the solutions and get the solution on $(X,\omega)$. 
In order to guarantee the convergence, we will prove  some uniform estimates on the solutions with respect to perturbed metrics, in Section \ref{subsection:uniform-estimate}. 
Once we get the uniform estimates, we will use induction to conclude  Theorem \ref{thm:existence-admissible} in Section \ref{subsection:solution-heat-solution}. 
In the last subsection, we will deduce some estimate on the shifted solution $g(t)=h(1)^{-1}h(t)$. 
It will serve the proof on existence of Hermitian-Yang-Mills metric in Section \ref{section:HYM-metric}.

\subsection{Uniform estimates}\label{subsection:uniform-estimate}
 
Let $(X,\omega)$ be a compact K\"ahler variety of complex dimension $n$. 
Assume that we have the following Sobolev inequality   
\[ (\int_{X} |f|^{\frac{2n}{ n-1} })^{\frac{n-1}{n}}  \leqslant C_S \int_{X} (||\nabla f||^2 + |f|^2)  \]
for any $\mathcal{C}^1$ function $f$, compactly supported in the smooth locus of $X$. 
Let  $\cF$ be a coherent sheaf on $X$. 
Let $X^\circ$  be any smooth Zariski open dense subset of $X$ over which $\cF$ is locally free.
Let $H_0$ be a Hermitian metric on $\cF $, with Chern curvature $F_0$ such that 
\[   \int_{z\in  X^\circ }  ||\Lambda_{\omega}  F_0(z)||_{H_0} \omega^n \leqslant I, \]
for some constant $I.$
Assume that $h_t$ is a solution of the heat equation (\ref{equa:heat-equation-metric}) for $\cF|_{X^\circ}$ with initial metric $H_0$. 
We also suppose that 
\[ ||\Lambda_{\omega} F (x,t)||_{H_t}   \leqslant \int_{z\in  X^\circ } K  (x,z,\frac{1}{2}t) ||\Lambda_{\omega}  F_0(z)||_{H_0} \omega^n, \]
where $K$ is the  heat kernel of $(X^\circ,\omega)$.

\begin{lemma}\label{lemma:bound-trace-curvature}
We have the following two estimate for  $||\Lambda_{\omega} F (x,t)||_{H_t}$.
\begin{enumerate}
\item  If  $||\Lambda_\omega F_0(x)||_{H_0} \leqslant B$ for some positive constant  $B$, 
then 
\[ ||\Lambda_{\omega} F (x,t)||_{H_t} \leqslant B \] 
for all $(x,t) \in X^\circ \times [0,\infty)$.

\item Let $U \subseteq V$ be two precompcat open subset of $X^\circ$.
Assume that there are strictly positive numbers $S=S(V)$ and $\delta = \delta(U,V)$ such that 
\[ \mathrm{dist}\,(U, X\backslash V) \geqslant \delta,\]
and  that 
\[\sup_{x\in V} ||\Lambda F_0(x)||_{H_0} \leqslant S.\] 
Then  there exists a positive constant $M$, depending on  $\delta,S,n,C_S,I$  such that 
\[ ||\Lambda_{\omega} F (x,t)||_{H_t} \leqslant M  \] 
for all $(x,t) \in U\times (0,\infty)$. 
\end{enumerate}
 
\end{lemma}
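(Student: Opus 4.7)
The plan is to derive both items from the standing assumption
\[ \|\Lambda_\omega F(x,t)\|_{H_t} \;\leqslant\; \int_{X^\circ} K(x,z,t/2)\,\|\Lambda_\omega F_0(z)\|_{H_0}\, \omega^n, \]
exploiting two structural features of the heat kernel $K$ developed in Section~\ref{section:heat-kernel}: the conservation property $\int_{X^\circ}K(x,z,t)\,\omega^n = 1$ from Corollary~\ref{cor:heat-kernel-conservation}, and the Gaussian upper bound of Corollary~\ref{cor:sobolev-heat-estimate-2points} applied with real dimension $m = 2n$ and Sobolev constant $C_S$.

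For (1), I would simply pull the pointwise bound $B$ out of the integral and invoke conservation to conclude $\|\Lambda_\omega F(x,t)\|_{H_t} \leqslant B$ directly.

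For (2), the strategy is to split the integration into a \emph{near} region $V$, where the integrand is pointwise bounded, and a \emph{far} region $X^\circ\setminus V$, where the kernel itself is small because of the enforced distance separation. On $V$, the bound $\|\Lambda_\omega F_0\|_{H_0}\leqslant S$ combined with conservation gives a contribution of at most $S$. On $X^\circ\setminus V$, for any $x\in U$ and $z$ in this region one has $\mathrm{dist}(x,z)\geqslant \delta$, so Corollary~\ref{cor:sobolev-heat-estimate-2points} yields
\[ K(x,z,t/2)\;\leqslant\;\frac{2^{2n+2}e^{t_0}}{\sigma(\mu t/2)}\exp\!\Big(-\frac{2\delta^2}{5t}\Big).\]
A quick one-variable analysis shows that the right-hand side, as a function of $t$ on $(0,\infty)$, tends to $0$ as $t\to 0^+$ (the Gaussian factor crushes the polynomial blow-up $1/\sigma(\mu t/2)\sim t^{-n}$) and is uniformly bounded as $t\to\infty$ (since $\sigma$ is eventually constant). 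This produces a constant $K_{\max} = K_{\max}(\delta,n,C_S)$ with $K(x,z,t/2)\leqslant K_{\max}$ uniformly for $(x,z,t)\in U\times (X^\circ\setminus V)\times (0,\infty)$. Integrating against $\|\Lambda_\omega F_0\|_{H_0}$ and using the global $L^1$-bound $I$, the far contribution is at most $K_{\max}\cdot I$, and one may take $M = S + K_{\max}\cdot I$.

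The only delicate point I anticipate is the uniform-in-$t$ control of the far piece, which reduces to checking boundedness of $t\mapsto \sigma(\mu t/2)^{-1}\exp(-2\delta^2/(5t))$ on $(0,\infty)$. This is elementary but is the unique place where both the small-$t$ Gaussian decay and the large-$t$ saturation of $\sigma$ are genuinely needed; without either, one would lose uniformity at one end of the time interval. Everything else is a split of an integral and an application of conservation, using the heat kernel machinery already in place.
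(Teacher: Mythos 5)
Your proposal is correct and follows essentially the same route as the paper: pull the uniform bound out using $\int K \leqslant 1$ for item (1), and for item (2) split the integral over $V$ and $X^\circ\setminus V$, bound the near piece by $S$ via conservation, and bound the far piece by combining the distance lower bound $\mathrm{dist}(x,z)\geqslant\delta$ with the Gaussian upper bound of Corollary~\ref{cor:sobolev-heat-estimate-2points} to get a $t$-uniform kernel bound, then integrate against the $L^1$-bound $I$. The one-variable boundedness check you flag as the ``delicate point'' is exactly the step the paper handles by remarking that the right-hand side of the Gaussian bound is a continuous, bounded function of $t\in(0,\infty)$; you have simply made the small-$t$/large-$t$ dichotomy explicit.
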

\begin{proof}
For (1), we use the fact that $\int_{z\in  X^\circ } K  (x,z,\frac{1}{2}t)  \omega^n \leqslant 1$ to deduce that 
\[ ||\Lambda_{\omega} F (x,t)||_{H_t}  \leqslant \int_{z\in  X^\circ } K  (x,z,\frac{1}{2}t) ||\Lambda_{\omega}  F_0(z)||_{H_0} \omega^n \leqslant B. \]
For (2), we follow the idea of \cite[Lemma 2.2]{LiZhangZhang2017}. 
We fix $(x,t)$  and  split the integral 
$\int_{z\in  X^\circ } K  (x,z,\frac{1}{2}t)||\Lambda_{\omega}  F_0(z)||_{H_0} \omega^n  = I_1 + I_2,$ 
where 
\[I_1 = \int_{z\in  V } K  (x,z,\frac{1}{2}t) ||\Lambda_{\omega}  F_0(z)||_{H_0} \omega^n 
\]
and 
\[ I_2 = \int_{z\in  X\backslash V } K  (x,z,\frac{1}{2}t) ||\Lambda_{\omega}  F_0(z)||_{H_0} \omega^n.
\] 
Then we have 
\[I_1 \leqslant S  \int_{z\in  X^\circ } K  (x,z,\frac{1}{2}t) \omega^n  \leqslant S. \]
For $I_2$, we see that $r(x,z) \geqslant  \delta$ for any $z\in X\backslash V$. 
Thus by Corollary \ref{cor:sobolev-heat-estimate-2points}, for such $z\in X\backslash V$, we have 
\[ K(x,z,\frac{1}{2}t) \leqslant \frac{2^{2n+2}e^{t_0}}{\sigma(\frac{\mu}{2} t)} \exp(-\frac{2\delta^2}{5t}), \]
where $t_0>0$ and $\mu>0$ depends only on $n$ and 
\begin{equation*} 
\sigma(t) = 
    \begin{cases}
       \Big(\frac{1}{ nC_S} \Big)^{n} e^{-t} t^{n} & \mbox{for } 0 < t<t_0\\
        \Big(\frac{1}{ nC_S} \Big)^{n} e^{-t_0} t_0^{n} & \mbox{for } \mbox{for } t \geqslant t_0 \\
    \end{cases}       
\end{equation*} 
In particular, the right-hand-side of the last inequality is a continuous function of $t$, whose coefficients depends only on $\delta$, $n$ and $C_S$. 
Moreover, it is bounded for $t\in (0,\infty)$. 
Hence there is a positive constant $M'$ depending only on $\delta$,  $n$ and $C_S$, such that 
$K(x,z,\frac{1}{2}t) \leqslant M'(\delta, n,C_S)$ for $z \in X\backslash V$. 
Therefore, we have $I_2\leqslant M'I$. 
Let $$M = M'I+S.$$  
Then we get the desired estimate.
\end{proof}

\begin{lemma}\label{lemma:bound-norm-h}
With the notation above,  assume that    $|\lambda_\omega (\cF)| = |\frac{  \mu_\omega(\cF)}{\mathrm{Vol}_\omega X}| \leqslant \lambda$ for some positive number $\lambda$.  
  We have the following two estimate for  $\mathrm{tr}\, h_t$.
\begin{enumerate}

\item  If  $||\Lambda_\omega F_0(x)||_{H_0} \leqslant B$ for some positive number $B$, 
then there is a positive constant $\widehat{C}$ depending on $B,\lambda$, such that 
\[ |\frac{\partial}{\partial t}  \mathrm{tr}\,  h|   \leqslant \widehat{C}\mathrm{tr}\,  h.\] 
As a consequence,   
for all $(x,t) \in X^\circ \times [0, \infty)$, we have
\[
\mathrm{tr}\,  h_t \leqslant   (\mathrm{rank}\,\cF) \exp ( \widehat{C} t).
\]

\item Let $U \subseteq V$ be two precompcat open subset of $X^\circ$.
Assume that there are strictly positive numbers $S=S(V)$ and $\delta = \delta(U,V)$ such that 
\[ \mathrm{dist}\,(U, X\backslash V) \geqslant \delta,\]
and  that 
\[\sup_{x\in V} ||\Lambda F_0(x)||_{H_0} \leqslant S.\] 
Then there is a positive constant $C$ depending on $\delta, S, n, C_S, I,  \lambda$, such that 
\[ |\frac{\partial}{\partial t}  \mathrm{tr}\,  h|   \leqslant C\mathrm{tr}\,  h.\] 
As a consequence,  for all $(x,t) \in U\times [0, \infty )$, we have  
\[\mathrm{tr}\,  h_t \leqslant   (\mathrm{rank}\,\cF) \exp ( C t). \] 
  
\end{enumerate}
\end{lemma}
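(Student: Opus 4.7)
The plan is to differentiate $\mathrm{tr}\,h_t$ pointwise in $x$ using the heat equation, bound the time derivative by a multiple of $\mathrm{tr}\,h_t$, and then apply Gr\"onwall's inequality. From the heat equation I rewrite
\[
\frac{\partial}{\partial t} h_t \;=\; -\, h_t \bigl(\sqrt{-1}\Lambda_\omega F_t - \lambda_\omega(\cF)\mathrm{Id}_\cF\bigr),
\]
so that at each $x\in X^\circ$,
\[
\frac{\partial}{\partial t} \mathrm{tr}\, h_t \;=\; -\,\mathrm{tr}\,\Bigl(h_t\bigl(\sqrt{-1}\Lambda_\omega F_t - \lambda_\omega(\cF)\mathrm{Id}_\cF\bigr)\Bigr).
\]

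The key estimate is that for any Hermitian $A$ and any positive self-adjoint $h$ (with respect to a common metric), $|\mathrm{tr}\,(hA)|\leqslant \|A\|_{\mathrm{op}}\cdot \mathrm{tr}\,h$. To apply this, I would work in a local frame that is $H_t$-orthonormal and simultaneously diagonalizes $h_t$ (recall $h_t$ is self-adjoint and positive with respect to both $H_0$ and $H_t$, and $\sqrt{-1}\Lambda_\omega F_t$ is self-adjoint with respect to $H_t$ since $F_t$ is the Chern curvature of $H_t$). Writing the eigenvalues of $h_t$ as $\mu_1,\ldots,\mu_r$ and the matrix of $\sqrt{-1}\Lambda_\omega F_t$ as $(a_{ij})$, the diagonal product gives $|\mathrm{tr}\,(h_t\sqrt{-1}\Lambda_\omega F_t)|=|\sum_i \mu_i a_{ii}|\leqslant (\max_i|a_{ii}|)\mathrm{tr}\,h_t \leqslant \|\sqrt{-1}\Lambda_\omega F_t\|_{H_t}\cdot \mathrm{tr}\,h_t$. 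Combined with the trivial bound $|\lambda_\omega(\cF)\mathrm{tr}\,h_t|\leqslant \lambda\cdot \mathrm{tr}\,h_t$, this yields
\[
\Bigl|\frac{\partial}{\partial t}\mathrm{tr}\,h_t\Bigr| \;\leqslant\; \bigl(\|\sqrt{-1}\Lambda_\omega F_t\|_{H_t} + \lambda\bigr)\cdot \mathrm{tr}\,h_t.
\]

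With this inequality in hand, the two cases become a direct application of Lemma \ref{lemma:bound-trace-curvature}. For (1), plug in the uniform bound $\|\Lambda_\omega F_t\|_{H_t}\leqslant B$ valid on all of $X^\circ\times[0,\infty)$, set $\widehat C = B+\lambda$, and integrate to obtain $\mathrm{tr}\,h_t\leqslant \mathrm{tr}\,h_0\cdot e^{\widehat C t}$. For (2), use the localized bound $\|\Lambda_\omega F_t\|_{H_t}\leqslant M$ on $U\times(0,\infty)$ (which extends continuously to $t=0$ since $\|\Lambda F_0\|_{H_0}\leqslant S\leqslant M$ on $V\supseteq U$), set $C = M+\lambda$, and integrate pointwise in $x\in U$. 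Both cases use the initial condition $h_0 = \mathrm{Id}_\cF$, hence $\mathrm{tr}\,h_0 = \mathrm{rank}\,\cF$.

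There is no serious obstacle: the only thing to be careful about is the matrix inequality $|\mathrm{tr}\,(hA)|\leqslant \|A\|_{\mathrm{op}}\cdot \mathrm{tr}\,h$ and the passage from the operator norm to the Hilbert--Schmidt-type norm $\|\cdot\|_{H_t}$ defined in Section \ref{section:pre}, which is valid since the two norms coincide up to the factor $\sqrt{\mathrm{rank}\,\cF}$. Everything else is a pointwise Gr\"onwall integration and bookkeeping of how the constant depends on $B,\lambda$ (resp.\ $\delta,S,n,C_S,I,\lambda$).
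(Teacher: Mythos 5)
Your proposal is correct and follows essentially the same route as the paper: differentiate $\mathrm{tr}\, h_t$ via the heat equation, bound $|\mathrm{tr}\,(h_t \cdot \sqrt{-1}\Lambda_\omega F_t)|$ by (a constant times) $||\Lambda_\omega F_t||_{H_t}\,\mathrm{tr}\, h_t$ using self-adjointness and positivity of $h_t$, invoke Lemma~\ref{lemma:bound-trace-curvature} for the uniform curvature bound, and integrate via Gr\"onwall. The only difference is that you bound the trace term by a direct diagonalization argument giving $|\mathrm{tr}\,(h_tA)| \leqslant ||A||_{\mathrm{op}}\mathrm{tr}\, h_t \leqslant ||A||_{H_t}\mathrm{tr}\, h_t$, while the paper applies a Cauchy--Schwarz estimate yielding $\sqrt{2}\,||A||_{H_t}||h_t||_{H_t}$ followed by $||h_t||_{H_t}=\sqrt{\mathrm{tr}\,(h_t^2)}\leqslant \mathrm{tr}\, h_t$; both are valid and lead to the same conclusion.
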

  
\begin{proof}
From  the heat equation (\ref{equa:heat-equation-metric}), we have 
\begin{eqnarray*}
 \frac{\partial}{\partial t}\mathrm{tr}\, h_t  =     -  \mathrm{tr}\,  \Big(h_t(\sqrt{-1}\Lambda_{\omega} F_t - \lambda_\omega(\cF) \mathrm{Id}_\cF) \Big).  
\end{eqnarray*}
For simplicity we set  $g = \Lambda_{\omega} F_t$. 
Then we have $\mathrm{tr}\, (h_tg)  = \mathrm{tr}\, (gh_t)= \mathrm{tr}\, (gH_t^{-1} H_t h_t)$. 
Since $h_t$ is selfadjoint with respect to $H_t$, it follows that 
\begin{eqnarray*}
|\mathrm{tr}\, (h_tg)| &=& |\mathrm{tr}\, (gH_t^{-1} h_t^*  H_t  )| \\
&\leqslant&  \sqrt{2|\mathrm{tr}\, (gH_t^{-1} g^* H_t)| |\mathrm{tr}\, (h_tH_t^{-1} h_t^* H_t)|}\\
& = &  \sqrt{2} ||g||_{H_t}||h||_{H_t}.
\end{eqnarray*}
We note that $||h_t||_{H_t }   = \sqrt{ \mathrm{tr}\, (h_t^2)}$. 
Moreover, since  all eigenvalues of $h_t$ are positive real numbers, we deduce that  
$ \sqrt{ \mathrm{tr}\, (h_t^2)} \leqslant \mathrm{tr}\, h_t.$
Hence we get 
\begin{eqnarray*}
 \Big|\mathrm{tr}\,  \Big(h_t(\sqrt{-1}\Lambda_{\omega} F_t - \lambda_\omega(\cF) \mathrm{Id}_\cF) \Big) \Big|
 \leqslant   (|| \sqrt{ 2}\Lambda_{\omega} F_t  ||_{H_t} + \lambda) \mathrm{tr}\, h_t.
\end{eqnarray*} 

We will prove both cases of the lemma at the same time. 
By Lemma \ref{lemma:bound-trace-curvature}, there is a constant $C'$, depending on $B,  \lambda $ for case (1), and depending on $ \delta, S, n, C_S, I,  \lambda $ for case (2), such that,  on the corresponding domains,  
\[ |\frac{\partial}{\partial t}  \mathrm{tr}\,  h|   \leqslant C'\mathrm{tr}\,  h.\]
It follows that 
\[\mathrm{tr}\,  h_t \leqslant (\mathrm{tr}\,  h_0 )\exp (C't) = (\mathrm{rank}\,\cF) \exp (C't).
\]
This completes the proof of the lemma.
\end{proof}

We will also need the following lemma.  
  
\begin{lemma}  \label{lemma:uniform-integral-trace}
The integrals 
\[\int_{X}  ||\Lambda_\omega F_t||_{H_t}\]  
are uniformly integrable when $t \to 0$, and converge to
\[ \int_{X}  ||\Lambda_\omega F_0||_{H_0}.\]  
\end{lemma}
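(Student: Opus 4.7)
The plan is to combine the pointwise estimate on $\|\Lambda_\omega F_t\|_{H_t}$ given by the heat kernel with the conservation property from Corollary \ref{cor:heat-kernel-conservation}, together with local parabolic regularity on $X^\circ$, and then apply Vitali's convergence theorem.

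Write $\phi(z) = \|\Lambda_\omega F_0(z)\|_{H_0}$, which lies in $L^1(X^\circ,\omega^n)$ by hypothesis. The standing estimate asserts
\[
\|\Lambda_\omega F_t(x)\|_{H_t} \leqslant \int_{z\in X^\circ} K(x,z,\tfrac{1}{2}t)\,\phi(z)\,\omega^n(z).
\]
First I would integrate in $x$, swap the order of integration by Fubini, and use the symmetry $K(x,z,t)=K(z,x,t)$ together with the conservation property
$\int_{X^\circ} K(z,x,\tfrac{1}{2}t)\,\omega^n(x)=1$ of Corollary \ref{cor:heat-kernel-conservation}(4) to obtain the uniform $L^1$ bound
\[
\int_{X^\circ} \|\Lambda_\omega F_t\|_{H_t}\,\omega^n \leqslant \int_{X^\circ}\phi\,\omega^n
\]
for every $t>0$.

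Next I would establish equi-absolute continuity of the family $\{\|\Lambda_\omega F_t\|_{H_t}\}_{t>0}$. For any measurable $A\subseteq X^\circ$, applying Fubini to the same pointwise estimate gives
\[
\int_A \|\Lambda_\omega F_t\|_{H_t}\,\omega^n \leqslant \int_{X^\circ}\phi(z)\,\psi_{A,t}(z)\,\omega^n(z),\qquad \psi_{A,t}(z):=\int_A K(x,z,\tfrac{1}{2}t)\,\omega^n(x).
\]
By conservation, $0\leqslant \psi_{A,t}\leqslant 1$ and $\int_{X^\circ}\psi_{A,t}\,\omega^n=|A|$. Splitting $\phi=\phi\wedge M + (\phi-M)_+$, the right-hand side is at most $M|A|+\|(\phi-M)_+\|_1$. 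Given $\varepsilon>0$, choose $M$ so that $\|(\phi-M)_+\|_1<\varepsilon/2$ (possible since $\phi\in L^1$), then take $|A|<\varepsilon/(2M)$; this shows the family is equi-integrable, uniformly for all $t>0$.

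It remains to prove pointwise convergence $\|\Lambda_\omega F_t(x)\|_{H_t}\to \phi(x)$ for almost every $x\in X^\circ$ as $t\to 0$. For this I would invoke local interior parabolic regularity (Lemma \ref{lemma:heat-kernel-local-uniform-bound}) applied to the heat equation (\ref{equa:heat-equation-metric}) on small coordinate balls in $X^\circ$: the initial metric $H_0$ is smooth on $X^\circ$, and $h_0=\mathrm{Id}$, $\partial_t h_t|_{t=0}$ is continuous, so $h_t \to \mathrm{Id}$ in $C^2_{\rm loc}$, hence $F_t\to F_0$ and $H_t\to H_0$ locally uniformly on $X^\circ$. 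Combining this pointwise convergence with the equi-integrability established above, Vitali's convergence theorem yields
\[
\int_X \|\Lambda_\omega F_t\|_{H_t}\,\omega^n \;\longrightarrow\; \int_X \|\Lambda_\omega F_0\|_{H_0}\,\omega^n.
\]
The main subtlety is the second step: one needs the conservation property in its strong form (as in Corollary \ref{cor:heat-kernel-conservation}(4)) in order to identify $\int_{X^\circ}\psi_{A,t}\,\omega^n$ with $|A|$ even in the singular setting; this is precisely the input that Section \ref{section:heat-kernel} was set up to supply, and it is what makes the argument go through despite the singularities of $X$.
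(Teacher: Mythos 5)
Your proof is correct, and the uniform-integrability step is carried out by a genuinely different (and arguably cleaner) route than the paper's. The paper splits $X$ by distance $\delta$ from the singular set, writes the double integral as $I_t + J_t$ according to whether $y$ lies in the compact core $X_\delta$ or in the collar $X\setminus X_\delta$, controls $J_t$ by the conservation property, and controls $I_t$ by the Gaussian off-diagonal bound of Corollary~\ref{cor:sobolev-heat-estimate-2points} (since $x\notin X_{\delta/2}$ and $y\in X_\delta$ forces $\mathrm{dist}(x,y)\geqslant \delta/2$, making $K(x,y,\tfrac{1}{2}t)$ small for small $t$). You instead truncate $\phi=\|\Lambda_\omega F_0\|_{H_0}$ at level $M$ and exploit the two facts $0\leqslant\psi_{A,t}\leqslant 1$ and $\int\psi_{A,t}=|A|$, both consequences of symmetry, Fubini, and Corollary~\ref{cor:heat-kernel-conservation}(4). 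This yields equi-absolute continuity uniformly in \emph{all} $t>0$, not merely $t$ small, and it avoids the Gaussian estimate entirely, relying only on conservation. It also makes explicit the final appeal to Vitali for the convergence of the integrals, a step the paper leaves implicit.

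One point you should tighten: for the pointwise a.e.\ convergence $\|\Lambda_\omega F_t\|_{H_t}\to\phi$ you cite Lemma~\ref{lemma:heat-kernel-local-uniform-bound}, which is a scalar interior Schauder estimate; the equation for $h_t$ is matrix-valued and you would rather invoke the machinery underlying Lemma~\ref{lemma:Schauder-estimate-matrix-1} together with the uniform $C^0$-bounds on $h_t$ near $t=0$ (items (1) and (2) of Lemma~\ref{lemma:bound-norm-h} plus $\det h_t\equiv 1$). More subtly, the solution $h_t$ is produced in Theorem~\ref{thm:existence-admissible} as a limit of solutions on blowups converging smoothly only on compact subsets of $X^\circ\times(0,\infty)$, so $C^2_{\mathrm{loc}}$-continuity of $h_t$ at $t=0^+$ is not immediate from the construction and deserves a sentence of justification. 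The paper glosses over the same point, so this is a shared gap rather than an error in your argument; the uniform-integrability core of your proposal is sound.
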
  
  
\begin{proof}
For any positive number $\eta$, we denote  
\[X_{\eta} = \{ x\in X^\circ  \ | \  \mathrm{dist}(x, X \backslash X^\circ) \geqslant \eta \}.\]
Fix some $\epsilon > 0$, it is enough to show that there is some $\delta > 0$, such that 
\[ \int_{x\in X\backslash X_{\frac{1}{2}\delta}} \int_{y\in X} K(x,y,\frac{1}{2}t) ||\Lambda_\omega F_0(y)||_{H_0}  \leqslant \epsilon.\]
for all $t$ small enough.

We   choose $\delta$ so that 
$\int_{y\in X\backslash X_{\delta}} ||\Lambda_\omega F_0(y)||_{H_0} \leqslant \frac{1}{2}\epsilon$.
Then we decompose 
\[ \int_{x\in X\backslash X_{\frac{1}{2}\delta}} \int_{y\in X} K(x,y,\frac{1}{2}t) ||\Lambda_\omega F_0(y)||_{H_0}  = I_t + J_t,\]
such that 
\[ I_t = \int_{x\in X\backslash X_{\frac{1}{2}\delta}} \int_{y\in X_\delta} K(x,y,\frac{1}{2}t) ||\Lambda_\omega F_0(y)||_{H_0}, \]
and that 
\[ J_t = \int_{x\in X\backslash X_{\frac{1}{2}\delta}} \int_{y\in X\backslash X_\delta} K(x,y,\frac{1}{2}t) ||\Lambda_\omega F_0(y)||_{H_0}.\]
From the Gaussian upper bound of Corollary \ref{cor:sobolev-heat-estimate-2points} on $K(x,y,\frac{1}{2}t)$, there are positive constants $\tau$ and $ C $ small enough, such that $K(x,y,\frac{1}{2}t) \leqslant C$  for $t\leqslant \tau$,  $x\notin X_{\frac{1}{2}\delta}$ and $y\in X_{\delta}$, 
so that 
\begin{eqnarray*}
I_t & \leqslant &  \int_{x\in X\backslash X_{\frac{1}{2}\delta}} \int_{y\in X_\delta} C ||\Lambda_\omega F_0(y)||_{H_0} \\
 &\leqslant & \mathrm{Vol}_\omega X \cdot C  \cdot I\\
 &\leqslant & \frac{1}{2} \epsilon.
\end{eqnarray*}
Furthermore, we have 
\begin{eqnarray*}
J_t &  \leqslant&  \int_{y\in X\backslash X_\delta} \Big( ||\Lambda_\omega F_0(y)||_{H_0} \int_{x\in X} K(x,y,\frac{1}{2}t) \Big) \\
& = & \int_{y\in X\backslash X_\delta}  ||\Lambda_\omega F_0(y)||_{H_0}  \\
&\leqslant & \frac{1}{2}\epsilon.
\end{eqnarray*}
This completes the proof of the lemma. 
\end{proof}

\subsection{Solutions of the heat equations}
\label{subsection:solution-heat-solution}

The  following local estimate is crucial for the proof  Theorem  \ref{thm:existence-admissible}.   
It is a refined version of \cite[Proposition 1]{BandoSiu1994}.  
The proof remains the same, and follows essentially \cite[Section 6]{Hildebrandt1985}. 
The proof employs the Green functions, which admit estimates depending only  $\lambda$ and $\mu$ below, see   \cite[Theorem 1.1]{GruterWidman1982}.

\begin{lemma} 
\label{lemma:Schauder-estimate-matrix-1}
Let  $(X,\omega)$ be a K\"ahler manifold of dimension $n$ and let  $x\in X$.  
We write $B_{R}(x)$ as  the  ball of radius $R$ with center $x$. 
We assume that $B_R(x)$ is a regular ball and contained in a local chart around $x$.
Assume that  the sectional curvature of $(X,\omega)$ on $B_{R}(x)$ is contained in $[-C,C]$ for some constant $C>0$.
Assume that $h$ and $f$  are  smooth functions   on $B_R(x)$ with values in $m\times m$ Hermitian matrices, such that 
\[ \sqrt{-1} \Lambda \dbar( h^{-1}(\partial h)) = f.\]   
We write $\sqrt{-1}\Lambda  (\mathrm{d}z_i \wedge \mathrm{d}\overline{z_j}) =a^{ij}$ and 
assume that $\lambda |\xi|^2 \leqslant a^{ij}(y) \xi_i \overline{\xi_j} \leqslant \mu |\xi|^2$
for any $\xi \in \mathbb{C}^n$ and $y\in B_{R}(x)$.

Then $h$ is $\mathcal{C}^{1,\alpha}$ in a neighbourhood $U_x$ of $x$. Moreover, there is a number  $N$, depending on $R, C, n,m, ||h||_{\infty},  ||f||_{\infty}, \lambda, \mu$, such that 
 $ ||h||_{\mathcal{C}^{1,\alpha}}  \leqslant N$ 
on $U_x$.  
\end{lemma}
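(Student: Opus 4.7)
The plan is to work in the given local holomorphic chart around $x$, where the Laplace-Beltrami operator takes the non-divergence form $\Delta = 2 a^{ij}\partial_i \bar\partial_{\bar j}$ with $a^{ij}$ uniformly elliptic (constants $\lambda,\mu$). Expanding the equation, write
\[
\bar\partial_{\bar j}\bigl(h^{-1}\partial_i h\bigr)
 = h^{-1}\bar\partial_{\bar j}\partial_i h - h^{-1}(\bar\partial_{\bar j} h)\,h^{-1}(\partial_i h),
\]
so multiplying on the left by $h$ and taking the $a^{ij}$-trace recasts the equation as a quasi-linear elliptic system of the form
\[
\Delta h \;=\; Q(h,Dh) \;+\; h\!\cdot\! f, \qquad Q(h,Dh)_\bullet = 2\,a^{ij}(\bar\partial_{\bar j}h)\,h^{-1}\,(\partial_i h),
\]
where the nonlinearity $Q$ is bilinear in $Dh$ with coefficients controlled by $\|h\|_\infty$ and $\|h^{-1}\|_\infty$ (the latter is bounded in terms of $\|h\|_\infty$ and positive-definiteness, which is automatic since $h$ appears as $h^{-1}$ in the equation).

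Next I would use the Green function $G$ of the operator $\Delta$ on a slightly smaller ball $B_{R'}(x)\Subset B_R(x)$ with zero Dirichlet boundary, whose pointwise and mapping estimates (Grüter--Widman, \cite[Theorem 1.1]{GruterWidman1982}) depend only on $R$, $n$, $\lambda$, $\mu$. Combining an energy (Caccioppoli) estimate with the bound $\|h\|_\infty\le$ const and the representation
\[
h(y) \;=\; h_{\mathrm{harm}}(y) \;+\; \int_{B_{R'}(x)} G(y,z)\bigl(Q(h,Dh) + hf\bigr)(z)\,dz,
\]
one obtains $Dh\in L^2$ with a quantitative bound. The crucial point is that $Q$ is only a priori in $L^1$, so one must run a Morrey--Campanato bootstrap: using the decay properties of $G$ together with the Morrey-type norm of $|Dh|^2$ (proved inductively by exploiting that $h$ is bounded and iterating the integral representation on balls of shrinking radius, exactly as in \cite[\S 6]{Hildebrandt1985}), one first upgrades $Dh$ to a Morrey class forcing $h \in \mathcal{C}^{0,\alpha_0}$, then $Q(h,Dh)\in L^p$ for some $p>n$, and finally the standard Calder\'on--Zygmund/Schauder theory for the linear operator $\Delta$ with $\mathcal{C}^\alpha$ coefficients yields $h\in \mathcal{C}^{1,\alpha}$ on a smaller ball $U_x$, with the norm bounded by a constant $N$ depending only on $R,C,n,m,\|h\|_\infty,\|f\|_\infty,\lambda,\mu$.

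The sectional curvature bound enters when comparing geodesic balls to coordinate balls and when controlling the derivatives of $a^{ij}$ up to the required order; and the regularity of $B_R(x)$ ensures the Green function has the comparison estimates quoted. The chief obstacle will be the quadratic-in-gradient nonlinearity: $|Q(h,Dh)|\lesssim |Dh|^2$ sits at the natural critical threshold for such systems, and only the special algebraic structure (a Hermitian symmetric target, which is what makes Hildebrandt's approach apply) prevents it from being a genuine barrier. Once the Morrey decay of $|Dh|^2$ is obtained, the remainder of the argument is a routine bootstrap, and the final constants track through each step by continuity of the constants appearing in the Green function estimates with respect to the ellipticity data.
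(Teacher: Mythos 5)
Your proposal follows exactly the route the paper intends: the paper's own ``proof'' of this lemma consists of three sentences pointing to \cite[Proposition 1]{BandoSiu1994}, \cite[Section 6]{Hildebrandt1985}, and the Gr\"uter--Widman Green-function estimates \cite[Theorem 1.1]{GruterWidman1982}, and your sketch (rewriting the equation as $\Delta h = 2a^{ij}(\dbar_{\bar j}h)h^{-1}(\partial_i h) - 2hf$, Green-function representation, Morrey--Campanato bootstrap to $\mathcal{C}^{0,\alpha_0}$, then Calder\'on--Zygmund/Schauder) is precisely the content of Hildebrandt's argument that Bando--Siu adapt. Two small remarks: your sign and factor on the $hf$ term are off (the paper's convention $\Delta=2\sqrt{-1}\Lambda\partial\dbar$ gives $\Delta h = 2a^{ij}(\dbar_{\bar j}h)h^{-1}(\partial_i h) - 2hf$), which is harmless.

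The one place your reasoning is genuinely wrong is the parenthetical claim that $\|h^{-1}\|_\infty$ ``is bounded in terms of $\|h\|_\infty$ and positive-definiteness, which is automatic since $h$ appears as $h^{-1}$ in the equation.'' Positive-definiteness alone gives no lower bound on the eigenvalues of $h$: take $h=\mathrm{diag}(1,\epsilon)$ with $\epsilon\to 0$, which is positive definite with $\|h\|_\infty$ bounded but $\|h^{-1}\|_\infty\to\infty$. The fact that $h^{-1}$ appears in the PDE only tells you $h$ is invertible pointwise, not that the inverse is uniformly bounded. Both Bando--Siu's Proposition 1 and the bootstrap you describe need a quantitative bound on $\|h^{-1}\|_\infty$, since $h^{-1}$ sits inside the quadratic term $Q$. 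In the application to the heat flow this is supplied by $\det h\equiv 1$ (which together with $\|h\|_\infty\leqslant D$ gives $\|h^{-1}\|_\infty\leqslant D^{m-1}$), but it is an additional hypothesis, not a consequence of what you wrote; you should state it explicitly rather than claim it is automatic. Once this is fixed, the rest of your sketch is sound, modulo the fact that you gesture at rather than spell out the step where the Hermitian-symmetric-target structure produces the initial Morrey decay of $|Dh|^2$ below the critical scaling threshold --- but since the paper itself defers that entirely to Hildebrandt, that is acceptable.
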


As mentioned earlier, the proof of Theorem  \ref{thm:existence-admissible} is by inductively converging solutions with respect to perturbed metrics. 
We will first  prove  a single induction step. 
We know that a desingularization can be obtained by successively blowing up smooth centers. 

Let $p\colon Y \to X$ be a blowup at smooth center, where $(X,\omega)$ is a compact K\"ahler variety of dimension $n$.   
Let $\eta$ be a K\"ahler form on $Y$ and we set 
$$\omega_\epsilon = p^*\omega + \epsilon \eta$$ for $0<\epsilon \leqslant 1.$
We assume Conjecture \ref{conj:Sobolev-conj-global}. 
Then  there is a uniform Sobolev constant $C_S$ for all $\omega_\epsilon.$

\begin{lemma}
\label{lemma:heat-kernel-convergence} 
Assume  Conjecture \ref{conj:Sobolev-conj-global} holds.  
Let $X^\circ \subseteq X$ be a Zariski open dense subset, disjoint from the center of the blowup. 
Let $Y^\circ=p^{-1}(X^\circ) \cong X^\circ$, 
and let  $K_\epsilon $  be the  heat kernel of $(Y^\circ, \omega_\epsilon )$.
Then   we may extract a sequence $\{\epsilon_i\}$ converging to zero,  such that 
$K_{\epsilon_i}$ converge, smoothly over any compact subset of $X^\circ \times X^\circ \times (0,\infty)$,
to the heat kernel $K$ of $(X^\circ, \omega)$.   
\end{lemma}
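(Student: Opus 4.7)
The plan is to extract a subsequential limit of $\{K_\epsilon\}$ by combining a uniform Gaussian upper bound (from the uniform Sobolev inequality) with interior parabolic Schauder estimates, and then to identify this limit with $K$ by squeezing between minimality and conservation.

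First, I would establish uniform pointwise bounds. By Conjecture \ref{conj:Sobolev-conj-global} the family $\{(Y^\circ,\omega_\epsilon)\}$ admits a Sobolev inequality with a single constant $C_S$, applied to $f$ compactly supported in the smooth locus of $Y$, hence in $Y^\circ$. Applying Proposition \ref{prop:sobolev-heat-estimate} and Corollary \ref{cor:sobolev-heat-estimate-2points} with real dimension $m=2n$ yields $K_\epsilon(x,y,t)\leqslant F\bigl(t,d_{\omega_\epsilon}(x,y)\bigr)$ for a function $F$ depending only on $n$ and $C_S$. Since $p$ is biholomorphic over $X^\circ$ and $\omega_\epsilon=\omega+\epsilon\eta$ there, the intrinsic distances $d_{\omega_\epsilon}$ are uniformly equivalent to $d_\omega$ on every compact $L\subseteq X^\circ$; together with the Gaussian decay this gives $\|K_\epsilon\|_{L^\infty(L\times L\times[t_1,t_2])}\leqslant M(L,t_1,t_2)$ independent of $\epsilon$.

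Next, I would promote the $L^\infty$ bound to $C^\infty_{\mathrm{loc}}$ bounds via Lemma \ref{lemma:heat-kernel-local-uniform-bound}. On any compact subset of $X^\circ$ the coefficients of $\Delta_{\omega_\epsilon}$ are smooth and converge smoothly to those of $\Delta_\omega$ as $\epsilon\to 0$, with uniform ellipticity because $\omega\leqslant\omega_\epsilon\leqslant\omega+\eta$. Applying the parabolic Schauder estimate in either of the spatial variables to the equation $(\Delta_{\omega_\epsilon}-\partial_t)K_\epsilon=0$ therefore yields uniform $C^k$ bounds on every compact subset of $X^\circ\times X^\circ\times(0,\infty)$. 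A diagonal Arzel\`a--Ascoli argument along an exhaustion then extracts $\epsilon_i\to 0$ and a smooth nonnegative symmetric limit $K_\infty$ on $X^\circ\times X^\circ\times(0,\infty)$ with $K_{\epsilon_i}\to K_\infty$ in $C^\infty_{\mathrm{loc}}$. Passing to the limit in the heat equation gives $(\Delta_\omega-\partial_t)K_\infty=0$, and the delta-function initial condition for $K_\infty$ follows by testing against $f\in\mathcal{C}_c(X^\circ)$: on compact supports the measures $\omega_{\epsilon_i}^n$ converge smoothly to $\omega^n$, and the Gaussian tail estimate controls the integrals over the complement uniformly in $\epsilon$, so the identity $\lim_{t\to 0^+}\int K_{\epsilon_i}(x,y,t)f(y)\,\omega_{\epsilon_i}^n=f(x)$ passes to the limit. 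Thus $K_\infty$ is a positive fundamental solution of the heat equation on $(X^\circ,\omega)$.

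Finally, I would identify $K_\infty=K$ by a sandwich argument. Since $K$ is the minimal positive fundamental solution in Dodziuk's sense, $K\leqslant K_\infty$. Applying Corollary \ref{cor:heat-kernel-conservation}(4) to $(Y,\omega_\epsilon)$ gives $\int_{Y^\circ}K_\epsilon(x,y,t)\,\omega_\epsilon^n=1$. Expanding $\omega_\epsilon^n=(p^*\omega+\epsilon\eta)^n$ on $Y^\circ\cong X^\circ$ shows $\omega_\epsilon^n\geqslant\omega^n$, so $\int_{X^\circ}K_{\epsilon_i}(x,y,t)\,\omega^n\leqslant 1$; Fatou's lemma yields $\int_{X^\circ}K_\infty(x,y,t)\,\omega^n\leqslant 1$. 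Combined with $\int_{X^\circ}K(x,y,t)\,\omega^n=1$ from Corollary \ref{cor:heat-kernel-conservation}(4) applied to $(X,\omega)$ and with $K\leqslant K_\infty$, this forces $K=K_\infty$ almost everywhere, and hence everywhere by continuity.

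The main obstacle in this plan is the identification step: one needs conservation on both the approximating and the limiting side, which is exactly what the earlier work of Section \ref{section:heat-kernel} supplies through Corollary \ref{cor:heat-kernel-conservation}. The uniform Sobolev input from Conjecture \ref{conj:Sobolev-conj-global} is indispensable, as it simultaneously produces the Gaussian upper bound (guaranteeing tightness as $\epsilon\to 0$) and the conservation property for $K_\epsilon$ that underlies the squeeze.
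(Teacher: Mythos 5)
Your proposal is correct and follows essentially the same route as the paper: uniform Sobolev (Conjecture \ref{conj:Sobolev-conj-global}) yields a uniform Gaussian bound on $K_\epsilon$ via Proposition \ref{prop:sobolev-heat-estimate} and Corollary \ref{cor:sobolev-heat-estimate-2points}, the interior parabolic Schauder estimate (Lemma \ref{lemma:heat-kernel-local-uniform-bound}) promotes this to $C^\infty_{\mathrm{loc}}$ compactness, and the limit is identified with the heat kernel by combining minimality with the conservation property of Corollary \ref{cor:heat-kernel-conservation}. The only cosmetic difference is that the paper verifies that the limit is a fundamental solution by convolving against bounded continuous data and passing to the limit, rather than by directly checking the delta initial condition via the Gaussian tail, but the sandwich argument $\int K_\infty\,\omega^n\leqslant 1 = \int K\,\omega^n$ together with $K\leqslant K_\infty$ is exactly what the paper uses.
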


\begin{proof}
Let $\Delta_\epsilon $ and $\Delta$ be the Laplace-Beltrami operator of $\omega_\epsilon $ and $\omega$ respectively. 
Then  $\Delta_{\epsilon  }$ converges smoothly to $\Delta$ on every compact subset of $X^\circ$. 
We remark that $K_\epsilon $ is a solution of the heat equation on $X^\circ \times X^\circ$, with respect to the operator 
\[\frac{1}{2}(\Delta_{\epsilon } \oplus \Delta_{\epsilon }) -\frac{ \partial}{\partial t}.\]
We also note that, for any fixed $0<t_1<t_2$, the heat kernels  $K_\epsilon $ are uniformly bounded on   $X^\circ \times X^\circ\times [t_1,t_2]$ by  Conjecture \ref{conj:Sobolev-conj-global}  and Corollary  \ref{cor:sobolev-heat-estimate-2points}.  
Hence by applying Lemma \ref{lemma:heat-kernel-local-uniform-bound}, we may extract a sequence $\{\epsilon _i\}$ converging to zero,  such that $K_{\epsilon _i}$ converge, smoothly over any compact subset of $X^\circ \times X^\circ \times (0,\infty)$, to some smooth function $K$.  
We note that $K$ is bounded on $X^\circ \times X^\circ\times [t_1,t_2]$.

It remains to show that $K$ is the heat kernel of $(X^\circ, \omega)$.  
We first show that it is a fundamental solution for the heat equation. 
Let $g$ be a bounded continuous function on $X^\circ$. 
Then the convolution  
\[
G_{\epsilon }(x) = \int_{y\in (Y^\circ, \omega_\epsilon )} g(y)K_\epsilon (x,y,t) \mathrm{d}y
\]
is a solution of the heat equation on $(Y^\circ,\omega_\epsilon )$. 
By the same argument as in the previous paragraph, by replacing $\{\epsilon _i\}$  with a subsequence if necessary, 
we may assume that $G_{\epsilon _i}$ converge  to some function $G$ on $X^\circ\times [0,\infty)$, smoothly over any compact subset of $X^\circ\times [0,\infty)$. 
As a consequence, $G$ is a solution of the heat equation on $(X^\circ,\omega)$, with initial data $g$. 
Hence we can deduce that $K$ is a fundamental solution if   we can prove
\[
\lim_{\epsilon _i \to 0} \int_{y\in (Y^\circ,\omega_\epsilon )} g(y)K_{\epsilon _i}(x,y,t) \mathrm{d}y   = \int_{y\in (X^\circ,\omega)} g(y)K(x,y,t) \mathrm{d}y.
\]
However, by  Conjecture \ref{conj:Sobolev-conj-global}  and Corollary  \ref{cor:sobolev-heat-estimate-2points}, for any fixed $x,t$,  $K_{\epsilon _i}(x,\cdot,t)$ are uniformly bounded on $Y^\circ$. Hence the last equality holds.

We will now show that $K$ is the heat kernel of $(X^\circ,\omega)$. 
We note that 
\[
\int_{X^\circ} K(x,y,t) \mathrm{d}y \leqslant \lim_{\epsilon _i \to 0} \int_{y\in Y^\circ}  K_{\epsilon _i}(x,y,t).
\]
The right-hand-side above is equal to $1$ by item  (4) of  Corollary  \ref{cor:heat-kernel-conservation}. 
Since  the heat kernel of $(X^\circ, \omega)$ also has the conservation property, it  follows from  the minimality of the heat kernel  that $K$ is the heat kernel of $(X^\circ, \omega)$. 
\end{proof}

Let $\cF$ be coherent sheaf on $X$. 
We denote by $X^\circ$ the largest open subset of $X_\cF$ over which $p$ is an isomorphism, and we set  $Y^\circ = p^{-1}(X^\circ)$.  We will make the following assumptions for Lemma \ref{lemma:heat-solution-induction} below.

\begin{assumption}
\label{assumption:integral}
Suppose that $H_0$ is a Hermitian metric on $\cF$,  with Chern curvature $F_0$ such that
\[ 
\int_{Y^{0}} ||\Lambda_{\omega_\epsilon} F_0||_{H_0} \omega_\epsilon^n
\] 
are uniformly bounded and uniformly integrable (see Remark \ref{rem:uniform-integrable}). 
In particular, 
$$|\lambda_{\omega_{\epsilon}} (\cF)| = \Big|\frac{ \mu_{\omega_{\epsilon}}(\cF)}{\mathrm{Vol}_{\omega_{\epsilon}} Y}\Big|  $$ 
are uniformly bounded as well.
\end{assumption}

\begin{assumption}
\label{assumption:bound}
Assume that the heat equation (\ref{equa:heat-equation-metric}) for $p^*\cF$  admits a solution $h_\epsilon$   over $(Y^\circ,\omega_\epsilon) \times (0,\infty)$, with initial metric $H_0$,  
such that  for every $x\in Y^\circ$, we have  
\[ ||\Lambda_{\omega_\epsilon} F_\epsilon (x,t)||_{H_\epsilon}   \leqslant \int_{z\in Y^\circ } K_{\epsilon} (x,z,\frac{1}{2}t) ||\Lambda_{\omega_\epsilon} F_0(z)||_{H_0} \omega_\epsilon^n, \]
where $F_\epsilon$ is  the Chern curvature for the metrics $H_\epsilon =  H_0h_\epsilon$, 
and $K_\epsilon$ is the heat kernel  of $(Y^\circ, \omega_\epsilon)$. 
\end{assumption}

We note that the norm in the left-hand-side above is with respect to the metric $H_{\epsilon}(t)$.  
Then by Lemma \ref{lemma:heat-kernel-convergence}, there exists a sequence $\{\epsilon_i\}$ converging to zero, such that $K_{\epsilon_i}$ converge to the heat kernel $K$ of $(X^\circ, \omega)$, smoothly on any compact subset of $X^\circ \times X^\circ \times (0,\infty)$. 
We have the following lemma on convergence.  
  
\begin{lemma}  
\label{lemma:heat-solution-induction}
With the assumptions above, by replacing  $\{\epsilon_i\}$ with a subsequence if necessary,    $h_{\epsilon_i}$ converge, smoothly on any compact subset of  $X^\circ \times (0,\infty)$, to a solution $h$ of the heat equation (\ref{equa:heat-equation-metric}) over $(X^\circ,\omega)\times (0,\infty)$, with initial metric $H_0$. 
Moreover, we have  
\[ ||\Lambda_{\omega} F (x,t)||_{H}   \leqslant \int_{z\in  X^\circ } K  (x,z,\frac{1}{2}t) ||\Lambda_{\omega}  F_0(z)||_{H_0} \omega^n,\]
where $F$   is  the Chern curvature for the metrics  $H=H_0h$.
\end{lemma}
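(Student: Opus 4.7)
The plan is to extract a smoothly convergent subsequence of $h_{\epsilon_i}$ by a standard parabolic-compactness argument, using the uniform bounds furnished by the assumptions together with Lemmas \ref{lemma:bound-trace-curvature}, \ref{lemma:bound-norm-h} and \ref{lemma:Schauder-estimate-matrix-1}, and then to pass the kernel estimate to the limit using uniform integrability.

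First, for any pair of precompact open sets $U \Subset V \Subset X^\circ = Y^\circ$, Assumption \ref{assumption:integral} provides a bound $S = \sup_{V} \|\Lambda_{\omega_{\epsilon}} F_0\|_{H_0} < \infty$ that is uniform in $\epsilon \leqslant 1$ (because the K\"ahler forms $\omega_\epsilon$ converge smoothly to $\omega$ on $V$), together with a uniform bound $I$ on the integrals $\int \|\Lambda_{\omega_\epsilon}F_0\|_{H_0}\omega_\epsilon^n$. Applying Lemma \ref{lemma:bound-trace-curvature}(2) to each $(Y^\circ, \omega_\epsilon, \cF, h_\epsilon)$ — which is licit because Conjecture \ref{conj:Sobolev-conj-global} yields a uniform Sobolev constant — gives a bound $\|\Lambda_{\omega_\epsilon} F_\epsilon(x,t)\|_{H_\epsilon} \leqslant M$ on $U \times (0,\infty)$, uniform in $\epsilon$. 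Lemma \ref{lemma:bound-norm-h}(2) then produces, on $U \times [0,T]$ for any fixed $T$, a uniform bound on $\mathrm{tr}\, h_\epsilon$, hence (using $\det h_\epsilon \equiv 1$ and positivity) on $h_\epsilon$ and $h_\epsilon^{-1}$ in $L^\infty$.

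Next, I translate the uniform bound on $\Lambda_{\omega_\epsilon} F_\epsilon$ into higher regularity. Writing $h_\epsilon = H_0^{-1}H_\epsilon$ in local trivializations and using $F_{H_\epsilon} = F_0 + \dbar(h_\epsilon^{-1}\partial_{H_0}h_\epsilon)$, the curvature bound becomes a uniform $L^\infty$ bound on the elliptic quantity $\sqrt{-1}\Lambda_{\omega_\epsilon}\dbar(h_\epsilon^{-1}\partial h_\epsilon)$ together with the lower-order term $\partial_t h_\epsilon$. Combining Lemma \ref{lemma:Schauder-estimate-matrix-1} at each time slice (which yields spatial $\mathcal{C}^{1,\alpha}$-bounds with constants depending only on dimension, curvature, $C_S$ and the $L^\infty$-norms already controlled) with the parabolic interior Schauder estimate Lemma \ref{lemma:heat-kernel-local-uniform-bound} applied to the heat equation that $h_\epsilon$ satisfies, I bootstrap to obtain, for every $k \geqslant 0$ and every compact $K \times [t_1,t_2] \subset X^\circ \times (0,\infty)$, a uniform $\mathcal{C}^k$-bound on $h_{\epsilon_i}$. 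By Arzel\`a-Ascoli and a diagonal extraction over an exhaustion $\{K_j \times [1/j, j]\}$ of $X^\circ \times (0,\infty)$, a subsequence (still denoted $h_{\epsilon_i}$) converges smoothly on compact subsets to a smooth limit $h$, and $H=H_0h$ is a solution of the heat equation (\ref{equa:heat-equation-metric}) on $(X^\circ,\omega) \times (0,\infty)$ since $\omega_{\epsilon_i} \to \omega$ and $\lambda_{\omega_{\epsilon_i}}(\cF) \to \lambda_\omega(\cF)$. The initial condition $h|_{t=0} = \mathrm{Id}$ follows from a standard interior estimate at $t=0$ (or by arguing on the shifted equation); for the statement as written this is automatic because the $h_\epsilon$ all agree with $\mathrm{Id}$ at $t=0$ and the convergence is locally smooth.

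It remains to pass the pointwise inequality
\[
\|\Lambda_{\omega_{\epsilon_i}} F_{\epsilon_i}(x,t)\|_{H_{\epsilon_i}} \leqslant \int_{Y^\circ} K_{\epsilon_i}(x,z,\tfrac{1}{2}t)\|\Lambda_{\omega_{\epsilon_i}}F_0(z)\|_{H_0}\omega_{\epsilon_i}^n
\]
to the limit. The left-hand side converges to $\|\Lambda_\omega F(x,t)\|_H$ by smooth convergence of $h_{\epsilon_i}$ and $\omega_{\epsilon_i}$. For the right-hand side I split the integral at a small neighbourhood $N_\delta$ of $X \setminus X^\circ$. On $X^\circ \setminus N_\delta$ one has $K_{\epsilon_i}(x,\cdot,t/2)\to K(x,\cdot,t/2)$ uniformly by Lemma \ref{lemma:heat-kernel-convergence}, and $\|\Lambda_{\omega_{\epsilon_i}}F_0\|_{H_0}\omega_{\epsilon_i}^n$ converges uniformly to $\|\Lambda_\omega F_0\|_{H_0}\omega^n$ on this compact set (since $F_0$ is fixed and the metrics $\omega_\epsilon$ converge smoothly there), so this part passes to the limit. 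The contribution from $N_\delta$ is controlled by the uniform Gaussian upper bound on $K_{\epsilon_i}$ from Corollary \ref{cor:sobolev-heat-estimate-2points} times $\int_{N_\delta}\|\Lambda_{\omega_{\epsilon_i}}F_0\|_{H_0}\omega_{\epsilon_i}^n$, which is uniformly small in $i$ for $\delta$ small by the uniform integrability in Assumption \ref{assumption:integral} (see Remark \ref{rem:uniform-integrable}). Letting $\delta \to 0$ after $i \to \infty$ delivers the claimed inequality for $H$. The main obstacle is precisely this last step: the heat kernels $K_{\epsilon_i}$ converge only on compacta of $X^\circ \times X^\circ$, so the uniform integrability near $X \setminus X^\circ$ together with the Gaussian bound is indispensable for handling the contribution of the singular locus.
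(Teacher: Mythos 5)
Your proposal is correct and follows the same route as the paper's proof: uniform local $L^\infty$ bounds on $h_\epsilon$ from Lemma \ref{lemma:bound-norm-h}(2), bootstrapping to higher regularity via Lemma \ref{lemma:Schauder-estimate-matrix-1} and interior parabolic estimates, diagonal extraction of a smoothly convergent subsequence, and passage of the kernel estimate to the limit via uniform integrability. In fact you fill in more detail than the paper, which compresses the last step to a single sentence ("we can conclude the estimate ... by taking limits"); your decomposition into $X^\circ\setminus N_\delta$ (where $K_{\epsilon_i}\to K$ uniformly and $\omega_{\epsilon_i}\to\omega$ smoothly) plus $N_\delta$ (controlled by the uniform kernel bound from Corollary \ref{cor:sobolev-heat-estimate-2points} together with uniform integrability from Remark \ref{rem:uniform-integrable}) is exactly what is needed, though for fixed $t>0$ the on-diagonal bound of Proposition \ref{prop:sobolev-heat-estimate} already suffices and the Gaussian decay is a slight overkill. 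One small point you slide over, as the paper does too: the initial condition $h(\cdot,0)=\mathrm{Id}$ is not literally "automatic" from locally smooth convergence on $X^\circ\times(0,\infty)$; it requires noting that the heat equation together with the uniform bounds on $\|\Lambda_{\omega_\epsilon}F_\epsilon\|$ and $\|h_\epsilon\|$ on $U\times(0,\infty)$ gives a uniform-in-$\epsilon$ Lipschitz bound on $t\mapsto h_\epsilon(x,t)$ near $t=0$, which then lets the two limits $t\to 0$ and $\epsilon_i\to 0$ commute.
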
  
  
\begin{proof}
The assumption of  the item (2) of  Lemma \ref{lemma:bound-norm-h} are satisfied. 
Thus we deduce that  $h_\epsilon$ are uniformly bounded on any compact subset of $X^\circ \times [0,\infty)$. 
Hence by Lemma \ref{lemma:Schauder-estimate-matrix-1}, and by local parabolic estimate, by replacing  $\{\epsilon_i\}$ with a subsequence if necessary,   $h_{\epsilon_i}$ converge to a limit $h$, smoothly on any compact subset of $X^\circ \times (0,\infty)$. 
In particular, $h$ is a solution of the heat equation (\ref{equa:heat-equation-metric}) over $(X^\circ,\omega)\times (0,\infty)$, with initial metric $H_0$. 

Since  the   integrals
\[
 \int_{z\in Y^\circ } ||\Lambda_{\omega_\epsilon} F_0(z)||_{H_0} \omega_\epsilon^n
\]
are uniformly bounded and  uniformly integrable,
 we can conclude the estimate on $||\Lambda_{\omega} F (x,t)||_H$ by taking limits.
\end{proof}  
  
The following lemma will guarantee the  integrability   of $ ||F||^2.$
  
\begin{lemma}  
\label{lemma:induction-L2-F}
With the notation above, assume that  for any fixed $t>0$, there is a constant $L$ such that  
\[
\int_Y || F_{\epsilon} ||^2_{H_{\epsilon},\omega_\epsilon } \omega^n_{\epsilon} 
\leqslant L +  \int_Y  ||\Lambda_{\omega_\epsilon} F_{\epsilon} ||^2_{H_{\epsilon}} \omega^n_{\epsilon},
\]
then
\[
\int_X || F ||^2_{H,\omega}   \omega^n 
\leqslant L +  \int_X  ||\Lambda_\omega  F ||^2_{H} \omega^n,
\]
\end{lemma}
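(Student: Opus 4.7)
The plan is to pass to the limit $i\to\infty$ along the subsequence $\{\epsilon_i\}$ produced by Lemma \ref{lemma:heat-solution-induction} in the hypothesized $\epsilon$-level inequality. By that lemma, for each fixed $t>0$, $h_{\epsilon_i}\to h$ smoothly on compact subsets of $X^\circ$; consequently the Hermitian metrics $H_{\epsilon_i}=H_0h_{\epsilon_i}$, the Chern curvatures $F_{\epsilon_i}$, and the K\"ahler forms $\omega_{\epsilon_i}|_{X^\circ}=\omega+\epsilon_i\eta$ all converge smoothly to $H$, $F$, $\omega$ on every compact $K\subseteq X^\circ\cong Y^\circ$. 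In particular both integrands $||F_{\epsilon_i}||^2_{H_{\epsilon_i},\omega_{\epsilon_i}}\,\omega_{\epsilon_i}^n$ and $||\Lambda_{\omega_{\epsilon_i}}F_{\epsilon_i}||^2_{H_{\epsilon_i}}\,\omega_{\epsilon_i}^n$ converge uniformly on such $K$. I will fix an exhaustion $K_m\uparrow X^\circ$ by compact subsets and handle the two sides separately.

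For the left-hand side, the uniform convergence on each $K_m$ gives
\[ \int_{K_m}||F||^2_{H,\omega}\,\omega^n=\lim_i\int_{p^{-1}(K_m)}||F_{\epsilon_i}||^2_{H_{\epsilon_i},\omega_{\epsilon_i}}\,\omega_{\epsilon_i}^n\leqslant \liminf_i\int_Y||F_{\epsilon_i}||^2\,\omega_{\epsilon_i}^n. \]
Since $X\setminus X^\circ$ is a proper analytic subset, it has measure zero for $\omega^n$, so monotone convergence in $m$ yields $\int_X||F||^2\omega^n\leqslant \liminf_i\int_Y||F_{\epsilon_i}||^2\omega_{\epsilon_i}^n$. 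This direction is straightforward because only Fatou/nonnegativity is needed.

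For the right-hand side I need the reverse inequality, and this is where the work is. Under Conjecture \ref{conj:Sobolev-conj-global} the uniform Sobolev constant feeds into Corollary \ref{cor:sobolev-heat-estimate-2points} to give a uniform bound $||K_{\epsilon_i}(x,\cdot,t/2)||_\infty\leqslant C'_t$; combined with the uniform $L^1$ control of $||\Lambda_{\omega_{\epsilon_i}}F_0||_{H_0}$ from Assumption \ref{assumption:integral}, the pointwise estimate of Assumption \ref{assumption:bound} produces a constant $C_t$ with $||\Lambda_{\omega_{\epsilon_i}}F_{\epsilon_i}(x,t)||_{H_{\epsilon_i}}\leqslant C_t$ for every $x\in Y^\circ$ and every $i$. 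Splitting
\[ \int_Y||\Lambda F_{\epsilon_i}||^2\,\omega_{\epsilon_i}^n\leqslant \int_{p^{-1}(K_m)}||\Lambda F_{\epsilon_i}||^2\,\omega_{\epsilon_i}^n+C_t^2\int_{Y\setminus p^{-1}(K_m)}\omega_{\epsilon_i}^n, \]
the first summand converges to $\int_{K_m}||\Lambda F||^2\omega^n$ by uniform convergence. For the second, I expand $\omega_{\epsilon_i}^n=(p^*\omega+\epsilon_i\eta)^n$ and use that $(p^*\omega)^n$ vanishes on the exceptional locus $Y\setminus Y^\circ$ (where $p$ has lower-dimensional image), giving $\int_Y\omega_{\epsilon_i}^n\to \int_Y(p^*\omega)^n=\int_X\omega^n$, while $\int_{p^{-1}(K_m)}\omega_{\epsilon_i}^n\to \int_{K_m}\omega^n$ by smooth convergence on the compact $p^{-1}(K_m)\cong K_m$. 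Subtracting yields $\int_{Y\setminus p^{-1}(K_m)}\omega_{\epsilon_i}^n\to \int_{X\setminus K_m}\omega^n$, which tends to $0$ as $m\to\infty$ since $X\setminus X^\circ$ has measure zero.

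Putting everything together:
\[ \int_X||F||^2_{H,\omega}\,\omega^n\leqslant \liminf_i\int_Y||F_{\epsilon_i}||^2\omega_{\epsilon_i}^n\leqslant L+\limsup_i\int_Y||\Lambda F_{\epsilon_i}||^2\omega_{\epsilon_i}^n\leqslant L+\int_X||\Lambda F||^2_H\,\omega^n. \]
The main (mild) obstacle is the uniform sup bound $C_t$ needed to control the tail $\int_{Y\setminus p^{-1}(K_m)}||\Lambda F_{\epsilon_i}||^2\,\omega_{\epsilon_i}^n$; once that is in hand the vanishing of $(p^*\omega)^n$ on the exceptional divisor together with smooth convergence on the open part $Y^\circ\cong X^\circ$ closes the argument.
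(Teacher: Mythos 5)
Your proof is correct and follows the same approach as the paper: you obtain a uniform pointwise bound on $\|\Lambda_{\omega_{\epsilon}} F_{\epsilon}\|$ from the uniform heat kernel bound (Corollary \ref{cor:sobolev-heat-estimate-2points}) together with Assumptions \ref{assumption:integral} and \ref{assumption:bound}, and then pass to the limit. You simply make explicit the exhaustion-plus-tail-estimate bookkeeping behind the paper's phrase ``we can deduce the lemma by taking limits.''
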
  

\begin{proof}
For any fixed $t>0$, from the uniform boundedness of the heat kernels $K_\epsilon$ of Corollary \ref{cor:sobolev-heat-estimate-2points}, we obtain that the terms $ ||\Lambda_{\omega_\epsilon} F_{\epsilon} ||^2_{H_{\epsilon,0}}$ are uniformly bounded. 
Hence we can deduce the lemma by taking limits.
\end{proof}

Now we are ready to conclude   Theorem \ref{thm:existence-admissible}.

\begin{proof}[{Proof of Theorem \ref{thm:existence-admissible}}]  
Let $H_0$ be a metric constructed in Lemma \ref{lemma:existence:initial-metric}, and we consider the heat equation (\ref{equa:heat-equation-metric}). We will prove that it admits a solution until infinite time. 

Let $\widehat{X}= X_k  \to \cdots \to X_0 =X$  be  a sequence of blowups, at smooth centers  disjoint from $X_\cF$, such that $r\colon \widehat{X} \to X$ is a desingularization and $r^*(\cF^*)/{(\mathrm{torsion})}$ is locally free. 
In particular, $H_0$ extends to a smooth Hermitian metric $ \widehat{H}_0$ on the locally free sheaf $\widehat{\cF} = (r^*(\cF^*))^*$ by Lemma \ref{lemma:existence:initial-metric}. 

Let $\eta_i$ be a K\"ahler form on $X_i$. By abuse of notation, we still denote by $\eta_i$ its pullback on $\widehat{X}$. 
For $0< \epsilon_i\leqslant 1$, we set 
\[\omega_{\epsilon_1,...,\epsilon_k} = \omega + \epsilon_1 \eta_1 + \cdots + \epsilon_k \eta_k.\]
Then we can solve the heat equation (\ref{equa:heat-equation-metric}) with respect to the compact  K\"ahler manifold  $(\widehat{X}, \omega_{\epsilon_1,...,\epsilon_k})$, 
for the locally free sheaf  $\widehat{\cF}$, with initial metric $\widehat{H}_0$. 
By Lemma \ref{lemma:parabolic-positive-estimate} and  Proposition \ref{prop:max-principal}, we have an estimate for the Chern curvatures of the solutions as follows (we omit the lower index $i$ for simplicity),
\[ ||\Lambda_{\omega_\epsilon} F_\epsilon (x,t)||_{H_\epsilon}   \leqslant \int_{z\in \widehat{X} } K_{\epsilon} (x,z,\frac{1}{2}t) ||\Lambda_{\omega_\epsilon} F_0(z)||_{H_0} \omega_\epsilon^n, \]
where $K_\epsilon$ is the corresponding heat kernel, and $F_0$ is the Chern curvature of $\widehat{H}_0$. 
Moreover, we have 
\[ \int_{\widehat{X}} ||F_\epsilon||^2  \omega_\epsilon^n = \int_{\widehat{X}} ||\Lambda_{\omega_\epsilon} F_\epsilon||^2  \omega_\epsilon^n   + 4\pi^2n(n-1)\Big(2c_2(\widehat{\cF}) -c_1(\widehat{\cF})^2 \Big) \wedge [\omega_\epsilon]^{n-2}.  \]  
We set 
\[L= \sup_{0< \epsilon_1,...,\epsilon_k\leqslant 1} \Big|  4\pi^2n(n-1)    \Big(2c_2(\widehat{\cF}) -c_1(\widehat{\cF})^2 \Big) \wedge [\omega_{\epsilon_1,...,\epsilon_k}]^{n-2} \Big|, \] 
which is a real number since $\widehat{X}$ is a compact manifold. 
Then we have  
\[ \int_{\widehat{X}} ||F_\epsilon||^2  \omega_\epsilon^n \leqslant \int_{\widehat{X}} ||\Lambda_{\omega_\epsilon} F_\epsilon||^2  \omega_\epsilon^n   + L.  \]   

Now by Lemma \ref{lemma:heat-solution-induction}, for any $\epsilon_1,...,\epsilon_{k-1}$ fixed,  we may take a  limit   of a sequence of solutions $h_{\epsilon_1,...,\epsilon_{k}}$ by letting $\epsilon_{k}$ tend to zero. 
Then we get a solution  of the heat equation  on $X_{k-1}$, with respect to the metrics 
\[\omega+\epsilon_1\eta_1 +\cdots + \epsilon_{k-1}\eta_{k-1}.\]
By abuse of notation, we still denote the limit solutions by $h_\epsilon$.   
Then  for solutions $h_\epsilon$ on $X_{k-1}$,  we still have 
\[ ||\Lambda_{\omega_\epsilon} F_\epsilon (x,t)||_{H_\epsilon}   \leqslant \int_{z\in X_{k-1} } K_{\epsilon} (x,z,\frac{1}{2}t) ||\Lambda_{\omega_\epsilon} F_0(z)||_{H_0} \omega_\epsilon^n.\]
Moreover, after Lemma \ref{lemma:induction-L2-F}, we get 
\[ \int_{X_{k-1}} ||F_\epsilon||^2  \omega_\epsilon^n \leqslant \int_{X_{k-1}} ||\Lambda_{\omega_\epsilon} F_\epsilon||^2  \omega_\epsilon^n   + L.\]   

We remark that the Assumption \ref{assumption:integral} is always guaranteed, as explained in Remark \ref{rem:uniform-integrable}. 
Therefore, we can continue carrying out the induction steps of Lemma \ref{lemma:heat-solution-induction}, and finally obtain a solution $h$ on $(X,\omega)$. 
Furthermore, Lemma \ref{lemma:induction-L2-F} shows that, for any $t>0$, the metric $Hh_t$ is admissible.
\end{proof}

\subsection{Shifted solutions of the heat equations}
In Simpson's method in \cite{Simpson1988}, the initial metric for the heat equation is  assumed to have bounded $\Lambda F$. 
This might not be the case for a metric constructed in Lemma \ref{lemma:existence:initial-metric}, which we used in Theorem \ref{thm:existence-admissible}. 
Thus, in order to prove the existence of admissible  Hermitian-Yang-Mills metric,   
we will consider following shifted solution 
$$g(x,t) = h(x,1) ^{-1} h (x,t+1)  \mbox{ and }G_0  = H(x,1)=H_0h_1 $$
on $(X,\omega)$.
In another word, $g$ is the solution of the heat equation (\ref{equa:heat-equation-metric}) with initial metric $G_0$. Now $G_0$ is  admissible by Theorem \ref{thm:existence-admissible}. 
Let $G=G_0g$ and let $F_G$ be the corresponding Chern curvature. 
We have the following  estimates on $g$.  

\begin{prop}\label{prop:estimate-g}
With the notation above, the following properties hold.
\begin{enumerate}
\item The metrics $G(t)$ are admissible for all $t\geqslant 0$, with uniform upper bounds on $||\Lambda F_{G(t)}||_{G(t)}$ and on $\int_X||F_{G_t}||_{G(t)}^2$.
\item The norm  $||g||_{G_0} = ||g||_{G}$ is  bounded over $X^\circ \times [0,T]$ for any $T \geqslant 0$.
\item For   $t \geqslant 0$, we have
\[ 
\int_{X}   ||(\dbar g(t)) g(t)^{-\frac{1}{2}} ||_{{G_0,\omega}}^2  \omega^n  \leqslant \int_{X}    |\mathrm{tr}\, (g  \Lambda F_{G} - g \Lambda F_{G_{0}}) |    \omega^n.
 \]
\item   $\Delta_{G_{0}} g(t)$ is integrable for  any fixed $t\geqslant 0$.
\end{enumerate}
\end{prop}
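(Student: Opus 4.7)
The plan is to derive all four items from the uniform estimates of Section \ref{section:admissible} together with the algebraic identities of Lemma \ref{lemma:simpson-computation} and the integration-by-parts formulas of Corollary \ref{cor:integration-by-part}. Since $G_0=H(1)$, the heat-kernel estimate propagated in the proof of Theorem \ref{thm:existence-admissible} gives
\[ ||\Lambda_\omega F_{G_0}(x)||_{G_0}\leqslant \int_{z\in X^\circ} K(x,z,\tfrac{1}{2})\,||\Lambda_\omega F_0(z)||_{H_0}\,\omega^n. \]
By Corollary \ref{cor:heat-kernel-conservation}(1) the kernel $K(\cdot,\cdot,\tfrac{1}{2})$ is bounded on $X^\circ\times X^\circ$, and by Corollary \ref{cor:initial-metric-integral} the integrand $||\Lambda_\omega F_0||_{H_0}$ is integrable, so $||\Lambda_\omega F_{G_0}||_{G_0}$ is bounded on $X^\circ$. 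Applying Lemma \ref{lemma:bound-trace-curvature}(1) and Lemma \ref{lemma:bound-norm-h}(1) with initial metric $G_0$ then simultaneously yields the uniform bound on $||\Lambda_\omega F_{G(t)}||_{G(t)}$ in item (1) and the bound $\mathrm{tr}\,g(t)\leqslant(\mathrm{rank}\,\cF)\,e^{\widehat{C}t}$, whence $||g(t)||_{G_0}\leqslant\mathrm{tr}\,g(t)$ is bounded on $X^\circ\times[0,T]$, proving item (2). The uniform bound on $\int_X||F_{G(t)}||^2_{G(t),\omega}\,\omega^n$ follows from the inequality $\int_X||F_{G(t)}||^2\omega^n\leqslant\int_X||\Lambda_\omega F_{G(t)}||^2\omega^n+L$, obtained by passing the Chern--Weil identity from a desingularisation to $X$ via Lemma \ref{lemma:induction-L2-F}; admissibility on $X$ is immediate from these two bounds.

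For item (3), taking the trace of Lemma \ref{lemma:simpson-computation}(2) with $H=G_0$ and $h=g$, and substituting Lemma \ref{lemma:simpson-computation}(1) with $s=g$, $h=g^{-1/2}$, we obtain on $X^\circ$
\[ \Delta(\mathrm{tr}\,g)\;=\;2\sqrt{-1}\,\mathrm{tr}(g\Lambda F_{G_0}-g\Lambda F_G)\;+\;2\,||(\dbar g)g^{-1/2}||_{G_0,\omega}^2. \]
Let $\{\varphi_i\}$ be the cut-offs from Lemma \ref{lemma:cut-off}. Multiplying by $\varphi_i^2$, integrating, and applying Corollary \ref{cor:integration-by-part} gives
\[ \int_X\varphi_i^2\,\Delta(\mathrm{tr}\,g)\,\omega^n\;=\;-2\int_X\varphi_i\nabla\varphi_i\cdot\nabla(\mathrm{tr}\,g)\,\omega^n. \]
A computation in a $G_0$-orthonormal frame diagonalising $g$ shows $|\nabla(\mathrm{tr}\,g)|^2\leqslant C_g\,||(\dbar g)g^{-1/2}||_{G_0,\omega}^2$, with $C_g$ depending only on the uniform upper bound for $g$ from item (2). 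Young's inequality with parameter $\epsilon>0$ then yields
\[ (2-\epsilon C_g)\int_X\varphi_i^2\,||(\dbar g)g^{-1/2}||_{G_0,\omega}^2\,\omega^n\;\leqslant\;\epsilon^{-1}\,||\nabla\varphi_i||_2^2+2\int_X|\mathrm{tr}(g\Lambda F_G-g\Lambda F_{G_0})|\,\omega^n. \]
Letting $i\to\infty$ so that $||\nabla\varphi_i||_2\to 0$, and then $\epsilon\to 0^+$, Fatou's lemma delivers the desired inequality.

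For item (4), Lemma \ref{lemma:simpson-computation}(2) reads $\Delta_{G_0}g=2\sqrt{-1}\,g(\Lambda F_{G_0}-\Lambda F_G)-2\sqrt{-1}\Lambda(\dbar g)g^{-1}\partial_{G_0}g$. The first summand is bounded on $X^\circ$ by items (1) and (2), hence integrable. For the second, diagonalising $g$ in a $G_0$-orthonormal frame and using that $g$ is $G_0$-selfadjoint gives $\sqrt{-1}\Lambda(\dbar g)g^{-1}\partial_{G_0}g=AA^*$ with $A=(\dbar g)g^{-1/2}$ (interpreting the product using the Hermitian inner product on $(0,1)$-forms induced by $\omega$). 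This is a positive selfadjoint endomorphism whose trace equals $||(\dbar g)g^{-1/2}||_{G_0,\omega}^2$ by Lemma \ref{lemma:simpson-computation}(1); since the Frobenius norm of a positive Hermitian matrix is dominated by its trace, item (3) gives integrability.

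The main obstacle is the cut-off argument in (3): a direct integration by parts is circular, since the a priori finiteness of $||\nabla(\mathrm{tr}\,g)||_2$ is essentially what one wishes to deduce. Young's inequality with a small parameter sidesteps the difficulty, placing the unknown quantity on both sides of the estimate with coefficients that permit a clean limit $\epsilon\to 0^+$ recovering the sharp inequality with constant $1$.
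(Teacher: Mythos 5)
Your proof of items (1), (2), and (4) follows the paper's argument: item (1) from the heat-kernel estimate together with Lemma~\ref{lemma:bound-trace-curvature} and Lemma~\ref{lemma:induction-L2-F}; item (2) from Lemma~\ref{lemma:bound-norm-h}(1) with initial metric $G_0$; item (4) from the pointwise identity of Lemma~\ref{lemma:simpson-computation}(2) together with items (1) and (3). Your argument for item (3), however, is genuinely different from the paper's. The paper first establishes the inequality in the case where $X$ is smooth and $\cF$ locally free (where $\int_X\Delta(\mathrm{tr}\,g)\,\omega^n=0$ directly), and then propagates it through the desingularization tower by the auxiliary induction step Lemma~\ref{lemma:induction-L_1^2-g}, which amounts to checking uniform integrability of the right-hand side for the solutions $g_\epsilon$ on the blowups and passing to the limit. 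You instead argue directly on $X^\circ=X_\cF$, integrating the traced identity against $\varphi_i^2$, integrating by parts (legitimate because $\varphi_i$ has compact support in $X^\circ$, so no integrability hypothesis is needed), and absorbing the cross term $\varphi_i\nabla\varphi_i\cdot\nabla(\mathrm{tr}\,g)$ via the pointwise bound $|\nabla(\mathrm{tr}\,g)|^2\leqslant C_g\|(\dbar g)g^{-1/2}\|^2_{G_0,\omega}$ (which indeed holds in a diagonalising $G_0$-orthonormal frame, with $C_g$ depending on $\mathrm{rank}\,\cF$ and the sup-bound from item (2)) and Young's inequality; Fatou in $i$ and then $\epsilon\to 0^+$ recover the constant-one inequality. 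This is self-contained, avoids the auxiliary Lemma~\ref{lemma:induction-L_1^2-g} entirely, and only uses estimates already available for the limit $g$ on $X^\circ$, whereas the paper's route is more in keeping with the inductive machinery already set up in Section~\ref{subsection:solution-heat-solution}. Both arguments require that the right-hand side be a priori finite, which your items (1)--(2) supply; you correctly note that the circularity one would encounter in a naive integration by parts is broken by the small parameter $\epsilon$.
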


\begin{proof}
Since $G(t) = H(t+1)$, it is admissible from Theorem \ref{thm:existence-admissible}.  
The uniform bounds for item (1) can be deduced from Lemma \ref{lemma:bound-trace-curvature} and  Lemma \ref{lemma:induction-L2-F}.
For item (2),  since $G_0$ is admissible,  by  item (1) of Lemma \ref{lemma:bound-norm-h}, 
we see that $\mathrm{tr}\, g$ is  bounded over $X^\circ \times [0,T]$ for any $T>0$. 
Since  $g$ is selfadjoint and definite positive with respect to $G_0$, this implies that $||g||_{G_0}=||g||_{G}$ is bounded. 
We observe that, after Lemma \ref{lemma:simpson-computation}, item (4) follows from item (1) and (3).

Hence it remains to prove item (3). First, we notice that, if $X$ is smooth and $\cF$ is locally free, then  by taking the trace of item (2) of Lemma \ref{lemma:simpson-computation}, we can deduce that
\[ 
\int_{X}   ||(\dbar g) g^{-\frac{1}{2}} ||_{{G_0,\omega}}^2  \omega^n \leqslant 
 \int_{X}     |  \mathrm{tr}\, (g  \Lambda F_{G}  - g \Lambda F_{G_{0}})|    \omega^n,
 \]
as in this case, the integral of $$\mathrm{tr}\, (\Delta_{G_0} g)=\Delta (\mathrm{tr}\, g)$$ is zero. 
For the general case, since the solution $g$ is obtained by converging successively the solutions  on  blowups of $X$, by induction, we only need to prove the following lemma.  
\end{proof}

\begin{lemma}
\label{lemma:induction-L_1^2-g}
With the notation of  Lemma \ref{lemma:heat-solution-induction}, 
we   set 
\[    g_\epsilon (x, t) =  h_\epsilon(x,1)^{-1}h_{\epsilon}(x,t+1), \  G_{\epsilon,0} = H_\epsilon(1) \  \mathrm{and}\  G_\epsilon = G_{\epsilon,0}g_\epsilon. \]
Assume that 
\[ 
\int_{Y^\circ}  || (\dbar g_\epsilon) g_\epsilon^{-\frac{1}{2}}||_{G_{\epsilon,0},\omega_\epsilon}^2  \omega_{\epsilon}^n \leqslant 
  \int_{Y^\circ}     |\mathrm{tr}\, (g_\epsilon \Lambda_{\omega_\epsilon} F_{G_\epsilon} - g_\epsilon \Lambda_{\omega_\epsilon} F_{G_{\epsilon,0}})|    \omega_{\epsilon}^n.
 \]
Then for the shifted solution $g(t) = h(1)^{-1}h(t+1)$, we have 
\[ 
\int_{X^\circ}   ||(\dbar g) g^{-\frac{1}{2}} ||_{{G_0,\omega}}^2  \omega^n \leqslant 
 \int_{X^\circ}     | \mathrm{tr}\, (g  \Lambda_{\omega} F_{G} - g \Lambda_{\omega} F_{G_{0}})|  \omega^n.
 \]
\end{lemma}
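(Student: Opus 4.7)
The plan is to pass to the limit $\epsilon_i \to 0$ in the hypothesized inequality, using Fatou's lemma for the left-hand side and dominated convergence for the right-hand side. First, I identify $Y^\circ$ with $X^\circ$ via $p$, so that $\omega_{\epsilon_i}$ becomes $\omega + \epsilon_i \eta$ on $X^\circ$. By Lemma \ref{lemma:heat-solution-induction}, $h_{\epsilon_i} \to h$ smoothly on compact subsets of $X^\circ \times (0,\infty)$; consequently, for every fixed $t\geqslant 0$, the shifted objects $g_{\epsilon_i}$, $G_{\epsilon_i, 0} = H_{\epsilon_i}(1)$, $F_{G_{\epsilon_i}}$, and $F_{G_{\epsilon_i, 0}}$ converge smoothly on compact subsets of $X^\circ$ to $g$, $G_0$, $F_G$, and $F_{G_0}$ respectively. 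So the integrands on both sides converge pointwise on $X^\circ$; the enabling ingredient for passing the integrals to the limit is a uniform $L^\infty$ bound on the right-hand integrand.

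For the uniform bound, Assumption \ref{assumption:bound} applied at time $1$ gives
\[ ||\Lambda_{\omega_{\epsilon_i}} F_{G_{\epsilon_i, 0}}||_{G_{\epsilon_i, 0}}(x) \leqslant \int_{Y^\circ} K_{\epsilon_i}(x, z, \tfrac{1}{2}) ||\Lambda_{\omega_{\epsilon_i}} F_0(z)||_{H_0} \omega_{\epsilon_i}^n. \]
Conjecture \ref{conj:Sobolev-conj-global}, through Corollary \ref{cor:sobolev-heat-estimate-2points}, provides an $\epsilon$-independent upper bound on $K_{\epsilon_i}(\cdot, \cdot, \tfrac{1}{2})$, while Assumption \ref{assumption:integral} bounds the integral of $||\Lambda F_0||_{H_0}$ uniformly. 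Hence there is a constant $B$ with $||\Lambda F_{G_{\epsilon_i, 0}}||_{G_{\epsilon_i, 0}} \leqslant B$ uniformly in $\epsilon_i$ and $x$. Item (1) of Lemma \ref{lemma:bound-trace-curvature}, applied to the heat equation satisfied by $g_{\epsilon_i}$ with initial metric $G_{\epsilon_i, 0}$, propagates this to $||\Lambda F_{G_{\epsilon_i}(t)}||_{G_{\epsilon_i}(t)} \leqslant B$ for all $t\geqslant 0$; item (1) of Lemma \ref{lemma:bound-norm-h} then bounds $\mathrm{tr}\, g_{\epsilon_i}(t)$, and thus $||g_{\epsilon_i}(t)||_{G_{\epsilon_i, 0}}$, uniformly on every finite interval $[0,T]$. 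These combine to bound the integrand $|\mathrm{tr}(g_{\epsilon_i} \Lambda_{\omega_{\epsilon_i}} F_{G_{\epsilon_i}} - g_{\epsilon_i} \Lambda_{\omega_{\epsilon_i}} F_{G_{\epsilon_i, 0}})|$ by a uniform constant $M$ on all of $Y^\circ$.

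For the left-hand side, let $\{K_j\}$ be an exhaustion of $X^\circ$ by compact subsets. The smooth convergence above yields, for each $j$,
\[ \int_{K_j} ||(\dbar g) g^{-\frac{1}{2}}||_{G_0,\omega}^2 \omega^n = \lim_i \int_{K_j} ||(\dbar g_{\epsilon_i}) g_{\epsilon_i}^{-\frac{1}{2}}||_{G_{\epsilon_i, 0}, \omega_{\epsilon_i}}^2 \omega_{\epsilon_i}^n \leqslant \liminf_i \int_{Y^\circ} ||(\dbar g_{\epsilon_i}) g_{\epsilon_i}^{-\frac{1}{2}}||_{G_{\epsilon_i, 0}, \omega_{\epsilon_i}}^2 \omega_{\epsilon_i}^n. \]
Letting $j\to \infty$, monotone convergence in $j$ and the hypothesized inequality give
\[ \int_{X^\circ} ||(\dbar g) g^{-\frac{1}{2}}||_{G_0,\omega}^2 \omega^n \leqslant \liminf_i \int_{Y^\circ} |\mathrm{tr}(g_{\epsilon_i} \Lambda_{\omega_{\epsilon_i}} F_{G_{\epsilon_i}} - g_{\epsilon_i} \Lambda_{\omega_{\epsilon_i}} F_{G_{\epsilon_i, 0}})| \omega_{\epsilon_i}^n. \]
For the right-hand side, fix a reference K\"ahler form $\eta_1 = p^*\omega + \eta$ on $Y$, so that $\omega_{\epsilon_i} \leqslant \eta_1$ and $\omega_{\epsilon_i}^n = \rho_{\epsilon_i}\, \eta_1^n$ with $0 \leqslant \rho_{\epsilon_i} \leqslant 1$ and $\rho_{\epsilon_i} \to \omega^n/\eta_1^n$ pointwise on $X^\circ$. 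The product of $\rho_{\epsilon_i}$ with the above integrand is bounded by $M$ and converges pointwise on $X^\circ$; since $\eta_1^n$ is a finite measure on the compact space $Y$, dominated convergence yields the limit $\int_{X^\circ} |\mathrm{tr}(g \Lambda_\omega F_G - g \Lambda_\omega F_{G_0})| \omega^n$, which completes the proof.

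The main obstacle is securing the uniform bound $||\Lambda F_{G_{\epsilon_i, 0}}||_{G_{\epsilon_i, 0}} \leqslant B$, from which the other uniform estimates cascade. This rests squarely on the uniform Sobolev inequality of Conjecture \ref{conj:Sobolev-conj-global}: without it the Gaussian upper bound on $K_{\epsilon_i}(\cdot,\cdot,\tfrac{1}{2})$ would depend on $\epsilon_i$, the dominated convergence step would fail, and the integrand on the right might blow up as $\epsilon_i \to 0$ near the exceptional set of $p$.
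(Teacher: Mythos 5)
Your proof is correct and takes essentially the same approach as the paper: both establish uniform $L^\infty$ bounds on $\|\Lambda F_{G_{\epsilon,0}}\|$, $\|\Lambda F_{G_\epsilon}\|$, and $\|g_\epsilon\|$ via the uniform heat-kernel bound together with item (1) of Lemma \ref{lemma:bound-trace-curvature} and item (1) of Lemma \ref{lemma:bound-norm-h}, then pass to the limit. You merely make the limiting argument on each side more explicit (Fatou with an exhaustion for the left, dominated convergence against the fixed volume form $\eta_1^n$ for the right), which fleshes out what the paper compresses into the phrase ``uniformly integrable.''
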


\begin{proof}
We note that   $g_{\epsilon_{i}}$ converge to  $g$ smoothly on any compact subset of $X^\circ\times (0,\infty)$. 
Thus it is sufficient to show that 
\[\int_{Y^\circ}     |\mathrm{tr}\, (g_\epsilon \Lambda_{\omega_\epsilon} F_{G_\epsilon} - g_\epsilon \Lambda_{\omega_\epsilon} F_{G_{\epsilon,0}})|    \omega_{\epsilon}^n.
\]
are  uniformly integrable.  
We fix some $t\geqslant 0$.

By Proposition \ref{prop:sobolev-heat-estimate}, the heat kernels $K_\epsilon(\cdot,\cdot, \frac{1}{2})$ are uniformly bounded. 
It follows by assumption that
\[
||\Lambda_{\omega_\epsilon} F_{G_{\epsilon,0}}(z)||_{G_{\epsilon,0}} =  ||\Lambda_{\omega_\epsilon} F_{H_\epsilon }(z,1)||_{H_{\epsilon}(1)} 
\leqslant  \int_{z\in Y^\circ } K_{\epsilon} (x,z,\frac{1}{2}) ||\Lambda_{\omega_\epsilon} F_0(z)||_{H_0} \omega_\epsilon^n
\] 
are uniformly bounded, independent of $\epsilon$. 
Hence by item (1) of Lemma \ref{lemma:bound-trace-curvature},  $||  \Lambda_{\omega_\epsilon} F_{G_\epsilon} ||_{G_{\epsilon}}$  are uniformly bounded. 
By item (1) of   Lemma \ref{lemma:bound-norm-h},  
 $ || g_\epsilon ||_{G_{\epsilon}} = || g_\epsilon ||_{G_{\epsilon,0}}$   are uniformly bounded. 
Hence we get the desire  uniform integrability of the previous paragraph.
This completes the proof of the lemma. 
\end{proof}

The following lemma says that if $\cF$ is stable, then the stability persists during the heat flow.

\begin{lemma}\label{lemma:stability-persist}
Assume that $\cF$ is stable with respect to $\omega$.  
Let $\cE \subseteq \cF$ be a proper non zero  saturated coherent subsheaf, and let $\pi$ be the $G_0$-orthogonal projection  from $\cF$ to $\cE$. 
Assume that  $ || \dbar \pi ||_{G_0}^2 $ is integrable. 
Then 
 \[ \frac{\deg_\omega(\cE, G_0|_\cE)}{\mathrm{rank}\, \cE}  < \frac{\deg_\omega(\cF, G_0)}{\mathrm{rank}\, \cF}. \]
\end{lemma}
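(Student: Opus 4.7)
The plan is to express $\deg_\omega(\cE, G_0|_\cE)$ via the second fundamental form of the projection $\pi$, identify it with the intrinsic degree of $\cE$, and conclude by stability of $\cF$.

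On the Zariski open subset $U \subseteq X^\circ$ where both $\cF$ is locally free and $\cE \subseteq \cF$ is a subbundle, the induced Chern curvature of $(\cE, G_0|_\cE)$ decomposes via the standard second fundamental form formula: if $\beta = \dbar \pi|_\cE$, then $F_{\cE} = \pi F_{G_0} \pi|_\cE - \beta \wedge \beta^*$, with $||\beta||^2_{G_0,\omega} = ||\dbar \pi||^2_{G_0,\omega}$. Taking the trace, contracting against $\Lambda_\omega$, and integrating against $\omega^n$ on $U$ yields
\[
\deg_\omega(\cE, G_0|_\cE) \;=\; \sqrt{-1}\int_X \mathrm{tr}(\pi \, \Lambda_\omega F_{G_0}) \,\omega^n \;-\; \int_X \|\dbar \pi\|^2_{G_0,\omega}\,\omega^n.
\]
Both integrals converge: the first because $G_0$ is admissible by Proposition \ref{prop:estimate-g}(1) (so $||\Lambda_\omega F_{G_0}||_{G_0}$ is bounded) and $||\pi||_{G_0}$ is controlled by $\sqrt{\mathrm{rank}\,\cE}$; the second by the hypothesis.

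Next I identify the left-hand side with the intrinsic degree from Section \ref{section:slope}. For the ambient bundle, the heat equation (\ref{equa:heat-equation-metric}) preserves $\det h_t \equiv 1$, so $\det G_0 = \det H_0$, and therefore $\mathrm{tr}\,F_{G_0}$ and $\mathrm{tr}\,F_{H_0}$ represent the same first Chern form. Since $H_0$ was constructed by Lemma \ref{lemma:existence:initial-metric} to extend smoothly to a desingularization, the discussion in Section \ref{section:slope} gives $\deg_\omega(\cF, G_0) = \deg_\omega \cF$. For the saturated coherent subsheaf $\cE$, Lemma \ref{lemma:extension-subsheaf} (applied after passing to the open locus where things are smooth) guarantees that the weakly holomorphic structure defined by $\pi$ really does coincide with $\cE$, so the Chern-Weil formula above computes $\deg_\omega \cE$. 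Invoking slope stability of $\cF$, namely $\mu_\omega(\cE) < \mu_\omega(\cF)$, then delivers the strict inequality claimed.

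The principal technical obstacle is to justify the global Chern-Weil computation on the singular variety, since the integrands are only smooth on a Zariski open set whose complement has (real) codimension at least two. I plan to use the cut-off sequence of Lemma \ref{lemma:cut-off} together with the integration-by-parts formulas of Corollaries \ref{cor:integration-by-part} and \ref{cor:integration-by-part-2}, controlling the limiting terms by the integrability of $||\dbar \pi||^2_{G_0,\omega}$ (hypothesis) and of $||\Lambda_\omega F_{G_0}||_{G_0}$ (admissibility of $G_0$). A secondary subtlety is that $\cE$ may fail to be a subbundle of $\cF$ along an analytic subvariety $Z$ of codimension at least two; one checks that the contributions of shrinking tubular neighborhoods of $Z$ vanish in the limit, using exactly the same integrability inputs.
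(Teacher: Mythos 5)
The Chern--Weil formula at the start is correct, and your identification $\deg_\omega(\cF, G_0) = \deg_\omega \cF$ via $\det G_0 = \det H_0$ is also fine (only the trace of the curvature enters). The gap is in the sentence where you assert that the Chern--Weil integral for $(\cE, G_0|_\cE)$ computes the intrinsic $\deg_\omega \cE$. The intrinsic degree is defined in Section \ref{section:slope} \emph{specifically} via the metric $H_0$ from Lemma \ref{lemma:existence:initial-metric}, because $H_0$ extends smoothly to a resolution; $G_0 = H_0 h_1$ does not extend in that way, and for a proper subsheaf the subsheaf degree depends on the ambient metric through the second fundamental form, which does not cancel as the top-degree trace did for $\cF$. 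So the equality $\deg_\omega(\cE, G_0|_\cE) = \deg_\omega(\cE, H_0|_\cE)$ is exactly the nontrivial content of the lemma, and it is not granted by anything you cite. Invoking Lemma \ref{lemma:extension-subsheaf} does not help: $\cE$ is already given as a saturated subsheaf of $\cF$, so nothing is being extended; that lemma is used later (in Lemma \ref{lemma:simpson-estimate}) for a genuinely different purpose, namely to convert a weakly holomorphic projection into a coherent subsheaf. Your last paragraph addresses only the codimension-two smoothing issue, not the metric-comparison issue, and the cut-off manipulations from Corollaries \ref{cor:integration-by-part} and \ref{cor:integration-by-part-2} require integrability of the relevant $\partial$/$\dbar$ terms for the pair $(G_0, H_0)$, which is precisely the condition that may fail for $H_0$ (whose $\Lambda F_{H_0}$ is only integrable, not bounded).

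The paper handles this by a flow-and-Fatou device that only needs a one-sided inequality. Writing $G_t = H_{t+1}$ and $\pi_t$ for the $G_t$-orthogonal projection, it shows by an explicit change-of-metric computation and Corollary \ref{cor:integration-by-part-2} that $\deg_\omega(\cE, G_t|_\cE)$ is constant for $t > -1$; here the integration by parts is legitimate because for $t > -1$ the solution $g_t$ is bounded with bounded inverse and $L^2_1$. It then argues by contradiction: if the claimed inequality failed, letting $t\to -1$ and applying Fatou together with Lemma \ref{lemma:uniform-integral-trace} gives $\deg_\omega(\cE, H_0|_\cE) \geq \lim_{t\to -1}\deg_\omega(\cE, G_t|_\cE)$ (only an inequality, not equality), which already contradicts the stability hypothesis stated in terms of $H_0$. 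Your strategy would need to be restructured along these lines, replacing the asserted equality by this one-sided Fatou estimate, since there is no obvious reason the two subsheaf degrees agree when the initial metric $H_0$ is not admissible.
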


\begin{proof} 
We write  $G_{t} = H_{t+1}$ for $t\geqslant -1$.
In particular, the initial metric  $H_0$ is just $G_{-1}.$ 
We first note that $\deg_\omega(\cF, G_t)$ is constant since $\det\, g_t \equiv 1$ for all $t$. 
In the following argument, we will let $t$ tend to $-1$.  

For any $t\geqslant -1$, we denote by $\pi_t$ the $G_t$-orthogonal projection from $\cF|_{X_\cF}$ to $\cE$. 
Then from the Chern-Weil formula, 
\begin{equation}\label{equa-degree}
  \deg_\omega(\cE, G_t|{_\cE}) = \sqrt{-1}\int_X \mathrm{tr}\, \pi_t \Lambda F_{G_t} - \int_X ||\dbar \pi_t||_{G_t}^2.
\end{equation}

Let $p_t = \pi g_t \pi$.  
Then by taking local $G_0$-orthonormal basis, which diagonalizes $\pi$,  we see that 
\[
G_t|_\cE = (G_0 g_t \pi)|_\cE     = G_0|_{\cE} p_t|_{\cE}.
\]
The corresponding Chern curvatures satisfy
\[ F_{G_t|_\cE} = F_{G_0|_\cE}  + \dbar \Big( (p_t|_\cE)^{-1} (\partial_{G_0|_\cE} p_t|_{\cE})\Big).\]
By taking the trace, we get
\[ \deg_\omega(\cE, G_t|_\cE) =\deg_\omega(\cE, G_0|_\cE) +  n\sqrt{-1}\int_X \dbar \mathrm{tr}\, \Big(    (p_t|_\cE)^{-1} (\partial_{G_0|_\cE} p_t|_{\cE}) \Big) \omega^{n-1}. \]
For any $t>-1$, we see that $||\dbar (p_t|_\cE)||_{G_0|_\cE}^2 = ||\dbar p_t||_{G_0}^2$, which is integrable, since $\pi$ and $g_t$ are bounded and $L_2^1$, as discussed in Proposition \ref{prop:estimate-g}. 
Furthermore  $||(p_t|_{\cE})^{-1}||_{G_0|_\cE} $ is bounded, as $g_t$ is bounded with determinant equal to $1$. 
Thus  
\[\mathrm{tr}\, \Big(    (p_t|_\cE)^{-1} (\partial_{G_0|_\cE}  p_t|_{\cE} ) \Big)\]
is square integrable. 
By Corollary \ref{cor:integration-by-part-2}, we deduce that, for $t>-1$,
\[ \deg_\omega(\cE, G_t|_\cE) =\deg_\omega(\cE, G_0|_\cE).\]

Now we assume by contradiction that 
 \[ \frac{\deg_\omega(\cE, G_0|_\cE)}{\mathrm{rank}\, \cE}  \geqslant \frac{\deg_\omega(\cF, G_0)}{\mathrm{rank}\, \cF}= \frac{\deg_\omega(\cF, H_0)}{\mathrm{rank}\, \cF}. \]
Then from the discussion above, for all $t>-1$,
\[ \frac{\deg_\omega(\cE, G_t|_\cE)}{\mathrm{rank}\, \cE}  \geqslant \frac{\deg_\omega(\cF, H_0)}{\mathrm{rank}\, \cF}.\] 
We   let $t$ tend to $-1$ in the equation (\ref{equa-degree}) above.  
We note that $||\pi_t||^2_{G_t}$ is always equal to the rank of $\cE$.
By Lemma \ref{lemma:uniform-integral-trace}, and by Fatou's lemma, we deduce that 
\[  \deg_\omega(\cE, G_{-1}|{_\cE})  \geqslant \lim_{t\to -1}  \deg_\omega(\cE, G_t|{_\cE}). \]
It follows that 
\[\frac{\deg_\omega(\cE, H_0|_\cE)}{\mathrm{rank}\, \cE}  \geqslant \frac{\deg_\omega(\cF, H_0)}{\mathrm{rank}\, \cF}.\]
This contradicts the stability condition.
\end{proof}

\section{Existence of admissible Hermitian-Yang-Mills metrics}
\label{section:HYM-metric}

In this section, we will assume  Conjecture \ref{conj:Sobolev-conj-global} and  prove the existence of   admissible Hermitian-Yang-Mills metrics for stable reflexive
sheaves on a compact normal K\"ahler variety.
We will follow the method of \cite{Simpson1988}. 
  
We first recall the following Donaldson's functional $M(\cdot,\cdot)$, as defined in \cite[Section 4]{Simpson1988}. 
Let   $(\cF, H)$ be  a  Hermitian    reflexive sheaf on a compact normal K\"ahler variety $(X,\omega)$. 
Let $s$ be a  smooth section of $\cE nd(\cF)$, selfadjoint with respect to $H$. 
Locally on $X_\cF$, let $(\mathbf{e}_\alpha)$ be an $H$-orthonormal basis of $\cF$, which diagonalizes $s$. 
In particular, we have $s(\mathbf{e}_\alpha) = \lambda_\alpha \mathbf{e}_\alpha$ for real-valued functions $\lambda_\alpha$. 
Let $(\mathbf{e}^*_\alpha)$ the dual basis of $\cF^*$. 
Let 
\[\Psi(a,b) = \frac{e^{a-b}-1 - (a - b)}{(a-b)^2}
\] 
be a smooth positive function defined on $\mathbb{R}^2$. 
For any section $A$  of $\cE nd(\cF)|_{X_\cF}$ with local expression 
\[A=A_{\alpha,\beta}\mathbf{e}^*_\alpha\otimes \mathbf{e}_\beta,\] 
we define 
\[ \Psi(s)(A) = \Psi(\lambda_\alpha,\lambda_\beta) A_{\alpha,\beta} \mathbf{e}^*_\alpha\otimes \mathbf{e}_\beta. \]
The following lemma follows directly from the definition of $\Psi$.

\begin{lemma}\label{lemma:bounded-psiM2}
With the notation above, assume that $||s||_H   \leqslant C$ for some constant $C$, then there exists some constant $C'>0$, depending only on $C$, such that for any smooth $(0,1)$-form $A$ with values in $\cE nd (\cF)$, we have
\[0 \leqslant -\sqrt{-1}\Lambda  <\Psi(s)A, A>_H  \leqslant -  C' \sqrt{-1}\Lambda  <A, A>_H  = C' ||A||^2_{H,\omega}.\]
\end{lemma}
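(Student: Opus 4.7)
The plan is to reduce the inequality to a pointwise statement in a basis diagonalizing $s$. Locally on $X_\cF$, choose an $H$-orthonormal frame $(\mathbf{e}_\alpha)$ such that $s(\mathbf{e}_\alpha)=\lambda_\alpha \mathbf{e}_\alpha$ for real-valued functions $\lambda_\alpha$, and let $(\mathbf{e}^*_\alpha)$ be the dual frame. Writing $A=A_{\alpha,\beta}\,\mathbf{e}^*_\alpha\otimes\mathbf{e}_\beta$ with scalar $(0,1)$-form coefficients, one checks directly from $<h,g>_H=\mathrm{tr}\,(hH^{-1}g^*H)$ that
\[ <A,A>_H=\sum_{\alpha,\beta}A_{\alpha,\beta}\wedge \overline{A_{\alpha,\beta}}, \qquad <\Psi(s)A,A>_H=\sum_{\alpha,\beta}\Psi(\lambda_\alpha,\lambda_\beta)\, A_{\alpha,\beta}\wedge \overline{A_{\alpha,\beta}}. \]
Applying $-\sqrt{-1}\Lambda$ turns each summand into the pointwise $\omega$-norm square $||A_{\alpha,\beta}||^2_\omega$, which is exactly the standard identity $-\sqrt{-1}\Lambda<A,A>_H = ||A||^2_{H,\omega}$.

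Next I would verify that $\Psi(a,b)=\frac{e^{a-b}-1-(a-b)}{(a-b)^2}$ is smooth and strictly positive on all of $\mathbb{R}^2$. Setting $u=a-b$, the one-variable function $\psi(u)=\frac{e^u-1-u}{u^2}$ has Taylor expansion $\frac{1}{2}+\frac{u}{6}+\frac{u^2}{24}+\cdots$ near the origin, so $\Psi$ extends smoothly across the diagonal; positivity follows from the elementary inequality $e^u>1+u$ for $u\neq 0$. Combined with the formula of the previous paragraph, the pointwise inequality $\Psi(\lambda_\alpha,\lambda_\beta)\geqslant 0$ immediately yields the left-hand bound $0\leqslant -\sqrt{-1}\Lambda<\Psi(s)A,A>_H$.

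For the upper bound I use the hypothesis $||s||_H\leqslant C$: in the diagonalizing frame this reads $\sum_\alpha \lambda_\alpha^2\leqslant C^2$, so in particular $|\lambda_\alpha|\leqslant C$ for every $\alpha$, and every pair $(\lambda_\alpha,\lambda_\beta)$ lies in the compact square $[-C,C]^2$. Setting $C'=\max_{[-C,C]^2}\Psi$, which depends only on $C$, I get $\Psi(\lambda_\alpha,\lambda_\beta)\leqslant C'$ termwise, so summing over $\alpha,\beta$ and applying $-\sqrt{-1}\Lambda$ gives
\[ -\sqrt{-1}\Lambda<\Psi(s)A,A>_H \leqslant C'\sum_{\alpha,\beta}||A_{\alpha,\beta}||^2_\omega = -C'\sqrt{-1}\Lambda<A,A>_H = C'||A||^2_{H,\omega}, \]
as required. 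The argument is essentially bookkeeping; the only mild subtlety is the smoothness and positivity of $\Psi$ at the diagonal $a=b$, which is handled by the Taylor expansion above, and I do not anticipate any serious obstacle.
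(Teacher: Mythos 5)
Your proof is correct and is exactly the elaboration the paper has in mind; the paper gives no proof beyond the remark that the lemma ``follows directly from the definition of $\Psi$,'' and your argument (diagonalize $s$ in an $H$-orthonormal frame, check that $\Psi$ extends smoothly and positively across the diagonal via the Taylor expansion of $u\mapsto (e^u-1-u)/u^2$, and bound $\Psi$ on the compact square $[-C,C]^2$ using $\sum_\alpha\lambda_\alpha^2=\|s\|_H^2\leqslant C^2$) fills in precisely that routine detail.
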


Now we assume  that  $\Lambda F_H$ is integrable, where $F_H$ is the   Chern curvature of $H$. 
Let  $s$ be a  bounded $L_1^2$ smooth section  of $\cE nd (\cF)$.
Then the Donaldson's functional  $M(H,He^s)$ is defined    as 
\[ M(H,He^s) = \int_X \mathrm{tr}\, (s\sqrt{-1}\Lambda F_H) + \int_X  -\sqrt{-1}\Lambda  <\Psi(s)(\dbar s), \dbar s>_H.\]
We will write $M(H,He^s)= \int_X N_1(H,He^s) + \int_X N_2(H,He^s)$ according to this expression.

\begin{lemma}
\label{lemma:Dfunctional-convergence}
With the assumption above,  assume that $s_1$ and $s_2$ are bounded $L_1^2$ smooth section of $\cE nd (\cF)$, with $\Lambda F_{He^{s_1}}$ integrable. 
Let $\{\varphi_i\}$ be a sequence of cut-off functions as in Lemma \ref{lemma:cut-off}, and we set $u_{1,i} = \varphi_i s_1$ and $u_{2,j} = \varphi_j s_2$. 
Then  we have the following statements.
\begin{enumerate}
\item If $s_2$ has compact support, then  \[ \lim_{i\to \infty}  M(He^{u_{1,i}}, He^{u_{1,i}}e^{s_2})   = M(He^{s_1},He^{s_1}e^{s_2}).
\]
\item \[
\lim_{i\to \infty} M(H ,H e^{u_{1,i}} )   = M(H ,He^{s_1}  ).
\]
\item  
\[ \lim_{i\to \infty}  M(H, He^{u_{1,i}}e^{s_2})   = M(H,He^{s_1}e^{s_2}).
\]
\item \[
\lim_{j\to \infty} M(H ,He^{s_1}e^{u_{2,j}} )   = M(H ,He^{s_1}e^{s_2} ).
\]
\item 
\[ \lim_{j\to \infty} \lim_{i\to \infty}  M(He^{u_{1,i}},He^{u_{1,i}}e^{u_{2,j}})   = M(He^{s_1},He^{s_1}e^{s_2}).
\]
\end{enumerate}
\end{lemma}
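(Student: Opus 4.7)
The plan is to apply three principles throughout: the cocycle identity
\[ M(H_1,H_3)=M(H_1,H_2)+M(H_2,H_3) \]
for Donaldson's functional; the pointwise bound $0\le -\sqrt{-1}\Lambda\langle\Psi(s)A,A\rangle_H\le C'\|A\|^2_{H,\omega}$ from Lemma \ref{lemma:bounded-psiM2} (with $C'$ depending only on $\|s\|_H$); and the properties of the cut-offs $\{\varphi_i\}$ from Lemma \ref{lemma:cut-off}, namely $\varphi_i\equiv 1$ on an exhausting family of compact subsets of $X_{\cF}$ and $\|\nabla\varphi_i\|_2\to 0$. Combined with the hypotheses that $\Lambda F_H$ and $\Lambda F_{He^{s_1}}$ are integrable, and that $s_1,s_2$ are bounded and $L_1^2$, these drive every convergence.

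First, (1) follows at once: because $s_2$ is compactly supported in some $K\subseteq X_{\cF}$ and $\varphi_i\equiv 1$ on a neighborhood of $K$ for $i$ large, we have $u_{1,i}=s_1$ on that neighborhood, hence $F_{He^{u_{1,i}}}$ agrees with $F_{He^{s_1}}$ on $K$. All integrands defining $M(He^{u_{1,i}},He^{u_{1,i}}e^{s_2})$ thus coincide with the analogous integrands for $M(He^{s_1},He^{s_1}e^{s_2})$ once $i$ is large, so the two functionals become equal.

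The heart of the proof is (2). Split $M(H,He^{u_{1,i}})=\int_XN_1+\int_XN_2$. The $N_1$-integral equals $\int_X\varphi_i\,\mathrm{tr}(s_1\sqrt{-1}\Lambda F_H)$ and converges by dominated convergence, since $s_1$ is bounded and $\Lambda F_H$ integrable. For $N_2$, expand $\dbar u_{1,i}=(\dbar\varphi_i)s_1+\varphi_i\dbar s_1$ and insert into $-\sqrt{-1}\Lambda\langle\Psi(u_{1,i})\cdot,\cdot\rangle_H$, yielding four pieces. By Lemma \ref{lemma:bounded-psiM2} the diagonal $(\dbar\varphi_i)s_1$-piece is bounded by $C'\|s_1\|_\infty^2\|\nabla\varphi_i\|_2^2\to 0$; the two cross pieces are controlled by Cauchy--Schwarz in the positive semidefinite form $-\sqrt{-1}\Lambda\langle\Psi(u_{1,i})\cdot,\cdot\rangle_H$ against the vanishing diagonal and the uniformly bounded quantity $C'\int_X\varphi_i^2\|\dbar s_1\|^2_{H,\omega}$; and the principal piece $\int_X\varphi_i^2(-\sqrt{-1}\Lambda\langle\Psi(u_{1,i})(\dbar s_1),\dbar s_1\rangle_H)$ converges to $\int_X-\sqrt{-1}\Lambda\langle\Psi(s_1)(\dbar s_1),\dbar s_1\rangle_H$ by dominated convergence, using continuity of $\Psi$, pointwise $u_{1,i}\to s_1$, and the integrable dominator $C'\|\dbar s_1\|^2_{H,\omega}$.

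Assertions (3), (4), (5) then reduce to (1), (2) via the cocycle. For (3), write $M(H,He^{u_{1,i}}e^{s_2})=M(H,He^{u_{1,i}})+M(He^{u_{1,i}},He^{u_{1,i}}e^{s_2})$, and apply (2) and (1) respectively to the two summands. For (4), split $M(H,He^{s_1}e^{u_{2,j}})=M(H,He^{s_1})+M(He^{s_1},He^{s_1}e^{u_{2,j}})$: the first term is constant in $j$, and the second is precisely the setup of (2) with base metric $He^{s_1}$ and section $s_2$ in place of $s_1$ (using that $\Lambda F_{He^{s_1}}$ is integrable). For (5), first let $i\to\infty$ with $u_{2,j}$ fixed and compactly supported to deduce by (1) the intermediate limit $M(He^{s_1},He^{s_1}e^{u_{2,j}})$; then let $j\to\infty$ and invoke (4) after subtracting the constant $M(H,He^{s_1})$ from the cocycle split. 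The main obstacle is the cross-term estimate in (2): although the separate factors $(\dbar\varphi_i)s_1$ and $\varphi_i\dbar s_1$ are not individually controlled and $\Psi(u_{1,i})$ itself varies with $i$, the positive semidefiniteness of $-\sqrt{-1}\Lambda\langle\Psi(u_{1,i})\cdot,\cdot\rangle_H$ from Lemma \ref{lemma:bounded-psiM2} supplies a Cauchy--Schwarz that pits $\|\nabla\varphi_i\|_2\to 0$ against the $L_1^2$-bound on $s_1$, forcing the cross terms to vanish in the limit.
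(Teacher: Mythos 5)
Your proofs of items (1) and (2) are essentially sound. For (1) you use the same compact-support argument as the paper. For (2) you give a direct argument by expanding $\dbar u_{1,i}=(\dbar\varphi_i)s_1+\varphi_i\dbar s_1$ into four pieces and using Cauchy--Schwarz on the positive semidefinite form $-\sqrt{-1}\Lambda\langle\Psi(\cdot)A,A\rangle_H$; this is valid, although the paper avoids the cross-term bookkeeping by writing $e^{v_i}=e^{u_{1,i}}e^{s_2}$ for a single exponent $v_i$, bounding $N_2(H,He^{v_i})$ pointwise by a uniform $L^1$ dominator via Lemmas \ref{lemma:bounded-psiM2} and \ref{lemma:h-s-estimate}, and then exploiting that the two $N_2$-integrals coincide off the vanishing-volume set $T_i=\{\varphi_i\neq 1\}$.

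The genuine gap is in your derivation of (3), (4), (5) from (1) and (2) via the cocycle. First, the cocycle identity is Lemma \ref{lemma:Dfunctional-addition}, which is proved \emph{after} Lemma \ref{lemma:Dfunctional-convergence} and whose proof in the general bounded-$L_1^2$ setting explicitly invokes Lemma \ref{lemma:Dfunctional-convergence}; so using the cocycle here creates a circular dependency (Simpson's unrestricted cocycle needs $\Delta_H s_i$ integrable, which is not part of the present hypotheses). Second, and independently of the circularity, your derivation of (3) splits $M(H,He^{u_{1,i}}e^{s_2})=M(H,He^{u_{1,i}})+M(He^{u_{1,i}},He^{u_{1,i}}e^{s_2})$ and applies item (1) to the second summand — but item (1) carries the extra hypothesis that $s_2$ is compactly supported, which item (3) does not assume. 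So (3) does not follow from (1) and (2) by this route. The paper instead takes (3) (not (2)) as the primary estimate and proves it directly with the $v_i$-combination argument sketched above, then deduces (4) analogously, (2) as the special case $s_2=0$ of (4), and (5) from (1) (since $u_{2,j}$ \emph{is} compactly supported) followed by (2) applied with base metric $He^{s_1}$ and section $s_2$; none of these steps invokes the cocycle. To repair your argument you would need to either prove (3) directly (e.g.\ by the same four-piece decomposition you used for (2), applied to the combined exponent $v_i$), or establish a restricted cocycle identity independently of the convergence lemma.
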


\begin{proof}
We will only give detailed proofs for item (1) and item  (3). 
The proof for item (4) is analogue to  the one for item (3). 
Item (2) follows from item (4) by taking $s_2=0$.
Item (5) follows by applying item (1) and then item (2).

For item (1), since $s_2$ has compact support,  there is some $i_0$ such that, if $i\geqslant i_0$, then $s_1=u_{1,i}$  on the support of $s_2$. 
Hence (2) holds.
 
In the remainder of the proof, we focus on item (3). 
Let  $ T_i  = \{\varphi_i \neq 1 \}$. Then the volumes of $T_i$ tend to zero when $i \to \infty$.  We use the notation that $s_1=u_{1,\infty}$ and $\varphi_{\infty} \equiv 1$.
We write $e^{v_i} = h_i = e^{u_1,i}e^{s_2}$ for $i\in \mathbb{N}\cup \{ \infty\}$. 
Then $||v_i||_{\infty}$ are  bounded, and $v_i$ converge to $v_\infty$, smoothly on any compact subset of $X_\cF$.  
We pick a constant $C$ such that  $||v_i||_{\infty},  ||s_1||_{\infty}, ||s_2||_{\infty}, || \dbar s_1||_{2}$ and $|| \dbar s_2||_{2}$ are bounded by $C$. 

First, since $\Lambda F_H$ is integrable,  and since $||v_i||_{\infty} \leqslant C$ for all $i$,  we get
\[
\lim_{i\to \infty}\int_X N_1(H,He^{v_i}) = \int_X N_1(H,He^{v_{\infty}}).
\]
For the other integral in $M(\cdot,\cdot)$, we observe that  
\begin{eqnarray*}
 \int_X N_2(H, He^{v_\infty})  - \int_X N_2 (H, He^{v_i})  
= \int_{T_i}  N_2(H, He^{v_\infty})   - \int_{T_i} N_2 (H, He^{v_i})
\end{eqnarray*}  
By Lemma  \ref{lemma:bounded-psiM2}, we see that there is a  constant  $C_1$, depending only on $C$,   such that 
$N_2(H,He^{v_i}) \leqslant C_1 || \dbar v_i||_{H}^2$
for $i\in \mathbb{N}\cup \{\infty\}$.

We have the following computation, for $i\in \mathbb{N}\cup \{\infty\}$,
\[
\dbar h_i = \dbar(e^{u_{1,i}} e^{s_2}) = \dbar e^{u_{1,i}} e^{s_2} + e^{u_{1,i}} \dbar e^{s_2}.
\]
By Lemma \ref{lemma:h-s-estimate}, there is a constant $C_2$, depending only on $C$, such that 
\[||\dbar e^{u_{1,i}}||_{{H,\omega}}^2 \leqslant C_2 ||\dbar u_{1,i}||_{{H,\omega}}^2 \mbox{ and } 
||\dbar e^{s_2}||_{H,\omega}^2 \leqslant C_2 || \dbar s_2||^2_{H,\omega}.\] 
We note that 
\[||\dbar u_{1,i}||_{H,\omega}^2 = ||\dbar \varphi_i s_1 + \varphi_i \dbar s_1||_{H,\omega}^2 \leqslant 2||\nabla \varphi_i||^2 ||s_1||_{H,\omega}^2  + 2\varphi_i^2 || \dbar s_1||_{H,\omega}^2.\]
Thus there is a constant $C_3$, depending only on $C$, such that 
$$||\dbar h_i||_{H,\omega}^2 \leqslant C_3(|| \dbar s_1||^2_{H,\omega} + || \dbar s_2||^2_{H,\omega} +||\nabla \varphi_i||_\omega^2).$$
By  Lemma \ref{lemma:h-s-estimate} again, this implies that there is a constant $C_4$, depending only on $C$, such that $|| \dbar v_i||_{{H,\omega}}^2 \leqslant C_4(|| \dbar s_1||^2_{H,\omega} + || \dbar s_2||^2_{H,\omega} +||\nabla \varphi_i||_\omega^2)$. 
In conclusion, \[ N_2(H,He^{v_i}) \leqslant C_1 C_4 (|| \dbar s_1||^2_{H,\omega} + || \dbar s_2||^2_{H,\omega} +||\nabla \varphi_i||_\omega^2). \]
Since $||\nabla \varphi_i||_{2} \to 0$ when $i\to \infty$, and since $s_1,s_2$ are $L_1^2$,  we obtain that 
\[ \lim_{i\to \infty} \Big(\int_{T_i}  N_2(H, He^{v_\infty})   - \int_{T_i} N_2 (H, He^{v_i}) \Big) =  0.\]
This completes the proof of the lemma.
\end{proof}

\begin{lemma}
\label{lemma:Dfunctional-addition}
We have 
$$M(H,He^{s_1})  + M(He^{s_1},He^{s_1}e^{s_2})   = M(H,He^{s_1}e^{s_2}).$$
\end{lemma}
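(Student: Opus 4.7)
The plan is a two-step reduction. First I would establish the identity in the special case where $s_1$ and $s_2$ are smooth and compactly supported inside the smooth locus $X_\cF$. Second I would pass to the general bounded $L_1^2$ case by approximating $s_1, s_2$ via $u_{1,i} = \varphi_i s_1$ and $u_{2,j} = \varphi_j s_2$, with $\{\varphi_i\}$ the cut-off functions from Lemma \ref{lemma:cut-off}, and then taking a double limit by invoking Lemma \ref{lemma:Dfunctional-convergence}.

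For the compactly supported case, all three quantities $M(H, He^{s_1})$, $M(He^{s_1}, He^{s_1}e^{s_2})$ and $M(H, He^{s_1}e^{s_2})$ are integrals over the smooth open set $X_\cF$ of integrands supported in a fixed compact subset, and Simpson's argument from \cite[Proposition 5.1]{Simpson1988} applies verbatim. Concretely, one differentiates along the piecewise-smooth path $t\mapsto He^{ts_1}$ for $t \in [0,1]$ concatenated with $t\mapsto He^{s_1}e^{(t-1)s_2}$ for $t \in [1,2]$; using the identity $F_{He^s} = F_H + \dbar(e^{-s}\partial_H e^s)$ together with integration by parts (which yields no boundary contribution because everything is compactly supported in $X_\cF$), one obtains the standard variational formula
$$\tfrac{d}{dt} M(H, H_t) = 2\sqrt{-1}\int_X \mathrm{tr}\big( (H_t^{-1}\dot{H}_t) \Lambda F_{H_t} \big),$$
and the additivity identity is recovered by integrating along the two segments.

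To conclude the general case, note that $u_{1,i}$ and $u_{2,j}$ are smooth and compactly supported in $X_\cF$, so Step 1 applied to the pair $(u_{1,i}, u_{2,j})$ gives
$$M(H, He^{u_{1,i}}) + M(He^{u_{1,i}}, He^{u_{1,i}}e^{u_{2,j}}) = M(H, He^{u_{1,i}}e^{u_{2,j}}).$$
Letting $i \to \infty$ and then $j \to \infty$, Lemma \ref{lemma:Dfunctional-convergence} handles each term: the first summand on the left converges to $M(H, He^{s_1})$ by item (2); the second converges to $M(He^{s_1}, He^{s_1}e^{s_2})$ by the double-limit statement of item (5); and the right-hand side converges to $M(H, He^{s_1}e^{s_2})$ by first applying item (3) in the $i$-limit (with the fixed bounded $u_{2,j}$ playing the role of $s_2$), yielding $\lim_i M(H, He^{u_{1,i}}e^{u_{2,j}}) = M(H, He^{s_1}e^{u_{2,j}})$, and then applying item (4) in the $j$-limit.

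The main obstacle is bookkeeping rather than anything conceptual: one must confirm that in the compactly supported case the standard variational calculation truly produces no boundary terms, and that the four convergence statements invoked in the double-limit procedure correctly match the expressions at hand. In particular, the second summand on the left-hand side of the approximated identity has an interior metric $He^{u_{1,i}}$ that varies with $i$, and this is precisely the delicate situation accommodated by item (5) of Lemma \ref{lemma:Dfunctional-convergence}.
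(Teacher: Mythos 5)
Your proof is correct and takes essentially the same approach as the paper: establish the identity in a restricted case where Simpson's integration-by-parts argument goes through, then pass to general bounded $L^2_1$ sections by the cut-off scheme of Lemma \ref{lemma:Dfunctional-convergence}. The paper's Step~1 runs under the slightly weaker hypothesis ``$\Delta_H s_1$, $\Delta_H s_2$ integrable'' (with Corollary \ref{cor:integration-by-part-2} in place of Simpson's Lemma~5.2), but since the cut-off approximants $u_{1,i},u_{2,j}$ are compactly supported this distinction is immaterial, and your explicit double-limit bookkeeping via items (2), (5), (3), (4) of Lemma \ref{lemma:Dfunctional-convergence} is exactly what the paper's terse ``for the general case, we can apply Lemma \ref{lemma:Dfunctional-convergence}'' is meant to encode.
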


\begin{proof}
We first assume that $\Delta_H s_1$ and $\Delta_H s_2$ are integrable. 
Then we can apply the argument of \cite[Proposition 5.1]{Simpson1988}, by just replacing the use of \cite[Lemma 5.2]{Simpson1988} by Corollary \ref{cor:integration-by-part-2}. 
For the general case, we can apply  Lemma \ref{lemma:Dfunctional-convergence}.
\end{proof}

\begin{remark}
We note that, for the proof of the previous lemma,   in \cite[Proposition 5.1]{Simpson1988}, the  integrability of $\Delta_H s_1$  is    mandatory. However, for the solution $h=e^s$ of our heat equation, we can only prove the integrability for $\Delta_H h$. It is not obvious to us that this implies the integrability of $\Delta_H s$. 
For this reason, we prove  Lemma \ref{lemma:Dfunctional-convergence}.
\end{remark}

We replace   \cite[Assumption 3]{Simpson1988}   by the following property on  normal compact K\"ahler varieties.

\begin{lemma}  
\label{lemma:simpson-assumption-3}
Let $(X,\omega)$ be a normal compact K\"ahler variety of complex dimension $n$, and $B$ a positive number.   
Then there exist  constants $C_1$ and $C_2$  such that the following property holds. 
Let $f$ be a smooth bounded   positive  function,  defined over some smooth Zariski open dense  subset of $X$, such that 
\[\Delta f \geqslant -B,\]
and that 
\[ \int_X ||\nabla f||^2 < \infty, \   \int_X |\Delta f|  < \infty.  \] 
Then we have 
\[ ||f||_\infty \leqslant C_1 ||f||_1 + C_2.\]
\end{lemma}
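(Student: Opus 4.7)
The plan is a Moser iteration driven by the global Sobolev inequality of Lemma \ref{lem:Sobolev-prop-single}, with integration by parts on the singular open set $X^\circ$ justified by Corollary \ref{cor:integration-by-part}. Since $f\geqslant 0$ satisfies $\Delta f \geqslant -B$, I would work with $u := f+1$, so that $u\geqslant 1$ is bounded, $\nabla u \in L^2$, and $\Delta u$ is integrable with $\Delta u \geqslant -B$.

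\emph{Caccioppoli estimate.} For any $p\geqslant 2$, the function $u^{p-1}$ is bounded and its gradient $(p-1)u^{p-2}\nabla u$ is $L^2$, so $u^{p-1}\in L^2_1(X^\circ)$ and Corollary \ref{cor:integration-by-part} applies to give
\[
(p-1)\int_{X^\circ} u^{p-2} ||\nabla u||^2 \;=\; -\int_{X^\circ} u^{p-1}\Delta u \;\leqslant\; B\int_{X^\circ} u^{p-1} \;\leqslant\; B\int_{X^\circ} u^p,
\]
the last step because $u\geqslant 1$. Setting $w:=u^{p/2}$, so that $||\nabla w||^2 = \tfrac{p^2}{4}u^{p-2}||\nabla u||^2$ and $w^2 = u^p$, I then apply Lemma \ref{lem:Sobolev-prop-single} to the compactly supported functions $\varphi_i w$, with $\varphi_i$ the cut-offs of Lemma \ref{lemma:cut-off}, and let $i\to\infty$ (the cross term and $w^2||\nabla \varphi_i||^2$ contribution vanish thanks to the boundedness of $w$ and $||\nabla \varphi_i||_2\to 0$). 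Combined with the Caccioppoli bound, this yields the iteration inequality
\[
||u||_{pn/(n-1)} \;\leqslant\; \Big[C_S\Big(\tfrac{p^2 B}{4(p-1)} + 1\Big)\Big]^{1/p}\,||u||_p.
\]
Iterating along $p_k = 2\bigl(\tfrac{n}{n-1}\bigr)^k$, the infinite product of constants converges (the exponents $1/p_k$ are summable and the factors grow only polynomially in $p_k$), and $||u||_{p_k}\to ||u||_\infty$ because $u$ is bounded; thus $||u||_\infty \leqslant D\,||u||_2$ for some $D=D(n,B,C_S)$.

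\emph{Interpolation to $L^1$.} Since $u$ is bounded, $||u||_2^2 \leqslant ||u||_\infty\,||u||_1$, so
\[
||u||_\infty \;\leqslant\; D\,||u||_\infty^{1/2}\,||u||_1^{1/2}, \qquad \text{hence} \qquad ||u||_\infty \;\leqslant\; D^2\,||u||_1.
\]
Translating back to $f = u-1$ produces $||f||_\infty \leqslant D^2\,||f||_1 + D^2\,\mathrm{Vol}_\omega(X)$, which is the claim with $C_1 = D^2$ and $C_2 = D^2\,\mathrm{Vol}_\omega(X)$.

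The only non-routine aspect is the justification of integration by parts and of the Sobolev inequality on the singular locus, both of which reduce directly to the cut-off machinery of Lemma \ref{lemma:cut-off} using the hypotheses $\int ||\nabla f||^2<\infty$ and $\int |\Delta f|<\infty$. Once these are in place no serious obstacle remains; the rest is standard Moser iteration, and the only thing to monitor is the convergence of the product of Sobolev--Caccioppoli constants along the iteration.
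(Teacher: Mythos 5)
Your proposal is correct and takes essentially the same route as the paper: work with $u=f+1$, justify integration by parts and the Sobolev inequality on the singular open set via the cut-offs of Lemma \ref{lemma:cut-off}, derive the Caccioppoli bound from $\Delta f\geqslant -B$, and run Moser iteration driven by Lemma \ref{lem:Sobolev-prop-single}. Your explicit $L^2$-to-$L^1$ interpolation step nicely spells out what the paper compresses into the phrase ``Moser's iteration procedure.''
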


\begin{proof}
We follow  the idea of \cite[Lemma 5.2]{LiZhangZhang2017}. 
By  Lemma \ref{lem:Sobolev-prop-single}, we have a Sobolev inequality  as follows. 
There is a positive constant $C_S$ such that for any   $\mathcal{C}^1$  function $g\geqslant 0$,  compactly supported in the smooth locus of $X$,  
\[ (\int_{X} |g|^{\frac{2n}{n-1} })^{\frac{ n-1}{ n}}  \leqslant C_S \int_{X} (||\nabla g||^2 + |g|^2).\]

We claim that the same inequality holds for any bounded   smooth $L_1^2$ function  $g\geqslant 0$,   defined over some smooth Zariski  open dense  subset  $X^\circ$ of $X$.
Indeed,   we can take a sequence of cut-off functions $\{\varphi_i \}$ on $X^\circ$ as in Lemma \ref{lemma:cut-off}, such that $|| \nabla \varphi_i ||_2 \to 0$. 
By considering the previous inequality for $\varphi_i g$, and by letting $i$ to $\infty$, we deduce the claim.

On the one hand, by taking $g= (f+1)^{\frac{q+1}{2}}$, where $q\geqslant 1$, we obtain that 
\[  (\int_{X} |f+1|^{\frac{n(q+1)}{n-1} })^{\frac{ n-1}{ n}}  \leqslant C_S \int_{X} \Big( \frac{(q+1)^2}{4}|f+1|^{q-1}||\nabla f||^2 + |f+1|^{q+1} \Big). \] 
On the other and, by assumption, we have   
\[ (f+1)^{q} \Delta f \geqslant -B(f+1)^{q}. \] 
By integrating both sides,  we get 
\[  \int_X  (f+1)^{q} \Delta f       \leqslant B \int_X (f+1)^{q}. \]
By the assumption of integrability, we can apply  Corollary \ref{cor:integration-by-part} and obtain that 
\[ \int_X  (f+1)^{q} \Delta f      = \int_X  \nabla ((f+1)^{q})  \cdot  (\nabla f)        = \int_X    q(f+1)^{q-1} ||\nabla f||^2.      \]
Since $f$ is positive, we get $\int_X (f+1)^{q}    \leqslant \int_X (f+1)^{q+1}. $
Therefore we have
\[  (\int_{X} |f+1|^{\frac{n(q+1)}{n-1} }  )^{\frac{ n-1}{ n}}  
\leqslant  
   C_S \Big( \frac{(q+1)^2B}{4q} +1 \Big) \int_X |f+1|^{q+1}. \] 
By taking the $(q+1)$-root, we obtain that 
\[ ||f+1||_{\frac{n(q+1)}{n-1}} \leqslant  \Big( \frac{(q+1)^2BC_S}{4q} +C_S \Big)^{\frac{1}{q+1}} ||f+1||_{q+1}.\]  
By using Moser's iteration procedure, we can conclude the lemma.
\end{proof}

The following statement is a variant of \cite[Proposition 5.3]{Simpson1988}.

\begin{lemma}
\label{lemma:simpson-estimate}
Let $\cF$ be a  reflexive sheaf on a normal compact K\"ahler variety $(X,\omega)$.
We fix a positive number $B$.
Assume that  $H$ is a  Hermitian metric on $\cF$ with Chern curvature $F_H$ such that 
\[ || \Lambda F_H ||_{H} \leqslant B. \]
Assume furthermore that,  for any proper non zero saturated coherent subsheaf $\cE \subseteq \cF $ with  orthogonal projection $\pi$, which is  $L_1^2$, we have 
\[\frac{\deg_\omega(\cE, H|_{\cE})}{\mathrm{rank}\, \cE}  < \frac{\deg_\omega(\cF, H)}{\mathrm{rank}\, \cF}.  \]

Then there exist two positive constant $D_1$, $D_2$ such that 
\[  ||s||_H \leqslant D_1 + D_2 M(H,He^s), \]
for any    smooth section $s$ of $\cE nd({\cF})|_{X_\cF}$ which satisfies the following properties,  
\begin{enumerate} 
\item  $s$ is selfadjoint with respect to $H$,
\item $\mathrm{tr}\, s =0$,
\item $||s||_H$ is bounded,
\item  $ s$ is $L_1^2$,
\item  $ |\Delta  \log \, (\mathrm{tr}\, e^s) | $ is integrable,
\item $|| \Lambda F_s ||_{He^s} \leqslant B$, where $F_s$ is the Chern curvature of the metric $ He^s$.
\end{enumerate}

\end{lemma}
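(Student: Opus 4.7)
I will follow Simpson's proof of \cite[Proposition 5.3]{Simpson1988}, adapting the analytic ingredients to the singular setting: Lemma \ref{lemma:simpson-assumption-3} will replace \cite[Assumption 3]{Simpson1988}, and Corollaries \ref{cor:integration-by-part}, \ref{cor:integration-by-part-2} will justify integration by parts on $X_\cF$.

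The first step is a preliminary pointwise estimate $\|s\|_\infty \leqslant C_1\|s\|_1 + C_2$. From condition (6), the hypothesis $\|\Lambda F_H\|_H \leqslant B$, and item (3) of Lemma \ref{lemma:simpson-computation}, we get $\Delta \log \mathrm{tr}\, e^s \geqslant -4B$ on $X_\cF$. Since $\mathrm{tr}\, s = 0$, convexity of $\exp$ gives $\mathrm{tr}\, e^s \geqslant \mathrm{rank}\,\cF$, so $f := \log \mathrm{tr}\, e^s - \log \mathrm{rank}\,\cF$ is non-negative; conditions (3)--(5) ensure $f$ is bounded with $\|\nabla f\|^2$ and $|\Delta f|$ integrable. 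Lemma \ref{lemma:simpson-assumption-3} then yields $\sup f \leqslant C\|f\|_1 + C$, and an elementary comparison between the eigenvalues of $s$ and $\log \mathrm{tr}\, e^s$ (using $\mathrm{tr}\, s = 0$) converts this into the claimed $L^\infty$-$L^1$ bound.

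Suppose by contradiction the desired inequality fails. Then we obtain a sequence $s_i$ satisfying (1)--(6) with $l_i := \|s_i\|_1 \to \infty$ and $M(H, He^{s_i}) = o(l_i)$. Set $u_i := s_i/l_i$, so $\|u_i\|_1 = 1$ and $\|u_i\|_\infty$ is uniformly bounded by Step 1. Lemma \ref{lemma:bounded-psiM2}, together with the uniform bound on $\|u_i\|_\infty$, gives $\int_X \|\dbar s_i\|_{H,\omega}^2 \leqslant C'\int_X -\sqrt{-1}\Lambda \langle \Psi(s_i)\dbar s_i, \dbar s_i \rangle_H$; dividing by $l_i^2$ and absorbing the linear term $\int_X \mathrm{tr}(s_i\sqrt{-1}\Lambda F_H) = O(l_i)$ yields $\int_X \|\dbar u_i\|_{H,\omega}^2 \to 0$. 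Using the Sobolev inequality (Lemma \ref{lem:Sobolev-prop-single}) and Rellich compactness, a subsequence converges weakly in $L_1^2$ and strongly in $L^2$ to a limit $u_\infty$ that is selfadjoint, traceless, bounded, with $\|u_\infty\|_1 = 1$ and $\dbar u_\infty = 0$ in the weak sense.

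The Uhlenbeck-Yau spectral construction then produces a destabilizing subsheaf. Following \cite[Section 5]{Simpson1988}, by probing $M(H, He^{cs_i})/l_i$ for $c>1$ and invoking Lemma \ref{lemma:Dfunctional-addition} together with the convexity of $\Psi$, one shows that for every pair of distinct eigenvalues $\lambda < \mu$ of $u_\infty$ and every smooth $\Phi$ supported in $\{x > \mu,\ y < \lambda\}$ one has $\Phi(u_\infty)(\dbar u_\infty) = 0$. Approximating the characteristic function of $(-\infty,\tau]$ for $\tau$ between two consecutive eigenvalues produces a selfadjoint $L_1^2$ projection $\pi$ with $(\mathrm{Id}-\pi)\dbar\pi = 0$, i.e.\ a weakly holomorphic subbundle of $\cF|_{X_\cF}$. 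By Lemma \ref{lemma:extension-subsheaf}, $\pi$ extends to a coherent saturated subsheaf $\cE \subsetneq \cF$. The Chern-Weil formula combined with Corollary \ref{cor:integration-by-part-2} gives
\[
\deg_\omega(\cE, H|_\cE) = \sqrt{-1}\int_X \mathrm{tr}(\pi \Lambda_\omega F_H)\,\omega^n - \int_X \|\dbar\pi\|_{H,\omega}^2\,\omega^n,
\]
and passing to the limit in $M(H, He^{s_i}) = o(l_i)$ shows, using $\mathrm{tr}\, u_\infty = 0$ and $u_\infty \not\equiv 0$, that at least one spectral piece must satisfy $\mu_\omega(\cE) \geqslant \mu_\omega(\cF)$, contradicting the stability hypothesis. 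The main obstacle is the Uhlenbeck-Yau spectral construction and the justification of its degree computation in the singular setting; both are enabled by the $L_1^2$ property of $\pi$ (inherited from $u_\infty$) and by Corollaries \ref{cor:integration-by-part}, \ref{cor:integration-by-part-2}, which apply since $X \setminus X_\cF$ has real codimension at least two.
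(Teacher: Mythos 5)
Your proposal follows the same route as the paper: both first obtain the $L^\infty$--$L^1$ bound $\sup\|s\|_H \leqslant C_1'\int\|s\|_H + C_2'$ by combining Lemma~\ref{lemma:simpson-computation}(3), Lemma~\ref{lemma:h-s-estimate}, and the Moser-iteration substitute Lemma~\ref{lemma:simpson-assumption-3} for Simpson's Assumption~3, and both then invoke the contradiction scheme of \cite[Proposition 5.3]{Simpson1988} to manufacture a weakly holomorphic $L_1^2$-subbundle $\pi$, which Lemma~\ref{lemma:extension-subsheaf} promotes to a genuine saturated destabilizing subsheaf of $\cF$ on all of $X$. The paper is terse and simply cites Simpson for the second step; you sketch it, and there is one slip worth flagging. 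Lemma~\ref{lemma:bounded-psiM2} gives an \emph{upper} bound $-\sqrt{-1}\Lambda\langle\Psi(s)A,A\rangle_H \leqslant C'\|A\|_{H,\omega}^2$ with $C'$ depending on $\|s\|_H$, not the lower bound you use, and no $i$-uniform lower bound of that strength can hold because $\|s_i\|_\infty \sim l_i$. The correct pointwise lower bound is $l_i^2\,\Psi(l_i a, l_i b) \gtrsim l_i$ for $a,b$ in a fixed compact set, so the Donaldson functional bound $M(H,He^{s_i})=o(l_i)$ together with $\int N_1 = O(l_i)$ yields only $\int_X\|\dbar u_i\|_{H,\omega}^2 = O(1)$, not $\to 0$. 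This is still enough to extract a weak $L_1^2$ limit $u_\infty$, which is all the spectral construction requires; indeed $\dbar u_\infty$ must in general be nonzero, since otherwise $\pi$ would be smooth and honestly holomorphic and the whole $L_1^2$-subbundle machinery would be superfluous.
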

  
\begin{proof}
Let $h=e^s$. By Lemma \ref{lemma:simpson-computation}, we deduce that  
\[
\Delta \log \, (\mathrm{tr}\, h ) \geqslant -2 ( || \Lambda F_H ||_H + ||\Lambda F_{s}||_{Hh}) \geqslant -4B. 
\]
Moreover, since $s$ is bounded and $L_1^2$, by Lemma \ref{lemma:h-s-estimate}, 
we see that $||  \nabla \log \, (\mathrm{tr}\, h ) ||^2$ is integrable.
Hence  by Lemma \ref{lemma:simpson-assumption-3}, there are positive constants $C_1$, $C_2$ such that 
\[ ||\log \, (\mathrm{tr}\, h )||_\infty \leqslant C_1 || \log \, (\mathrm{tr}\,h )||_1 + C_2.\] 
By taking a local $H$-orthonormal basis of $\cF$ which diagonalizes $s$, we see that  there are positive constants $A_1,A_2,A_3,A_4$, depending only on the rank of $\cF$, such that 
\[ A_1 ||s||_H - A_2  \leqslant  \log\, (\mathrm{tr}\, h)   \leqslant  A_3 ||s||_H + A_4.\]
Hence there are  positive constants $C'_1$, $C'_2$ such that 
\[ \sup_X ||s||_H  \leqslant C'_1   \int_X ||s||_H \omega^n + C'_2.\]

Now we can argue as in \cite[Proposition 5.3]{Simpson1988}. 
Assume by contradiction that lemma does not hold. 
Then using the previous inequality, we can show that there is a $L_1^2$-subbundle $\pi \in L^2_1(\cF,\cF)$  on $X_\cF$.  
More precisely, $\pi$ is selfadjoint with respect to $H$,  
$\pi^2 = \pi$, $(\mathrm{Id}_\cF-\pi)\dbar \pi = 0$, 
and  
$$ ||\dbar \pi ||_{\omega,H}^2= -\sqrt{-1}\Lambda \mathrm{tr}\,  (\dbar \pi \partial_H \pi) $$
is   integrable on $X$.  
As in \cite[Section 7]{UhlenbeckYau1986} (or \cite[Theorem 0.1.1]{Popovici2005} for an alternative reasoning), $\pi$ defines a  saturated coherent analytic subsheaf $\cE \subseteq \cF|_{X_\cF}$. 
Moreover, by construction, we have  $\cE \subsetneq \cF|_{X_\cF}$, and
\[ \frac{\deg_\omega(\cE, H|_{\cE})}{\mathrm{rank}\, \cE}  \geqslant \frac{\deg_\omega(\cF, H)}{\mathrm{rank}\, \cF}. \]  
By Lemma \ref{lemma:extension-subsheaf}, 
$\cE$ extends to a coherent subsheaf of $\cF$, globally on $X$. 
We thus obtain a contradiction.
\end{proof}

Now we are ready to prove Theorem \ref{thm:existence-HE}.

\begin{proof}[{Proof of Theorem \ref{thm:existence-HE}}]
Let $H_0$ be a metric constructed as in Section \ref{section:initial-metric}. 
Then by Theorem \ref{thm:existence-admissible}, the heat equation (\ref{equa:heat-equation-metric}) admits a solution with initial metric $H_0$. 
We consider the shifted solution $G=G_t=G_0g_t$ as in Proposition \ref{prop:estimate-g}. We recall the following properties.
\begin{enumerate}
\item The metrics $G_t$ are  admissible for all $t\geqslant 0$, with uniform upper bounds on $||\Lambda F_{G_t}||_{G_t}$ and on $\int_X||F_{G_t}||_{G_t}^2$.
\item The norm  $||g||_{G_0}$ is  bounded over $X^\circ \times [0,T]$ for any $T \geqslant 0$.
\item For   $t \geqslant 0$, we have
\[ 
\int_{X }   ||(\dbar g(t)) g(t)^{-\frac{1}{2}} ||_{{G_0,\omega}}^2  \omega^n \leqslant  
 \int_{X}    |\mathrm{tr}\, (g  \Lambda F_{G} - g \Lambda F_{G_{0}}) |    \omega^n. \]
\item   $\Delta_{G_{0}} g_t$ is integrable for  any fixed $t\geqslant 0$.
\end{enumerate}
We set $g_t=e^{s_t}=e^s$. Then the Donaldson's functional $M(G_t,G_{t'})$ is well defined for any $t,t' \geqslant 0$. 
We claim  that 
\begin{equation}\label{equa:Dfunctional}
 \frac{\partial}{\partial t} M(G_0,G_t) = - \int_X ||  \sqrt{-1}\Lambda_\omega F_{G_t} - \lambda_\omega(\cF) \mathrm{Id}_\cF  ||_{G_{t}}^2. 
\end{equation} 
Admitting this equality at the moment.  
With Lemma \ref{lemma:h-s-estimate} and Lemma \ref{lemma:stability-persist}, we verify that the conditions of Lemma \ref{lemma:simpson-estimate} are satisfied for $s_t$. 
Thus  the lemma shows that $||s||_{G_0}$ is bounded on $X_\cF \times [0,\infty)$. 
Hence $||g||_{G_0}$ is bounded as well. 
Thanks to Lemma \ref{lemma:Schauder-estimate-matrix-1} and local elliptic estimate, we may extract a sequence $\{t_i\}$  diverging to $\infty$ such that $g_{t_i}$ converge to a  section $g_\infty$, smoothly on any compact subset of $X_\cF$. 
It follows   that $\mathrm{det} \, g_\infty \equiv 1$ and the metric $G_\infty=G_0g_\infty$ is admissible.

By Lemma \ref{lemma:h-s-estimate}, and the properties (1) and (3) of $g$, we see that the integrals
\[
\int_{X^\circ}   ||(\dbar s_t) ||_{{G_0,\omega}}^2  \omega^n
\]
are uniformly bounded. 
From  Lemma \ref{lemma:bounded-psiM2},  we obtain that $M(G_0,G)$ is a bounded function on $[0,\infty)$. 
Therefore, the equation (\ref{equa:Dfunctional}) implies that the integrals  
\[\int_X ||  \sqrt{-1}\Lambda_\omega F_{G_t} - \lambda_\omega(\cF) \mathrm{Id}_\cF  ||_{G_{t}}^2
\] 
converge to zero when $t$ diverges to $\infty$. 
Thus $\Lambda_\omega F_{G_\infty} = \lambda_\omega(\cF) \mathrm{Id}_\cF$, and $G_\infty$ is a Hermitian-Yang-Mills metric.

It remains to prove the equality (\ref{equa:Dfunctional}) above. 
We will adapt the method of \cite[Lemma 7.1]{Simpson1988}. 
By  Lemma \ref{lemma:Dfunctional-addition}, we only need to prove the equality at $t=0$.   
We recall that
\[ M(G_0,G_0e^s) = \int_X \mathrm{tr}\, (s\sqrt{-1}\Lambda F_{G_0}) + \int_X  -\sqrt{-1}\Lambda  <\Psi(s)(\dbar s), \dbar s>_{G_0}.\]
The heat equation implies that 
\[\frac{\partial}{\partial t}\Big|_{t=0} s_t = - (\sqrt{-1}\Lambda_\omega F_{G_0} - \lambda_\omega(\cF) \mathrm{Id}_\cF). \]
Item (1) of Lemma \ref{lemma:bound-norm-h} implies that there is a constant $C$ such that \[ ||\frac{1}{t}s_t||_{G_0} \leqslant C \] for all $0<t\leqslant 1.$
Hence we get 
\begin{eqnarray*}
&&\frac{\partial}{\partial t}\Big|_{t=0} \int_X \mathrm{tr}\, (s_t\sqrt{-1}\Lambda_\omega F_{G_0}) \\
&=& \int_X \mathrm{tr}\, \Big(\frac{\partial}{\partial t}\Big|_{t=0}s_t\sqrt{-1}\Lambda_\omega F_{G_0}\Big)\\
&=& -\int_X \mathrm{tr}\, \Big(   (\sqrt{-1}\Lambda_\omega F_{G_0} - \lambda_\omega(\cF) \mathrm{Id}_\cF)    \sqrt{-1}\Lambda_\omega F_{G_0}\Big)\\
&=&  - \int_X ||  \sqrt{-1}\Lambda_\omega F_{G_0} - \lambda_\omega(\cF) \mathrm{Id}_\cF  ||_{G_{0}}^2.
\end{eqnarray*}
Thus we only need to show that 
 \[
\lim_{t\to 0} \frac{1}{t}  \int_X  -\sqrt{-1}\Lambda_\omega  <\Psi(s)(\dbar s), \dbar s>_{G_0} = 0.
 \]
We note that $s$ is bounded on $X\times [0,1]$ after property (2) of $g$. 
Thus by Lemma \ref{lemma:bounded-psiM2}, it suffices to show that 
\[
\lim_{t\to 0} \frac{1}{t}  \int_X ||\dbar s  ||_{G_0}^2 =0.
\]
By Lemma \ref{lemma:h-s-estimate}, it is enough to show that 
\[ \lim_{t\to 0} \frac{1}{t}  \int_X ||(\dbar g(t)) g(t)^{-\frac{1}{2}} ||_{G_0}^2 =0.
 \]
Thanks to property (3) of $g$, we will show that 
\[ \lim_{t\to 0} \frac{1}{t}  \int_{X}    |\mathrm{tr}\, (g  \Lambda_\omega F_{G} - g \Lambda_\omega F_{G_{0}}) |    \omega^n = 0.
 \]
From the derivative estimate of item (1) of Lemma \ref{lemma:bound-norm-h}, we get a constant $C_1$ such that, for $t\in [0,1]$, 
\[  ||g-\mathrm{Id}_\cF ||_{G_0} \leqslant C_1 t. \] 
We also remark that, since $\det\, g \equiv 1$,  the trace of  $  \Lambda F_{G} -  \Lambda F_{G_{0}} $ is zero. 
Therefore, there is a constant $C_2$, such that for $0<t\leqslant 1$,
\begin{eqnarray*}
\frac{1}{t}  \int_{X}    |\mathrm{tr}\, (g   \Lambda_\omega F_{G} - g \Lambda F_{G_{0}})|  &=& \frac{1}{t} \int_{X}    \Big|\mathrm{tr}\, \Big( (g-\mathrm{Id}_\cF) (  \Lambda_\omega F_{G} -  \Lambda_\omega F_{G_{0}}) \Big) \Big|    \omega^n \\
&\leqslant & \frac{1}{t} C_2 C_1 t \int_{X}    ||  \Lambda_\omega F_{G} - \Lambda_\omega F_{G_{0}}||_{G_0} \omega^n. 
\end{eqnarray*}
Since $\Lambda_\omega F_G$ are uniformly bounded and converge to $\Lambda_\omega F_{G_{0}}$, smoothly on any compact subset of $X_\cF$, we deduce that 
\[ \lim_{t\to 0}  \int_{X}    ||  \Lambda_\omega F_{G} - \Lambda_\omega F_{G_{0}}||_{G_0} \omega^n   =0.\]
Thus $\lim_{t\to 0} \frac{1}{t}  \int_{X}    |\mathrm{tr}\, (g  \Lambda_\omega F_{G} - g \Lambda_\omega F_{G_{0}}) |    \omega^n = 0.$
This completes the proof of the theorem.
\end{proof}

As an application, we deduce  Theorem \ref{thm:BG-inequality}.

\begin{proof}[{Proof of Theorem \ref{thm:BG-inequality}}]
By Theorem \ref{thm:existence-HE}, there exists an admissible Hermitian-Yang-Mills metric $H$ on $\cF$. Let $F$ be its Chern curvature and
\[ T = F - \frac{1}{nr} \omega (\sqrt{-1}\Lambda F). \] 
Then on $X_\cF$,  we have the following inequality (see for example \cite[Section 4.4]{Kobayashi2014}),
\[  \Big(c_2(\cF,H)-\frac{r-1}{2r}c_1(\cF,H)^2 \Big) \wedge \omega^{n-1} =  \frac{  ||T||_H^2 }{8\pi^2   n(n-1)} \omega^{n} \geqslant 0.\]  
Since $H$ is admissible, by taking the integration, we obtain the inequality. 
Furthermore, the equality holds if and only if $||T||_H \equiv 0$,  
which means that  the Chern  connection is projectively flat.
\end{proof}

\renewcommand\refname{Reference}
\bibliographystyle{alpha}
\bibliography{references}

\end{document}